\def \d {\partial}
\DeclareMathOperator{\Pttn}{Pttn}
\newcommand{\bel}[1]{\begin{equation}\label{#1}}
\newcommand{\be}{\begin{equation}}
\newcommand{\qe}{\end{equation}}
\newcommand{\R}{{\mathbb R}}
\newcommand{\wt}{\widetilde}
\newcommand{\CC}{\mathcal{C}_{\geq 0}}
\newcommand{\MM}{\mathcal{M}_{\geq 0}}
\newcommand{\Hmm}[1]{\leavevmode{\marginpar{\tiny%
$\hbox to 0mm{\hspace*{-0.5mm}$\leftarrow$\hss}%
\vcenter{\vrule depth 0.1mm height 0.1mm width \the\marginparwidth}%
\hbox to
0mm{\hss$\rightarrow$\hspace*{-0.5mm}}$\\\relax\raggedright #1}}}
\newcommand{\curvature}[1]{
 \ifnum #1=1
 5.74238\cdots\times 10^{-5}\else{%
 \ifnum #1=2
 1.83701\cdots\times 10^{-5}\else{%
 \ifnum #1=3
 7.85456\cdots\times 10^{-6}\else{%
 \ifnum #1=4
 3.49781\cdots \times10^{-6}\else{%
 \ifnum #1=5
 1.46541\cdots\times 10^{-6}\else{%
 \ifnum #1=6
 1.38186\cdots\times 10^{-8}\else{%
 \ifnum #1=7
 1.12992\cdots\times 10^{-8}\else{
 \ifnum #1=8
 4.70209\cdots\times 10^{-9} \else{
 \ifnum #1=9
 1.30776\cdots\times 10^{-9} \else{
 \ifnum #1=10
4.96239\cdots\times 10^{-11}
\fi}
\fi}
\fi}
\fi}
\fi}
\fi}
\fi}
\fi}
\fi}
\fi}
\newcommand\generalcurv[1]{
\ifnum #1>54
\curvature{10} \else{
\ifnum #1>44
\curvature{9} \else{
\ifnum #1>40
\curvature{8}  \else{
\ifnum #1>34
\curvature{7}\else{
\ifnum #1>20
\curvature{6} \else{
\ifnum #1>18
\curvature{5} \else{
\ifnum #1>16
\curvature{4} \else{
\ifnum #1>15
\curvature{3} \else{
\ifnum #1>12
\curvature{2} \else
\curvature{1}
\fi}\fi}\fi}\fi}\fi}\fi}\fi}\fi}\fi}
\newcommand\generalp[1]{
\ifnum #1>54
(29,55,55) \else{
\ifnum #1>44
(35,38,43) \else{
\ifnum #1>40
(9,36,41)  \else{
\ifnum #1>34
(15,18,19) \else{
\ifnum #1>20
(16,16,21) \else{
\ifnum #1>18
(11,16,16) \else{
\ifnum #1>16
(11,13,15) \else{
\ifnum #1>15
(12,12,16) \else{
\ifnum #1>12
(8,10,13) \else
(6,7,11)
\fi}\fi}\fi}\fi}\fi}\fi}\fi}\fi}\fi}
\newcommand\generalq[1]{
\ifnum #1>54
(32,43,55) \else{
\ifnum #1>44
(31,44,45) \else{
\ifnum #1>40
(10,23,26) \else{
\ifnum #1>34
(11,26,35) \else{
\ifnum #1>20
(14,20,20) \else{
\ifnum #1>18
(10,17,19) \else{
\ifnum #1>16
(11,12,17) \else{
\ifnum #1>15
(11,14,15) \else{
\ifnum #1>12
(7,13,13) \else
(6,8,9)
\fi}\fi}\fi}\fi}\fi}\fi}\fi}\fi}\fi}
\newcommand{\pt}{\mathrm{Pttn}}
\newcommand{\NNG}{\mathcal{PC}_{\geq0}}
\newtheorem{theorem}{Theorem}[section]
\newtheorem{lemma}[theorem]{Lemma}
\newtheorem{corollary}[theorem]{Corollary}
\newtheorem{definition}[theorem]{Definition}
\newtheorem{remark}[theorem]{Remark}
\newtheorem{prop}[theorem]{Proposition}
\newtheorem{problem}[theorem]{Problem}
\newtheorem{example}[theorem]{Example}
\begin{document}

\title[A curvature notion for planar graphs stable under planar duality]{A curvature notion for planar graphs stable under planar duality}

\author{Yohji Akama}
\email{yoji.akama.e8@tohoku.ac.jp}
\address{Yohji Akama: Mathematical Institute, Graduate School of Science, Tohoku University,
Sendai, 980-0845, Japan}

\author{Bobo Hua}
\email{bobohua@fudan.edu.cn}
\address{Bobo Hua: School of Mathematical Sciences, LMNS, Fudan University, Shanghai 200433, China; Shanghai Center for Mathematical Sciences, Fudan University, Shanghai 200433, China}

\author{Yanhui Su}
\email{suyh@fzu.edu.cn}
\address{Yanhui Su: College of Mathematics and Computer Science, Fuzhou University, Fuzhou 350116, China}

\author{Lili Wang}
\email{lili\_wang@fudan.edu.cn}
\address{Lili Wang: School of Mathematical Sciences, Fudan University, Shanghai 200433, China}

\begin{abstract}

Woess \cite{Woess98} introduced a curvature notion on the set of edges of a planar graph, called $\Psi$-curvature in our paper, which is stable under the planar duality. We study geometric and combinatorial properties for the class of infinite planar graphs with non-negative $\Psi$-curvature. By using the discharging method, we prove that for such an infinite graph the number of vertices (resp. faces) of degree $k,$ except $k=3,4$ or $6,$ is finite.
As a main result, we prove that for an infinite planar graph with non-negative $\Psi$-curvature the sum of the number of vertices of degree at least $8$ and the number of faces of degree at least $8$ is at most one.



 \bigskip
\noindent \textbf{Keywords.}  combinatorial curvature, $\Psi$-curvature, $4$-regular, planar graph\\

 \noindent \textbf{Mathematics Subject Classification 2010:} 05C10, 
51M20, 
52C20. 

\end{abstract}
\maketitle\tableofcontents

\section{Introduction}\label{sec:intro}

The combinatorial curvature for planar graphs was introduced by \cite{MR0279280,MR0410602,MR919829,Ishida90}, which is a discrete analog of the Gaussian curvature of surfaces. It has been intensively studied by many authors, see e.g.
\cite{MR1600371,Woess98,MR1864922,BP01,MR1894115,MR1923955,
MR2038013,MR2096789,RBK05,MR2243299,MR2299456,
MR2410938,MR2466966,MR2470818,MR2558886,MR2818734,MR2826967,MR3624614,Gh17}.

Let $(V,E)$ be a locally finite, undirected, connected graph with the set of vertices $V$ and the set of edges $E.$
We always assume $(V,E)$ is simple, i.e. it has no selfloops no multiple edges. Two vertices $x,y$ are called adjacent if $\{x,y\}\in E,$ denoted by $x\sim y.$ The graph $(V,E)$ is called \emph{planar} if it can be topologically embedded into the sphere $\mathbb S^2$ or the plane $\mathbb R^2$. We write $G=(V,E,F)$ for the combinatorial structure, or the cell complex, induced by the embedding where $F$ is the set of faces, i.e. connected components of the complement of the embedding image of the graph $(V,E)$ in the target.
Two elements in $V,E,F$ are called \emph{incident} if the closures of their images have non-empty intersection. We call the closure $\bar{\sigma}$ of a face $\sigma$ a closed face.
A planar graph $G$ is called a (planar) \emph{tessellation} if the following conditions hold, see e.g. \cite{MR2826967}:
\begin{enumerate}[(i)]
\item Any closed face is a closed disk whose boundary consists of finitely many edges of the graph.
\item Any edge is contained in exactly two different closed faces.
\item The intersection of two closed faces is an empty set, a vertex or the closure of an edge.
\end{enumerate}
In this paper, we call tessellations planar graphs for simplicity and assume that for any vertex $x$ and any face $\sigma$
$$|x|\geq 3,\ |\sigma|\geq 3, $$ where $|\cdot|$ denotes the degree of a vertex or the degree of a face, i.e. the number of edges incident to the face.
For a planar graph $G$, the \emph{combinatorial curvature} at the
vertex is defined as
\begin{equation}\label{def:comb}\Phi(x)=1-\frac{|x|}{2}+\sum_{\sigma\in
F \colon x\in \overline{\sigma}}\frac{1}{|\sigma|},\quad  x\in V,\end{equation} where the summation is taken over all faces $\sigma$ incident to $x.$

 To digest the definition, we endow the ambient space of $G$ with a canonical piecewise flat metric and call it the (regular) \emph{Euclidean polyhedral surface}, denoted by $S(G)$:
 replace each face by a regular Euclidean polygon of same facial degree and of side length one, glue them together along their common edges, and define the metric on the ambient space via gluing methods, see \cite[Chapter~3]{MR1835418}. It is well-known that the generalized Gaussian curvature on an Euclidean polyhedral surface, as a measure, concentrates on the vertices. One can show that the combinatorial curvature at a vertex is in fact the mass of the generalized Gaussian curvature at that vertex up to the normalization $2\pi,$ see e.g. \cite{MR2127379,MR3318509}.


We denote by $\NNG$ the class of planar graphs with non-negative combinatorial curvature. This class of planar graphs has been studied by \cite{MR1864922, MR2299456, MR2410938, MR3318509, HJ15}.
As is well-known, $G\in\NNG$ if and only if $S(G)$ has non-negative curvature in the sense of Alexandrov, i.e. it is a (possibly non-smooth) convex surface, see \cite{MR3318509, MR1835418}. For a planar graph $G,$ we denote by $$\Phi(G):=\sum_{x\in V}\Phi(x)$$ the total curvature of $G.$ For a finite planar graph $G,$ i.e. embedded into $\mathbb{S}^2,$ the (discrete) Gauss-Bonnet theorem states that $\Phi(G)=2.$ For an infinite planar graph $G\in \NNG,$ DeVos and Mohar \cite{MR2299456} proved a discrete analog of Cohn-Vossen theorem in the Riemannian geometry.
\begin{theorem}[\cite{MR2299456}]\label{thm:DeVosMohar} For any infinite $G\in \NNG,$
\begin{equation}\label{CVthm}\Phi(G)\leq 1.\end{equation}
\end{theorem}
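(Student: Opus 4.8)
The plan is to reduce the inequality to a combinatorial Gauss--Bonnet identity on finite disks and then run a monotonicity argument in the spirit of the classical proof of the Cohn--Vossen inequality. Fix a vertex $o$ and exhaust $G$ by finite subcomplexes $D_1\subseteq D_2\subseteq\cdots$ with $\bigcup_n D_n=G$, each $D_n$ a topological closed disk containing the combinatorial ball of radius $n$ around $o$; that such disk subcomplexes exist is routine for tessellations of the plane (fill in bounded complementary regions and, if necessary, thicken to remove pinch points). For a finite disk subcomplex $D$ let $|x|_D$ be the number of edges of $D$ at $x$ and
\[
\Phi^D(x)\;=\;1-\frac{|x|_D}{2}+\sum_{\sigma\in F(D):\,x\in\bar\sigma}\frac{1}{|\sigma|}
\]
the relative curvature, which equals $\Phi(x)$ for interior vertices of $D$. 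Reorganizing Euler's formula $|V(D)|-|E(D)|+|F(D)|=1$ in the manner of \eqref{def:comb} (via $\sum_{x}|x|_D=2|E(D)|$ and $\sum_{x}\sum_{\sigma\in F(D):\,x\in\bar\sigma}|\sigma|^{-1}=|F(D)|$) yields the disk Gauss--Bonnet identity $\sum_{x\in V(D)}\Phi^D(x)=1$.

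The algebraic heart of the argument is a splitting of curvature along the boundary cycle. Put $G':=G\setminus\mathrm{int}(D)$; since $G$ is infinite and $D$ finite, $G'$ is an infinite subcomplex with $\partial G'=\partial D$, and the degree relation $|x|_{G'}=|x|-|x|_D+2$ together with $F(G')=F(G)\setminus F(D)$ gives, after a one-line computation,
\[
\Phi(x)\;=\;\Phi^D(x)+\Phi^{G'}(x)\qquad\text{for every }x\in\partial D .
\]
Summing over $V(D)$ and using the disk identity, $\sum_{x\in V(D)}\Phi(x)=1+\sum_{x\in\partial D}\Phi^{G'}(x)$. Since the curvatures are non-negative and every vertex lies in some $D_n$, one has $\Phi(G)=\lim_n\sum_{x\in V(D_n)}\Phi(x)$, so the theorem is equivalent to the assertion that the boundary correction $\beta(D):=\sum_{x\in\partial D}\Phi^{G\setminus D}(x)$ satisfies $\limsup_n\beta(D_n)\le 0$. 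Here $\beta$ is monotone: applying the above to $D_m\supseteq D_n$ gives $\beta(D_m)-\beta(D_n)=\sum_{x\in V(D_m)\setminus V(D_n)}\Phi(x)\ge 0$, so it in fact suffices to produce a single large disk $D$ with $\beta(D)\le 0$, i.e. with $\sum_{x\in V(D)}\Phi(x)\le 1$.

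To locate such a disk I would pass to the Euclidean polyhedral surface $S(G)$, which by hypothesis is a complete, non-compact, non-negatively curved Alexandrov surface, and take $D$ to be (a disk subcomplex engulfing) a metric ball $B_r\subseteq S(G)$. The polyhedral Gauss--Bonnet formula reads $\mathrm{def}(B_r)+\kappa_g(\partial B_r)=2\pi$, where $\mathrm{def}(B_r)=2\pi\sum_{x\in V\cap B_r}\Phi(x)$ is the total angle defect and $\kappa_g$ is the signed geodesic-curvature measure of the boundary curve; applying Gauss--Bonnet on the annuli $B_R\setminus B_r$ and using non-negativity of the defect shows that $r\mapsto\kappa_g(\partial B_r)$ is non-increasing, while the first-variation-of-arclength formula identifies $\kappa_g(\partial B_r)$ with the derivative of $L(r):=\mathrm{length}(\partial B_r)\ge 0$. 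A non-negative function whose non-increasing derivative tends to a negative limit would become negative, so $\lim_{r}\kappa_g(\partial B_r)\ge 0$; hence $\mathrm{def}(S(G))=2\pi-\lim_r\kappa_g(\partial B_r)\le 2\pi$, which is exactly $\Phi(G)\le 1$.

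The main obstacle is this last passage. Making it rigorous requires handling the non-smooth metric on $S(G)$ (boundary circles of metric balls are only rectifiable and may run through vertices, where $\kappa_g$ acquires atoms) and justifying the Gauss--Bonnet and first-variation formulae in that setting; all of this is standard in the intrinsic geometry of surfaces, and the cleanest route is simply to invoke the Cohn--Vossen inequality for complete open Alexandrov surfaces of curvature $\ge 0$. A reader insisting on a self-contained combinatorial proof must instead supply a purely combinatorial substitute for the monotonicity ``$\kappa_g(\partial B_r)$ is non-increasing'' --- equivalently, exhibit an exhaustion by disk subcomplexes with $\beta(D_n)\le 0$ for large $n$, which amounts to controlling how much the boundary cycles of the $D_n$ can ``turn inward'' --- and this is the genuinely nontrivial point.
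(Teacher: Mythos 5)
The paper does not prove Theorem~\ref{thm:DeVosMohar}: it is quoted from DeVos--Mohar \cite{MR2299456}, whose argument is purely combinatorial. Measured against that, your proposal is an outline that ultimately reduces the statement to a quoted theorem of comparable depth, and it also contains a logical inversion in the combinatorial part. Your setup is fine: the disk Gauss--Bonnet identity $\sum_{x\in V(D)}\Phi^D(x)=1$, the splitting $\Phi=\Phi^D+\Phi^{G'}$ along $\partial D$, and hence $\sum_{x\in V(D)}\Phi(x)=1+\beta(D)$ with $\beta(D_n)\le\beta(D_m)$ for $D_n\subseteq D_m$ are all correct. But from these facts $\Phi(G)=1+\lim_n\beta(D_n)=1+\sup_n\beta(D_n)$, so what must be shown is $\beta(D_n)\le 0$ for \emph{every} disk of an exhaustion (equivalently, a cofinal family of disks). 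Your sentence that ``it suffices to produce a single large disk $D$ with $\beta(D)\le 0$'' uses your own monotonicity in the wrong direction: since $\beta$ is non-decreasing, one good disk says nothing about the larger ones (a single disk with $\beta>0$ would refute the theorem; a single disk with $\beta\le 0$ yields nothing).

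The decisive content --- that the boundary term can never become positive, i.e.\ the discrete analogue of ``the geodesic curvature of $\partial B_r$ is monotone and stays non-negative'' --- is exactly what you do not prove; you acknowledge deferring it to the Cohn--Vossen inequality for complete open Alexandrov surfaces of non-negative curvature. If that citation is allowed, then together with facts already recorded in the introduction of this paper (for $G\in\NNG$ the polyhedral surface $S(G)$ is a complete, non-compact, non-negatively curved Alexandrov surface homeomorphic to $\R^2$, and $2\pi\Phi(x)$ is the angle defect at $x$), the theorem follows in one line, $\Phi(G)=\omega(S(G))/2\pi\le\chi(\R^2)=1$, and your entire disk/monotonicity apparatus is superfluous. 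That is a legitimate alternative route, but it is a reduction to a black-box theorem rather than an independent proof, and it differs in character from DeVos--Mohar, who establish the needed boundary estimate directly and combinatorially (and in greater generality). As a self-contained argument your proposal therefore has a genuine gap precisely at the decisive point (the boundary-turning estimate, or rigorous Gauss--Bonnet, first variation and ball-structure facts on the non-smooth surface), and the intermediate reduction should either be corrected to quantify over all disks of an exhaustion or be removed in favour of the direct appeal to the Alexandrov Cohn--Vossen theorem.
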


For a planar graph $G$ with an embedding, one can define the dual graph $G^*$: the vertices of $G^*$ are corresponding to faces of $G,$ the faces of $G^*$ are corresponding to vertices of $G,$ and two vertices in $G^*$ are adjacent if and only if the corresponding faces in $G$ share a common edge. One can show that $G$ is a tessellation if and only if so is $G^*,$ see e.g. \cite{MR2243299}. For $G\in \NNG,$ the dual graph $G^*$ may not have non-negative combinatorial curvature, e.g. the boundary of the truncated cube, one of the Archimedean solids. This means that the dual operation doesn't preserve the class $\NNG.$

Baues and Peyerimhoff \cite{MR2243299} introduced the so-called corner curvature on planar graphs. A corner of $G=(V,E,F)$ is a pair $(x,\sigma)$ such that $x\in V,$ $\sigma\in F$ and $x$ is incident to $\sigma.$ The corner curvature of a corner $(x,\sigma)$ is defined as
$$C(x,\sigma):=\frac{1}{|x|}+\frac{1}{|\sigma|}-\frac12.$$ We define the class of planar graphs with non-negative corner curvature as $$\CC:=\{G: C(x,\sigma)\geq 0, \forall\ \mathrm{corner}\ (x,\sigma)\}.$$ One can show that $\CC\subset \NNG,$ and $G\in \CC$ if and only if $G^*\in \CC,$ see e.g. \cite{MR2243299}. Although the class $\CC$ has nice properties, it is somehow restrictive.


Woess \cite[pp. 387]{Woess98} introduced an interesting curvature notion on edges. 
\begin{definition}\label{Mcurvature} For any edge $e,$ let $x_1,x_2$ be end-vertices of $e$ and $\sigma_1,\sigma_2$ be two faces whose closures contain $e,$ see Figure~\ref{Fig:1}.
We define the $\Psi$-curvature of the edge $e$ as
$$\Psi(e)=\frac{1}{|x_1|}+\frac{1}{|x_2|}+\frac{1}{|\sigma_1|}+\frac{1}{|\sigma_2|}-1.$$
\end{definition}
\begin{remark}Woess called it $\phi$ curvature, which is same as the above notion up to a sign.
\end{remark}
We denote by $$\MM:=\{G: \Psi(e)\geq 0, \forall\ e\in E\}$$ the class of planar graphs with non-negative $\Psi$-curvature. By the definition, one easily verifies that $G\in \MM$ if and only if $G^*\in\MM.$ In fact, for any $e\in E$, there is a unique edge $e^*$ in $G^*$ such that $\Psi(e)=\Psi(e^*)$. Moreover, we observe that the $\Psi$-curvature is related to the corner curvature as follows. 
\begin{prop}\label{prop:relationCM} For any edge $e,$
$$\Psi(e)=\frac{1}{2}\left[C(x_1,\sigma_1)+C(x_1,\sigma_2)+C(x_2,\sigma_1)+C(x_2,\sigma_2)\right],$$ where $x_1,x_2$ are end-vertices of $e$ and $\sigma_1,\sigma_2$ are two faces whose closures contain $e.$
\end{prop}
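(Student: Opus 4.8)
The plan is to prove this by directly expanding the definition of the corner curvature and collecting terms. Recall that for a corner $(x,\sigma)$ one has $C(x,\sigma)=\frac1{|x|}+\frac1{|\sigma|}-\frac12$. Given an edge $e$ with end-vertices $x_1,x_2$ and incident faces $\sigma_1,\sigma_2$ (well-defined by the tessellation axioms (i)--(iii), and pairwise distinct as $x_1\neq x_2$, $\sigma_1\neq\sigma_2$), each of the four pairs $(x_i,\sigma_j)$ with $i,j\in\{1,2\}$ is a genuine corner of $G$, since the vertex $x_i$ is incident to the face $\sigma_j$: indeed $x_i$ lies on $e$ and $e$ lies on the boundary of $\sigma_j$.

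Next I would simply add the four terms:
\[
\sum_{i,j\in\{1,2\}}C(x_i,\sigma_j)=\sum_{i,j\in\{1,2\}}\left(\frac1{|x_i|}+\frac1{|\sigma_j|}-\frac12\right).
\]
In this sum each reciprocal vertex degree $\frac1{|x_i|}$ occurs exactly twice (once for $\sigma_1$ and once for $\sigma_2$), each reciprocal face degree $\frac1{|\sigma_j|}$ occurs exactly twice (once for $x_1$ and once for $x_2$), and the constant $-\frac12$ occurs four times. Hence the right-hand side equals
\[
2\left(\frac1{|x_1|}+\frac1{|x_2|}+\frac1{|\sigma_1|}+\frac1{|\sigma_2|}\right)-2
=2\left(\frac1{|x_1|}+\frac1{|x_2|}+\frac1{|\sigma_1|}+\frac1{|\sigma_2|}-1\right)=2\,\Psi(e),
\]
using Definition~\ref{Mcurvature}. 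Dividing by $2$ gives the claimed identity.

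There is really no substantive obstacle here: the statement is a one-line algebraic identity once the definitions are unwound. The only point that deserves a sentence of care is the combinatorial bookkeeping, namely confirming that the four corners $(x_i,\sigma_j)$ are exactly the corners ``at'' the edge $e$ and that the multiplicities in the double sum are as claimed; this is immediate from the tessellation axioms guaranteeing that $e$ has precisely two end-vertices and lies on precisely two closed faces.
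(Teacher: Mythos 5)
Your computation is correct and is exactly the direct expansion the paper has in mind (the paper omits the proof of Proposition~\ref{prop:relationCM} as evident); summing the four corner curvatures so that each $\frac{1}{|x_i|}$ and each $\frac{1}{|\sigma_j|}$ appears twice and the constant $-\frac12$ four times immediately gives $2\Psi(e)$.
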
 This yields that $\CC\subset \MM.$

\begin{figure}[htbp]
 \begin{center}
   \begin{tikzpicture}
    \node at (0,0){\includegraphics[width=0.6\linewidth]{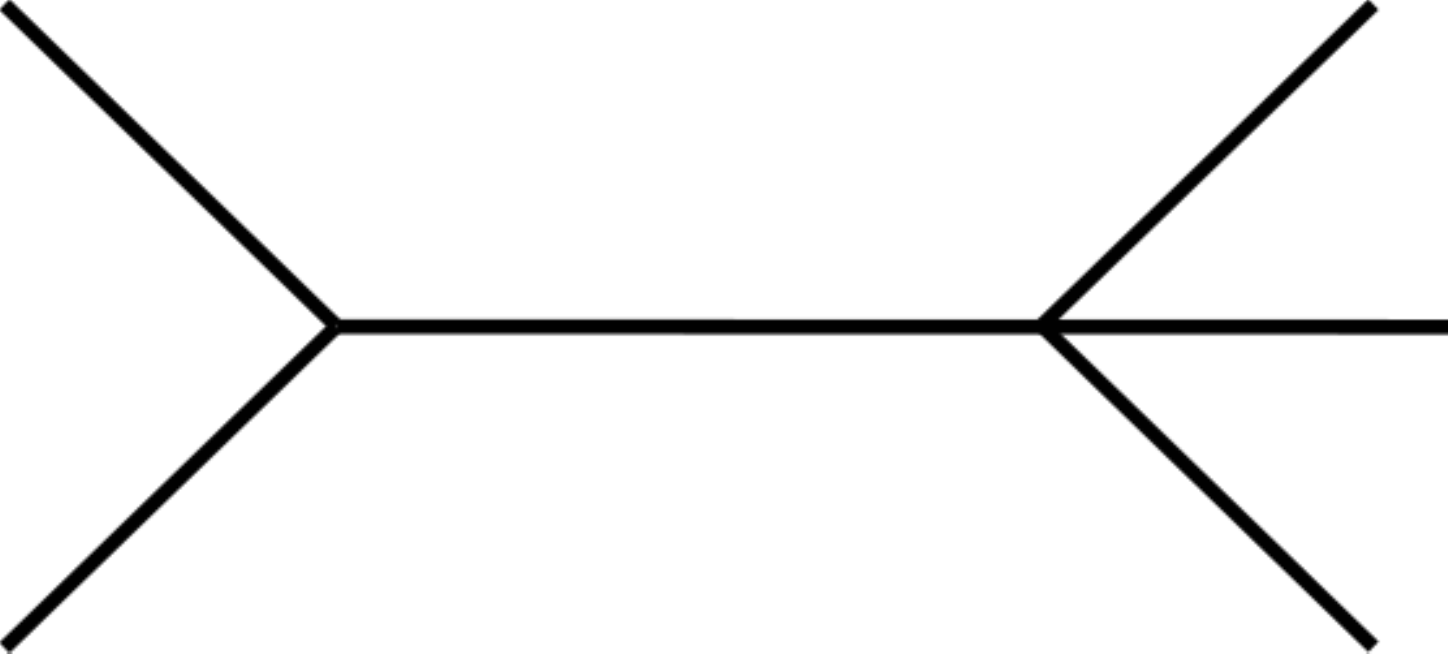}};
    \node at (0, -.3){\Large $e$};
    \node at (-2.6,   0){\Large $x_1$};
    \node at (1.5,   -.3){\Large $x_2$};
    \node at (0,  1.1){\Large $\sigma_1$};
    \node at (0,  -1.5){\Large $\sigma_2$};
   \end{tikzpicture}
  \caption{}\label{Fig:1}
 \end{center}
\end{figure}

To digest the definition of $\Psi$-curvature, we introduce the definition of medial graphs, introduced by Steiniz \cite{Steinitz22}. Let $G=(V,E,F)$ be a planar graph. The medial graph $\hat G=(\hat V, \hat E, \hat F)$ associated to $G$ is defined as follows. For each edge $e\in E$, we denote by $m(e)$ the midpoint of $e$ in $S(G)$. The set of vertices of $\hat G$ is given by $\hat V=\{m(e): e\in E\}$. Any two vertices $m(e_1)$ and $m(e_2)$ are adjacent, i.e. $\{m(e_1), m(e_2)\}\in \hat E$, if and only if $e_1$ and $e_2$ share a common vertex, and are incident to a common face. In this case, we draw a segment from $m(e_1)$ to $m(e_2)$ in the Euclidean polyhedral surface $S(G)$ representing the edge. The faces of $\hat G$ are induced by the embedding of the vertices and edges of $\hat G$ in $S(G)$, homeomorphic to $\mathbb S^2$ or $\mathbb R^2$. So that we have the following bijections
\[\hat V\approx E, \  \hat F\approx V\cup F.\]
Note that the medial graph $\hat{G}$ is always $4$-regular, i.e. $|m(e)|=4,$ for any $m(e)\in \hat{V}$. We will show that
if $G$ is a tessellation, then so is $\hat{G},$ see Proposition~\ref{medial-graph-tes}.

By the definitions, we observe that the $\Psi$-curvature of $G$ is given by the combinatorial curvature of the medial graph $\hat G$. So that many problems on the class $\MM$ can be reformulated to those on $4$-regular planar graphs in $\NNG$. The proof is evident, so that we omit it here.
\begin{prop} For any edge $e,$ $$\Psi(e)=\Phi(m(e)),$$ where $\Phi(m(e))$ is the combinatorial curvature at the vertex $m(e)$ in $\hat{G}.$
\end{prop}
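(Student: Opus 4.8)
The plan is to simply unwind the definition \eqref{def:comb} of the combinatorial curvature $\Phi$ at the vertex $m(e)$ of the medial graph $\hat G=(\hat V,\hat E,\hat F)$, using the two structural facts recorded above: that $\hat G$ is $4$-regular, so $|m(e)|=4$, and that there is a bijection $\hat F\approx V\cup F$ between the faces of $\hat G$ and the union of the vertices and faces of $G$.

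First I would identify the four faces of $\hat G$ incident to $m(e)$. By the construction of $\hat G$, the four edges of $\hat G$ leaving $m(e)$ run to the midpoints $m(e')$ of the edges $e'$ sharing both an endpoint and an incident face with $e$; reading these off in cyclic order around $m(e)$ one sees that the four corners at $m(e)$ point alternately at $x_1$, $\sigma_1$, $x_2$, $\sigma_2$ (a vertex of $G$, then a face of $G$, then the other vertex, then the other face). Since $\hat G$ is a tessellation (Proposition~\ref{medial-graph-tes}), these four faces are pairwise distinct, and under the bijection $\hat F\approx V\cup F$ they are exactly the faces of $\hat G$ associated with $x_1,x_2\in V$ and with $\sigma_1,\sigma_2\in F$. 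It then remains to compute their degrees: the face of $\hat G$ associated with a vertex $x\in V$ has boundary the closed polygonal path through $m(e_1),\dots,m(e_{|x|})$, where $e_1,\dots,e_{|x|}$ are the edges incident to $x$ in cyclic order, hence it is a $|x|$-gon; dually, the face of $\hat G$ associated with $\sigma\in F$ has boundary running through the midpoints of the $|\sigma|$ edges incident to $\sigma$, hence it has degree $|\sigma|$.

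Plugging $|m(e)|=4$ and these four face degrees into \eqref{def:comb} yields
$$\Phi(m(e))=1-\frac{4}{2}+\frac{1}{|x_1|}+\frac{1}{|x_2|}+\frac{1}{|\sigma_1|}+\frac{1}{|\sigma_2|}=\frac{1}{|x_1|}+\frac{1}{|x_2|}+\frac{1}{|\sigma_1|}+\frac{1}{|\sigma_2|}-1=\Psi(e),$$
which is the assertion. As the authors note, there is essentially no obstacle: the only point needing a little care is the bookkeeping in the first step, namely checking that the cyclic arrangement of faces of $\hat G$ around $m(e)$ really is ``$x_1,\sigma_1,x_2,\sigma_2$'' and that these are four \emph{distinct} faces — which is exactly where one uses that $G$, and hence $\hat G$, is a tessellation rather than an arbitrary planar cell complex.
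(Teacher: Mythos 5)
Your proof is correct and is exactly the definitional argument the paper has in mind (the paper omits it as evident): you use the facts already recorded in the preliminaries that $\hat G$ is $4$-regular and that the faces of $\hat G$ incident to $m(e)$ are $\Sigma_{x_1},\Sigma_{x_2},\Sigma_{\sigma_1},\Sigma_{\sigma_2}$ with $|\Sigma_x|=|x|$, $|\Sigma_\sigma|=|\sigma|$, and then substitute into \eqref{def:comb}. Nothing further is needed.
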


Let $G$ be a planar graph. For $k\geq 3$, we denote by $V_k(G)$ the number of vertices of degree $k$ in $G$, and by $F_k(G)$ the number of faces of degree $k$ in $G.$ 
In this paper, we study the following problem.

\begin{problem}\label{num-face-vert-nonneg}
For any infinite $G\in \MM$, what are the upper bounds for $F_k(G)$ and $V_k(G)?$
\end{problem}
By the examples of regular planar tilings, it is possible that $V_k(G)=\infty$ or $F_k(G)=\infty$, for $k=3,4,6$. For a planar graph $G$, any face of the medial graph $\hat G$ corresponds to a vertex or a face in $G$ same degree. Hence we have
\begin{equation}\label{eq:key1}
V_k(G)+F_k(G)=F_k(\hat G)
\end{equation}
for any $k\geq 3$. By setting $\Gamma=\hat{G}$, Problem \ref{num-face-vert-nonneg} is reformulated to the estimate of $F_k(\Gamma)$ for a $4$-regular planar graph $\Gamma$ in $\NNG$. We first prove the following result, see Section~\ref{sec:3}.
\begin{prop}\label{max-face-degree} For a $4$-regular infinite planar graph $\Gamma\in \NNG,$ the degree of any face is at most $12.$
\end{prop}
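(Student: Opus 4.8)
The plan is to argue by contradiction. Suppose some face $\sigma_0$ has degree $d:=|\sigma_0|\ge 13$. The first, purely local, observation is that every face incident to a boundary vertex of $\sigma_0$ other than $\sigma_0$ itself must be a triangle. Indeed, since $\Gamma$ is $4$-regular, every vertex $x$ is incident to exactly four (distinct) faces, and $0\le\Phi(x)=-1+\sum_{x\in\overline\sigma}\frac1{|\sigma|}$ forces $\sum_{x\in\overline\sigma}\frac1{|\sigma|}\ge 1$. If $x$ lies on the boundary of $\sigma_0$, one of those four faces is $\sigma_0$, contributing at most $\tfrac1d\le\tfrac1{13}$, so the other three faces $\sigma_1,\sigma_2,\sigma_3$ satisfy $\sum_j\frac1{|\sigma_j|}\ge 1-\frac1d$; as each term is $\le\frac13$, this gives $\frac1{|\sigma_j|}\ge\frac13-\frac1d>\frac14$, hence $|\sigma_j|=3$ for each $j$. (This is exactly where the constant $12$ enters: for $d=12$ the same argument only yields $|\sigma_j|\le 4$, and a $12$-gon surrounded alternately by triangles and squares is locally consistent, so one expects the bound to be sharp.)

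Next I would feed this into Gauss--Bonnet. The boundary of $\sigma_0$ is a simple cycle $x_1,\dots,x_d$ of distinct vertices, and by the previous step each $x_i$ is incident to $\sigma_0$ and to three triangles, so $\Phi(x_i)=-1+\frac1d+3\cdot\frac13=\frac1d$ and $\sum_{i=1}^d\Phi(x_i)=1$. Since $\Gamma$ is infinite and $\Gamma\in\NNG$, Theorem~\ref{thm:DeVosMohar} gives $\sum_{v\in V}\Phi(v)\le 1$, and as $\Phi\ge 0$ everywhere it follows that $\Phi(v)=0$ for every vertex $v$ not lying on the boundary cycle of $\sigma_0$.

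Finally I would exhibit a forbidden vertex. Fix an edge $f=x_1x_2$ of $\sigma_0$ and let $\rho$ be the other face containing $f$; by the first step $\rho$ is a triangle, with third vertex $y$. By tessellation axiom (iii), $\overline\rho\cap\overline{\sigma_0}=\overline f$, so the only vertices common to $\rho$ and $\sigma_0$ are $x_1,x_2$; in particular $y$ is \emph{not} on the boundary of $\sigma_0$, whence $\Phi(y)=0$. On the other hand, reading off the cyclic arrangement of the four edges and four faces around $x_1$ (resp.\ $x_2$), the face $\tau_1$ (resp.\ $\tau_2$) occupying the corner opposite $\sigma_0$ is incident to the edge $x_1y$ (resp.\ $x_2y$), hence contains $y$, and is a triangle by the first step. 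The faces $\rho,\tau_1,\tau_2$ are pairwise distinct: $\rho\ne\tau_1$ and $\rho\ne\tau_2$ because distinct corners of a vertex carry distinct faces (the boundary cycle of a face being simple), and if $\tau_1=\tau_2$ then this common triangle would contain $x_1,x_2,y$ and hence equal $\rho$ — a contradiction. Since $y$ has degree $4$, it is incident to exactly one further face $\omega$, so $\Phi(y)=-1+\frac1{|\rho|}+\frac1{|\tau_1|}+\frac1{|\tau_2|}+\frac1{|\omega|}=\frac1{|\omega|}>0$, contradicting $\Phi(y)=0$. Hence $d\le 12$.

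I expect the last step to be the delicate one: the tessellation axioms have to be applied with some care to guarantee that $y$ is a genuinely new vertex (off the boundary of $\sigma_0$) and that $\rho,\tau_1,\tau_2$ are three distinct faces, so that the curvature identity at $y$ takes the asserted form. The first two steps are routine manipulations of the curvature definition, $4$-regularity, and the cited Cohn--Vossen bound.
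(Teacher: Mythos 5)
Your proposal is correct and follows essentially the same route as the paper: the paper proves Proposition~\ref{max-face-degree} via Theorem~\ref{thm:13}, whose infinite case argues exactly as you do — boundary vertices of a face of degree $\geq 13$ must have pattern $(3,3,3,|\sigma|)$, their curvatures already sum to $1$, and the apex of a triangle adjacent to $\sigma$ is a vertex off $\partial\sigma$ incident to three triangles, hence of positive curvature, contradicting Theorem~\ref{thm:DeVosMohar}. Your localized verification at a single edge (distinctness of $\rho,\tau_1,\tau_2$ via the tessellation axioms) is just a pared-down version of the paper's claim about the apexes $x_i$, so there is no substantive difference.
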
 This yields that for any $G\in \MM,$ $F_k(G)=V_k(G)=0$ for any $k\geq 13.$

For any vertex $x$ with $|x|=N$ in a planar graph, the vertex pattern of $x$ is defined as  $$\pt(x):=(|\sigma_1|,|\sigma_2|,\cdots,|\sigma_{N}|),$$ where $\{\sigma_i\}_{i=1}^N$ are the faces incident to $x,$ ordered by $|\sigma_1|\leq|\sigma_2|\leq\cdots\leq|\sigma_{N}|.$
By \cite{MR2410938}, for any $\Gamma\in \NNG$ there are only finitely many vertices of positive curvature. So that except finite vertices, the combinatorial curvature is vanishing. If $\Gamma$ is $4$-regular, their vertex patterns with vanishing curvature are given by $(3,3,6,6)$, $(4,4,4,4)$, $(3,4,4,6)$ or $(3,3,4,12).$
This yields that for any $k\neq 3,4,6,12$, $F_k(\Gamma)$ is finite, which implies that $V_k(G)+F_k(G)$ is finite for any $G\in \MM.$ In fact, by some elementary arguments, we get the following estimates.
\begin{prop}\label{num-est-5-11}
 For a $4$-regular infinite planar graph $\Gamma\in \NNG,$
$$
F_5(\Gamma)\leq 21,\ F_7(\Gamma)\leq 15.$$
\end{prop}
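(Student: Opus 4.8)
The plan is to reduce both bounds to a counting argument anchored by the Cohn--Vossen inequality $\Phi(\Gamma)\le 1$ (Theorem~\ref{thm:DeVosMohar}). Since $\Gamma$ is $4$-regular, the combinatorial curvature at a vertex $x$ with incident faces $\sigma_1,\dots,\sigma_4$ is $\Phi(x)=-1+\sum_{i=1}^4 1/|\sigma_i|$; hence $\Phi(x)\ge 0$ everywhere and $\sum_{x\in V}\Phi(x)=\Phi(\Gamma)\le 1$. The only $4$-regular vertex patterns with $\Phi(x)=0$ are $(3,3,6,6)$, $(4,4,4,4)$, $(3,4,4,6)$, $(3,3,4,12)$, as recalled above, and none of these contains a $5$ or a $7$. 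Thus every vertex lying on a face of degree $5$ (resp.\ $7$) has strictly positive curvature, and the whole task is to lower bound that curvature.

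The combinatorial heart is a finite enumeration. From $|x|=4$ and $\Phi(x)\ge 0$ the smallest incident face of such an $x$ must be a triangle, and a short case analysis shows that the only vertex patterns of a $4$-regular graph in $\NNG$ containing a $5$ are
\[(3,3,3,5),\ (3,3,4,5),\ (3,4,4,5),\ (3,3,5,5),\ (3,3,5,6),\ (3,3,5,7),\]
with curvatures $\tfrac15,\ \tfrac7{60},\ \tfrac1{30},\ \tfrac1{15},\ \tfrac1{30},\ \tfrac1{105}$ respectively; in particular the least positive value is $\tfrac1{105}$, attained only at $(3,3,5,7)$, and such a vertex meets at most two pentagons, meeting exactly two only in the pattern $(3,3,5,5)$, where its curvature is $\tfrac1{15}$. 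The same analysis gives that the only such patterns containing a $7$ are $(3,3,3,7)$, $(3,3,4,7)$, $(3,3,5,7)$, with curvatures $\tfrac17,\ \tfrac5{84},\ \tfrac1{105}$, and that such a vertex meets exactly one heptagon.

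To conclude I would count incidences and feed them into $\Phi(\Gamma)\le 1$. The number of (vertex, degree-$5$ face) incidences equals $5F_5(\Gamma)$; writing $a$ for the number of vertices meeting exactly one pentagon and $b$ for those meeting exactly two, the enumeration gives $a+2b=5F_5(\Gamma)$ and $\sum_{x}\Phi(x)\ge \tfrac{a}{105}+\tfrac{b}{15}$ over these vertices, so $a+7b\le 105$ by Theorem~\ref{thm:DeVosMohar}; since $b\ge 0$ this yields $5F_5(\Gamma)=a+2b\le a+7b\le 105$, i.e.\ $F_5(\Gamma)\le 21$. For heptagons it is even simpler: there are $7F_7(\Gamma)$ vertex--heptagon incidences, each incident vertex accounts for exactly one of them and contributes $\Phi(x)\ge \tfrac1{105}$, whence $\tfrac{7F_7(\Gamma)}{105}\le \Phi(\Gamma)\le 1$ and $F_7(\Gamma)\le 15$.

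There is no deep obstacle here; the content is the finite pattern enumeration together with the observation that one must weight vertices by their number of incidences with pentagons rather than simply count distinct such vertices — a naïve distinct-vertex bound would only give $F_5(\Gamma)\le 42$. The one step demanding a little care is confirming that a vertex on a pentagon lies on at most two pentagons and that the unique two-pentagon pattern $(3,3,5,5)$ has curvature $\tfrac1{15}\ge \tfrac2{105}$, which is precisely what makes the variable $b$ irrelevant to the extremum.
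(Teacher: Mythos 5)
Your proposal is correct and follows essentially the same route as the paper: enumerate the vertex patterns containing a $5$ (resp.\ a $7$), use the minimal curvature $\tfrac{1}{105}$ together with the facts that a vertex lies on at most two pentagons (only in the pattern $(3,3,5,5)$, of curvature $\tfrac1{15}$) and on exactly one heptagon, and feed this into the Cohn--Vossen bound $\Phi(\Gamma)\le 1$. Your incidence count with the variables $a,b$ is just a repackaging of the paper's rule that a vertex shared by two pentagons contributes $\tfrac{1}{30}$ to each, so both arguments yield $F_5(\Gamma)\le 21$ and $F_7(\Gamma)\le 15$ in the same way.
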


For the critical case $k=12,$ it is very possible that there are infinitely many $12$-gons in a $4$-regular $\Gamma\in \NNG$. As a main result of the paper, we prove that the number of $12$-gons is finite, and in fact we give the tight upper bound estimate for the number of faces of degree at least $8$ as follows.
\begin{theorem}\label{thm:main2} For a $4$-regular infinite planar graph $\Gamma\in \NNG,$
$$\sum_{k=8}^{12}F_{k}(\Gamma)\leq 1.$$
\end{theorem}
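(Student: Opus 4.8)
The plan is to argue by contradiction: suppose $\Gamma$ has two faces $\sigma_1,\sigma_2$ of degree at least $8$, which by Proposition~\ref{max-face-degree} means their degrees lie in $\{8,9,10,11,12\}$, and produce more total curvature than Theorem~\ref{thm:DeVosMohar} permits. Since $\Gamma$ is $4$-regular, $\Phi(x)=-1+\sum_{\sigma\ni x}|\sigma|^{-1}$ at every vertex $x$; if some $x$ were incident to two faces of degree $\ge 8$, the two remaining faces at $x$ would satisfy $|\tau|^{-1}+|\tau'|^{-1}\ge 1-\tfrac14=\tfrac34>\tfrac13+\tfrac13$, which is impossible. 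Hence no vertex is incident to two faces of degree $\ge 8$; in particular the boundary cycles of $\sigma_1$ and $\sigma_2$ are disjoint, every face incident to a boundary vertex of a degree-$\ge 8$ face has degree $\le 7$, and the local neighbourhoods of $\sigma_1$ and $\sigma_2$ that we build below are disjoint.

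Next I would analyse the collar of a single face $\sigma$ of degree $d\in\{8,\dots,12\}$. Let $x_1,\dots,x_d$ be its boundary vertices in cyclic order, let $p_i$ be the face sharing the edge $x_ix_{i+1}$ with $\sigma$, and let $g_i$ be the fourth face at $x_i$, so that $\sigma,p_{i-1},g_i,p_i$ are the faces at $x_i$ in cyclic order. From $\Phi(x_i)\ge 0$ we get $|p_{i-1}|^{-1}+|g_i|^{-1}+|p_i|^{-1}\ge 1-\tfrac1d\ge\tfrac78$; since a triangle together with two faces of degree $\ge 4$ contributes at most $\tfrac13+\tfrac12=\tfrac56<\tfrac78$, at least two of $p_{i-1},g_i,p_i$ are triangles, and the third then has degree at most $\tfrac{3d}{d-3}\le\tfrac{24}{5}$, hence is a triangle or a square. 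Thus every boundary vertex of $\sigma$ has pattern $(3,3,3,d)$ (curvature $\tfrac1d$) or $(3,3,4,d)$ (curvature $\tfrac1d-\tfrac1{12}$), the collar faces $p_i,g_i$ are pairwise distinct, and $\sum_{i=1}^d\Phi(x_i)\ge 1-\tfrac d{12}$, which is positive for $d\le 11$. Moreover a $12$-gon cannot have all of its boundary vertices of type $(3,3,3,12)$: then all $p_i,g_i$ are triangles, their outer vertices form a $12$-cycle bounding a single face, and that face is again a $12$-gon with all boundary vertices of type $(3,3,3,12)$, so the configuration propagates concentrically forever and forces $\Phi(\Gamma)=\infty$.

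The heart of the matter is then a $12$-gon $\sigma$ with a flat collar, every boundary vertex of type $(3,3,4,12)$, for which $\sum_i\Phi(x_i)=0$ and one must go one layer further out. The pattern $(3,3,4,12)$ around $\partial\sigma$ fixes a rigid brickwork of triangles and squares in the collar; for the faces of the second layer (those incident to a collar face but not to $\partial\sigma$) the curvature condition at the collar vertices forces each such face to have degree at most $6$, together with the crucial squeeze that at a vertex where two second-layer faces meet across a collar square their degrees $a,b$ and the degree $c$ of the fourth face there satisfy $\tfrac14+a^{-1}+b^{-1}+c^{-1}\ge 1$, whence $a,b$ cannot both be $\ge 5$ because $\tfrac14+\tfrac15+\tfrac15+\tfrac13<1$. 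This forbids two consecutive large second-layer faces, forcing at least half of them to be triangles or squares, and each such face of degree $\le 4$ forces positive curvature (in fact $\ge\tfrac1{12}$) at an incident second-layer vertex; counting these over all the possible collar shapes shows that $\sigma$ together with its collar and second layer carries total curvature at least $\tfrac12$. With the disjointness from the first step, two degree-$\ge 8$ faces would then give $\Phi(\Gamma)\ge 1$, and to upgrade this to a contradiction with Theorem~\ref{thm:DeVosMohar} one examines the borderline configuration --- the forced periodic ``$6,4,6,4,\dots$'' second-layer degree sequence --- and shows that continuing one more layer it can no longer be completed without extra positive curvature or a face of degree $>12$. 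The same second-layer bookkeeping, now with the larger boundary term $1-\tfrac d{12}$ already in hand, disposes of the cases $\min(|\sigma_1|,|\sigma_2|)\le 11$.

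I expect essentially all the work, and all the difficulty, to lie in this last step: the flat collar of a $12$-gon occurs in a combinatorial family of shapes according to where its squares sit, and each must be shown to impose the squeeze and hence to trap strictly more than $\tfrac12$ of curvature in a region separated from that of any other degree-$\ge 8$ face --- equivalently, one wants a discharging rule pushing curvature inward so that every face of degree $\ge 8$ finishes with charge exceeding $\tfrac12$ while all charges remain non-negative, forcing at most one such face. Getting the strict inequality (ruling out the perfectly balanced borderline cases) is the subtle point and is where I would expect to have to work hardest.
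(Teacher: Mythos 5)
Your outline follows the same general strategy as the paper (collar analysis of a large face, trapping a fixed amount of curvature near it, disjointness of the trapped regions, then DeVos--Mohar), but as written it has two genuine gaps, and they are exactly where the paper's work lies. First, the disjointness. From the observation that no vertex is incident to two faces of degree $\geq 8$ you conclude ``in particular \dots\ the local neighbourhoods of $\sigma_1$ and $\sigma_2$ that we build below are disjoint.'' That does not follow: it only gives $\partial\sigma_1\cap\partial\sigma_2=\varnothing$. A vertex of the collar or second layer of $\sigma_1$ could coincide with, or be adjacent to, a boundary or collar vertex of $\sigma_2$, and then the curvature you count for the two faces is double-counted and the final ``$>\tfrac12+\tfrac12$'' estimate collapses. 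In the paper this disjointness is not a local parity remark but the content of all of Section~\ref{sec:4}: Theorem~\ref{nexist-both-adj}, proved via Proposition~\ref{prop:disj1}, Theorem~\ref{prop:disj2}, the long Proposition~\ref{non-square-lower-triangle} and Proposition~\ref{vert-in-sigma-conn-anosigma}, and the proof is itself a curvature-accumulation argument (an overlap of the $1$-neighborhoods forces total curvature $>1$), requiring an extensive case analysis of triangle/square configurations between the two collars. Since your counting region extends one layer beyond the $1$-neighborhood, you would in fact need a disjointness statement at least as strong as Theorem~\ref{nexist-both-adj}, plus an accounting (the paper's factor $\tfrac12$ in the discharging rule \eqref{mod-cur}) for second-layer vertices shared by two boundary vertices of the \emph{same} large face.

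Second, the quantitative core --- that each face of degree $8\leq k\leq 12$ traps charge $>\tfrac12$ (or $\geq\tfrac12$ for $k=12$, with the equality case excluded) --- is only described, not proved; you yourself defer ``essentially all the work'' to it. In the paper this is Lemma~\ref{Two-triang-adj} and Lemma~\ref{one-triangle-one-square} (each boundary vertex receives modified charge at least $\tfrac1{2k}$, with equality only in a very specific $(3,4,4,4)/(3,4,4,6)$ configuration when $k=12$), Theorem~\ref{12-gon-modcur-lower-bd}, and then a further two-layer rigidity argument inside the proof of Theorem~\ref{thm:main2}: if two $12$-gons each trapped exactly $\tfrac12$, the alternating pattern of the $x_i$ is forced, the next ring of vertices $y_i,p_i$ is forced, the curvature around $\sigma_1$ alone already sums to $1$, and the second $12$-gon then produces extra positive curvature, contradicting Theorem~\ref{thm:DeVosMohar}. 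Your ``borderline $6,4,6,4,\dots$ configuration'' is precisely this case, and you acknowledge not having resolved it; likewise the claim for $8\leq k\leq 11$ needs the strict inequality $\tilde\Phi(z)>\tfrac1{2k}$, not just the boundary term $1-\tfrac{k}{12}$ (which by itself, e.g.\ $\tfrac1{12}$ for $k=11$, is far from enough for two faces). Until both the disjointness theorem and the per-face lower bound (with its equality analysis) are actually established, the proposal is a plan rather than a proof.
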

\begin{remark}
Figure \ref{sharp-graph} indicates that the upper bound for the case $k=12$ is sharp. In fact, replacing $\sigma$ in Figure \ref{sharp-graph} by $k$-gon for $8\leq k\leq 11$, we obtain other examples for the sharpness when $8\leq k\leq 11$.
\end{remark}

We collect our estimates in Table~\ref{tabl2}.
\begin{table}
\refstepcounter{table}\label{tabl2}
\begin{tabular}{|lc|lr|}
\hline
$k$-gons && $F_k(\Gamma)$&\\
    \hline
$k=5$&&$\leq 21$&\\
$k=7$&&$\leq 15$&\\
$8\leq k\leq 12$& &$\leq 1$&\\
$k\geq 13$& &$=0$&\\
\hline
\multicolumn{4}{c}{}\\
\end{tabular}

\textbf{\tablename~\thetable.}
\end{table}

\begin{figure}[htbp]
 \begin{center}
   \begin{tikzpicture}
    \node at (0,0){\includegraphics[width=0.84\linewidth]{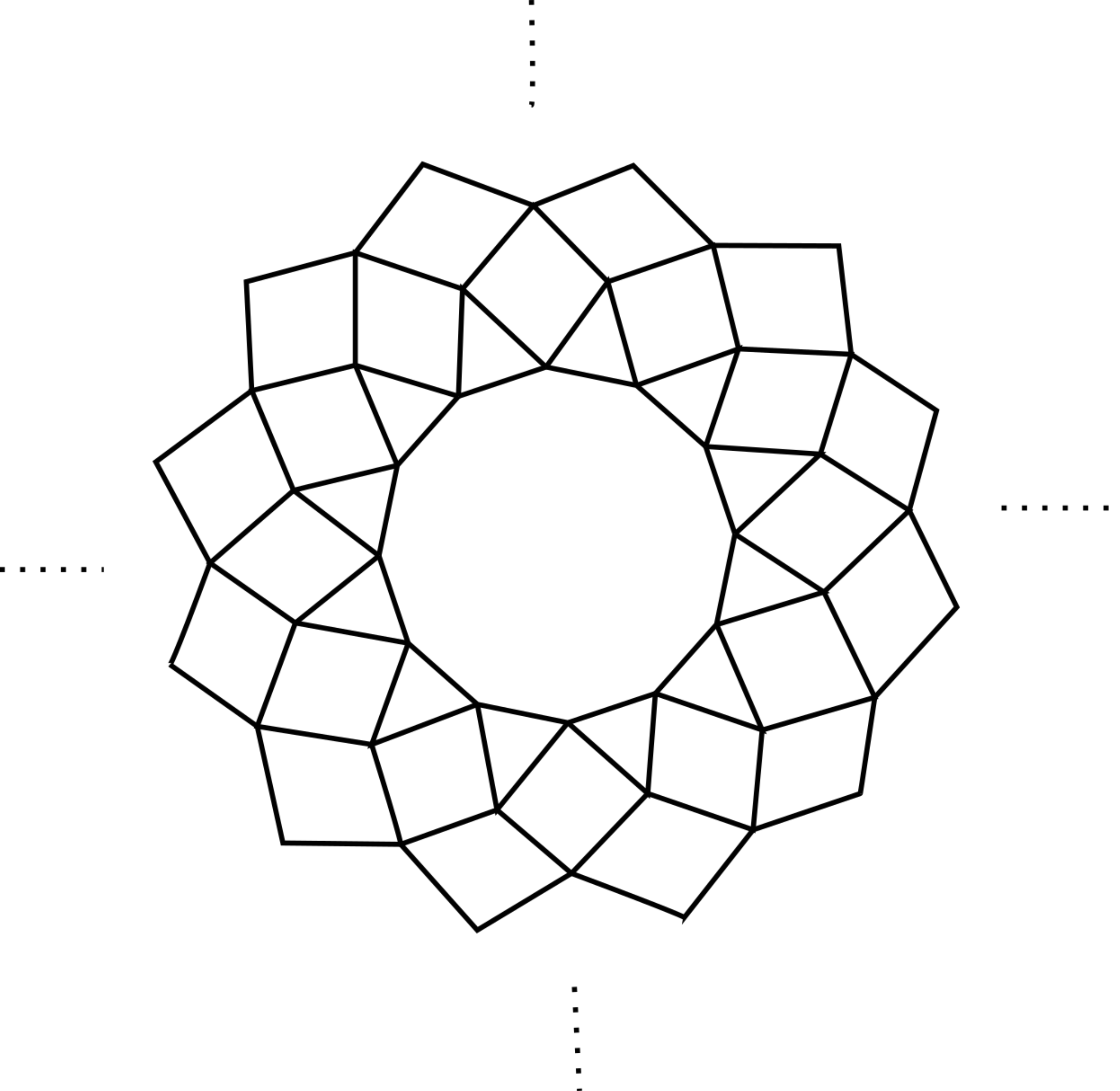}};
    \node at (0,0){\Large $\sigma$};
   \end{tikzpicture}
  \caption{The fact $\sigma$ is a $12$-gon. The vertex patterns of vertices incident to $\sigma$ are $(3,3,4,12)$, the vertex patterns of vertices which are adjacent to vertices in the boundary of $\sigma$ are $(3,4,4,4)$, and the other vertex patterns are $(4,4,4,4)$.}\label{sharp-graph}
 \end{center}
\end{figure}

The theorem is proved via the discharging method. Discharging methods were used by many authors in graph theory for different purposes, see e.g. \cite{MR0543792,MR1441258,MR1897390,MR2299456,MR3624614,Gh17, HS19}. The curvature at vertices of a planar graph can be regarded as the charge concentrated on vertices. The discharging method is to redistribute the charge on vertices via transferring the charge from ``goog" vertices to ``bad" vertices. For any face $\sigma,$ the $1$-neighborhood of $\sigma,$ denoted by $U_1(\sigma)$, is the union of vertices incident to $\sigma$ and the vertices adjacent to these vertices. To estimate the number of $k$-gons with $8\leq k \leq 12$, we first argue that the $1$-neighborhood of $k_1$-gon and the $1$-neighborhood of $k_2$-gon are disjoint for $8\leq k_1,k_2 \leq 12$, see Theorem~\ref{nexist-both-adj}.  For the discharging rule, for any $k$-gon $\sigma$ with $8\leq k\leq 12$ we redistribute the curvature of nearby vertices, within the $1$-neighborhood of $\sigma,$ to those vertices incident to $\sigma$, such that the total redistributed curvature of the vertices incident to $\sigma$ is uniformly bounded below. Then one can estimate the cardinality of $k$-gons with $8\leq k \leq 12$ by using the total curvature restriction, i.e. Theorem~\ref{thm:DeVosMohar}.

By the above theorem and \eqref{eq:key1}, we give the answer of Problem~\ref{num-face-vert-nonneg} for $8\leq k \leq 12.$
\begin{corollary}For any infinite $G\in \MM$, $\sum\limits_{k=8}^{12} \left(F_k(G)+V_k(G)\right)\leq 1.$
\end{corollary}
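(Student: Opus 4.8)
The plan is to reduce the statement to Theorem~\ref{thm:main2} by passing to the medial graph $\hat G$. Given an infinite $G\in\MM$, I would set $\Gamma=\hat G$ and verify that $\Gamma$ is an infinite $4$-regular planar graph lying in $\NNG$; the desired bound then follows immediately from the identity \eqref{eq:key1}. The whole argument is a formal transfer along the medial-graph correspondence, so the substantive content is entirely absorbed into Theorem~\ref{thm:main2}.

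First I would check the structural hypotheses on $\Gamma=\hat G$. The medial graph is $4$-regular by construction, since $|m(e)|=4$ for every $m(e)\in\hat V$. Because $\hat V\approx E$ and $G$ is infinite, the edge set $E$ is infinite, whence $\hat V$ is infinite and $\hat G$ is an infinite planar graph. That $\hat G$ is again a tessellation is Proposition~\ref{medial-graph-tes}, so $\Gamma=\hat G$ is a legitimate input for Theorem~\ref{thm:main2}.

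Next I would verify the curvature condition $\hat G\in\NNG$. Every vertex of $\hat G$ is of the form $m(e)$ for a unique edge $e\in E$, and by the proposition relating the two curvatures we have $\Phi(m(e))=\Psi(e)$. Since $G\in\MM$ means precisely that $\Psi(e)\geq 0$ for all $e\in E$, it follows that the combinatorial curvature satisfies $\Phi(\hat x)\geq 0$ at every vertex $\hat x\in\hat V$, that is, $\hat G\in\NNG$.

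Having confirmed that $\Gamma=\hat G$ meets all the hypotheses, I would apply Theorem~\ref{thm:main2} to obtain $\sum_{k=8}^{12}F_k(\hat G)\leq 1$. Finally, substituting the identity $F_k(\hat G)=V_k(G)+F_k(G)$ from \eqref{eq:key1} for each $8\leq k\leq 12$ and summing over $k$ yields $\sum_{k=8}^{12}\left(F_k(G)+V_k(G)\right)\leq 1$. I do not expect a genuine obstacle here: all the hard work lives in Theorem~\ref{thm:main2}, and the only points requiring any care are the elementary verifications that the medial graph inherits both infiniteness and non-negative curvature from $G$, together with the fact (already recorded via $\hat F\approx V\cup F$) that a $k$-gon face of $\hat G$ corresponds either to a degree-$k$ vertex or to a $k$-gon face of $G$, which is exactly what licenses \eqref{eq:key1}.
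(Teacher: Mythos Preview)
Your proposal is correct and follows exactly the same route as the paper, which derives the corollary directly from Theorem~\ref{thm:main2} together with the identity \eqref{eq:key1} via the medial graph. You have simply spelled out the verifications (infiniteness, $4$-regularity, tessellation, $\hat G\in\NNG$) that the paper leaves implicit.
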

This result is sharp since one can construct an infinite graph whose medial graph is given by $\Gamma$ in Figure \ref{sharp-graph}.

The paper is organized as follows: In \S \ref{Preliminary}, we recall some basic facts about planar graphs and the curvature notions.  In \S \ref{sec:3}, we study basic properties of $4$-regular infinite planar graphs with non-negative combinatorial curvature, and prove Proposition~\ref{max-face-degree} and Propositon~\ref{num-est-5-11}. In \S \ref{sec:4}, we investigate the local structure of large faces in $4$-regular infinite planar graphs with non-negative combinatorial curvature, and prove that $1$-neighborhoods of large faces are disjoint in Theorem~\ref{nexist-both-adj}. In \S \ref{section 5}, we use the discharge method to prove the main result, Theorem \ref{thm:main2}.

\section{Preliminaries}\label{Preliminary}
Let $G=(V,E,F)$ be a planar graph. For any face $\sigma,$ we denote by $$\partial\sigma:=\{x\in V: x\prec \sigma\}$$ the set of boundary vertices of $\sigma.$
For any two elements in $V,E,F,$ we call that one is contained in the other if  the former is contained in the closure of the latter, denoted by $x\prec e, e\prec \sigma, x\prec \sigma$ etc., where $x\in V, e\in E, \sigma\in F.$ Two edges $e_1$ and $e_2$ (resp. two faces $\sigma_1$ and $\sigma_2$) are called lower-adjacent, denoted by $e_1\smile e_2$ (resp. $\sigma_1\smile \sigma_2$), if their closures contain a common vertex (resp. edge). We write $e=\sigma\cap\tau$ if $e\prec\sigma$ and $e\prec \tau$. For a face $\sigma$, we say that faces $\tau$ and $\omega$ are $\sigma$-neighbours if $\tau\smile\sigma$, $\omega\smile\sigma,$ and the intersection $\bar{\tau}$ and $\bar{\omega}$ is a vertex in $\sigma$. In this case, we also say that $\tau$ is $\sigma$-adjacent to $\omega.$ For convenience, sometimes we don't distinguish the closure of a face $\bar{\sigma}$ and a face $\sigma$ itself.

%
%
%
%


For a planar graph $G=(V,E,F),$ we introduce the definition of the medial graph $\hat{G}=(\hat{V}, \hat{E}, \hat{F})$ of $G.$ Let $\varphi$ be the embedding of $(V,E)$ in $S(G)$, where we equip it with the metric in the target for as in the introduction simplicity. For any edge $e\in E$, we denote by $m(e)$ the midpoint of $\varphi(e)$, which corresponds to a vertex in $\hat V$. For any $e_1, e_2\in E$, satisfying $e_1\smile e_2,$ $e_1\prec\sigma$ and $e_2\prec \sigma$ for some $\sigma\in F$, we draw a segment $l_{e_1e_2}$ from $m(e_1)$ to $m(e_2)$ in $S(G)$ presenting an edge $\{m(e_1), m(e_2)\}$ in $\hat E$. The face set $\hat F$ is induced by the above embedding $(\hat V, \hat E)$ in $S(G)$. We describe the face set $\hat F$ as follows. For any $x\in V$ with $|x|=N$, let $\{e_i\}_{i=1}^N$ be the set of edges incident to $x$, whose embedding images are ordered clockwise in $S(G)$. We denote by $\Sigma_x$ the face of $\hat F$ bounded by the edge set $\{l_{e_1e_2}, l_{e_2e_3}, \cdots, l_{e_{N-1}e_N}, l_{e_Ne_1}\}$. For any $\sigma\in F$ with $|\sigma|=Q$, let $\{s_i\}_{i=1}^Q$ be the set of edges incident to $\sigma$, whose embedding images are ordered clockwise in $S(G)$. We denote by $\Sigma_\sigma$ the face of $\hat F$ bounded by the edge set $\{l_{s_1s_2}, l_{s_2s_3}, \cdots, l_{s_{Q-1}s_Q}, l_{s_Qs_1}\}$. Then $\hat F=\{\Sigma_x\colon x\in V\} \cup \{\Sigma_\sigma \colon \sigma \in F\}$. One easily sees that for any $x\in V$, $\sigma\in F$, $|\Sigma_x|=|x|$, $|\Sigma_\sigma|=|\sigma|.$

\begin{prop}\label{medial-graph-tes}
 If $G$ is a planar tessellation, then so is $\hat{G}.$
\end{prop}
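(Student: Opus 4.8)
The plan is to exhibit $\hat G$, together with its face set $\{\Sigma_\sigma\}_{\sigma\in F}\cup\{\Sigma_x\}_{x\in V}$, as a cell decomposition of the polyhedral surface $S(G)$ and then read off the tessellation axioms from the ones for $G$. The key is a local picture: inside each closed face $\overline\sigma$ — a regular, hence convex, $|\sigma|$-gon in $S(G)$ — the segments $l_{s_is_{i+1}}$ joining midpoints of consecutive boundary edges cut $\overline\sigma$ into the central ``midpoint polygon'' $\overline{\Sigma_\sigma}$ and, for each $x\in\partial\sigma$, a corner triangle $T_{x,\sigma}$ with vertices $x,m(a),m(b)$, where $a,b$ are the two edges of $\sigma$ incident to $x$. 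By convexity these pieces have pairwise disjoint interiors and no two of the segments $l$ cross; since the interior of each such segment lies in an open face of $G$ and its two endpoints are edge-midpoints, two segments lying in different faces of $G$ can meet at most at one common midpoint (a vertex of $\hat G$), so $\hat G$ is embedded in $S(G)$. Around a fixed $x$ the faces incident to $x$ are cyclically ordered, and gluing the triangles $T_{x,\sigma}$ over all $\sigma$ with $x\in\partial\sigma$ along the half-edges $[x,m(e)]$ yields exactly the region bounded by $l_{e_1e_2},\dots,l_{e_Ne_1}$, i.e.\ $\overline{\Sigma_x}$. Hence $S(G)$ is covered, with disjoint interiors, by $\{\overline{\Sigma_\sigma}\}\cup\{\overline{\Sigma_x}\}$, and these are precisely the closed faces of $\hat G$; in particular every face of $\hat G$ has degree $|\sigma|\geq 3$ or $|x|\geq 3$, and every vertex has degree $4$ (as already recalled in the introduction).

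Simplicity, local finiteness and connectedness of $\hat G$ follow immediately: each $\Sigma_x$ (resp.\ $\Sigma_\sigma$) is bounded by the finitely many edges $l_{e_ie_{i+1}}$ (resp.\ $l_{s_is_{i+1}}$) since $G$ is locally finite and a tessellation; $\hat G$ is connected because consecutive edges of any path in $G$ can be joined in $\hat G$ by turning around their common vertex; there are no loops since distinct edges of $G$ have distinct midpoints; and if $m(a),m(b)$ were joined by two edges of $\hat G$ they would arise from two corners $(x,\sigma),(y,\tau)$ both having $\{a,b\}$ as their pair of sides, which forces $x=y$ (two distinct edges of a simple graph meet in at most one vertex) and then $\overline\sigma\cap\overline\tau\supseteq\overline a\cup\overline b$, contradicting axiom~(iii) for $G$ unless $\sigma=\tau$. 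For axiom~(i): $\overline{\Sigma_\sigma}$ is the midpoint polygon of a convex polygon, hence a closed disk with boundary the $|\sigma|$ edges $l_{s_is_{i+1}}$, while $\overline{\Sigma_x}$ is a cyclic union of triangles, consecutive ones glued along a half-edge and non-consecutive ones meeting only at $x$, hence a closed disk with boundary the $|x|$ edges $l_{e_ie_{i+1}}$. For axiom~(ii): each edge $l_{ab}$ of $\hat G$ comes from a unique corner $(x,\sigma)$, with sides $a,b$, and lies on the boundary of exactly the two faces $\Sigma_x$ and $\Sigma_\sigma$; it cannot bound any other $\Sigma_y$ (that would need $a,b$ to share a vertex $y\neq x$) nor any other $\Sigma_\tau$ (that would make $a,b$ both incident to $\sigma$ and $\tau$, again contradicting axiom~(iii) for $G$).

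It remains to verify axiom~(iii) for two distinct closed faces of $\hat G$, which I would split into three cases using the covering by midpoint polygons and corner triangles. If the faces are $\overline{\Sigma_x}$ and $\overline{\Sigma_y}$, then since $T_{x,\sigma}\cap T_{y,\tau}\neq\emptyset$ forces $\sigma=\tau$ with $x,y$ adjacent on $\sigma$, the intersection is empty unless $x\sim y$, in which case it is the single vertex $m(\{x,y\})$. If the faces are $\overline{\Sigma_\sigma}$ and $\overline{\Sigma_\tau}$, the intersection lies in $\overline\sigma\cap\overline\tau$, which by axiom~(iii) for $G$ is empty, a vertex, or a closed edge $\overline e$; as $\overline{\Sigma_\sigma}$ contains no vertex of $\sigma$ and meets each edge of $\partial\sigma$ only at its midpoint, the intersection is empty in the first two cases and $\{m(e)\}$ in the last. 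If the faces are $\overline{\Sigma_x}$ and $\overline{\Sigma_\sigma}$: when $x\notin\partial\sigma$, each triangle $T_{x,\tau}$ meets $\partial\tau$ only in the two half-edges at $x$, and these cannot meet $\overline{\Sigma_\sigma}\subseteq\overline\sigma$ because $x\notin\overline\sigma$ and a half-edge at $x$ never contains the midpoint of an edge of $\sigma$, so the intersection is empty; when $x\in\partial\sigma$, one checks that $\overline{\Sigma_x}\cap\overline{\Sigma_\sigma}$ is exactly the closed edge $\overline{l_{ab}}$, where $a,b$ are the sides of the corner $(x,\sigma)$ — the only contributions from the neighbouring triangles $T_{x,\tau}$ occur at the endpoints $m(a),m(b)$ of $l_{ab}$. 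The genuinely delicate points are the embeddedness bookkeeping of the first paragraph (non-crossing of the segments $l$, adjacent corner triangles meeting only at a midpoint, the cyclic closing-up that makes $\overline{\Sigma_x}$ a disk) and the last subcase of axiom~(iii), where one must confirm that $\Sigma_x$ and $\Sigma_\sigma$ meet along a single edge and nowhere else; everything else is a routine translation of the tessellation axioms for $G$.
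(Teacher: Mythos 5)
Your proof is correct and takes essentially the same route as the paper: it verifies the three tessellation axioms for $\hat G$ via its embedding in $S(G)$, with the same three-case analysis of $\overline{\Sigma}_x\cap\overline{\Sigma}_y$, $\overline{\Sigma}_\sigma\cap\overline{\Sigma}_\tau$, $\overline{\Sigma}_x\cap\overline{\Sigma}_\sigma$ for axiom (iii), while additionally supplying the corner-triangle/midpoint-polygon bookkeeping that the paper dismisses as obvious. (One intermediate claim — that $T_{x,\sigma}\cap T_{y,\tau}\neq\emptyset$ forces $\sigma=\tau$ — overlooks the case where $\sigma\neq\tau$ share the edge $\{x,y\}$ and the two corner triangles meet at $m(\{x,y\})$, but since that case yields exactly the asserted intersection, the conclusion is unaffected.)
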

\begin{proof}
It suffices to check that $\hat G$ satisfies the planar tessellation as stated in \S \ref{sec:intro}. (i) This is obvious.

(ii) For any edge $\{m(e_1), m(e_2)\}\in \hat E$, there is a unique $x\in V$ (resp. $\sigma\in F$) such that $x$ (resp. $\sigma$) is incident to both $e_1$ and $e_2$. Then the embedding image $l_{e_1e_2}$ of $\{m(e_1),m(e_2)\}$ is contained in exactly $\bar{\Sigma}_x$ and $\bar{\Sigma}_\sigma$.

(iii) If $\bar{\Sigma}_x\cap\bar{\Sigma}_y\ne\emptyset$ for $x, y\in V$, then there is a unique edge $e\in E$ such that $x,y\prec e$. Then $\bar{\Sigma}_x\cap\bar{\Sigma}_y=\{m(e)\}$. If $\bar{\Sigma}_\sigma\cap\bar{\Sigma}_{\sigma'}\ne\emptyset$ for $\sigma,\sigma'\in F$, then there is a unique edge $e\in E$ such that $e\prec \sigma,\sigma'$. Then the intersection consists of a vertex $m(e)$. If $\bar{\Sigma}_x\cap\bar{\Sigma}_{\sigma}\ne\emptyset$ for $x\in V$ and $\sigma\in F$, then there are edges $e_1$ and $e_2$ such that $x$ (resp. $\sigma$) is incident to both $e_1$ and $e_2$. Hence, $\bar{\Sigma}_x\cap\bar{\Sigma}_\sigma=l_{e_1e_2}$ which represents the edge $\{m(e_1),m(e_2)\}$.

Hence the proof is completed.
\end{proof}

For a $4$-regular infinite planar graph with non-negative combinatorial curvature,
the list of vertex patterns of positive curvature is given in Table~\ref{tabl1}, and the vertex patterns with vanishing
curvature are given by
\begin{align}\begin{split}\label{vanishing-cur-pattern}
(3,3,6,6),  (4,4,4,4), (3,4,4,6), (3,3,4,12).
\end{split}\end{align}

\begin{table}
\refstepcounter{table}\label{tabl1}
\begin{tabular}{|lc|lr|}
\hline
Patterns of $x$ &&$\Phi(x)$&\\
    \hline
$(3,3,3,k)$&$3\leq k$&$1/k$&\\
$(3,3,4,k)$&$4\leq k\leq 11$&$1/k-1/12$&\\
$(3,3,5,k)$&$5\leq k\leq 7$&$1/k-2/15$&\\
$(3,4,4,k)$&$4\leq k\leq 5$&$1/k-1/6$&\\
\hline
\multicolumn{4}{c}{}\\
\end{tabular}

\textbf{\tablename~\thetable.} The patterns of a vertex with positive curvature.
\end{table}

Noting that $\CC\subset \MM,$ we give an example to show that the converse is not true.
\begin{example}\label{exam:p1} Let $G$ be a periodic square tiling of $\R^2$ as in Figure~\ref{perior-tiling}. Then the medial graph $\hat G$ is the uniform (Archimedean) tiling of type $(3,4,4,6).$ Hence $G\in\MM.$ One easily sees that $G\not\in \NNG,$ since it has a vertex of pattern $(4,4,4,4,4,4)$ whose combinatorial curvature is negative. Note that $\CC\subset \NNG,$
 so that $G\not\in \CC.$ \begin{figure}[htbp]

 \begin{center}
   \begin{tikzpicture}
    \node at (0,0){\includegraphics[width=0.6\linewidth]{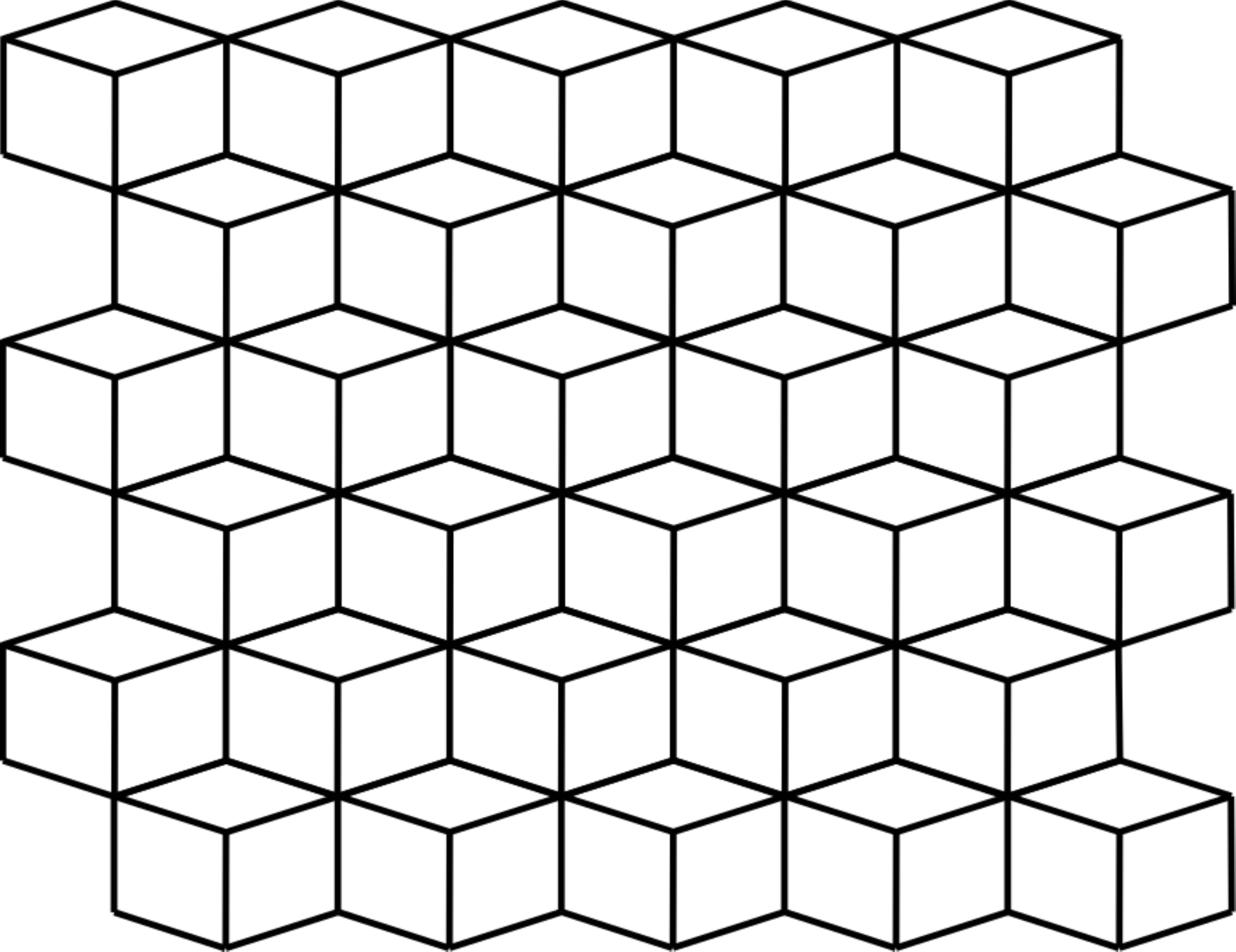}};
   \end{tikzpicture}
  \caption{}\label{perior-tiling}
 \end{center}
\end{figure}
\end{example}

Next we show that $\NNG\not\subset \MM$ and $\MM \not\subset \NNG.$ The second assertion follows from Example~\ref{exam:p1}.
For the first assertion, consider the uniform (Archimedean) tiling $G$ of type $(3, 12, 12).$ Then $G\in \NNG.$ Note that in the medial graph $\hat{G}$ there is a vertex of pattern $(3,3,12,12)$ whose combinatorial curvature is negative, so that $G\notin \MM$.

\section{Some properties of $4$-regular infinite planar graphs with non-negative combinatorial curvature}\label{sec:3}
In this section, we study some basic properties of infinite $4$-regular planar graphs with non-negative combinatorial curvature.

First, we derive an upper bound for the degree of faces in an infinite $4$-regular planar graph with non-negative combinatorial curvature.
\begin{theorem}\label{thm:13} Let $\Gamma$ be a $4$-regular planar graph with non-negative combinatorial curvature. Suppose that $\sup_{\sigma\in F}|\sigma|\geq 13,$ then $\Gamma$ is a finite graph, i.e. embedded into $\mathbb S^2,$ and $\Gamma$ is an antiprism.
\end{theorem}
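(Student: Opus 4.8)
The plan is to analyze a $4$-regular planar graph $\Gamma$ possessing a face $\sigma_0$ of degree $N:=|\sigma_0|\geq 13$, and to show that the non-negativity of curvature is so restrictive around such a large face that the graph must close up into a sphere, ending up as an antiprism. First I would set up the local combinatorics around $\sigma_0$: label the boundary vertices $x_1,\dots,x_N$ cyclically, and at each $x_i$ note that since $|x_i|=4$, the vertex is incident to $\sigma_0$ and to exactly three other faces, with two of them (say $\tau_i,\tau_{i+1}$ using the edge labelling) shared between consecutive vertices along $\partial\sigma_0$ and one ``outer'' face $\omega_i$. The curvature condition $\Phi(x_i)=1-2+\frac{1}{|\sigma_0|}+\frac{1}{|\tau_i|}+\frac{1}{|\tau_{i+1}|}+\frac{1}{|\omega_i|}\geq 0$ combined with $|\sigma_0|=N\geq 13$ forces $\frac{1}{|\tau_i|}+\frac{1}{|\tau_{i+1}|}+\frac{1}{|\omega_i|}\geq 1-\frac{1}{13}=\frac{12}{13}$, which is a strong constraint: it essentially forces the three faces to be triangles, or two triangles and one small face.

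The key step is a propagation/rigidity argument. Using the inequality above at every boundary vertex simultaneously, I would argue that the faces $\tau_2,\dots,\tau_N$ sandwiched between consecutive boundary vertices are all triangles. Indeed if some $\tau_i$ has degree $\geq 4$, then at both $x_{i-1}$ and $x_i$ the remaining two faces must have reciprocal degrees summing to at least $\frac{12}{13}-\frac14 = \frac{35}{52}$, which already pushes everything toward being triangles and a $(3,3,4,k)$-type pattern; a short case analysis on the admissible vertex patterns from Table~\ref{tabl1} and \eqref{vanishing-cur-pattern} with a face of degree $\geq 13$ present — the only patterns with a large face are $(3,3,3,k)$ and $(3,3,4,k)$ — should show that around $\sigma_0$ the pattern at each $x_i$ is either $(3,3,3,N)$ or $(3,3,4,N)$, and that in fact the ``inner'' faces $\tau_i$ between consecutive boundary vertices are all triangles. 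This gives a ring of $N$ triangles surrounding $\sigma_0$, whose outer boundary is a cycle of $N$ vertices (the apexes of the triangles). Then I repeat the curvature bookkeeping one layer out: each apex vertex has degree $4$ and is already incident to two of the ring triangles, so it has two further faces; the pattern constraints again force these to be triangles sharing edges appropriately, producing an antiprism-like second ring. Iterating, or better, invoking the discrete Gauss–Bonnet / Cohn–Vossen dichotomy (Theorem~\ref{thm:DeVosMohar}): the total curvature of an infinite graph in $\NNG$ is at most $1$, but the ring of $N\geq 13$ triangles already contributes curvature $\sum_{i}\Phi(x_i)$ which, with pattern $(3,3,3,N)$ giving $\Phi(x_i)=1/N$ each, totals at least $N\cdot\frac1N=1$ just from $\partial\sigma_0$ when all $\tau_i$ are triangles and all $x_i$ are $(3,3,3,N)$ — leaving no curvature for the rest, forcing every other vertex to have zero curvature and the graph to be exactly the $N$-antiprism (two $N$-gons joined by a ring of $2N$ triangles). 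If instead some $x_i$ has pattern $(3,3,4,N)$ with $N\geq 13$, then $\Phi(x_i)=1/N-1/12<0$, contradicting non-negativity; hence all boundary patterns are $(3,3,3,N)$, and the total-curvature budget is already saturated by $\partial\sigma_0$ alone.

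Thus the logical skeleton is: (1) large face $\Rightarrow$ boundary vertex patterns are among $(3,3,3,N)$, $(3,3,4,N)$; (2) $(3,3,4,N)$ is impossible for $N\geq 13$ by $\Phi\geq0$; (3) so all boundary vertices are $(3,3,3,N)$, contributing total curvature exactly $1$; (4) by Theorem~\ref{thm:DeVosMohar} every other vertex has curvature $0$, so the complement of $\sigma_0$ is tiled by triangles with all $(3,3,3,*)$ or $(4,4,4,4)$ or other zero-curvature $4$-regular patterns; (5) a short combinatorial argument shows the only way to complete a $4$-regular triangulated annulus around an $N$-gon consistently is the antiprism, which in particular forces the graph to be finite with a second $N$-gon as the ``opposite'' face. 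The main obstacle I expect is step (5): ruling out infinite or exotic completions of the triangulated region outside $\sigma_0$. The curvature budget being already exhausted means every vertex outside $\partial\sigma_0$ must be a zero-curvature $4$-regular vertex incident to $\sigma_0$'s triangle ring or further out; the admissible zero-curvature patterns are $(3,3,6,6)$, $(4,4,4,4)$, $(3,4,4,6)$, $(3,3,4,12)$, and one must show none of these can be attached to the outer cycle of the triangle ring without either re-creating positive curvature somewhere or failing to close up — forcing instead the ``all triangles meeting a single opposite $N$-gon'' configuration of the antiprism. This is a finite but slightly delicate local analysis, and I would organize it by examining, for an apex vertex $y$ of a ring triangle, which of the four zero-curvature patterns is compatible with $y$ being incident to two triangles that share the edge of $\partial\sigma_0$'s ring, and then walking around the outer cycle.
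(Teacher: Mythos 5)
Your first steps match the paper's: with $|\sigma_0|=N\geq 13$, non-negative curvature and $4$-regularity force $\pt(x)=(3,3,3,N)$ at every $x\in\partial\sigma_0$ (since $(3,3,4,N)$ would give $\Phi(x)=\tfrac1N-\tfrac1{12}<0$), hence $\sum_{x\in\partial\sigma_0}\Phi(x)=1$. From there, however, your argument has two genuine gaps. First, you misread the local structure and miss the observation that closes the infinite case in one line: because every boundary vertex has pattern $(3,3,3,N)$, the face meeting $\sigma_0$ only at that vertex (the ``corner'' face) is also a triangle, so $\sigma_0$ is surrounded by $2N$ triangles ($N$ edge-adjacent ones and $N$ corner ones), and each apex vertex $x_i$ is incident to \emph{three} triangles, not two as you assert. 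Consequently $\Phi(x_i)=1/|f_i|>0$, where $f_i$ is the fourth face at $x_i$, and no zero-curvature pattern from \eqref{vanishing-cur-pattern} can occur there. If $\Gamma$ were infinite, Theorem~\ref{thm:DeVosMohar} together with $\sum_{x\in\partial\sigma_0}\Phi(x)=1$ forces $\Phi\equiv 0$ off $\partial\sigma_0$, an immediate contradiction. Your plan instead leaves this as an unfinished case analysis over $(3,3,6,6),(4,4,4,4),(3,4,4,6),(3,3,4,12)$ attached to an apex ``incident to two triangles,'' which does not close as stated (e.g. $(3,3,6,6)$ and $(3,3,4,12)$ are compatible with two incident triangles), so the infinite case is not actually ruled out. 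You also take for granted that the $N$ apexes are pairwise distinct and form a cycle; the paper proves this from $4$-regularity and the tessellation axioms.

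Second, your steps (4)--(5) conflate the two regimes. Theorem~\ref{thm:DeVosMohar} (total curvature $\leq 1$) applies only to infinite graphs, and under its consequence ``all vertices off $\partial\sigma_0$ are flat'' the antiprism cannot arise: in the $N$-antiprism the opposite ring of $N$ vertices carries curvature $1/N>0$ each, and the total is $2$, as Gauss--Bonnet requires for a finite graph. So ``the curvature budget is already exhausted'' cannot be the premise from which the antiprism emerges. The correct architecture (and the paper's) is: (i) the infinite case yields a contradiction, hence $\Gamma$ is finite; (ii) in the finite case, with total curvature $2$, one shows the fourth face at every apex is one and the same face $\sigma_1$ with $|\sigma_1|=|\sigma_0|$, i.e. $\Gamma$ is an antiprism. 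Your proposal never carries out (ii); the antiprism conclusion is asserted inside an analysis whose hypotheses exclude it. To repair the proof you need the three-triangles-at-each-apex observation, an explicit contradiction in the infinite case, and then a separate finite-case argument identifying the common fourth face.
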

\begin{proof} Let $\sigma$ be a face of degree at least $13.$ For any vertex $x\in \partial \sigma,$ $\Phi(x)\geq 0$, then by Table \ref{tabl1} and (\ref{vanishing-cur-pattern}), $\pt(x)=(3,3,3,|\sigma|)$ and  \begin{equation}\label{eq2:eq1}\sum_{x\in \partial\sigma} \Phi(x)=1.\end{equation}
Let $\{\tau_i\}_{i=1}^{|\sigma|}$ be the triangles lower-adjacent to $\sigma,$ ordered clockwise along $\sigma.$ For any $1\leq i\leq |\sigma|,$ let $x_i\in \partial\tau_i\setminus\partial \sigma.$ We claim that  $x_i\neq x_j$ for $i\neq j$. Note that all vertices in $\partial \sigma$ are of pattern $(3,3,3,|\sigma|),$ and there are triangles $\{\omega_i\}_{i=1}^{|\sigma|}$ with $\omega_i$ is lower-adjacent to $\tau_i$ and $\tau_{i+1}$ and incident to $\sigma,$ where the index is understood in the sense of modulo $|\sigma|.$ Hence for any $i,$ $x_i$ is incident to three triangles $\omega_{i-1},\tau_i,\omega_{i}.$ Hence $x_i$ is not identified with $x_j$ for $i\neq j,$ which follows the tessellation properties $(i)-(iii)$ and $|x_i|=4.$ This proves the claim.

We divide it into cases.

\emph{Case~1.} $\Gamma$ is an infinite graph, i.e. embedded into $\R^2.$ By Theorem \ref{thm:DeVosMohar},
$$\sum_{x\in V}\Phi(x)\leq 1.$$ By \eqref{eq2:eq1}, this yields that for any $x\not\in \partial\sigma,$ $\Phi(x)=0.$
 Note that for each $x_i$ defined above, it is incident to three triangles. So that $\Phi(x_i)>0$ which yields a contradiction. 

\emph{Case~2.} $\Gamma$ is a finite graph, i.e. embedded into $\mathbb S^2$. By the Gauss-Bonnet theorem, $$\sum_{x\in V}\Phi(x)=2.$$ Since $\{x_i\}$ satisfies that $x_i\neq x_j$ for $i\neq j$ and each $x_i$ incident to three triangles. Let $\sigma_1$ be a face incident to $x$, which different from the three triangles obtained before. One can show that all $x_i$ are the boundary of same face $\sigma_1$ and $|\sigma_1|=|\sigma|.$ This proves that $\Gamma$ is an antiprism.
\end{proof}

This yields that the maximum facial degree of an infinite $4$-regular planar graph with non-negative combinatorial curvature is at most $12.$
\begin{proof}[Proof of Proposition~\ref{max-face-degree}]
Since $\Gamma$ is an infinite $4$-regular planar graph with non-negative combinatorial curvature,
by Theorem~\ref{thm:13}, $|\sigma|\leq 12,$ for any face $\sigma.$ This proves the proposition.
\end{proof}

Next, we give upper bounds for the number of faces of degree $k,$ for $k=5,7.$
\begin{proof}[Proof of Proposition~\ref{num-est-5-11}]
Let $\sigma$ be a $k$-gon in $\Gamma$. Let $\sigma_1$ and $\sigma_2$ be two $k$-gons in $\Gamma$ with facial degree $k$.
By Table \ref{tabl1} and (\ref{vanishing-cur-pattern}), if $k\geq 7$, then
\begin{align}\begin{split}\label{two-k-gon-nonadj}
\d\sigma_1\cap\d\sigma_2=\varnothing.
\end{split}\end{align}
We divide it into cases.


\textbf{Case 1.} $k=7$. For any $z\in \d\sigma$, $\Pttn(z)=(3,3,3,7)$, $(3,3,4,7)$ or $(3,3,5,7)$, and $\Phi(z)\geq \frac{1}{105}$. This implies that $\sum_{z\in\partial \sigma}\Phi(z)\geq \frac{1}{15}.$
By (\ref{two-k-gon-nonadj}) and Theorem \ref{thm:DeVosMohar}, we have
\[
\frac{1}{15}F_7(\Gamma)\leq \sum_{\sigma\in F: |\sigma|=7}\sum_{z\in\partial \sigma}\Phi(z)
\leq  1,
\]
which gives
\[
F_7(\Gamma)\leq 15.
\]

\textbf{Case 2.} $k=5$. For any $z\in V$, by Table \ref{tabl1} and (\ref{vanishing-cur-pattern}), we have $$\#\{\tau: z\prec\tau, |\tau|=5\}\leq 2.$$ For any $z\in\partial\sigma$, if $\#\{\tau: z\prec\tau, |\tau|=5\}=1$, then $\Phi(z)\geq \frac{1}{105}$. If $\#\{\sigma: z\prec\sigma, |\sigma|=5\}=2$, then $\Phi(z)=\frac{1}{15}$ and $\Phi(z)$ contributes the curvature $\frac{1}{30}$ to each pentagon.
Hence, by Theorem \ref{thm:DeVosMohar}, we have
\[
\frac{1}{21}F_5(\Gamma)\leq \sum_{\sigma\in F: |\sigma|=5}\sum_{z\in\partial \sigma}\Phi(z)
\leq  2\sum_{z\in V}\Phi(z)\leq 1,
\]
which yields
\[
F_5(\Gamma)\leq 21.
\]

This proves the proposition.
\end{proof}

\begin{figure}[htbp]
 \begin{center}
 \includegraphics[width=0.4\linewidth]{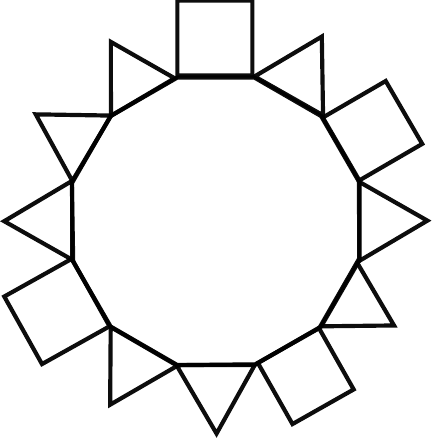}
 \caption{}\label{Fig:2}
 \end{center}
\end{figure}

Let $\Gamma$ be a $4$-regular infinite planar graph satisfying $\Gamma\in \NNG$. For a large face $\sigma$ in $\Gamma$ with facial degree no less than $8$, the non-negative combinatorial curvature implies that any vertex in $\partial \sigma$ is of pattern $(3,3,3,k)$ or $(3,3,4,k)$, see Table \ref{tabl1} and (\ref{vanishing-cur-pattern}). Hence, we obtain that the faces surrounding $\sigma$ are triangles or squares in the following property. Since the proof is obvious, we omit it here.

\begin{prop}\label{prop:nonadj}Let $\Gamma$ be a $4$-regular infinite planar graph satisfying $\Gamma\in \NNG$ and $\sigma$ be a face in $\Gamma$ with facial degree $k$ and $k\geq 8$. Then any face lower-adjacent to $\sigma$ is either a triangle or a square. Moreover, for any two squares lower-adjacent to $\sigma,$ they are not $\sigma$-adjacent.
\end{prop}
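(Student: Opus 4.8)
The plan is to reduce everything to the finite list of vertex patterns of non-negative curvature recorded in Table~\ref{tabl1} and in \eqref{vanishing-cur-pattern}. Fix the face $\sigma$ with $|\sigma|=k\ge 8$; by Proposition~\ref{max-face-degree} we may assume $8\le k\le 12$. The one local fact we need is: \emph{every} $x\in\partial\sigma$ has pattern $\pt(x)=(3,3,3,k)$ or $(3,3,4,k)$. Indeed, $4$-regularity gives $|x|=4$, and one of the four faces incident to $x$ is $\sigma$, of degree $\ge 8$; scanning the admissible patterns, the only ones containing an entry $\ge 8$ are $(3,3,3,k)$ with $k\ge 8$, $(3,3,4,k)$ with $8\le k\le 11$, and the flat pattern $(3,3,4,12)$ — every other pattern in Table~\ref{tabl1} or in \eqref{vanishing-cur-pattern} has all entries $\le 7$. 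Hence, apart from $\sigma$ itself, the faces incident to a boundary vertex of $\sigma$ are all triangles, with at most one square at each such vertex.

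Both assertions are now immediate. If $\tau\smile\sigma$, write $e=\sigma\cap\tau$ with endpoints $x_1,x_2\in\partial\sigma$; since $e$ is an edge of $\tau$, the vertex $x_1$ is incident to $\tau\ne\sigma$, so by the previous paragraph $|\tau|\in\{3,4\}$. This proves the first claim. For the ``moreover'' part, suppose toward a contradiction that $\tau_1$ and $\tau_2$ are distinct squares with $\tau_i\smile\sigma$ and $\bar\tau_1\cap\bar\tau_2=\{x\}$ for some $x\in\partial\sigma$. The faces $\sigma,\tau_1,\tau_2$ at $x$ are pairwise distinct ($\tau_i\ne\sigma$ because $\tau_i$ meets $\sigma$ along an edge; $\tau_1\ne\tau_2$ because their closures meet in a single vertex), so $4$-regularity leaves exactly one further face $\omega$ at $x$, and the four faces at $x$ have degrees $|\omega|,4,4,k$ with $k\ge 8$. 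Then $\Phi(x)=-\tfrac12+\tfrac1{|\omega|}+\tfrac1k\le -\tfrac12+\tfrac13+\tfrac18=-\tfrac1{24}<0$, contradicting $\Phi(x)\ge 0$; equivalently, no pattern in Table~\ref{tabl1} or \eqref{vanishing-cur-pattern} carries two entries equal to $4$ alongside an entry $\ge 8$. Hence no two squares lower-adjacent to $\sigma$ are $\sigma$-adjacent.

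Since the whole argument is a finite inspection of the pattern list, there is no genuine obstacle — this is precisely why the paper omits the proof. The only points that deserve a moment's care are the completeness of the scan of admissible patterns (in particular, noticing that the pattern $(3,4,4,6)$ just barely fails to reach degree $8$, so that two squares can never share a vertex of a face of degree $\ge 8$) and the verification that $\sigma,\tau_1,\tau_2$ really are three distinct faces around $x$, so that the remaining face $\omega$ is the only degree of freedom left in $\pt(x)$.
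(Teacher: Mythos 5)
Your proof is correct and follows exactly the route the paper intends for this omitted proof: restrict the vertex patterns on $\partial\sigma$ to $(3,3,3,k)$ or $(3,3,4,k)$ via Table~\ref{tabl1} and \eqref{vanishing-cur-pattern}, which immediately forces the surrounding faces to be triangles or squares and rules out two squares meeting at a vertex of $\sigma$ (your explicit curvature estimate $\Phi(x)\le -\tfrac{1}{24}$ at such a vertex is a clean way to phrase the contradiction). No gaps; nothing further needed.
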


For the faces lower-adjacent to a large face, we obtain that they have no intersection except vertices in the large face.
\begin{prop}\label{nonadj1}
Let $\Gamma$ be a $4$-regular infinite planar graph satisfying $\Gamma\in \NNG$ and $\sigma$ be a face in $\Gamma$ with facial degree $8\leq k\leq 12$.   Then for any $\tau,\omega$ lower-adjacent to $\sigma,$ \begin{equation}\label{eq4:eq2}(\partial \tau\setminus \partial \sigma)\cap (\partial \omega\setminus \partial \sigma)=\emptyset.\end{equation}
\end{prop}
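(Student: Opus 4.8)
The plan is to argue by contradiction: suppose two faces $\tau,\omega$ lower-adjacent to $\sigma$ share a vertex $y \notin \partial\sigma$, and derive a contradiction with the non-negative curvature hypothesis and the structural constraints on vertices in $\partial\sigma$. By Proposition~\ref{prop:nonadj}, both $\tau$ and $\omega$ are triangles or squares, and every vertex of $\partial\sigma$ has pattern $(3,3,3,k)$ or $(3,3,4,k)$ with $8 \le k \le 12$. First I would observe that since $\tau$ and $\omega$ are lower-adjacent to $\sigma$, each of them has an edge on $\partial\sigma$ (because any face meeting $\bar\sigma$ along a vertex only would be $\sigma$-adjacent to another face through that vertex, contradicting the tessellation property combined with $4$-regularity and Proposition~\ref{prop:nonadj}; more simply, for a face lower-adjacent to $\sigma$ we mean $\bar\tau \cap \bar\sigma$ contains an edge). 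So $\tau$ has vertices $x_1,x_2 \in \partial\sigma$ with $\{x_1,x_2\} = \tau\cap\sigma$, and similarly $\omega$ has $x_3,x_4 \in \partial\sigma$.

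The heart of the argument is a local analysis around the shared vertex $y \notin \partial\sigma$. Since $\Gamma$ is $4$-regular, $y$ is incident to exactly four faces; two of them are $\tau$ and $\omega$. Because $y \notin \partial\sigma$, the face $\sigma$ is not among these four, yet $\tau$ and $\omega$ both touch $\sigma$. I would trace the edges at $y$: walking from $\tau$ to $\omega$ around $y$ in one rotational direction passes through a face $\rho_1$, and in the other direction through a face $\rho_2$ (possibly $\rho_1 = \rho_2$ if $\tau,\omega$ are opposite at $y$, but then they are not adjacent at $y$ — handle this configuration separately, it forces $y \in \partial\sigma$ via the edge structure). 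In the generic case, consider the edge $e$ of $\tau$ emanating from $y$ toward $x_i \in \partial\sigma$: this edge connects $y$ to a vertex of $\partial\sigma$, so $y$ is a vertex adjacent to $\partial\sigma$. Now since $x_i$ has pattern $(3,3,3,k)$ or $(3,3,4,k)$, the three or four faces at $x_i$ other than $\sigma$ are small (triangles, plus possibly one square), and $\tau$ is one of them. The key combinatorial fact I want is that the union $\bar\tau \cup \bar\omega$ together with the small faces forced around $x_1,x_2,x_3,x_4$ would compress the portion of $\partial\sigma$ between these vertices too much — specifically, if $\tau$ and $\omega$ are not $\sigma$-adjacent (which we get from Proposition~\ref{prop:nonadj} when they are squares, and must check when triangles), then there must be at least one full face of $\partial\sigma$'s neighborhood strictly between them, and that face plus $\tau,\omega$ cannot all fit around $y$ while $y$ avoids $\sigma$.

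The cleanest execution: enumerate the cases by the types of $\tau$ and $\omega$ (triangle-triangle, triangle-square, square-square). In each case, label all vertices and faces in the disk bounded by $\bar\tau \cup \bar\omega \cup (\text{arc of }\partial\sigma)$, use $4$-regularity to determine every vertex degree, use the pattern restrictions on $\partial\sigma$-vertices, and show that some interior vertex is forced to have a vertex pattern not appearing in Table~\ref{tabl1} or \eqref{vanishing-cur-pattern} — i.e., it would have strictly negative curvature, or a face of degree $\ge 13$ appears contradicting Proposition~\ref{max-face-degree}. Alternatively, a cleaner route is to show $y$ itself, having two sides shared with $\tau$ and $\omega$ which both abut $\sigma$, forces the fourth neighbor configuration to reintroduce $\sigma$ at $y$, contradicting $y \notin \partial\sigma$. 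The main obstacle I anticipate is the case analysis bookkeeping — particularly the triangle-square and square-square subcases where the squares being non-$\sigma$-adjacent (Proposition~\ref{prop:nonadj}) must be combined correctly with the $4$-regularity to pin down the shared vertex — and ensuring no configuration escapes via a vertex of pattern $(4,4,4,4)$ or $(3,4,4,6)$, which are curvature-zero and hence allowed. I would structure the proof to dispatch these by carefully noting that any vertex adjacent to $\partial\sigma$ and incident to two faces touching $\sigma$ inherits strong constraints, reducing all subcases to a single local picture.
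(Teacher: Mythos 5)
Your high-level strategy --- argue by contradiction, split into the triangle--triangle, triangle--square and square--square cases, and use $4$-regularity, the patterns $(3,3,3,k)$, $(3,3,4,k)$ on $\partial\sigma$ and the tessellation axioms to kill each configuration --- coincides with the paper's. But what you have written is only the plan: none of the three cases is actually carried out, and the entire content of the proposition \emph{is} that case-by-case bookkeeping, which you explicitly defer. Worse, the contradictions you announce you will aim for (an interior vertex with a pattern of negative curvature, a face of degree $\geq 13$, or the configuration ``reintroducing $\sigma$'' at the shared vertex) are not the obstructions that actually arise, so executing your outline as written would not close the cases. In the triangle--triangle case, for instance, the shared vertex $x\notin\partial\sigma$ has all four of its neighbours on $\partial\sigma$; after reducing to the situation where $\tau$ and $\omega$ are not $\sigma$-neighbours, one may assume two of these neighbours, say $y_1\prec\tau$ and $y_3\prec\omega$, are non-adjacent, and then one of the two remaining faces at $x$ must contain one edge of $\tau$ and one edge of $\omega$, so its closure meets $\bar{\sigma}$ in two non-adjacent vertices --- the contradiction is with tessellation axiom (iii), not with curvature: $x$ itself can perfectly well have an admissible pattern such as $(3,3,3,4)$, no negative-curvature vertex or face of degree $\geq 13$ need appear, and $\sigma$ is never forced to become incident to $x$. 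Likewise, in the triangle--square and square--square cases the working contradiction comes from the triangles that the patterns $(3,3,4,k)$ force to be $\sigma$-adjacent to $\tau$ or $\omega$, whose apexes are then forced to be extra neighbours of the shared vertex, contradicting $|x|=4$; this mechanism does not appear in your sketch.

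Two smaller points. First, ``lower-adjacent to $\sigma$'' already means by definition that the closures share an edge, so your opening argument that $\tau$ and $\omega$ each have an edge on $\partial\sigma$ is unnecessary (and the justification you give for it is circular). Second, the reduction you should make at the outset is the one the paper makes: the claim is easy when $\tau$ and $\omega$ are $\sigma$-neighbours, so one may assume they are not; it is precisely this assumption that supplies the non-adjacent pair $y_1,y_3$ on $\partial\sigma$ needed to trigger the violation of axiom (iii). As it stands, the proposal identifies the right framework but leaves a genuine gap where the proof should be, and the contradiction types it proposes would fail in several of the cases.
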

\begin{remark} Let $\{\tau_i\}_{i=1}^{k}$ be faces lower-adjacent to a face $\sigma$ with facial degree $k$ and $8\leq k \leq 12$. This proposition implies that the vertices in $\{\partial \tau_i\setminus \partial \sigma\}_{i=1}^{k}$ has no identification. This means that in general we have the picture as in Figure~\ref{Fig:2}.
\end{remark}

\begin{proof} By Proposition~\ref{prop:nonadj}, $\tau$ (resp. $\omega$) is either a triangle or a square. By the tessellation properties, one easily shows that $\tau$ and $\omega$ satisfy \eqref{eq4:eq2} if they are $\sigma$-neighbours. We may assume that $\tau$ and $\omega$ are not $\sigma$-neighbours. Suppose that the statement \eqref{eq4:eq2} is not true.

We divide it into cases.

\emph{Case~1.} $|\tau|=|\omega|=3.$ Let $x=(\partial \tau\setminus \partial \sigma)\cap (\partial \omega\setminus \partial \sigma).$ Let $e_1,e_2$ (resp. $e_3,e_4$) be two edges contained in $\tau$ (resp. in $\omega$), but not in $\sigma.$ For $1\leq i\leq 4,$ let $y_i$ be a vertex satisfying $y_i\prec e_i$ and $y_i\prec \sigma.$ Since $\tau$ is not $\sigma$-adjacent to $\omega$, either $y_1$ is not adjacent to $y_3$ or $y_2$ is not adjacent to $y_4.$ Without loss of generality, we assume that $y_1$ is not adjacent to $y_3,$ see Figure~\ref{Fig:6-4-1}.
Since $|x|=4,$ there are two other faces, $\tau'$ and $\omega'$, containing $x,$ so that each of them contains two edges from $\{e_i\}_{i=1}^4.$ Hence by the tessellation properties there is one of $\tau'$ and $\omega',$ say $\tau'$, containing two edges $e_1$ and $e_3$ or two edges $e_1$ and $e_4.$ For either case, it contradicts the tessellation properties, since $\tau'$ and $\sigma$ intersect at two non-adjacent vertices $\{y_1,y_3\}$ or $\{y_1,y_4\}$.

\begin{figure}[htbp]
 \begin{center}
   \begin{tikzpicture}
    \node at (0,0){\includegraphics[width=0.4\linewidth]{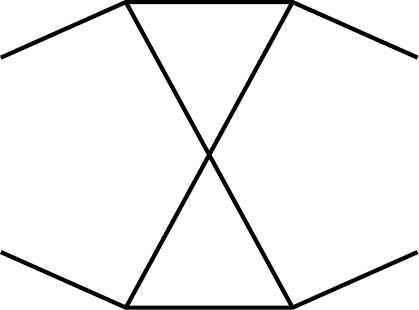}};
    \node at (-0.4, 0){\Large $x$};
    \node at (-.7,   .7){\Large $e_1$};
    \node at (.7,   .7){\Large $e_2$};
    \node at (0,  1){\Large $\tau$};
    \node at (0,  -1){\Large $\omega$};
     \node at (-.7,   -.7){\Large $e_3$};
    \node at (.7,   -.7){\Large $e_4$};
    \node at (-1,   2.1){\Large $y_1$};
    \node at (1,   2.1){\Large $y_2$};
    \node at (-1,   -2.1){\Large $y_3$};
    \node at (1,   -2.1){\Large $y_4$};
    \node at (-2.2,  -1){\Large $\sigma$};
   \end{tikzpicture}
  \caption{}\label{Fig:6-4-1}
 \end{center}
\end{figure}

\emph{Case~2.} One of $\tau$ and $\omega$ is a triangle and the other is not, say $|\tau|=3$ and $|\omega|=4.$
Let $e_1,e_2$ (resp. $e_3,e_4,e_5$) be the edges contained in $\tau$ (resp. in $\omega$), but not in $\sigma.$ For $1\leq i\leq 3,$ let $y_i$ be a vertex satisfying $y_i\prec e_i$ and $y_i\prec \sigma.$ We divide it into two subcases.

\emph{Case~2.1.} $y_1\not \sim y_3,$ see Figure~\ref{Fig:6-4-2}. \begin{figure}[htbp]
 \begin{center}
   \begin{tikzpicture}
    \node at (0,0){\includegraphics[width=0.4\linewidth]{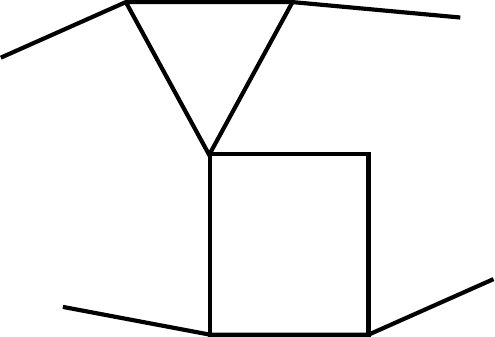}};
    \node at (-0.6, 0){\Large $x$};
    \node at (-1.1,   .9){\Large $e_1$};
    \node at (.4,   .9){\Large $e_2$};
    \node at (-.4,  1){\Large $\tau$};
    \node at (.5,  -.8){\Large $\omega$};
     \node at (-.7,   -.7){\Large $e_3$};
      \node at (1.6,   -.7){\Large $e_5$};
    \node at (.5,   0.4){\Large $e_4$};
    \node at (-1.2,   2.1){\Large $y_1$};
    \node at (.7,   2.1){\Large $y_2$};
    \node at (-0.5,   -2.1){\Large $y_3$};
        \node at (-1.6,  -1.2){\Large $\sigma$};
   \end{tikzpicture}
  \caption{}\label{Fig:6-4-2}
 \end{center}
\end{figure} By $|x|=4,$ there is one face $\tau'$ containing $x$ which contains the edges $e_1$ and $e_3$ or $e_2$ and $e_3.$ In both cases, $\tau'$ and $\sigma$ intersect at two non-adjacent vertices $\{y_1,y_3\}$ or $\{y_2,y_3\},$ which contradicts the tessellation properties.

\emph{Case~2.2.} $y_1\sim y_3,$ see Figure~\ref{Fig:4-6-3}.\begin{figure}[htbp]
 \begin{center}
   \begin{tikzpicture}
    \node at (0,0){\includegraphics[width=0.4\linewidth]{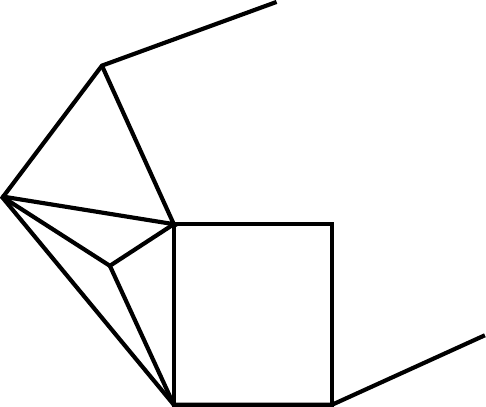}};
    \node at (-0.5, 0){\Large $x$};
    \node at (-1.5,   .1){\Large $e_1$};
    \node at (-.8,   .7){\Large $e_2$};
    \node at (-1.6,  .5){\Large $\tau$};
    \node at (0.2,  -1.2){\Large $\omega$};
     \node at (-.4,   -1.2){\Large $e_3$};
    \node at (.1,  0 ){\Large $e_4$};
      \node at (1.2,  -1.2 ){\Large $e_5$};
    \node at (-2.7,   .1){\Large $y_1$};
    \node at (-1.6,   1.8){\Large $y_2$};
    \node at (-1,   -2.2){\Large $y_3$};
        \node at (-1.5,   -1){\Large $\eta$};
        \node at (-1.4,   -.4){\Large $z$};
            \node at (2.2,  -1.3){\Large $\sigma$};

   \end{tikzpicture}
  \caption{}\label{Fig:4-6-3}
 \end{center}
\end{figure}  Let $\eta$ be a face lower-adjacent to $\sigma$ which contains the edge $\{y_1y_3\}.$ Note that $\eta$ and $\omega$ are $\sigma$-adjacent. By Proposition~\ref{prop:nonadj} and $|\omega|=4,$ $|\eta|=3.$
Set $z=\partial \eta\setminus \partial \sigma.$ It is easy to see that $z\neq x.$ Considering the vertex pattern of $y_3,$ we get $z\sim x.$ This contradicts $|x|=4.$

\emph{Case~3.}  $|\tau|=|\omega|=4.$ We divide it into subcases.

\emph{Case~3.1.} $\sharp (\partial \tau \cap \partial \omega)=2.$ Let $\{x,y\}=\partial \tau \cap \partial \omega.$ By the orientability of $\R^2,$ we have the case as in Figure~\ref{Fig:4-6-4}. \begin{figure}[htbp]
 \begin{center}
   \begin{tikzpicture}
    \node at (0,0){\includegraphics[width=0.4\linewidth]{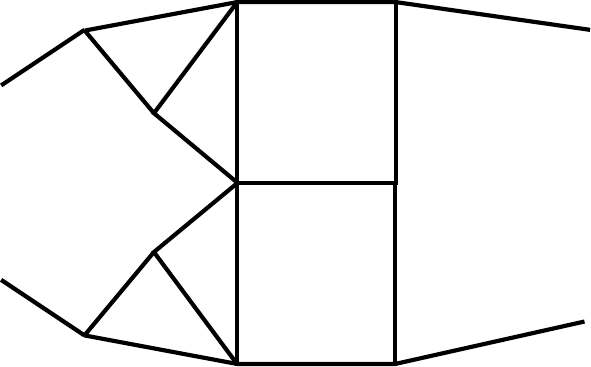}};
    \node at (-.8, 0){\Large $x$};
        \node at (1.2, 0){\Large $y$};
    \node at (.2,  .7){\Large $\tau$};
        \node at (-1.5,  .5){\Large $z_1$};
         \node at (-1.5,  -.5){\Large $z_2$};
    \node at (0.2,  -.8){\Large $\omega$};
        \node at (-1.2,  1.1){\Large $\tau_1$};
        \node at (-1.2,  -1.1){\Large $\tau_2$};
            \node at (-2.3,  -.7){\Large $\sigma$};
   \end{tikzpicture}
  \caption{}\label{Fig:4-6-4}
 \end{center}
\end{figure} By a similar argument as in the proof of Case 2.3, one can show that there are more than two edges on the boundary of $\sigma$ which lie on the left hand side of $\tau$ and $\omega.$  Let $\tau_1\smile \sigma$ (resp. $\tau_2\smile \sigma$) be the face $\sigma$-adjacent to $\tau$ (resp. $\omega$) as in Figure~\ref{Fig:4-6-4}. By Proposition~\ref{prop:nonadj}, $|\tau_i|=3,$ $i=1,2.$ Set $z_i=\partial \tau_i\setminus \partial \sigma$ for $i=1,2.$ By Case~1, $z_1,$ $z_2,$ $x$ are different. By considering the vertex patterns of $\partial \tau_1\cap \partial \tau$ (resp. $\partial \tau_2\cap \partial \omega$), we get that $z_1\sim x$ ($z_2\sim x$). This contradicts $|x|=4.$

\emph{Case~3.2.} $\sharp (\partial \tau \cap \partial \omega)=1.$ Set $x=\partial \tau \cap \partial \omega.$
Let $\tau_1\smile \sigma$ (resp. $\tau_2\smile \sigma$) be the face $\sigma$-adjacent to $\tau$ (resp. $\omega$) as in Figure~\ref{Fig:4-6-5}. \begin{figure}[htbp]
 \begin{center}
   \begin{tikzpicture}
    \node at (0,0){\includegraphics[width=0.4\linewidth]{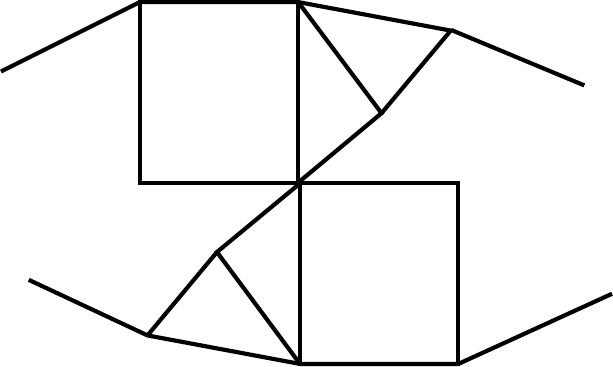}};
    \node at (.2, -.3){\Large $x$};
     \node at (-.8,  .7){\Large $\tau$};
        \node at (1,  .5){\Large $z_1$};
         \node at (-1,  -.5){\Large $z_2$};
    \node at (0.5,  -.8){\Large $\omega$};
        \node at (.6,  1.1){\Large $\tau_1$};
        \node at (-.7,  -1.1){\Large $\tau_2$};
            \node at (-2.1,  -.6){\Large $\sigma$};
   \end{tikzpicture}
  \caption{}\label{Fig:4-6-5}
 \end{center}
\end{figure}  The same argument as in Case~3.1 yields that $\tau_1$ and $\tau_2$ are triangles. Set $z_i=\partial \tau_i\setminus \partial \sigma$ for $i=1,2.$ We have $z_1\sim x$ ($z_2\sim x$) which contradicts $|x|=4.$

By combining all cases above, we prove the theorem.
\end{proof}

\section{The local structure of large faces in $4$-regular infinite planar graphs with non-negative combinatorial curvature}\label{sec:4}
In this section, we study the local structure of large faces in infinite $4$-regular planar graphs with non-negative combinatorial curvature. The main result of this section is the following theorem, which states that the $1$-neighborhoods of two large faces are disjoint. This will be crucial for the discharging methods in Section~\ref{section 5}.

We denote by
\[
U_1(\sigma)=\d\sigma\cup \{x\in V: x\sim z, z\in\d\sigma\},
\]
the $1$-neighborhood of a face $\sigma$ in $\Gamma$,  and by
\[
\d U_1(\sigma)=U_1(\sigma)\setminus \d\sigma,
\]
the boundary vertices of $U_1(\sigma)$.
\begin{theorem}\label{nexist-both-adj}
Let $\Gamma$ be a $4$-regular infinite planar graph satisfying $\Gamma\in \NNG$. For $i=1,2$, let $\sigma_i$ be a face with facial degree $k_i\geq 8$. Then
\[
U_1(\sigma_1)\cap U_1(\sigma_2)=\varnothing.
\]
\end{theorem}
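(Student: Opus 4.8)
The plan is to argue by contradiction. Suppose $U_1(\sigma_1)\cap U_1(\sigma_2)\neq\varnothing$ for two faces $\sigma_1,\sigma_2$ of facial degree $\geq 8$. First I would dispose of the case $\sigma_1=\sigma_2$: if a single large face $\sigma$ had $U_1(\sigma)$ self-overlapping in a nontrivial way, this would already contradict the tessellation axioms together with Proposition~\ref{nonadj1}, which guarantees that the vertices in $\bigcup_i(\partial\tau_i\setminus\partial\sigma)$ are pairwise distinct, so the picture near $\sigma$ is exactly the ``standard collar'' of Figure~\ref{Fig:2}. Hence we may assume $\sigma_1\neq\sigma_2$. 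By \eqref{two-k-gon-nonadj} in the proof of Proposition~\ref{num-est-5-11} we already know $\partial\sigma_1\cap\partial\sigma_2=\varnothing$ (both faces have degree $\geq 7$), so the only way the $1$-neighborhoods can meet is that there exist $z_1\in\partial\sigma_1$ and $z_2\in\partial\sigma_2$ with either $z_1=w$ for some $w\sim z_2$, or $z_1\sim w\sim z_2$ for a common intermediate vertex $w$; equivalently, some vertex $w\in\partial U_1(\sigma_1)$ lies in $\overline{\partial\sigma_2}\cup\partial U_1(\sigma_2)$.

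Next I would exploit the rigid local structure forced by non-negative curvature. By Table~\ref{tabl1} and \eqref{vanishing-cur-pattern}, every $z\in\partial\sigma_i$ has pattern $(3,3,3,k_i)$ or $(3,3,4,k_i)$, so every face lower-adjacent to $\sigma_i$ is a triangle or a square (Proposition~\ref{prop:nonadj}), and two squares adjacent to $\sigma_i$ are never $\sigma_i$-adjacent. This pins down the combinatorics of the collar around each $\sigma_i$ completely: reading around $\partial\sigma_i$, one sees an alternating-type arrangement of triangles and squares, and each boundary vertex $z$ of $\sigma_i$ is incident to at most one square. Consequently, for a vertex $w\in\partial U_1(\sigma_1)$, the vertex $w$ is incident to one of the collar triangles/squares of $\sigma_1$, and since $|w|=4$ the patterns of $w$ are severely constrained (typically $w$ must have pattern $(3,3,\ast,\ast)$ or $(3,4,4,\ast)$ with small entries, since the big face $\sigma_1$ is ``two steps away'' from $w$). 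Meanwhile, if $w$ is also within distance one of $\sigma_2$, then the same analysis applied to $\sigma_2$ forces a second, competing constraint on the four faces around $w$. I would enumerate the finitely many ways the collar of $\sigma_1$ can present itself at $w$ (triangle-triangle, triangle-square, square-triangle around $w$) and in each case show that the four corners at $w$ cannot simultaneously accommodate being adjacent, one step removed, to a face of degree $\geq 8$ belonging to $\sigma_2$ — either because this would require $|w|\geq 5$, or because it would force two large faces to share an edge or a vertex (contradicting \eqref{two-k-gon-nonadj}), or because it would produce a vertex of pattern whose curvature is negative (contradicting $\Gamma\in\NNG$), or it directly violates a tessellation axiom (iii).

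The main obstacle will be the bookkeeping in the case analysis: there are several sub-configurations depending on whether the collar faces of $\sigma_1$ and $\sigma_2$ meeting near $w$ are triangles or squares, and on whether $w$ itself lies in $\partial U_1(\sigma_1)$ but in $\partial\sigma_2$, or in $\partial U_1$ of both — these must be handled separately, much in the spirit of the case split (Cases 1–3.2) in the proof of Proposition~\ref{nonadj1}. The key simplifying leverage I expect to use repeatedly is $4$-regularity: every vertex has exactly four incident faces, so once two of them are identified (a collar triangle of $\sigma_1$) and we try to also force a collar face of $\sigma_2$ nearby, the degree budget is quickly exhausted, and the ``large face pushes its neighbors to be triangles/squares'' mechanism propagates a contradiction outward along $\partial\sigma_1$ to a vertex $x_i\in\partial\tau_i\setminus\partial\sigma_1$ which, being incident to three triangles, would have strictly positive curvature and then, via the Cohn–Vossen bound (Theorem~\ref{thm:DeVosMohar}) combined with $\sum_{z\in\partial\sigma_1}\Phi(z)$ plus $\sum_{z\in\partial\sigma_2}\Phi(z)$ already accounting for too much total curvature, overshoots $\Phi(\Gamma)\leq 1$. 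I would close the argument with this global curvature-counting contradiction once the local picture is fixed.
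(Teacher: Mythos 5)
Your reduction of the problem to the two contact configurations (a common vertex of $\partial U_1(\sigma_1)$ and $\partial U_1(\sigma_2)$, or a vertex of $\partial\sigma_2$ adjacent to $\partial\sigma_1$) matches the paper's Remark~\ref{three-cases}, and the easy step $\partial\sigma_1\cap\partial\sigma_2=\varnothing$ is fine. But the core of your plan has a genuine gap: the claim that each local picture at the overlap vertex $w$ can be killed on the spot (degree budget, a tessellation axiom, or a negative-curvature pattern) is false. The critical configuration --- a triangle lower-adjacent to $\sigma_1$ and a triangle lower-adjacent to $\sigma_2$ sharing their apex $w$ --- is locally perfectly consistent: $w$ then has pattern $(3,3,3,3)$, $(3,3,3,4)$ or $(3,3,4,4)$, all of non-negative curvature, with no violation of $4$-regularity or of the tessellation axioms; the same holds for the square/square and square/triangle contacts (these are exactly Cases 1--3 in the paper's Theorem~\ref{prop:disj2}). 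So no purely local contradiction exists, and the entire burden falls on a global argument that your sketch only gestures at.

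Your closing curvature count is also miscalibrated. A single vertex of strictly positive curvature does not contradict $\Phi(\Gamma)\leq 1$, and the boundary sums $\sum_{z\in\partial\sigma_i}\Phi(z)$ can be as small as $1-\frac{k_i}{12}$ (zero for $12$-gons, $\frac13$ for $8$-gons), so the two boundary sums together never ``already account for too much curvature.'' What actually produces the contradiction in the paper is a propagation step you do not supply: once one contact exists, Lemma~\ref{square-to-trianglespair} and Lemma~\ref{all-faces-type} (and, in the asymmetric square--triangle case, the long Proposition~\ref{non-square-lower-triangle} with its averaged curvature $\Phi'$) force the sandwiched triangle/square strip to continue around essentially the whole boundary of $\sigma_1$, producing on the order of $k_1$ intermediate vertices each of curvature at least $\frac16$ (or $2(k_1-1)$ vertices of modified curvature at least $\frac1{12}$); only then does the total, e.g. $1+\frac{k_1-l}{12}>1$ in Case 3 of Theorem~\ref{prop:disj2}, contradict Theorem~\ref{thm:DeVosMohar}. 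The analogous propagation is also what carries Proposition~\ref{vert-in-sigma-conn-anosigma} in your second contact configuration. Without proving this ring-propagation --- and in particular without handling the mixed square/triangle case, which is where most of the technical work of the paper lies --- the argument does not close.
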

\begin{remark}\label{three-cases}
Clearly,  $U_1(\sigma_i)=\d U_1(\sigma_i)\cup\d\sigma_i$ for $i=1,2$. So we divide the proof into three cases:
\begin{enumerate}[(i)]
 \item $\partial\sigma_1\cap \partial\sigma_2=\varnothing$;
 \item $\d U_1(\sigma_1)\cap \d U_1(\sigma_2)=\varnothing$;
 \item $\d U_1(\sigma_i)\cap \d\sigma_j=\varnothing$ for $1\leq i\neq j\leq 2$,
\end{enumerate}
see Proposition \ref{prop:disj1}, Theorem \ref{prop:disj2} and Proposition \ref{vert-in-sigma-conn-anosigma} respectively in the following.
\end{remark}

\begin{prop}\label{prop:disj1} Let $\Gamma$ be a $4$-regular infinite planar graph satisfying $\Gamma\in \NNG$. For $i=1,2$, let $\sigma_i$ be a face with facial degree $k_i\geq 8$. Then $\partial \sigma_1\cap \partial \sigma_2=\emptyset.$
\end{prop}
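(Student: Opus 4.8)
The plan is to argue by contradiction: suppose $\partial\sigma_1\cap\partial\sigma_2\neq\emptyset$ and pick a vertex $x$ lying in both boundaries. Since $\Gamma$ is $4$-regular and $x$ is incident to two faces of degree $\geq 8$, the vertex pattern of $x$ must list both $k_1$ and $k_2$ among its four face-degrees. Consulting Table~\ref{tabl1} and \eqref{vanishing-cur-pattern}, a vertex with non-negative curvature incident to a face of degree $\geq 8$ has pattern $(3,3,3,k)$ or $(3,3,4,k)$; in either case at most one of the four incident faces can have degree $\geq 8$ (a pattern like $(3,3,k_1,k_2)$ with $k_1,k_2\geq 8$ would give curvature $1-2+\tfrac13+\tfrac13+\tfrac1{k_1}+\tfrac1{k_2}\leq 1-2+\tfrac13+\tfrac13+\tfrac18+\tfrac18<0$). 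Hence $\sigma_1=\sigma_2$, contradicting that they are distinct faces; so no common boundary vertex exists, unless $\sigma_1=\sigma_2$, which is excluded. Wait — one must be slightly careful: the statement does not literally say $\sigma_1\neq\sigma_2$, so I should phrase the conclusion as ``either $\sigma_1=\sigma_2$ or $\partial\sigma_1\cap\partial\sigma_2=\emptyset$,'' and note that the application in Theorem~\ref{nexist-both-adj} uses it with two genuinely distinct large faces (or the case $\sigma_1=\sigma_2$ is trivial since then $U_1(\sigma_1)\cap U_1(\sigma_2)=U_1(\sigma_1)\neq\emptyset$ but the theorem presumably intends distinct faces).

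Actually the cleaner and more likely intended route is purely local and does not even need the two faces to be distinct in the hypothesis: if $x\in\partial\sigma_1\cap\partial\sigma_2$ with $\sigma_1\neq\sigma_2$, then among the four faces incident to $x$ two have degree $\geq 8$, forcing the pattern of $x$ to contain two entries $\geq 8$, which by the curvature computation above makes $\Phi(x)<0$, contradicting $\Gamma\in\NNG$. First I would state this curvature inequality explicitly, then invoke it. This is short enough that no case division is needed beyond reading off the admissible patterns.

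The only mild subtlety — and the step I expect to require the most care in writing — is making sure the enumeration of vertex patterns is complete: I must justify that Table~\ref{tabl1} together with \eqref{vanishing-cur-pattern} exhausts all patterns of a vertex with $\Phi(x)\geq 0$ in a $4$-regular tessellation, which is exactly the classification recalled at the start of \S\ref{sec:3} (citing \cite{MR2410938}). Granting that, the proof is a one-line curvature estimate. I would write it as: for a vertex $x$ with $|x|=4$ incident to faces of degrees $a\leq b\leq c\leq d$, $\Phi(x)=-1+\tfrac1a+\tfrac1b+\tfrac1c+\tfrac1d$; if $c,d\geq 8$ then $\Phi(x)\leq -1+\tfrac13+\tfrac13+\tfrac18+\tfrac18=-\tfrac1{12}<0$, impossible. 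Hence a vertex of $\Gamma$ is incident to at most one face of degree $\geq 8$, which gives $\partial\sigma_1\cap\partial\sigma_2=\emptyset$ when $\sigma_1\neq\sigma_2$.
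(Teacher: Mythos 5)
Your argument is correct and is essentially the paper's own proof: the authors also take a common boundary vertex $x$ and observe $\Phi(x)<0$, contradicting $\Gamma\in\NNG$ (they simply omit the explicit bound $\Phi(x)\leq -1+\tfrac13+\tfrac13+\tfrac18+\tfrac18=-\tfrac1{12}$ that you spell out). Your remark about the implicit assumption $\sigma_1\neq\sigma_2$ is a fair point of hygiene but does not change the substance.
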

\begin{proof} Suppose that it is not true. For any vertex $x\in \partial \sigma_1\cap \partial \sigma_2,$ we have $\Phi(x)<0.$ This is a contradiction.
\end{proof}

Before proving Theorem \ref{prop:disj2}, we need the following two lemmas. Meantime, Proposition \ref{non-square-lower-triangle} is essential and we put it after Theorem \ref{prop:disj2}  since its proof is independent and tedious.
\begin{lemma}\label{square-to-trianglespair}
Let $\Gamma$ be a $4$-regular infinite planar graph satisfying $\Gamma\in \NNG$ and $\sigma_i$ be faces of $\Gamma$ with facial degree $k_i\geq 8$ for $i=1,2$. If there is a square lower-adjacent to $\sigma_1$ and $\sigma_2$, then there exist two triangles $\tau$ and $\omega$ such that $\tau$(resp. $\omega$) is $\sigma_1$-adjacent(resp. $\sigma_2$-adjacent) to the square, and $\partial\tau\setminus \partial\sigma_1=\partial\omega\setminus \partial\sigma_2$.
\end{lemma}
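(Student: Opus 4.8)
The plan is to make the whole argument local around the square $Q$, extracting the structure from the vertex--pattern classification (Table~\ref{tabl1} together with \eqref{vanishing-cur-pattern}), which is the only place where the curvature hypothesis will be used. Write $e_1=Q\cap\sigma_1$ and $e_2=Q\cap\sigma_2$ for the edges along which $Q$ is lower-adjacent to $\sigma_1$ and $\sigma_2$. First I would observe that $e_1$ and $e_2$ cannot share a vertex, since such a vertex would lie in $\partial\sigma_1\cap\partial\sigma_2$, which is empty by Proposition~\ref{prop:disj1}; here one may assume $\sigma_1\neq\sigma_2$, because if $\sigma_1=\sigma_2$ the tessellation axioms force $e_1=e_2$ and one simply takes $\tau=\omega$. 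Hence $e_1$ and $e_2$ are opposite edges of $Q$: writing the boundary $4$-cycle of $Q$ as $a_1\,b_1\,a_2\,b_2$ with $e_1=\{a_1,b_1\}$ and $e_2=\{a_2,b_2\}$, the two remaining edges of $Q$ are $\{b_1,a_2\}$ and $\{a_1,b_2\}$. I then fix the auxiliary faces: let $X\neq Q$ be the face with $\{b_1,a_2\}\prec X$; let $f\neq e_1$ be the edge with $b_1\prec f\prec\sigma_1$ and $\tau\neq\sigma_1$ the face with $f\prec\tau$; symmetrically let $g\neq e_2$ be the edge with $a_2\prec g\prec\sigma_2$ and $\omega\neq\sigma_2$ the face with $g\prec\omega$.

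The key step is to read off the four faces incident to $b_1$. By $4$-regularity these are, in cyclic order, $\sigma_1$, $\tau$, $X$, $Q$, and $\sigma_1,Q,\tau,X$ are pairwise distinct --- the only care needed here is to exclude coincidences such as $X=\sigma_1$ or $\tau=X$ using the tessellation axioms and the fact that closed faces are disks. Since $|\sigma_1|=k_1\geq 8$, non-negativity of the curvature forces $\pt(b_1)\in\{(3,3,3,k_1),(3,3,4,k_1)\}$; and since $Q$ already contributes a face of degree $4$, in fact $\pt(b_1)=(3,3,4,k_1)$, so $|\tau|=|X|=3$. I would then run the identical argument at $a_2\in\partial\sigma_2$: its four incident faces are $\sigma_2$, $\omega$, the face across $\{b_1,a_2\}$ from $Q$ --- which is the same face $X$ --- and $Q$, in cyclic order, and $\pt(a_2)=(3,3,4,k_2)$ forces $|\omega|=3$ (and re-confirms $|X|=3$).

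To finish, I would exploit that the triangle $X$ is glued to both $\tau$ and $\omega$. Since $\tau$ is a triangle with $f=\tau\cap\sigma_1$, tessellation axiom (iii) gives $\bar\tau\cap\bar\sigma_1=\bar f$, so its third vertex $q$ (the one not on $f$) is the only vertex of $\partial\tau$ off $\partial\sigma_1$, i.e. $\partial\tau\setminus\partial\sigma_1=\{q\}$; moreover the edge of $\tau$ at $b_1$ other than $f$ is $\{b_1,q\}$, and by the cyclic order at $b_1$ this edge is shared with $X$. Symmetrically $\partial\omega\setminus\partial\sigma_2=\{r\}$ for the third vertex $r$ of $\omega$, and $\{a_2,r\}$ is an edge of $\omega$ shared with $X$. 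Thus the triangle $X$ has among its edges $\{q,b_1\}$, $\{b_1,a_2\}$, $\{a_2,r\}$, and these are three distinct edges (the vertices $q,b_1,a_2$ being pairwise distinct and $r\notin\{b_1,a_2\}$, all by the tessellation axioms). A triangle has only three edges, so these are all of $\partial X$ and they close into the $3$-cycle $q\,b_1\,a_2\,r$ --- which is possible only if $r=q$. Hence $\partial\tau\setminus\partial\sigma_1=\{q\}=\{r\}=\partial\omega\setminus\partial\sigma_2$. Finally, a routine check using the tessellation axioms gives $\bar\tau\cap\bar Q=\{b_1\}\subset\partial\sigma_1$ and $\bar\omega\cap\bar Q=\{a_2\}\subset\partial\sigma_2$, so $\tau$ and $\omega$ are $\sigma_1$-adjacent, respectively $\sigma_2$-adjacent, to $Q$, as required.

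The step I expect to be the real obstacle is the bookkeeping in the key step: correctly determining the cyclic order of the four faces around $b_1$ (and around $a_2$) and systematically ruling out the degenerate identifications among $\sigma_1,Q,\tau,X$. Everything downstream is forced once the local picture is fixed, and the curvature hypothesis enters only through the single clean fact that a boundary vertex of a face of degree $\geq 8$ which also borders a square must have pattern $(3,3,4,k)$, hence all of its other incident faces are triangles.
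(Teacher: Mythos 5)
Your proposal is correct and follows essentially the same route as the paper: at the two square corners $b_1\in\partial\sigma_1$ and $a_2\in\partial\sigma_2$ joined by an edge of the square, non-negative curvature forces the patterns $(3,3,4,k_1)$ and $(3,3,4,k_2)$, producing the triangles $\tau$, $\omega$ and the common triangle $X$ across the middle edge, which glues their apexes together. Your write-up merely adds details the paper leaves implicit (opposite edges via Proposition \ref{prop:disj1}, the degenerate case $\sigma_1=\sigma_2$, distinctness of the faces around $b_1$), so no further comment is needed.
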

\begin{figure}[htbp]
\begin{center}
   \begin{tikzpicture}
    \node at (0,0){\includegraphics[width=0.3\linewidth]{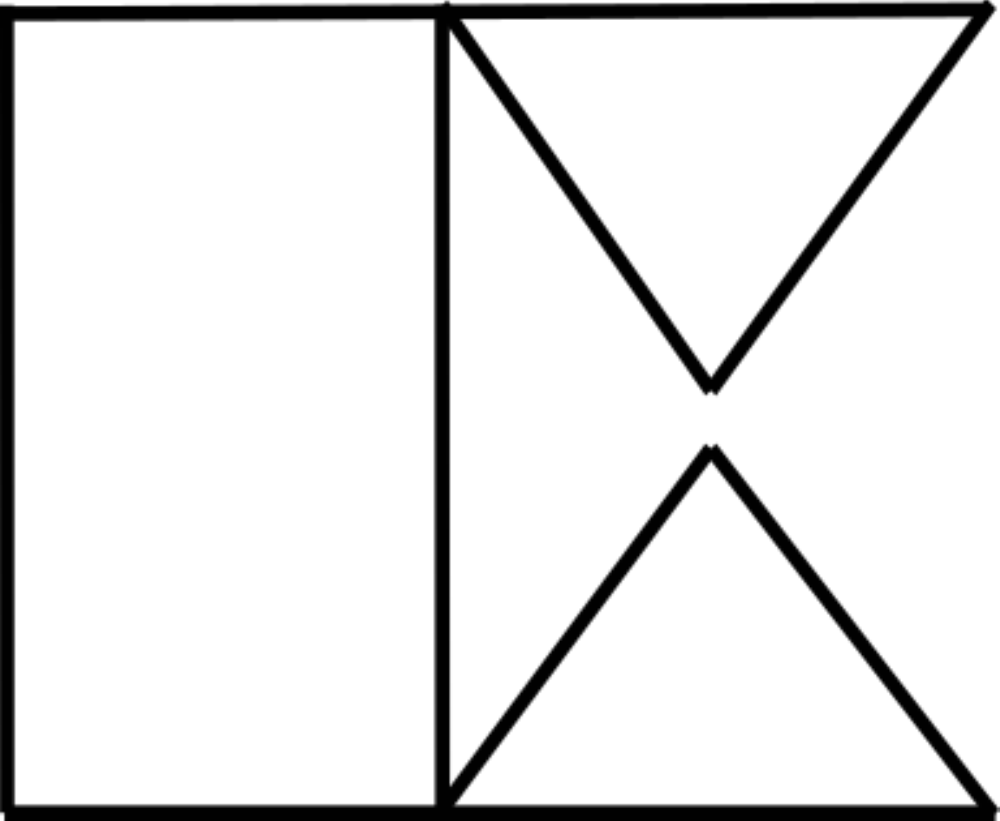}};
    \node at (1.1, .1){\small $p$};
    \node at ( .8, -.5){\small $q$};
    \node at ( -.2, 1.8){\small $p_1$};
    \node at ( -.2, -1.8){\small $q_1$};
    \node at (.8,  -1){\small $\omega$};
    \node at (.8,  1.0){\small $\tau$};
    \node at (-1,   .1){\small $\omega_1$};
    \node at (-1.4,  -2.1){\small $\sigma_2$};
    \node at (-1.4,   2.1){\small $\sigma_1$};
    \end{tikzpicture}
  \caption{}\label{square-up-down-triangles}
 \end{center}
\end{figure}

\begin{proof} Let $\omega_1$ be a square lower-adjacent to $\sigma_1$ and $\sigma_2$, as in Figure \ref{square-up-down-triangles}.
Since $\Pttn(x)=(3,3,3,k_i)$ or $(3,3,4,k_i)$ for $x\prec \sigma_i$, $i=1,2$,
then $|\omega_1|=4$ implies that there are two triangles, $\tau$ and $\omega$, $\sigma_1$-adjacent to $\omega_1$ and $\sigma_2$-adjacent to $\omega_1$, respectively. Let $p=\partial\tau\setminus \partial\sigma_1$, $q=\partial\omega\setminus\partial\sigma_2$, $p_1=\partial\tau\cap\partial\omega_1$ and $q_1=\partial\omega\cap\partial\omega_1$.
Since $\Pttn(p_1)=(3,3,4,k_1)$ and $|p_1|=4$, $\{p_1,q_1\}$ and $\{p_1,p\}$ are contained in a triangle. By $\Pttn(q_1)=(3,3,4,k_2)$ and $|q_1|=4$,
$\{p_1,q_1\}$ and $\{q_1,q\}$ are contained in a triangle. Then $p=q$.
This proves the lemma.
\end{proof}

\begin{figure}[htbp]
 \begin{center}
   \begin{tikzpicture}
    \node at (0,0){\includegraphics[width=0.38\linewidth]{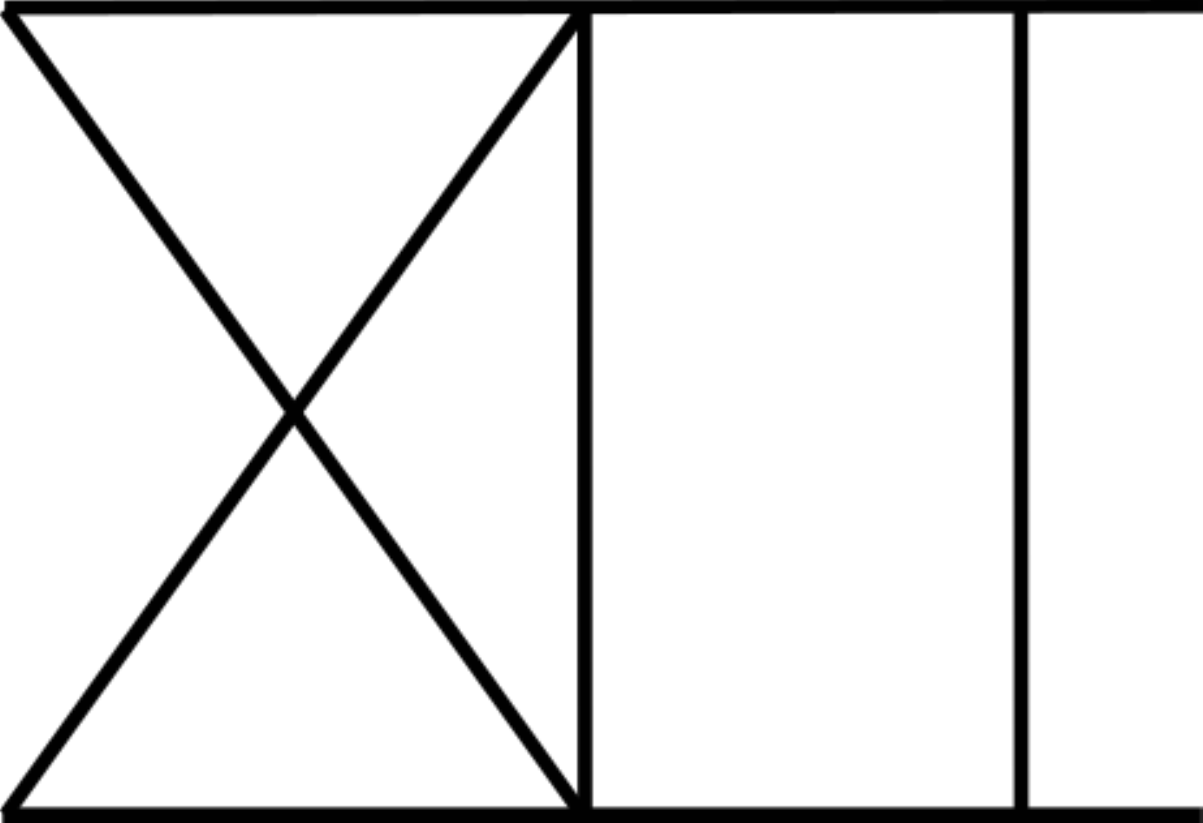}};
    \node at (.9, -.1){\small $\omega_1$};
    \node at (-.4, -.1){\small $\omega'$};
     \node at (-.45, .6){\small $e_1$};
   \node at (- .0, 1.8){\small $y_1$};
   \node at (1.6, 1.8){\small $p_1$};
     \node at (-2, .7){\small $e_2$};
    \node at (-2.2, 1.8){\small $y_2$};
     \node at (-.45, -.8){\small $e_4$};
    \node at (0, -1.8){\small $y_4$};
    \node at (1.6, -1.8){\small $q_1$};
     \node at (-2.1, -.8){\small $e_3$};
    \node at ( -2.2, -1.8){\small $y_3$};

    \node at (-1,  0){\small $x$};
    \node at (-1,  -1){\small $\omega$};
    \node at (-1,  1.0){\small $\tau$};

    \node at (-1.8,  - .1){\small $\tau'$};

    \node at (-1.3,  -2.1){\small $\sigma_2$};
    \node at (-1.2,   2.1){\small $\sigma_1$};

   \end{tikzpicture}
  \caption{}\label{up-down-triangles-3333}
 \end{center}
\end{figure}
\begin{lemma}\label{all-faces-type}
Assume that there are two triangles $\tau$ and $\omega$ such that $\tau\smile\sigma_1$  and $\omega\smile \sigma_2$.
If $\partial\tau\setminus\partial\sigma_1=\partial\omega\setminus\partial\sigma_2$,
then one of the following holds:
\begin{enumerate}
\item There exists a square $\omega_1$ such that $\omega_1$ is $\sigma_1$-adjacent to $\tau$ and $\omega_1$ is $\sigma_2$-adjacent to $\omega$.

\item There exist two triangles, $\tau_1$ and $\omega_1$, satisfying that $\tau_1$ is $\sigma_1$-adjacent to $\tau$ and $\omega_1$ is $\sigma_2$-adjacent to $\omega$ for $i=1,2$, such that $(\partial\tau_1\setminus\partial\sigma_1)\cap(\partial\omega_1\setminus\partial\sigma_2)\neq\varnothing$.
\end{enumerate}
\end{lemma}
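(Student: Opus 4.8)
The plan is to read everything off the local picture at the single vertex
\[
x:=\partial\tau\setminus\partial\sigma_1=\partial\omega\setminus\partial\sigma_2,
\]
which (since a triangle meets any face in at most a closed edge) has degree $4$ and is incident to both triangles $\tau$ and $\omega$. Writing $\partial\tau=\{x,y_1,y_2\}$ with $y_1y_2=\tau\cap\sigma_1$ and $\partial\omega=\{x,y_3,y_4\}$ with $y_3y_4=\omega\cap\sigma_2$, Proposition~\ref{prop:disj1} gives $\{y_1,y_2\}\cap\{y_3,y_4\}=\varnothing$, so $\tau$ and $\omega$ share no edge and are therefore the two \emph{opposite} faces at $x$; hence the four faces around $x$ are, cyclically, $\tau,\alpha,\omega,\beta$, and after relabelling we may assume the edges $xy_2,xy_3$ bound $\alpha$ (so $\{x,y_2,y_3\}\subseteq\partial\alpha$) while $xy_1,xy_4$ bound $\beta$. (If $\sigma_1=\sigma_2$, then the hypothesis together with Proposition~\ref{nonadj1} forces $\tau=\omega$; so we assume $\sigma_1\neq\sigma_2$, which also gives $\tau\neq\omega$.)

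The next step is to produce, at the vertices $y_2$ and $y_3$, the two ``outer'' faces appearing in the conclusion. Since $y_2\in\partial\sigma_1$ and $k_1\geq 8$, Table~\ref{tabl1} and \eqref{vanishing-cur-pattern} force $\Pttn(y_2)\in\{(3,3,3,k_1),(3,3,4,k_1)\}$; listing the faces at $y_2$ cyclically as $\sigma_1,\tau,\alpha,\gamma$, where $\gamma$ is the fourth face and shares a $\partial\sigma_1$-edge at $y_2$, this forces $\{|\alpha|,|\gamma|\}\subseteq\{3,4\}$ with at most one of them equal to $4$; in particular, if $|\alpha|=4$ then $|\gamma|=3$. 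Symmetrically the faces at $y_3$ are $\sigma_2,\omega,\alpha,\gamma'$, with $\gamma'$ sharing a $\partial\sigma_2$-edge at $y_3$, and again $|\alpha|=4$ forces $|\gamma'|=3$. A routine check with the tessellation axioms (using $|\sigma_1|\geq 3$, so the two $\partial\sigma_1$-neighbours of $y_2$ are distinct, and Proposition~\ref{prop:disj1}) shows $\bar\gamma\cap\bar\tau=\{y_2\}$, so $\gamma$ is $\sigma_1$-adjacent to $\tau$, and likewise $\gamma'$ is $\sigma_2$-adjacent to $\omega$.

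Now I would case-split on $|\alpha|$. If $|\alpha|=3$, then $\partial\alpha=\{x,y_2,y_3\}$, so the face lying across the edge $y_2y_3$ from $\alpha$ is at once the fourth face $\gamma$ at $y_2$ and the fourth face $\gamma'$ at $y_3$, i.e. $\gamma=\gamma'$; it contains $y_2,y_3$ together with the remaining $\partial\sigma_1$-neighbour $z$ of $y_2$ and the remaining $\partial\sigma_2$-neighbour $w$ of $y_3$, which are four distinct vertices, whence $|\gamma|\geq 4$. Combined with $|\gamma|\leq 4$ from the previous step, $\gamma$ is a square sharing an edge with $\sigma_1$ and with $\sigma_2$ and $\sigma_1$-adjacent to $\tau$, $\sigma_2$-adjacent to $\omega$ — this is conclusion~(1), with $\omega_1:=\gamma$. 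If $|\alpha|=4$, write $\partial\alpha=\{x,y_2,t,y_3\}$ cyclically; then $\gamma$ and $\gamma'$ are triangles, $\gamma$ lying across the edge $y_2t$ and $\gamma'$ across $y_3t$, so $t\in\partial\gamma\cap\partial\gamma'$, while $t\notin\partial\sigma_1\cup\partial\sigma_2$ (else axiom~(iii) would force $y_2t$ or $y_3t$ to be an edge of $\sigma_1$ or $\sigma_2$, which it is not). Hence $t\in(\partial\gamma\setminus\partial\sigma_1)\cap(\partial\gamma'\setminus\partial\sigma_2)$ — this is conclusion~(2), with $\tau_1:=\gamma$ and $\omega_1:=\gamma'$.

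The real work is concentrated in the two middle steps: showing that the fourth faces $\gamma,\gamma'$ have exactly the claimed incidence with $\tau,\omega$ and that, according to $|\alpha|$, they either coincide (and are then forced to be a square) or share precisely the one auxiliary vertex $t$, together with the distinctness of all the auxiliary vertices and the fact that they sit on the intended faces. These facts all follow from $4$-regularity, the curvature patterns of Table~\ref{tabl1}, and the tessellation axioms (i)--(iii), with Proposition~\ref{prop:disj1} ruling out the degenerate identifications; but they demand a careful reading of the cyclic order of faces around the $4$-valent vertices $x,y_2,y_3$, and this bookkeeping — rather than any single sharp estimate — is the main obstacle.
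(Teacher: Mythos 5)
Your proof is correct and follows essentially the same route as the paper: a local case analysis at the apex $x$ on the degree ($3$ or $4$) of one of the two side faces at $x$, with the patterns $(3,3,3,k_i)$, $(3,3,4,k_i)$ at the adjacent boundary vertices forcing either the common square (conclusion (1)) or the two triangles sharing an outer vertex (conclusion (2)). The only cosmetic differences are that you work with the side face between $\tau$ and $\omega$ directly and exclude the degenerate triangle case by counting four distinct vertices via Proposition~\ref{prop:disj1}, whereas the paper builds the same configurations beyond the other pair of boundary vertices and rules out the triangle by a curvature computation at the would-be identified vertex.
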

\begin{proof}
Let $x=\partial\tau\setminus\partial\sigma_1=\partial\omega\setminus\partial\sigma_2$.  Let $e_1,e_2$ (resp. $e_3,e_4$) be two edges contained in $\tau$ (resp. in $\omega$) ordered clockwise along $\tau$ (resp. $\omega$), but not in $\sigma.$ Since $|x|=4,$ there exists a face $\tau'$(resp. $\omega'$) containing $e_2,e_3$ (resp. $e_1,e_4$). For $1\leq i\leq 4,$ let $y_i$ be a vertex satisfying $y_i\prec e_i$, $y_1,y_2\prec \sigma_1$ and $y_3, y_4\prec \sigma_2$.

 Suppose that $|\omega'|=3$, as shown in Figure \ref{up-down-triangles-3333}.  Then $y_1\sim y_4$ and the edge $\{y_1,y_4\}$ is contained in $\omega'$. By $|y_1|=4$, there exists an edge $\{y_1,p_1\}\prec\sigma_1$ such that $\{y_1,p_1\}\nprec\tau$, and both $\{y_1,p_1\}$ and $\{y_1,y_4\}$ are contained in a face. Similarly, by $|y_4|=4$, there  exists an edge $\{y_4, q_1\}\prec\sigma_2$ such that $\{y_4,q_1\}\nprec\omega$, and both $\{y_4,q_1\}$ and $\{y_1,y_4\}$ are contained in a face. Hence, $\{y_1,p_1\}$, $\{y_1,y_4\}$ and $\{y_4,q_1\}$ are contained in a face, denoted by $\omega_1$. Since $\Pttn(y_1)=(3,3,3,k_1)$ or $(3,3,4,k_1)$,  $|\omega_1|=3,4$. We claim that $|\omega_1|=4$. Otherwise, by $p_1=q_1\prec \sigma_2$, together with $|\sigma_1|,|\sigma_2|\geq 8$, we obtain $\Phi(p_1)<0$. This contradicts  $\Phi(p_1)\geq 0$ and proves the claim. Hence, $|\omega_1|=4$ and $\omega_1$ is $\sigma_1$-adjacent to $\omega$ and $\sigma_2$-adjacent to $\omega$.

Suppose that $|\omega'|=4$, as shown in Figure \ref{up-down-triangles-33-4-33}. $|x|=4$ yields that both $e_1$ and $e_4$ are contained in $\omega'$. Then $\Pttn(y_1)=(3,3,4,k_1)$ and $\Pttn(y_4)=(3,3,4,k_2)$. Hence, there are two triangles, $\tau_1$ and $\omega_1$, $\sigma_1$-adjacent to $\tau$ and $\sigma_2$-adjacent to $\omega$ respectively, as shown in Figure \ref{up-down-triangles-33-4-33}.
Let $p=\partial\tau_1\setminus \partial\sigma_1$, $q=\partial\omega_1\setminus\partial\sigma_2$. Then for $|y_1|=4$ we obtain that $\{y_1, p\}$ and $e_1$ are contained in a face, $\{y_4, q\}$ and $e_4$ are contained in a face for $|y_4|=4$. Hence, $\{y_1, p\},e_1,e_4,\{y_4, q\}$ are contained in $\omega'$, and $|\omega'|=4$ implies that $p=q$. This proves the lemma.

\begin{figure}[htbp]
 \begin{center}
   \begin{tikzpicture}
    \node at (0,0){\includegraphics[width=0.39\linewidth]{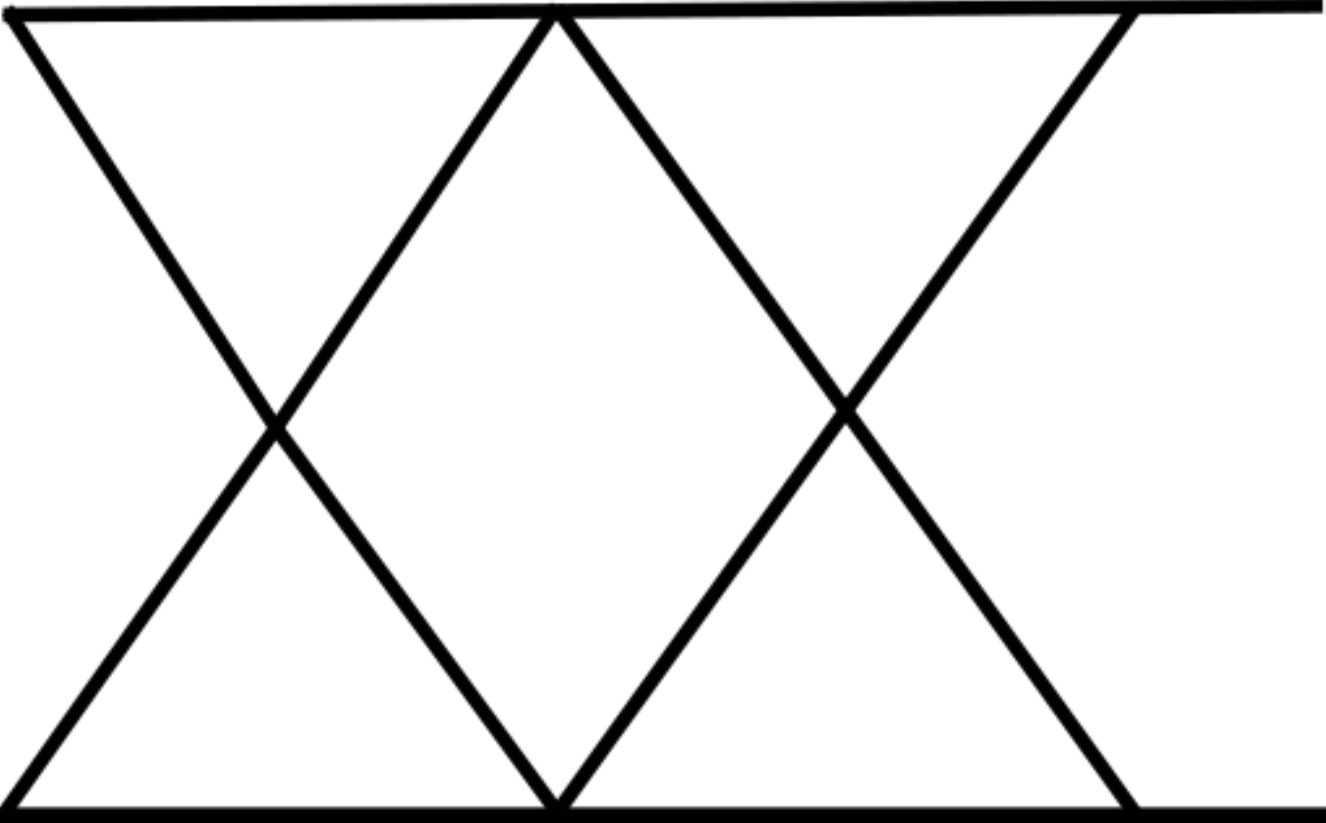}};

   \node at (-2.2, -.0){\small $\tau'$};
   \node at (.6, -.4){\small $q$};
   \node at (.75, .3){\small $p$};
   \node at (-1.4,  -.9){\small $\omega$};

     \node at (.6,  -.9){\small $\omega_1$};
   \node at (-2.2,  1.7){\small $y_2$};
   \node at (-0.4, 1.7){\small $y_1$};
   \node at (-1.4,  .9){\small $\tau$};
   \node at (.6,  .9){\small $\tau_1$};
   \node at (-2.2,  -1.8){\small $y_3$};
   \node at (-0.3,  -1.8){\small $y_4$};
   \node at (-1.2, -.1){\small $x$};
   \node at ( -.3, .0){\small $\omega'$};
   \node at (-1.4,  -2.1){\small $\sigma_2$};
   \node at (-1.4,   2.2){\small $\sigma_1$};
   \node at (-.70, .6){\small $e_1$};
   \node at (-2.2, .7){\small $e_2$};
   \node at (-2.2, -.8){\small $e_3$};
   \node at (-.65, -.8){\small $e_4$};
   \end{tikzpicture}
  \caption{}\label{up-down-triangles-33-4-33}
 \end{center}
\end{figure}

\end{proof}

\begin{theorem}\label{prop:disj2} Let $\Gamma$ be a $4$-regular infinite planar graph satisfying $\Gamma\in \NNG$. For $i=1,2$, let $\sigma_i$ be a face with facial degree $k_i\geq 8$. Then
\begin{align}\begin{split}\label{1-neighborhood-non-adj}
\d U_1(\sigma_1) \cap \d U_1(\sigma_2)=\varnothing.
\end{split}\end{align}
\end{theorem}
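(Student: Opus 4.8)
The plan is to argue by contradiction: suppose there is a vertex $v\in \d U_1(\sigma_1)\cap \d U_1(\sigma_2)$. By Proposition~\ref{prop:disj1} we have $v\notin \d\sigma_1\cup \d\sigma_2$, so $v$ is adjacent to some $z\in\d\sigma_1$ and to some $w\in\d\sigma_2$, and $z\neq w$. Since $\Pttn(z)=(3,3,3,k_1)$ or $(3,3,4,k_1)$ and $\Pttn(w)=(3,3,3,k_2)$ or $(3,3,4,k_2)$, and $|v|=4$, the faces incident to $v$, $z$, $w$ are severely constrained. The first step is a \emph{reduction}: using this local analysis together with Proposition~\ref{non-square-lower-triangle}, I would show that either there is a square lower-adjacent to both $\sigma_1$ and $\sigma_2$, or there are triangles $\tau\smile\sigma_1$ and $\omega\smile\sigma_2$ with $\d\tau\setminus\d\sigma_1=\d\omega\setminus\d\sigma_2$. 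In the former case Lemma~\ref{square-to-trianglespair} converts the common square into exactly such a triangle pair. So after the reduction we may assume there are triangles $\tau\smile\sigma_1$, $\omega\smile\sigma_2$ sharing a common "apex" $x:=\d\tau\setminus\d\sigma_1=\d\omega\setminus\d\sigma_2$.

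The second step is a \emph{propagation} around $\sigma_1$ (and simultaneously $\sigma_2$). Apply Lemma~\ref{all-faces-type} to the pair $(\tau,\omega)$. In alternative (1) there is a square $\omega_1$ that is $\sigma_1$-adjacent to $\tau$ and $\sigma_2$-adjacent to $\omega$, hence lower-adjacent to both $\sigma_1$ and $\sigma_2$; by Proposition~\ref{prop:nonadj} the faces $\sigma_1$-adjacent to $\omega_1$ are triangles, and applying Lemma~\ref{square-to-trianglespair} to $\omega_1$ on the side away from $\tau,\omega$ yields a new triangle pair $\tau',\omega'$, with $\tau'$ $\sigma_1$-adjacent to $\omega_1$ and $\omega'$ $\sigma_2$-adjacent to $\omega_1$, again with a common apex. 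In alternative (2) Lemma~\ref{all-faces-type} directly supplies triangles $\tau_1\smile\sigma_1$, $\omega_1\smile\sigma_2$, $\sigma_1$- resp.\ $\sigma_2$-adjacent to $\tau$ resp.\ $\omega$, with $(\d\tau_1\setminus\d\sigma_1)\cap(\d\omega_1\setminus\d\sigma_2)\neq\varnothing$, i.e.\ again a triangle pair with common apex. In both cases the sequence of faces lower-adjacent to $\sigma_1$ advances by one or two consecutive faces along the cycle $\d\sigma_1$, and likewise for $\sigma_2$. Iterating in both directions and using that $\d\sigma_1$ is a finite cycle of length $k_1\leq 12$ (Proposition~\ref{max-face-degree}), the process closes up, so that \emph{every} face lower-adjacent to $\sigma_1$ occurs in this chain, each carrying an associated triangle/square lower-adjacent to $\sigma_2$ on the other side with matching apices.

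The third step is the \emph{topological contradiction}. By Proposition~\ref{nonadj1} the vertices $\d\tau_i\setminus\d\sigma_1$ (over faces $\tau_i$ lower-adjacent to $\sigma_1$) are pairwise distinct, likewise on the $\sigma_2$-side, and by Proposition~\ref{prop:disj1} the two sides only meet at the shared apices; hence the chain of triangles and squares produced above is an \emph{embedded} annular region in $S(\Gamma)$ whose two boundary circles are $\d\sigma_1$ and $\d\sigma_2$. Gluing the closed disks $\overline{\sigma_1}$ and $\overline{\sigma_2}$ along these circles, the subcomplex $\overline{\sigma_1}\cup(\text{band})\cup\overline{\sigma_2}$ is homeomorphic to $\mathbb S^2$. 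Since $\Gamma$ is embedded in $\mathbb S^2$ (or $\mathbb R^2\subset\mathbb S^2$) and a $2$-sphere embedded in $\mathbb S^2$ is all of $\mathbb S^2$, this subcomplex is the whole ambient sphere, so $\Gamma$ consists only of the finitely many faces appearing in it; thus $\Gamma$ is finite, contradicting the hypothesis. Therefore no such $v$ exists, proving \eqref{1-neighborhood-non-adj}.

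I expect the main obstacle to be the reduction step: ruling out, or reducing to a triangle pair, the possibility that $v$ is an "outer" vertex of a square lower-adjacent to $\sigma_1$ (or to $\sigma_2$) rather than the apex of a triangle — this is exactly the situation that forces one to invoke the technical Proposition~\ref{non-square-lower-triangle}. A secondary point needing care is verifying that the propagation genuinely closes up into an embedded annulus (no self-overlaps, no spiralling, no extra vertex identifications between the two sides), which is controlled by Proposition~\ref{nonadj1}, Proposition~\ref{prop:disj1}, and the finiteness of $\d\sigma_1$.
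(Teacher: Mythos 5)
Your reduction-plus-propagation skeleton coincides with the paper's treatment of its main case: the paper also argues by contradiction from a common vertex, notes via Proposition~\ref{prop:nonadj} that the two offending faces are triangles or squares, disposes of the square--square and square--triangle configurations, and in the triangle--triangle case propagates around $\sigma_1$ exactly as you describe, alternating Lemma~\ref{all-faces-type} and Lemma~\ref{square-to-trianglespair}. Where you genuinely diverge is the endgame. The paper never builds a global annulus: having gone once around $\sigma_1$ it records that each apex vertex produced by the propagation has curvature at least $\frac16$ (at least $\frac14$ when a common square intervenes), adds the curvature $k_1(\frac{1}{k_1}-\frac{1}{12})$ carried by $\partial\sigma_1$, and concludes $\sum_{z\in U_1(\sigma_1)}\Phi(z)>1$, contradicting Theorem~\ref{thm:DeVosMohar}. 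Your topological finish (the band closes up, cap it with $\overline{\sigma_1}$ and $\overline{\sigma_2}$, obtain a compact surface without boundary inside the plane, hence $\Gamma$ finite) can plausibly be made rigorous, but it is the heavier route: you must prove the chain closes after exactly one circuit --- a lockstep count plus the uniqueness of the $\sigma_2$-triangle at each apex gives $k_2\mid k_1$ and then $k_1=k_2$ since both lie between $8$ and $12$ --- and you must check that every face incident to every vertex of the band lies in the band (including the intermediate faces at the apexes), without invoking Proposition~\ref{vert-in-sigma-conn-anosigma}, which in the paper is proved \emph{after}, and partly by means of, the present theorem. One smaller inaccuracy: the square--square case does not reduce to either of your two configurations; like the paper's Case~1 and Case~2.1 it must be eliminated outright by a direct $|x|=4$/tessellation argument, with Proposition~\ref{non-square-lower-triangle} needed only when the square and the triangle share an edge. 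In short: same reduction and same propagation machinery, but a different conclusion step; the paper's curvature-sum finish is cheaper because it needs only one pass around $\sigma_1$ and no embeddedness or closing-up bookkeeping, whereas your sphere argument trades the DeVos--Mohar bound for nontrivial combinatorial-topological verification that your sketch leaves open.
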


\begin{proof}
Note that
for any vertex $z\prec \sigma_i$,
\begin{align}\begin{split}\label{vexpttn-geq8}
\Pttn(z)=(3,3,3,k_i)\ \textrm{or} \ (3,3,4,k_i), i=1,2.
\end{split}\end{align}
We will prove (\ref{1-neighborhood-non-adj}) by contradiction. Suppose that
\[\d U_1(\sigma_1) \cap \d U_1(\sigma_2)\neq \varnothing.\]
For any vertex $z\in \d U_1(\sigma_1) \cap \d U_1(\sigma_2)$, we have $z\notin \d\sigma_1\cup\d \sigma_2$ and there exist two vertices, $z_1(\prec\sigma_1)$ and $z_2(\prec \sigma_2)$, such that $z\sim z_1$ and $z\sim z_2$. Hence, there are two faces, denoted by $\tau$ and $\omega$, which are lower-adjacent to $\sigma_1$ and $\sigma_2$ respectively, such that $z\in (\partial\tau\setminus\partial\sigma_1)\cap(\partial\omega\setminus\partial\sigma_2)$.
By Proposition \ref{prop:nonadj}, we obtain $|\tau|=3,4$  and  $|\omega|=3,4$. We divide it into cases.


\begin{figure}[htbp]
 \begin{center}
   \begin{tikzpicture}
    \node at (0,0){\includegraphics[width=0.5\linewidth]{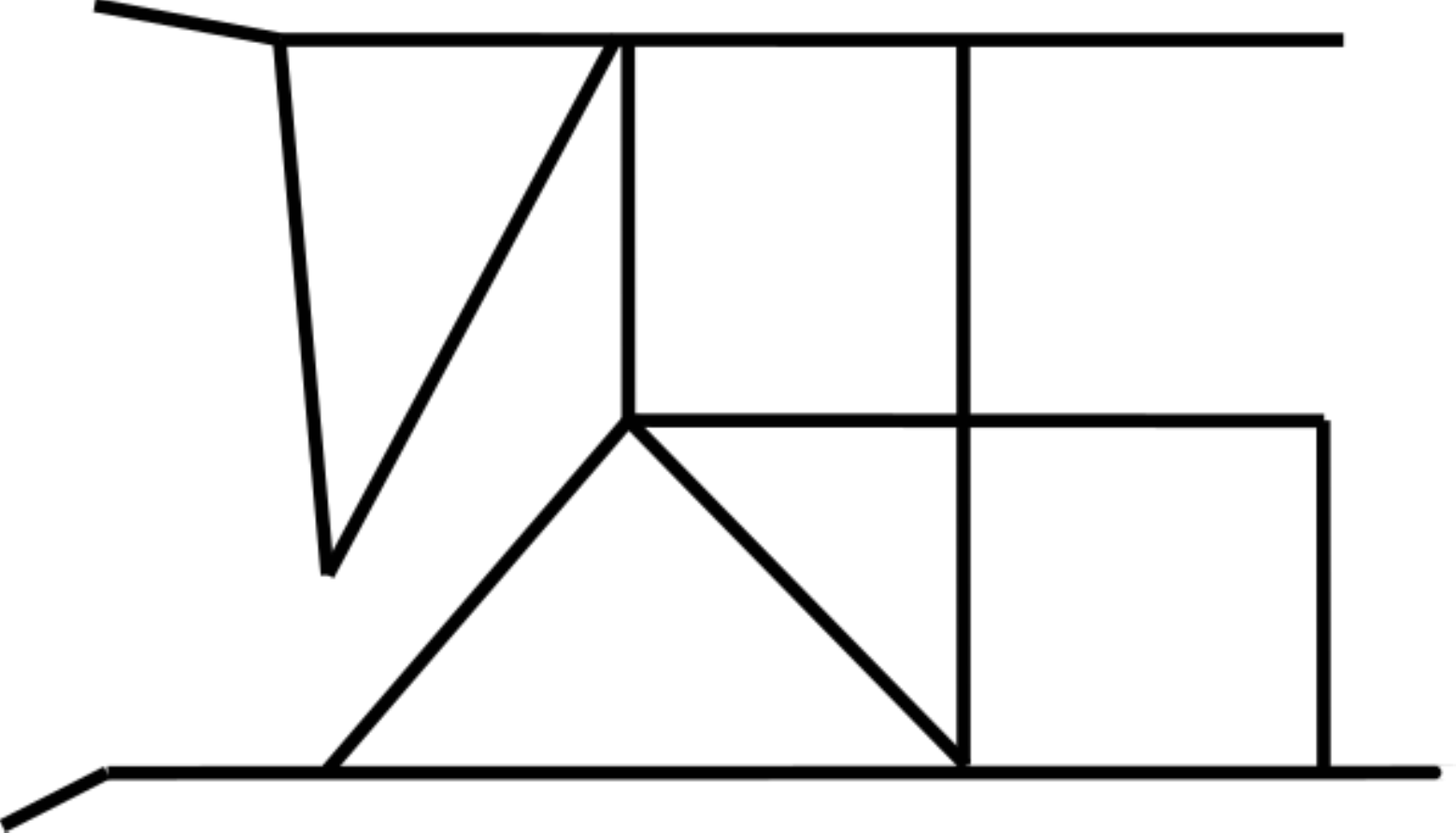}};
    \node at (1.3,  -.2){\Large $x$};
    \node at (1.3,  .8){\Large $e_1$};
    \node at (.3,  .2){\Large $e_2$};
    \node at (-.2,  .8){\Large $e_3$};
    \node at (1.8,  .2){\Large $e_5$};
    \node at (2.9,  -.8){\Large $e_6$};
    \node at (.7,  -.8){\Large $e_4$};
    \node at (2.9,  .3){\Large $y_5$};
    \node at (1.0,  -2.0){\Large $y_4$};
    \node at (1.0,   2.0){\Large $y_1$};
    \node at (1.8,  -.8){\Large $\omega$};
    \node at (.2,  .9){\Large $\tau$};
    \node at (-.7,  -1.1){\Large $\tau_1$};
    \node at (-.4,  -.4){\Large $x_1$};
    \node at (-.65,   .15){\Large $y_2$};
    \node at (-.7,   2){\Large $y_3$};
    \node at (-1.8,  -2.0){\Large $z$};
    \node at (-2.1,  -.7){\Large $x_2$};
    \node at (-1.6,  .7){\Large $\tau_2$};
    \node at (-2.7,  -2.5){\Large $\sigma_2$};
    \node at (-2.2,  2.5){\Large $\sigma_1$};
    \end{tikzpicture}
  \caption{}\label{two-square-adj1}
 \end{center}
\end{figure}

\textbf{Case 1.} $|\tau|=|\omega|=4$.
We divide it into subcases.

\textbf{Case 1.1.}  $\#\{(\partial\tau\setminus\partial\sigma_1)\cap(\partial\omega\setminus\partial\sigma_2)\}=1.$ Set $x=\partial \tau \cap \partial \omega.$ As in Figure~\ref{two-square-adj1}, let $e_1,e_2,e_3$ (resp. $e_4,e_5,e_6$) be the edges ordered clockwise along $\sigma_1$ (resp. $\sigma_2$) which are contained in $\tau$ (resp. in $\omega$), but not in $\sigma_1$(resp. $\sigma_2$). We write $e_i=\{x,y_i\}$ for $i=1,2,4,5$ and $y_1\prec\sigma_1, y_4\prec\sigma_2$, $y_2=e_2\cap e_3$, $y_5=e_5\cap e_6$ and $y_3=e_3\cap \sigma_1$.
Clearly, $y_2\nprec\sigma_2$ and $y_5\nprec\sigma_1$. Otherwise, $|y_1|=|y_4|=3$, which is a contradiction.
Let $\tau_1$ be a face $\sigma_1$-adjacent to $\omega$ and $\partial\tau_1\cap\partial\omega=y_4$.  Set $x_1=\partial\tau_1\setminus\partial\sigma_2$.
By $|x|=4$, $e_2, e_4$ are contained in a face. Moreover, $|y_4|=4$ and $\Pttn(y_4)=(3,3,4,k_2)$ which yields $e_4, \{x_1,y_4\}$ are contained in a triangle. Then there exists a triangle containing $e_2=\{x,y_2\}$, $e_4=\{x,y_4\}$,  which implies $y_2=x_1$. Let $z\prec \sigma_2$ such that $\partial\tau_1=\{y_2,z,y_4\}$. The fact $|y_2|=4$ implies that $e_3,\{y_2,z\}$ are contained in a face.

Similarly, let $\tau_2$ be a face $\sigma_1$-adjacent to $\tau$ and $\partial\tau_2\cap\partial\tau=y_3$. Let $x_2=\partial\tau_2\setminus \partial\sigma_1$. By $|y_3|=4$ and $\Pttn(y_3)=(3,3,4,k_1)$, we obtain $e_3$ and $\{y_3,x_2\}$ are contained in a triangle. Hence, there exists a triangle containing $e_3=\{y_2,y_3\}$, $\{y_2,z\}$ and $\{y_3,x_2\}$, which implies $z=x_2$. This contradicts $|z|=4$.
\begin{figure}[htbp]
 \begin{center}
   \begin{tikzpicture}
    \node at (0,0){\includegraphics[width=0.5\linewidth]{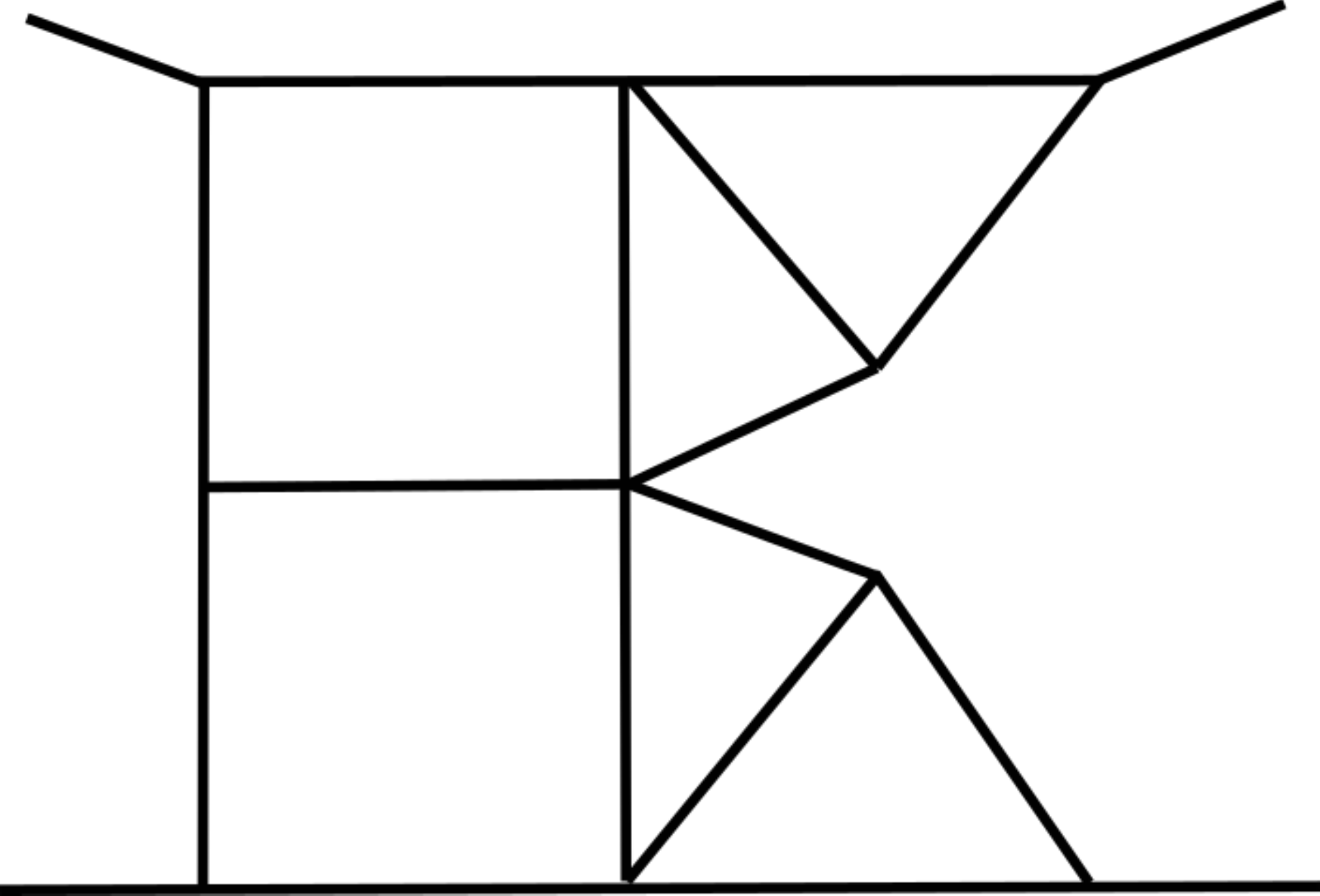}};
    \node at (.3, -.15){\Large $x$};
    \node at (-2.4, -.2){\Large $y$};
    \node at (-1,  -1.2){\Large $\omega$};
    \node at (-1,  0.8){\Large $\tau$};
    \node at (1,  1.2){\Large $\tau_1$};
    \node at (1.3,  0.2){\Large $z_1$};
    \node at (1.3,  -.4){\Large $z_2$};
    \node at (1,  -1.4){\Large $\tau_2$};
    \node at (-1.6,  -2.5){\Large $\sigma_2$};
    \node at (-1.6,  2.3){\Large $\sigma_1$};
    \end{tikzpicture}
    \caption{}\label{two-squares-two-commvertex}
 \end{center}
\end{figure}

\textbf{Case~1.2.} $\#\{(\partial\tau\setminus\partial\sigma_1)\cap(\partial\omega\setminus\partial\sigma_2)\}=2.$ Let $\{x,y\}=\partial \tau \cap \partial \omega.$ By the orientability of $\R^2,$ we have the case as in Figure~\ref{two-squares-two-commvertex}. Let $\tau_1$ (resp. $\tau_2$) be a face $\sigma_1$-adjacent (resp. $\sigma_2$-adjacent) to $\tau$ (resp. $\omega$) as in Figure~\ref{two-squares-two-commvertex}. By (\ref{vexpttn-geq8}), $|\tau_i|=3,$ $i=1,2.$ Set $z_i=\partial \tau_i\setminus \partial \sigma_i$ for $i=1,2.$
Note that $z_1\neq x$ and $z_2\neq x$. By considering the vertex patterns of $\partial \tau_1\cap \partial \tau$ (resp. $\partial \tau_2\cap \partial \omega$), we get that $z_1\sim x$ ($z_2\sim x$). If $z_1\neq z_2$, then $|x|=5$, this contradicts $|x|=4.$ If $z_1=z_2$, then $|z_1|\geq 5$, which contradicts that the graph is $4$-regular.

\textbf{Case 2.} One of $\tau$ and $\omega$ is a triangle and the other is not, say $|\tau|=4$ and $|\omega|=3.$

\begin{figure}[htbp]
 \begin{center}
   \begin{tikzpicture}
    \node at (0,0){\includegraphics[width=0.4\linewidth]{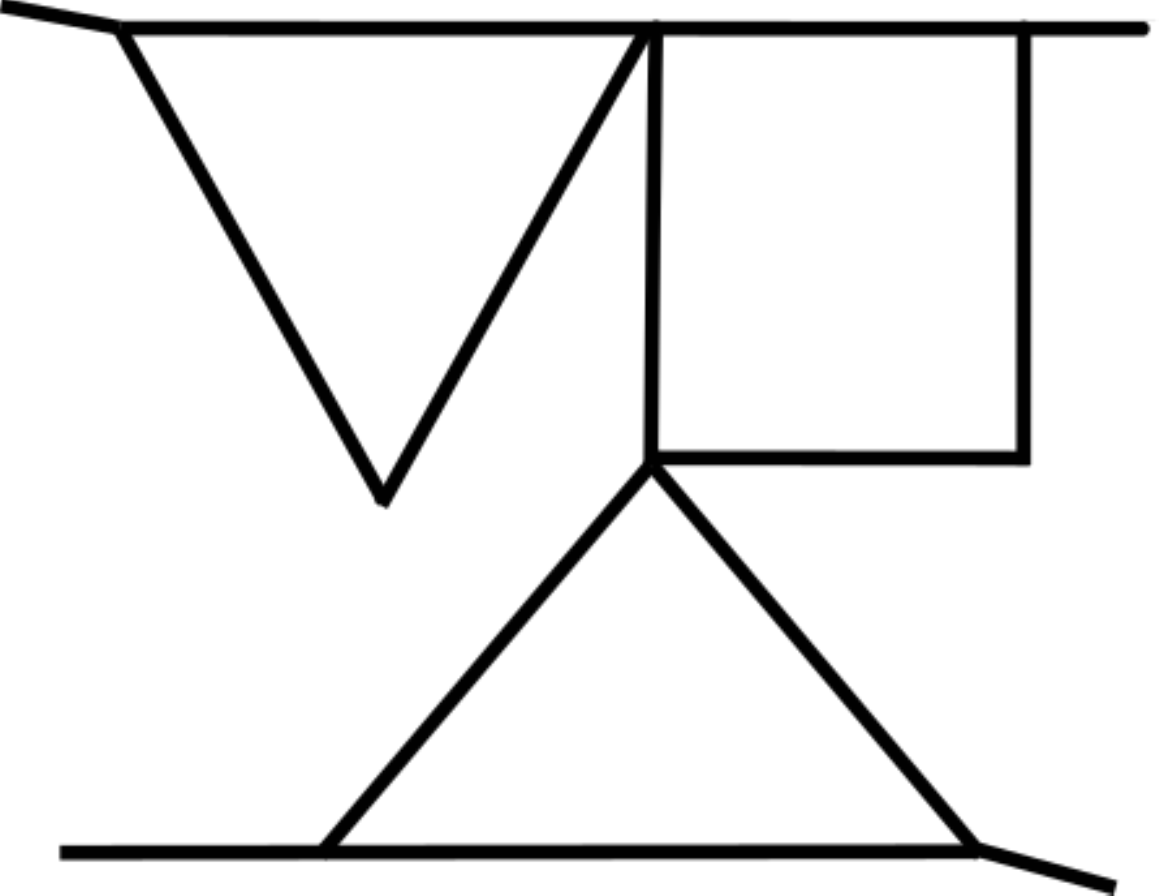}};
    \node at (.0, -.1){\Large $x$};
    \node at (2.1, -.1){\Large $y$};
    \node at (.4,  -1.2){\Large $\omega$};
    \node at (1.1, .9){\Large $\tau$};
    \node at (-.7,   1.1){\Large $\tau_1$};
    \node at (-.9,    -.5){\Large $x_1$};
    \node at (-2.1,  -2.2){\Large $\sigma_2$};
    \node at (-2.1,   2.2){\Large $\sigma_1$};
   \end{tikzpicture}
  \caption{}\label{triangle-square-two-adj}
 \end{center}
\end{figure}

\textbf{Case 2.1.} $\#\{(\partial\tau\setminus\partial\sigma_1)\cap(\partial\omega\setminus\partial\sigma_2)\}=1$. Let $x=\partial\tau\cap\partial\omega$, $x_1=\partial\tau_1\setminus\partial\sigma_1$, as shown in Figure \ref{triangle-square-two-adj}. With the same argument as $\tau$, $\tau_1$, $\tau_2$ in Case 1.2, we obtain that the vertex $x_1\prec(\sigma_2\cap\omega)$, which implies that $|x_1|=5$, a contradiction.


\textbf{Case 2.2.} $\#\{(\partial\tau\setminus\partial\sigma_1)\cap(\partial\omega\setminus\partial\sigma_2)\}=2$. Let $\{x,y\}=\partial\tau\cap\partial\omega$ and $x\nprec\sigma_1$ and $x\nprec\sigma_2$.
Assume that $\partial\tau\cap\partial\sigma_1=\{z_1,z_2\}$ and $x\sim z_1$. By Proposition \ref{non-square-lower-triangle}, we obtain that this case is impossible.


\begin{figure}[htbp]
\begin{center}
   \begin{tikzpicture}
    \node at (0,0){\includegraphics[width=0.3\linewidth]{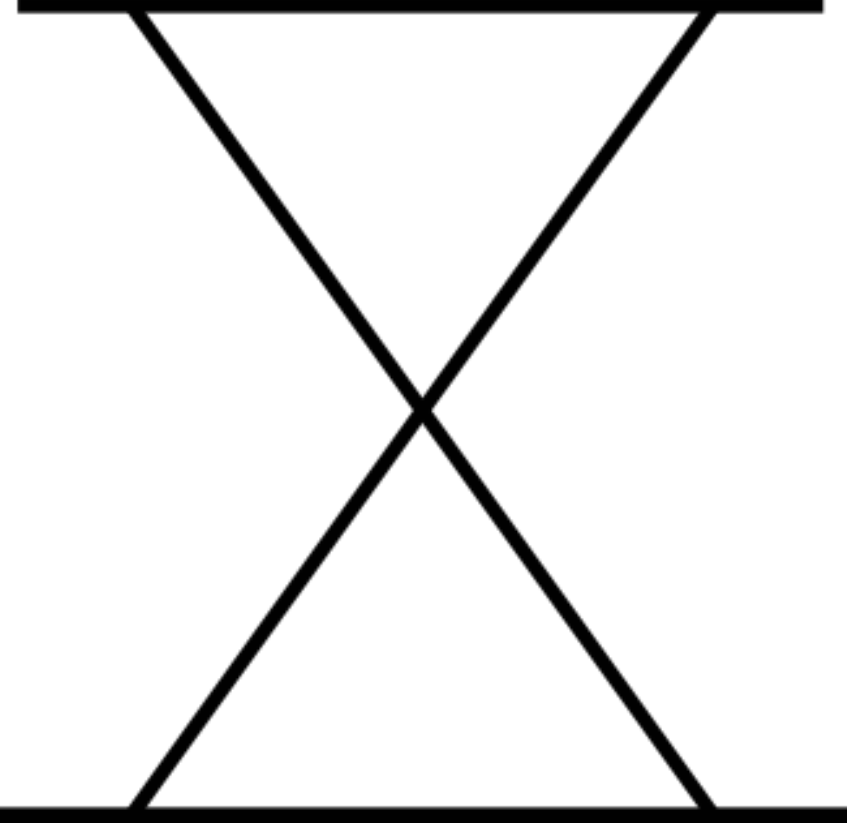}};
    \node at (.3, -.0){\Large $x$};
    \node at (1.2, -.1){\Large $\omega'$};
    \node at (1.0, .8){\Large $e_1$};
    \node at ( -.9, .8){\Large $e_2$};
    \node at (.9, -.8){\Large $e_4$};
    \node at (-.9, -.8){\Large $e_3$};
    \node at (0,  -1){\Large $\omega$};
    \node at (0,  1.0){\Large $\tau$};
    \node at (-.8,  - .1){\Large $\tau'$};
    \node at (-1.4,  -2.3){\Large $\sigma_2$};
    \node at (-1.4,   2.3){\Large $\sigma_1$};
   \end{tikzpicture}
  \caption{}\label{up-down-trian-only}
 \end{center}
\end{figure}

\textbf{Case 3.} $|\tau|=|\omega|=3$. Obviously, $\#\{(\partial\tau\setminus\partial\sigma_1)\cap(\partial\omega\setminus\partial\sigma_2)\}=1$.
Let $x=(\partial\tau\setminus\partial\sigma_1)\cap(\partial\omega\setminus\partial\sigma_2)$.  Let $e_1,e_2$ (resp. $e_3,e_4$) be two edges contained in $\tau$ (resp. in $\omega$) ordered clockwise along $\tau$ (resp. $\omega$), but not in $\sigma.$ Since $|x|=4,$ there exists a face $\tau'$(resp. $\omega'$) containing $e_2,e_3$ (resp. $e_1,e_4$), as shown in Figure \ref{up-down-trian-only}. We divide it into subcases.

\textbf{Case 3.1.} $|\omega'|=3$. By $|\tau'|=3$ or $4$, $\Pttn(x)=(3,3,3,3)$ or $(3,3,3,4)$, and
\[
\Phi(x)\geq\frac{1}{4}.
\]

\textbf{Case 3.2.} $|\omega'|=4$. By $|\tau'|=3$ or $4$, $\Pttn(x)=(3,3,3,4)$ or $(3,3,4,4)$, and
\[
\Phi(x)\geq \frac{1}{6}.
\]

Without loss of generality, we assume that $k_1\leq k_2$. 
Let $\{\tau_i\}_{i=1}^{k_1-1}$ be triangles lower-adjacent to $\sigma_1$ ordered clockwise along $\sigma_1$ such that $\tau_i$ are $\sigma_1$-adjacent to $\tau_{i+1}$ for $1\leq i\leq k_1-1$, $\tau$ is $\sigma_1$-adjacent to $\tau_1$, and $\tau_{k_1-1}$ is $\sigma_1$-adjacent to $\tau$. By Proposition \ref{prop:nonadj}, $|\tau_i|=3$ or $4$ for $1\leq i\leq k_1-1$.
If $|\tau_1|=3$, by Lemma \ref{all-faces-type}, we obtain that there is a triangle, denoted by $\tau_1'$, such that $\tau_1'$ is $\sigma_2$-adjacent to $\omega$ and $\d\tau_1\setminus\d\sigma_1= \d\tau_1'\setminus\d \sigma_2$. This is Case 3.2, i.e. $\Phi(x)\geq \frac{1}{6}$.
If $|\tau_1|=4$, Lemma \ref{all-faces-type} yields that $\tau_1$ is lower-adjacent to both $\sigma_1$ and $\sigma_2$, then by Case 3.1, $\Phi(x)\geq \frac{1}{4}$. Moreover, Lemma \ref{square-to-trianglespair} yields $|\tau_2|=3$ and there is a triangle, denoted by $\tau_2'$, such that $\tau_2'$ is $\sigma_2$-adjacent to $\omega$ and $\d\tau_2\setminus\d\sigma_1= \d\tau_2'\setminus\d \sigma_2$. Repeating the process, we obtain for any triangle $\tau_j\in\{\tau_i\}_{i=1}^{k_1-1}$, $\Phi(\d\tau_j\setminus\d\sigma_1)\geq \frac{1}{6}$ if $|\tau_{j+1}|=3$ and $\Phi(\d\tau_j\setminus\d\sigma_1)\geq \frac{1}{4}$ if $|\tau_{j+1}|=4$.

Set $S=\tau\cup\{\tau_i\}_{i=1}^{k_1-1}$. Suppose there are $l$ squares in $S$, then there are $l$ triangles whose estimates are same as $\tau$ in Case 3.1 and the remaining $k_1-2l$ triangles whose estimates are same as $\tau$ in Case 3.2. Note that for any vertex $z\prec\sigma_1$, $\Phi(z)=\frac{1}{k_1}-\frac{1}{12}$. We can estimate the sum of the curvature of vertices in $U_1(\sigma_1)$ as follows.
\begin{align}\begin{split}
\sum_{z\in U_1(\sigma_1)}\Phi(z)
&=\sum_{z\in \d U_1(\sigma_1)}\Phi(z)+\sum_{z\in \d \sigma_1}\Phi(z)\\
&\geq l\times\frac{1}{4}+(k_1-2l)\times\frac{1}{6}+k_1\left(\frac{1}{k_1}-\frac{1}{12}\right)\\
&=1+\frac{k_1-l}{12}>1,
\end{split}\end{align}
where we used $l\leq [\frac{k_1}{2}]$ by the fact that two squares can not be $\sigma_1$-adjacent to each other, see Proposition \ref{prop:nonadj}.
This contradicts Theorem \ref{thm:DeVosMohar}.
\end{proof}

Before stating the next proposition, we introduce a modified curvature notion $\Phi'$. For any fixed subset $B\subset V$ satisfying $B\setminus (\d\sigma_1\cup\d\sigma_2)\neq \varnothing$. We define
\begin{align}\begin{split}\label{mod-curvature1}
\Phi'(z)=\Phi(z)+\frac{1}{N}\sum_{z'\in B\setminus\left(\partial\sigma_1\cup \partial\sigma_2\right)}\Phi(z'), \ \ \forall z\in B\cap\left(\partial\sigma_1\cup \partial\sigma_2\right),
\end{split}\end{align}
where
\[
 N:=\#\{z|z \in B\cap\left(\partial\sigma_1\cup \partial\sigma_2\right)\}.
\]
Note that $\Phi'$ depends on the choice of $B$. By using this modified curvature, we distribute the curvature of vertices in $B\setminus (\partial\sigma_1\cup\partial\sigma_2)$ to the vertices in $B\cap(\partial\sigma_1\cup\partial\sigma_2)$ evenly.

\begin{prop}\label{non-square-lower-triangle}
Let $\sigma_1$ be a $k_1$-gon and $\sigma_2$ be a $k_2$-gon with $k_1,k_2\geq 8$. The following configuration is impossible: There are a square lower-adjacent to $\sigma_1$ and a triangle is lower-adjacent to $\sigma_2$ such that the square is lower-adjacent to the triangle and the triangle has a vertex neither in $\sigma_1$ nor in $\sigma_2$.
\end{prop}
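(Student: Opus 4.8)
\emph{Reduction to a canonical configuration.} Let $Q$ be the square with $Q\smile\sigma_1$, say $\bar Q\cap\bar\sigma_1$ is the closed edge $\{a,b\}$, and write $Q=abcd$ cyclically; by tessellation property (iii) we have $c,d\notin\partial\sigma_1$. Let $T$ be the triangle with $T\smile\sigma_2$ and $Q\smile T$. By property (ii) the edge $\bar Q\cap\bar T$ is one of $\{b,c\}$, $\{c,d\}$, $\{d,a\}$. If it is $\{b,c\}$, write $T=\{b,c,x\}$; the edge $\bar T\cap\bar\sigma_2$ is $\{b,x\}$ or $\{c,x\}$, and it cannot be $\{b,x\}$ since then $b\in\partial\sigma_1\cap\partial\sigma_2$, contradicting Proposition~\ref{prop:disj1}; hence it is $\{c,x\}$, so $c,x\in\partial\sigma_2$ and all three vertices of $T$ lie in $\bar\sigma_1\cup\bar\sigma_2$, contrary to hypothesis. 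The case $\{d,a\}$ is symmetric. Therefore $\bar Q\cap\bar T=\{c,d\}$, $T=\{c,d,p\}$, the edge $\bar T\cap\bar\sigma_2$ is $\{c,p\}$ or $\{d,p\}$, and the vertex of $T$ lying outside $\bar\sigma_1\cup\bar\sigma_2$ is forced to be the one of $c,d$ not on $\partial\sigma_2$. Up to relabelling we take $\bar T\cap\bar\sigma_2=\{c,p\}$, so $c,p\in\partial\sigma_2$ and $d\notin\partial\sigma_1\cup\partial\sigma_2$.

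\emph{Determining the local structure.} Next I would build out the faces around $a,b,c,d,p$ using $4$-regularity together with \eqref{vexpttn-geq8} on $\partial\sigma_1,\partial\sigma_2$ and the finite list of patterns of Table~\ref{tabl1} and \eqref{vanishing-cur-pattern} at every other vertex. Since $a,b\prec\sigma_1$ are incident to the square $Q$, we get $\Pttn(a)=\Pttn(b)=(3,3,4,k_1)$; since $c\prec\sigma_2$ is incident to $Q$, we get $\Pttn(c)=(3,3,4,k_2)$ and the two faces at $c$ other than $\sigma_2,Q$ are triangles, one of them $T$ and the other a triangle $R$ with $b\in\partial R$ whose third edge lies on $\partial\sigma_2$. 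Pushing this out around $b$ (faces $\sigma_1,Q,R$ and one more triangle $W$) and around the newly produced vertices produces a ``ladder'' of triangles --- interrupted only by squares, which by Proposition~\ref{prop:nonadj} are never $\sigma_i$-adjacent --- joining $\partial\sigma_1$ to $\partial\sigma_2$ on the side away from $Q,T$; likewise the face across $Q$ from the bridge (over edge $\{a,d\}$) is forced to be a triangle, and the face across $T$ from the bridge (over edge $\{d,p\}$) a triangle or a square, each with prescribed incidences. One continues this determination layer by layer away from the bridge; in all branches but the ``generic'' ones it terminates in a combinatorial inconsistency --- a vertex forced to have degree $\neq 4$, two closed faces forced to meet in more than a closed edge, or (via Proposition~\ref{nonadj1} and Proposition~\ref{prop:disj1}) a forced identification of boundary vertices.

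\emph{Curvature overflow and conclusion.} In the remaining branches I would collect curvature. Apply the modified curvature $\Phi'$ of \eqref{mod-curvature1} with $B$ the finite set consisting of $\partial\sigma_1$, $\partial\sigma_2$, and the bridge vertices lying off $\partial\sigma_1\cup\partial\sigma_2$ (the vertex $d$, the apices of the flanking triangles, and the interior vertices of the ladder). Since $\Phi'$ only redistributes mass,
\begin{align*}
\sum_{z\in B\cap(\partial\sigma_1\cup\partial\sigma_2)}\Phi'(z)
&=\sum_{z\in B}\Phi(z)\\
&=\sum_{z\in\partial\sigma_1}\Phi(z)+\sum_{z\in\partial\sigma_2}\Phi(z)+\sum_{z\in B\setminus(\partial\sigma_1\cup\partial\sigma_2)}\Phi(z).
\end{align*}
Using that the two vertices of $Q$ on $\partial\sigma_1$ and the vertex $c$ carry pattern $(3,3,4,k_i)$, and that in each branch further vertices of $\partial\sigma_2$, of the ladder and the vertex $d$ carry a definite amount of positive curvature, one checks that the right-hand side is $>1$; since $B$ is finite this contradicts Theorem~\ref{thm:DeVosMohar}. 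Hence the configuration is impossible.

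\emph{Main obstacle.} The difficulty lies entirely in the structural step: the incidences at $d$, at $p$, and at the outer vertices of the ladder each split into a triangle case and a square case, so the forced picture must be followed several layers out from the bridge in each branch, while carefully recording which newly created vertices are pinned onto $\partial\sigma_1$ or $\partial\sigma_2$ and which contribute positive curvature; this bookkeeping is what makes the argument long, as the authors note.
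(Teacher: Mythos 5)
Your reduction to the canonical configuration is correct and agrees with the paper's own starting point (their square $\tau=\{x',x,p,y\}$ with $x,x'\prec\sigma_1$, $y\prec\sigma_2$, and triangle $\omega=\{p,y,y_1\}$ corresponds to your $Q=abcd$ and $T=\{c,d,p\}$), and your overall strategy --- propagate the forced local structure away from the bridge and contradict Theorem~\ref{thm:DeVosMohar} via the redistributed curvature \eqref{mod-curvature1} --- is exactly the paper's. But the proposal stops where the proof begins: everything after the reduction is stated in the conditional (``I would build out\dots'', ``one continues\dots'', ``one checks\dots''). The entire substance of the paper's argument is the branch-by-branch determination of the corridor between $\sigma_1$ and $\sigma_2$ (their Cases 1--8 on the degrees of the successive faces $\omega',\tau_1',\omega_1',\tau_2',\dots$, followed by a second propagation on the other side of the square in their Step~2), together with the verification in each branch that the interior ``ladder'' vertices carry a definite amount of positive curvature ($1/12$, $1/6$ or $1/4$), which is what yields $\Phi'\geq\frac{1}{12}$ at the adjacent boundary vertices. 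None of this is carried out, so what you have is a plan, not a proof. Moreover, your structural claim that all non-generic branches terminate in combinatorial inconsistencies is not how the analysis goes: in the paper most branches survive and are handled by curvature bookkeeping, and only the terminal branches (their Case 8) end in a direct curvature contradiction.

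More importantly, the quantitative conclusion as you state it would fail. With $B$ consisting of $\partial\sigma_1\cup\partial\sigma_2$ together with a bounded cluster of bridge vertices, the sum $\sum_{z\in B}\Phi(z)$ need not exceed $1$: when $k_1=k_2=12$ every vertex of $\partial\sigma_i$ may have pattern $(3,3,4,12)$ and hence zero curvature, so the two boundaries contribute nothing, and a fixed handful of nearby vertices of pattern $(3,3,3,4)$ or $(3,3,4,4)$ contributes only a small constant, well below $1$. The paper's contradiction instead comes from showing that the forced triangle/square corridor extends essentially all the way around $\sigma_1$, producing at least $2(k_1-1)$ boundary vertices each with modified curvature at least $\frac{1}{12}$, whence the total is at least $\frac{2(k_1-1)}{12}>1$ since $k_1\geq 8$. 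Establishing that the structure propagates that far, without terminating or shedding the needed curvature in some branch, is precisely the long case analysis you have deferred; without it the overflow estimate is unsupported and the proposition is not proved.
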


\begin{proof}
We will prove it by contradiction. Assume that there exist a square $\tau$, lower-adjacent to $\sigma_1$, and a triangle $\omega$, lower-adjacent to $\sigma_2$, such that $\tau$ lower-adjacent to $\omega$ and $p=(\partial\omega\setminus\partial\sigma_2)\nprec\sigma_1$.
Let $\{x,x'\}=\tau\cap\sigma_1$, $\{y,y_1\}=\omega\cap\sigma_2$, and $\{p,y\}=\tau\cap\omega$, as shown in Figure \ref{triangle-square-oneedge-com-begin}.

\begin{figure}[htbp]
 \begin{center}
   \begin{tikzpicture}
    \node at (0,0){\includegraphics[width=0.45\linewidth]{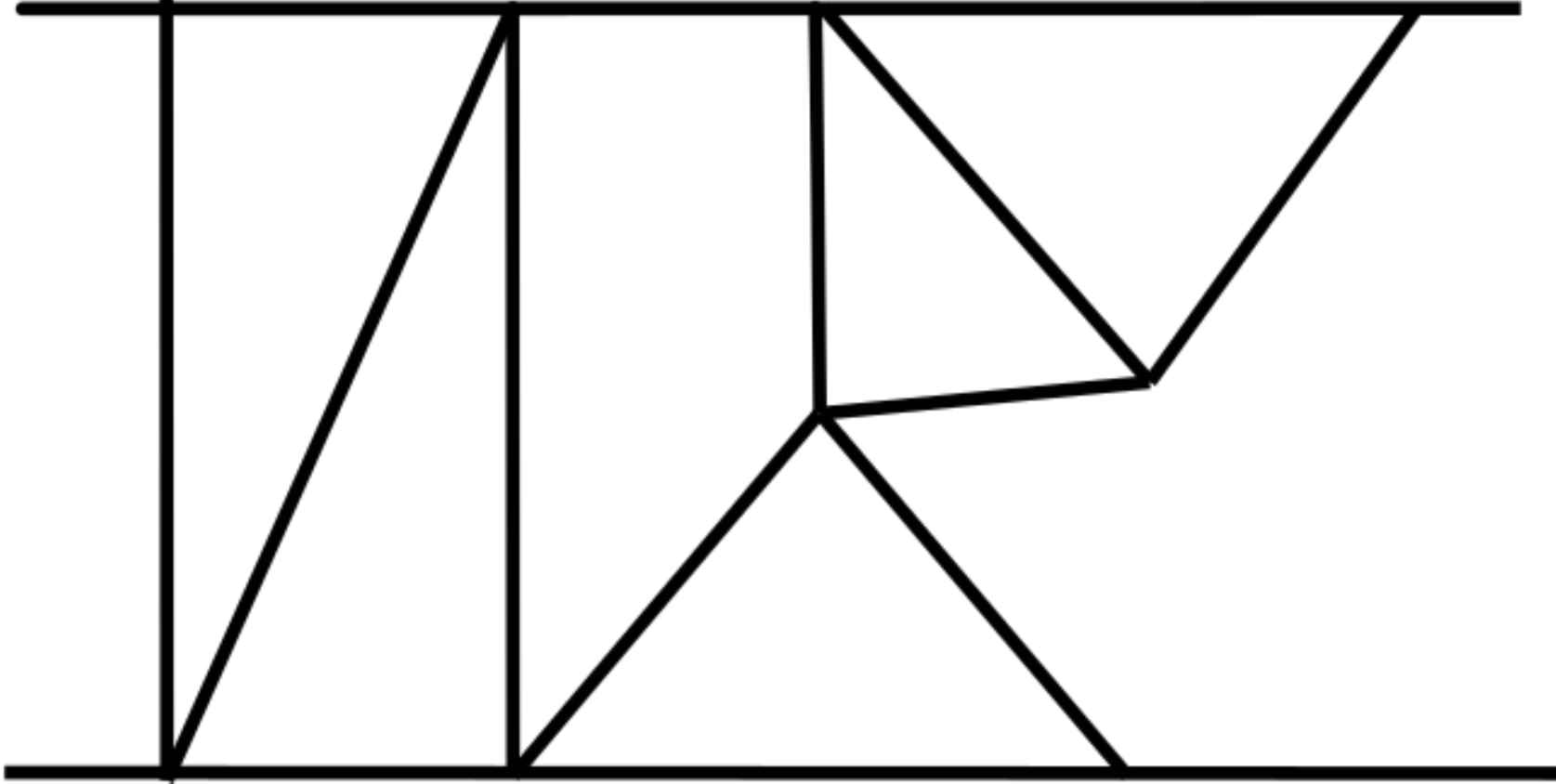}};
    \node at (2.4, 1.6){\small $x_1$};
    \node at (1.3, .9){\small $\tau_1$};
    \node at ( .5, .6){\small $\tau'$};
    \node at ( -.5, .3){\small $\tau$};
    \node at (-.1,  -.1){\small $p$};
    \node at (1.5, -.2){\small $p_1$};
    \node at (1.2, -1.6){\small $y_1$};
    \node at (-1.0, -1.6){\small $y$};
    \node at (.2,  -.8){\small $\omega$};
    \node at (-1.4,   -.3){\small $\tau_0$};
    \node at (-1.0,  -2.0){\small $\sigma_2$};
    \node at (-1.0,   2.1){\small $\sigma_1$};
   \node at (-1,   1.7){\small $x'$};
    \node at (.2,   1.6){\small $x$};
   \end{tikzpicture}
  \caption{}\label{triangle-square-oneedge-com-begin}
 \end{center}
\end{figure}

Since $k_1,k_2\geq 8$, we obtain for any $z\prec \sigma_i$, $i=1$ or $i=2$. \begin{align}\begin{split}\label{pattern}
\Pttn(z)=(3,3,3,k_i) \ \textrm{or} \ (3,3,4,k_i).
\end{split}\end{align}
Without loss of generality, we assume $p\sim x$, as shown in Figure \ref{triangle-square-oneedge-com-begin}. 
By $|\tau|=4$, $x\prec\tau$ and (\ref{pattern}), we obtain $\Pttn(x)=(3,3,4,k_1)$. Then there is a triangle $\tau_1$, $\sigma_1$-adjacent to $\tau$. Set $p_1=\partial\tau_1\setminus\partial\sigma_1$, $\{x,x_1\}=\tau_1\cap\sigma_1$ and $x\prec\tau$. Since $\Pttn(x)=(3,3,4,k_1)$ and $|x|=4$, $\{p,x\}$ and $\{x,p_1\}$ are contained in a triangle, denoted by $\tau'$. $|\tau'|=3$ implies that $p\sim p_1$. Clearly, $p_1\nprec \sigma_2$. Otherwise, $p_1=y_1$ or $\{p_1,y_1\}\prec\sigma_2$, this yields $|y_1|=5$ or $|y_1|=3$, which contradicts $|y_1|=4$. Hence,
\begin{align}\begin{split}\label{tau-tau'-tau_1}
|\tau|=4,\ |\tau'|=|\tau_1|=|\omega|=3.
\end{split}\end{align}

Without loss of generality, we assume that $k_1\leq k_2$. Next we will prove that there are at least $2(k_1-1)$ vertices in $\partial\sigma_1\cup\partial\sigma_2$, and the modified curvature $\Phi'$ of each vertex of them is bounded below by $\frac{1}{12}$. We divide it into two steps.

\textbf{Step 1.}  For some subset $B$ to be chosen, we will prove that for any $z\in B\cap(\partial\sigma_1\cup\partial\sigma_2)$, the modified curvature
\begin{align}\begin{split}\label{lower-bdd-curv-of-vertinsigma}
\Phi'(z)\geq \frac{1}{12}.
\end{split}\end{align}
By (\ref{mod-curvature1}) and $\Phi(z)\geq 0$, it suffices to show
\begin{align}\begin{split}\label{mid-ver-cur}
\sum_{z'\in B\setminus\left(\partial\sigma_1\cup \partial\sigma_2\right)}\Phi(z')\geq \frac{N}{12}.
\end{split}\end{align}

Let $\omega'$ be a face containing the two edges $\{p,y_1\}$ and $\{p,p_1\}$.
We divide it into cases.
\begin{figure}[htbp]
 \begin{center}
   \begin{tikzpicture}
    \node at (0,0){\includegraphics[width=0.55\linewidth]{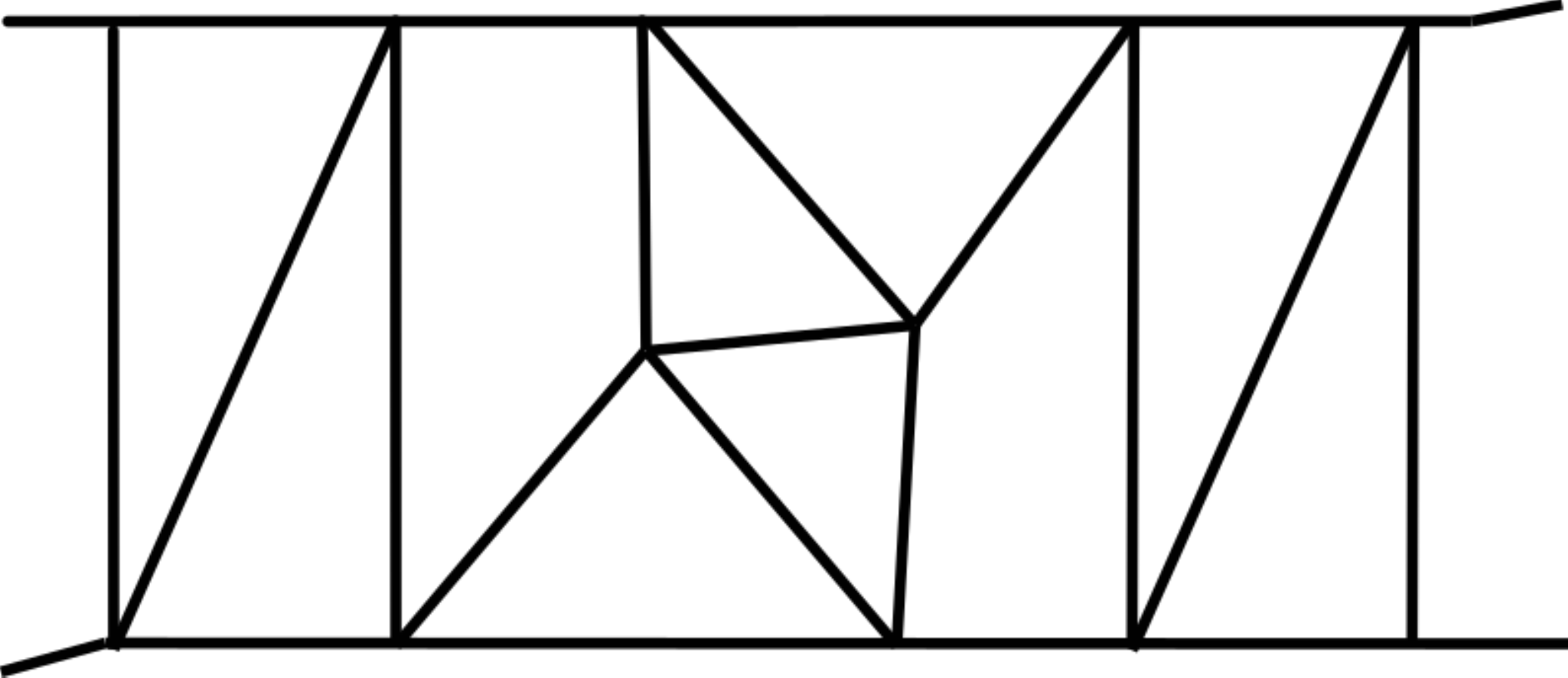}};
    \node at (1.5, 1.6){\small $x_1$};
    \node at (.6, .9){\small $\tau_1$};
    \node at (-.2, .6){\small $\tau'$};
    \node at (.6, .4){\small $p_1$};
    \node at (0.6, -1.6){\small $y_1$};
    \node at (1.6, -1.6){\small $y_1'$};
    \node at (-1.7, -1.6){\small $y$};
    \node at (-.5,  -.8){\small $\omega$};
    \node at (.3,  -.4){\small $\omega'$};
    \node at (1.1,  -.3){\small $\omega_1$};
    \node at (-.3,  -.2){\small $p$};
    \node at (-1.2,   .1){\small $\tau$};
    \node at (-1.0,  -2.0){\small $\sigma_2$};
    \node at (-1.0,   2.1){\small $\sigma_1$};
   \node at (-1.7,   1.7){\small $x'$};
    \node at (-.6,   1.6){\small $x$};
   \end{tikzpicture}
  \caption{}\label{triangle-square-oneedge-com}
 \end{center}
\end{figure}

\textbf{Case 1.} $|\omega'|=3$, as shown in Figure \ref{triangle-square-oneedge-com}. Then $p_1\sim y_1$.

\textbf{Claim:} There exists a square lower-adjacent to $\sigma_2$, denoted by $\omega_1$ such that $\omega_1$ is $\sigma_2$-adjacent to $\omega$, $\omega_1\cap\omega'=\{p_1,y_1\}$, and $\omega_1\cap\tau_1=\{p_1,x_1\}$.

Let $x_1$ be a vertex such that $\{x,x_1\}=\partial\tau_1\cap\partial\sigma_1$. $|p_1|=4$ yields that $\{p_1,x_1\}$ and $\{p_1,y_1\}$ are contained in a face.
$|y_1|=4$ implies that there exists $y_1'$ satisfying $y_1'\prec\sigma_2$ and $y_1'\neq y$ such that $\{p_1,y_1\}$, $\{y_1,y_1'\}$ are contained in a face. Hence, $\{p_1,x_1\}$, $\{p_1,y_1\}$ and $\{y_1,y_1'\}$ contained in a face, denoted by $\omega_1$. By $\Pttn(y_1)=(3,3,3,k_2)$ or $(3,3,4,k_2)$, $|\omega_1|=3$ or $4$. Clearly, $|\omega_1|=4$. Otherwise, $|\omega_1|=3$ yields $x_1=y_1'\prec\sigma_2$. Then $\Phi(x_1)=(3,3,k_1,k_2)$. This yields $\Phi(x_1)<0$  for $k_1,k_2\geq 8$, which contradicts $\Phi(x_1)\geq 0$. Since $\{p_1,y_1\}\prec\omega'$, $\{p_1,x_1\}\prec\tau_1$ and $\{ y_1,y_1'\}\prec\sigma_2$, together with $\Gamma$ is a tessellation, $\{p_1,y_1\}=\omega'\cap \omega_1$, $\{p_1,x_1\}=\tau_1\cap\omega_1$ and $\{y_1,y_1'\}=\sigma_2\cap\omega_1$. This proves the claim.

Choose
\[
B=\partial\omega\cup\partial\tau\cup \partial\omega_1\cup\partial\tau_1.
\]
Since $B\setminus\left(\partial\sigma_1\cup \partial\sigma_2\right)=\{p,p_1\}$, $|\omega_1|=4$, $|\tau'|=3$, and (\ref{tau-tau'-tau_1}), we obtain
\[
\Pttn(p)=\Pttn(p_1)=(3,3,3,4),
\]
and
\begin{align}\begin{split}\label{p-p_1-1/4}
\ \Phi(p)=\Phi(p_1)=\frac{1}{4}.
\end{split}\end{align}
It is easy to check that $N=6$ and
\[\sum_{z'\in B\setminus\left(\partial\sigma_1\cup \partial\sigma_2\right)}\Phi(z')=\Phi(p)+\Phi(p_1)=\frac{1}{2}=\frac{N}{12}\]
by (\ref{p-p_1-1/4}). This proves (\ref{mid-ver-cur}).

\begin{figure}[htbp]
 \begin{center}
   \begin{tikzpicture}
    \node at (0,0){\includegraphics[width=0.65\linewidth]{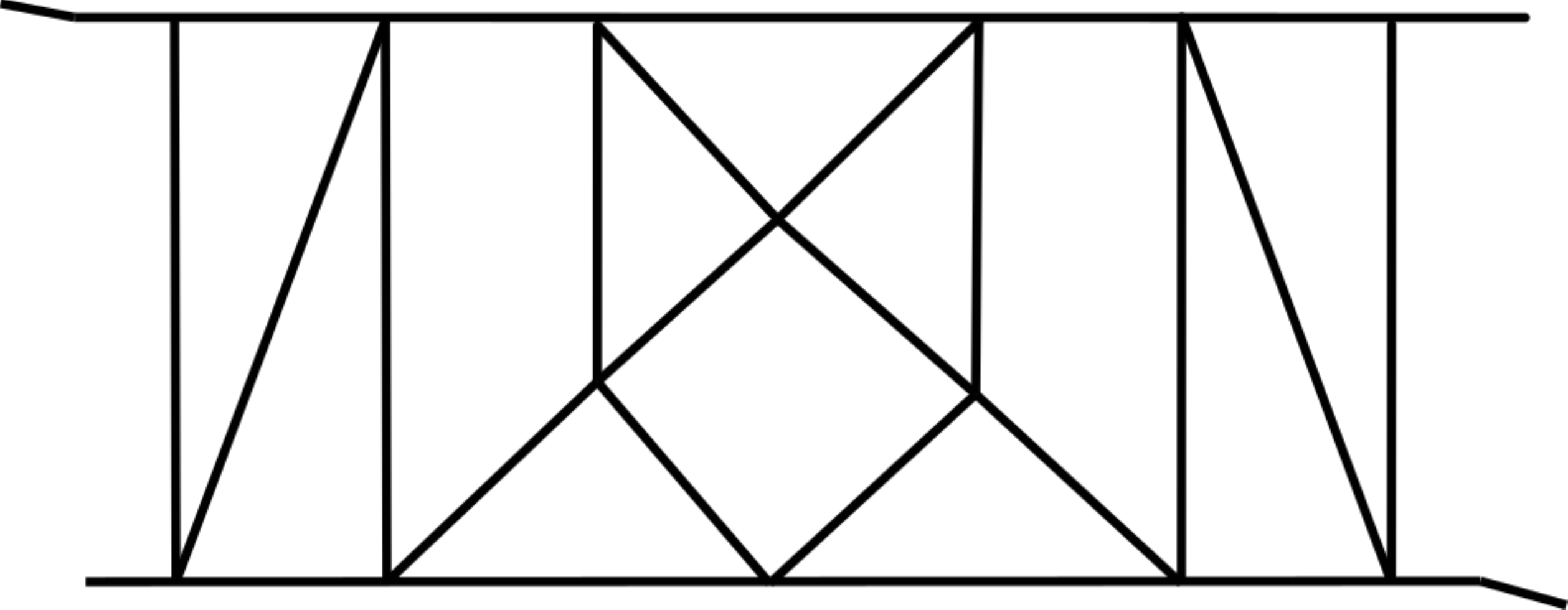}};
    \node at (1.2, 1.7){\small $x_1$};
    \node at (2.2, 1.8){\small $x_1'$};
    \node at (.0,1.0){\small $\tau_1$};
    \node at (.0, .15){\small $p_1$};
    \node at (.65, .5){\small $\tau'_1$};
    \node at (1.5, .5){\small $\tau_2$};
    \node at (.15, -.4){\small $\omega'$};
    \node at (-.0, -1.7){\small $y_1$};
    \node at (1.3, -.3){\small $q_1$};
    \node at (2, -1.7){\small $y_2$};
    \node at (-2.1, -1.7){\small $y$};
    \node at (-.9,  -1.0){\small $\omega$};
    \node at (1,  -1){\small $\omega_1$};
    \node at (-.7,  -.4){\small $p$};
     \node at (-.7,  .4){\small $\tau'$};
    \node at (-1.6,   .1){\small $\tau$};
    \node at (-2.5,  -2 ){\small $\sigma_2$};
    \node at (-2.5,   2 ){\small $\sigma_1$};
   \node at (-2,   1.75){\small $x'$};
    \node at (-.9,   1.7){\small $x$};
   \end{tikzpicture}
  \caption{}\label{triangle-square-oneedge-com2}
 \end{center}
\end{figure}

Let $\tau_1'$ be a face, lower-adjacent to $\tau_1$, such that  $\{p_1,x_1\}=\tau_1\cap\tau_1'$.

\textbf{Case 2.} $|\omega'|=4$, $|\tau_1'|=3$.

\textbf{Claim 1:} There exists a triangle $\omega_1$, $\sigma_2$-adjacent to $\omega$, such that $q_1=\partial\omega_1\setminus\partial\sigma_2$, $\{y_1,y_2\}=\omega_1\cap\sigma_2$, $y_1\prec\omega$, $q_1\nprec\sigma_1$ and  $q_1\sim p_1$, as shown in Figure \ref{triangle-square-oneedge-com2}.

By $|\omega'|=4$, $y_1\prec\sigma_2$ and (\ref{pattern}), $\Pttn(y_1)=(3,3,4,k_2)$.
Then there exists a triangle $\omega_1$, $\sigma_2$-adjacent to $\omega$, such that $q_1=\partial\omega_1\setminus\partial\sigma_2$, $\{y_1,y_2\}=\omega_1\cap\sigma_2$ and $y_1\prec\omega$, as shown in Figure \ref{triangle-square-oneedge-com2}. Clearly, $q_1\nprec\sigma_1$. Otherwise,
$q_1=x_1$ or $\{x_1,q_1\}\prec \sigma_1$, which yields $|x_1|=5$ or $3$. This contradicts $|x_1|=4$. $|y_1|=4$ yields that $\{p,y_1\}$ and $\{y_1,q_1\}$ are contained in a face. Using $\{p,y_1\}=\omega\cap\omega'$, $\{p,p_1\}\nprec\omega$ and $\{q_1,y_1\}\nprec\omega$, we obtain that $\{p,p_1\}$, $\{p,y_1\}$ and $\{y_1,q_1\}$ are contained in $\omega'$. $|\omega'|=4$ implies that $p_1\sim q_1$, which proves the claim. Hence,
\begin{align}\begin{split}\label{omega_1}
|\omega_1|=3.
\end{split}\end{align}

\textbf{Claim 2:} There exists a square, denoted by $\tau_2$, lower-adjacent to $\sigma_1$ such that $\{x_1,x_1'\}= \tau_2\cap\sigma_1$, $\{q_1,x_1\}=\tau_1'\cap\tau_2$ and $\{q_1,y_2\}=\tau_2\cap\omega_1$, as shown in Figure \ref{triangle-square-oneedge-com2}.

By $|p_1|=4$, $\{p_1,x_1\}$ and $\{p_1,q_1\}$ are contained in a face. Since $\{p_1,x_1\}=\tau_1\cap\tau_1'$ and $\{p_1,q_1\}\prec\tau_1'$. $|\tau_1'|=3$ implies that $x_1\sim q_1$. $|q_1|=4$ yields that $\{q_1,x_1\}$ and $\{q_1,y_2\}$ are contained in a face. For $|x_1|=4$, there exists $x_1'\prec\sigma_1$ and $x_1'\neq x$ such that $\{q_1,x_1\}$ and $\{x_1,x_1'\}$ are contained in a face. Hence, $\{q_1,x_1\}$, $\{q_1,y_2\}$ and $\{x_1,x_1'\}$ are contained in a face, denoted by $\tau_2$. By (\ref{pattern}), $\Pttn(x_1)=(3,3,3,k_1)$ or $(3,3,4,k_1)$, which implies that $|\tau_2|=3$ or $4$. Clearly, $|\tau_2|=4$. Otherwise, $|\tau_2|=3$ yields $y_2=x_1'\prec\sigma_1$. Then $\Phi(y_2)=(3,3,k_1,k_2)$. So that $\Phi(y_2)<0$ for $k_1,k_2\geq 8$, which contradicts $\Phi(y_2)\geq 0$.  Since $\{q_1,x_1\}\prec\tau_1'$, $\{q_1,y_2\}\prec\omega_1$ and $\{x_1,x_1'\}\prec\sigma_1$, together with $\Gamma$ is a tessellation, $\{q_1,x_1\}=\tau_1'\cap \tau_2$, $\{q_1,y_2\}=\tau_2\cap\omega_1$ and $\{x_1,x_1'\}=\sigma_1\cap\tau_2$, which proves the claim.
Hence,
\begin{align}\begin{split}\label{tau_1'-tau_2}
|\tau_2|=4.
\end{split}\end{align}

Set
\[
B=\partial\tau\cup\partial\tau_1\cup\partial\tau
_2\cup\partial\omega\cup\partial\omega_1.
\]
Using (\ref{tau-tau'-tau_1}), (\ref{omega_1}) and (\ref{tau_1'-tau_2}), together with $|\omega'|=4$ and $|\tau_1'|=3$, we obtain
\begin{align}\begin{split}\label{p-p_1}
\Pttn(p)=\Pttn(q_1)=(3,3,4,4),\ \Pttn(p_1)=(3,3,3,4).
\end{split}\end{align}
It is easy to check that $N=7$ and
\[
\sum\limits_{z'\in B\setminus(\partial\sigma_1\cup\partial\sigma_2)}\Phi(z')=\Phi(p)+\Phi(q_1)+\Phi(p_1)=\frac{7}{12}=\frac{N}{12} \]
by (\ref{p-p_1}).

\begin{figure}[htbp]
 \begin{center}
   \begin{tikzpicture}
    \node at (0,0){\includegraphics[width=0.7\linewidth]{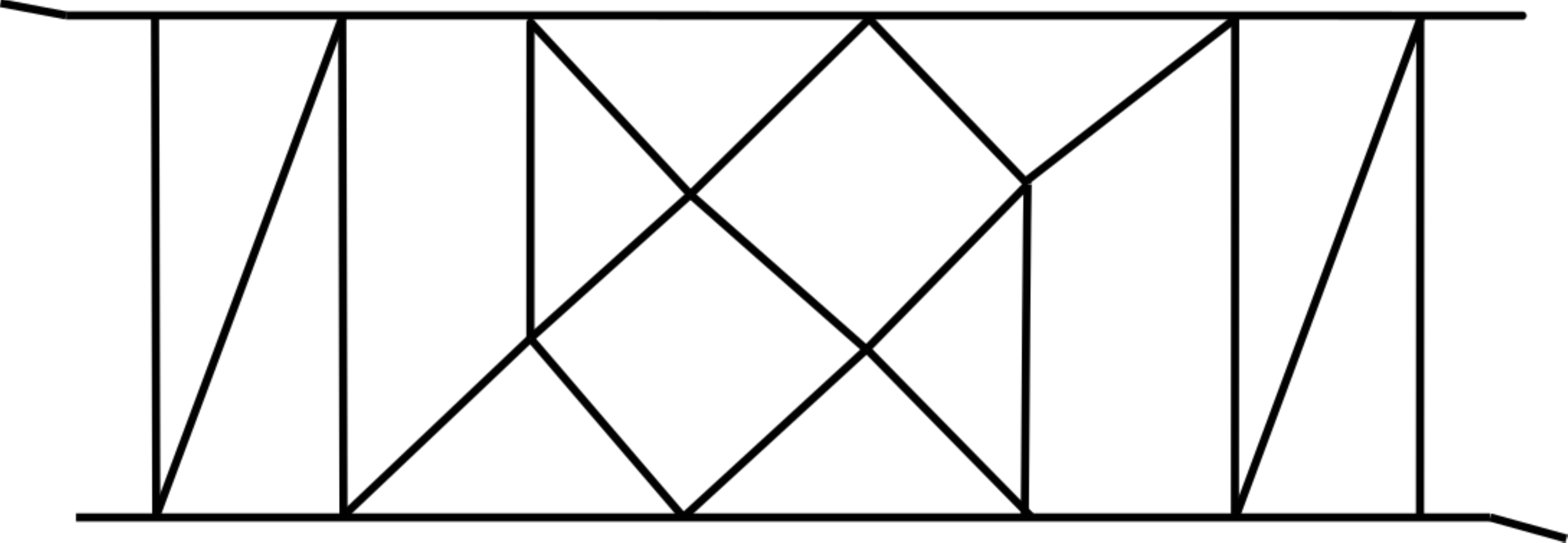}};
    \node at (.6, 1.7){\small $x_1$};
    \node at (2.4, 1.7){\small $x_2$};
    \node at (-.4,1.15){\small $\tau_1$};
    \node at (1.6,1.15){\small $\tau_2$};
    \node at (.6,.6){\small $\tau'_1$};
    \node at (-.1, .5){\small $p_1$};
    \node at (-.6, -1.7){\small $y_1$};
    \node at (.5, -.15){\small $q_1$};
    \node at (1.65,  .4){\small $p_2$};
    \node at (1.3, -1.7){\small $y_2$};
    \node at (2.4, -1.7){\small $y_2'$};
    \node at (-2.4, -1.7){\small $y$};
    \node at (-1.5,  -1.1){\small $\omega$};
    \node at (.5,  -1.1){\small $\omega_1$};
    \node at (-.5,  -.45){\small $\omega'$};
    \node at (1.05,  -.5){\small $\omega_1'$};
    \node at (1.9,  -.3){\small $\omega_2$};
    \node at (-1.2,  -.4){\small $p$};
    \node at (-1.9,   .4){\small $\tau$};
    \node at (-1.2,   .4){\small $\tau'$};
    \node at (-1.0,  -2.2){\small $\sigma_2$};
    \node at (-1.0,   2.2){\small $\sigma_1$};
   \node at (-1.4,   1.7){\small $x$};
    \node at (-2.4,   1.8){\small $x'$};
   \end{tikzpicture}
  \caption{}\label{triangle-square-oneedge-com3}
 \end{center}
\end{figure}

Let $\omega_1'$ be a face, lower-adjacent to $\omega_1$, such that  $\{q_1,y_2\}=\omega_1\cap\omega_1'$.

\textbf{Case 3.} $|\omega'|=4$, $|\tau_1'|=4$, $|\omega_1'|=3$.

For $|\tau_1'|=4$, with the similar argument as in Claim 1 of Case 2, we obtain that there exists a triangle $\tau_2$ such that $\tau_2$ is $\sigma_1$-adjacent to $\tau_1$, $\{x_1,x_2\}=\tau_2\cap\partial\sigma_1$, $p_2=\partial\tau_2\setminus \partial\sigma_1$, $x_1\prec \tau_1$, $p_2\nprec\sigma_1$, and $q_1\sim p_2$, as shown in Figure \ref{triangle-square-oneedge-com3}. Then
\begin{align}\begin{split}\label{tau_2}
|\tau_2|=3.
\end{split}\end{align}


For $|\omega_1'|=3$, with the same argument as in Claim 2 of Case 2, we obtain that there exists a square $\omega_2$ such that
$\omega_2$ is $\sigma_2$-adjacent to $\omega_1$, $\omega_2\cap\omega_1'=\{p_2,y_2\}$, $\omega_2\cap\tau_2=\{p_2,x_2\}$, and $\omega_2\cap\sigma_2=\{y_2,y_2'\}$, as shown in Figure \ref{triangle-square-oneedge-com3}. Hence,
\begin{align}\begin{split}\label{omega_2-omega_1'}
|\omega_2|=4.
\end{split}\end{align}
Choose
\[
B=\partial\tau\cup\partial\omega\cup\left(\bigcup_{i=1}^2(\partial\tau_i\cup\partial
\omega_i)\right).
\]
Using (\ref{tau-tau'-tau_1}), (\ref{omega_1}), (\ref{tau_2}) and (\ref{omega_2-omega_1'}), together with
\[
|\omega'|=|\tau_1'|=4, |\omega_1'|=3,
\]
we obtain
\begin{align}\begin{split}\label{p-p_1-q-q_1}
\Pttn(p)=\Pttn(p_1)=\Pttn(q_1)=\Pttn(p_2)=(3,3,4,4).
\end{split}\end{align}
It is easy to check that $N=8$ and
\[
\sum\limits_{z'\in B\setminus(\partial\sigma_1\cup\partial\sigma_2)}\Phi(z')=\Phi(p)+\Phi(q_1)+\Phi(p_1)+\Phi(p_2)=\frac{2}{3}=\frac{N}{12} \]
by (\ref{p-p_1-q-q_1}).


\begin{figure}[htbp]
 \begin{center}
   \begin{tikzpicture}
    \node at (0,0){\includegraphics[width=0.7\linewidth]{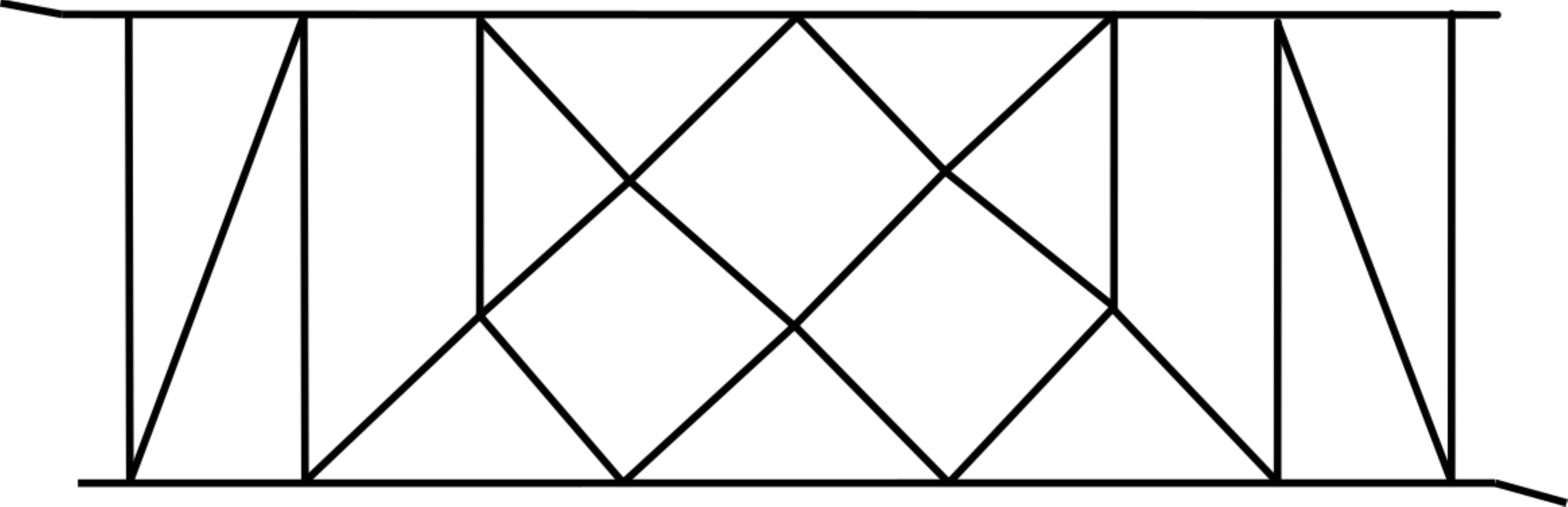}};
    \node at (.2, 1.6){\small $x_1$};
    \node at (1.9, 1.6){\small $x_2$};
    \node at (2.7, 1.7){\small $x_2'$};
    \node at (-.8,1){\small $\tau_1$};
    \node at (1.05,1 ){\small $\tau_2$};

    \node at (.2,.6){\small $\tau'_1$};
    \node at (1.5,.6){\small $\tau'_2$};
    \node at (2.3,.6){\small $\tau_3$};
    \node at (-.4, .4){\small $p_1$};
    \node at (-.9, -1.6){\small $y_1$};
    \node at (.4, -.4){\small $q_1$};
    \node at (.9,  .1){\small $p_2$};
    \node at (1.1, -1.6){\small $y_2$};
    \node at (2.7, -1.6){\small $y_3$};
    \node at (-2.7, -1.6){\small $y$};
    \node at (-1.7,  -.9){\small $\omega$};
    \node at (.15,  -.9){\small $\omega_1$};
    \node at (-.8,  -.4){\small $\omega'$};
    \node at (1.05,  -.5){\small $\omega_1'$};
    \node at (2.1,  -.9){\small $\omega_2$};
    \node at (2.1,  -.2){\small $q_2$};
    \node at (-1.4,  -.4){\small $p$};
    \node at (-2.1,   .4){\small $\tau$};
    \node at (-1.4,   .5){\small $\tau'$};
    \node at (-1.0,  -2.2){\small $\sigma_2$};
    \node at (-1.0,   2.2){\small $\sigma_1$};
   \node at (-1.6,   1.6){\small $x$};
    \node at (-2.6,   1.7){\small $x'$};
   \end{tikzpicture}
  \caption{}\label{triangle-square-oneedge-com4}
 \end{center}
\end{figure}

Let $\tau_2'$ be a face, lower-adjacent to $\tau$, such that $\{p_2,x_2\}=\tau_2\cap\tau_2'$.

\textbf{Case 4.} $|\omega'|=|\tau_1'|=4$, $|\omega_1'|=4$, $|\tau_2'|=3$.

For $|\omega_1'|=4$, with the similar argument as in Claim 1 of Case 1, we obtain that there exists a triangle $\omega_2$ such that $\omega_2$ is $\sigma_2$-adjacent to $\omega_1$, $q_2=\partial\omega_2\setminus\partial\sigma_2$, $\{y_2,y_3\}=\omega_2\cap\sigma_2$, $y_2\prec\omega_1$, $q_2\nprec\sigma_1$, and $p_2\sim q_2$, as shown in Figure \ref{triangle-square-oneedge-com4}.
Then
\begin{align}\begin{split}\label{omega_2}
|\omega_2|=3.
\end{split}\end{align}

For $|\tau_2'|=3$, with the similar argument as in Claim 2 of Case 2, we obtain that there exists a square $\tau_3$ such that $\tau_3$ is $\sigma_1$-adjacent to $\tau_2$, $\{x_2,q_2\}=\tau_2'\cap\tau_3$, $\{q_2,y_3\}=\omega_2\cap\tau_3$ and $\{x_2,x_2'\}=\tau_3\cap\sigma_1$, as shown in Figure \ref{triangle-square-oneedge-com4}.
Hence,
\begin{align}\begin{split}\label{tau_2'-tau_3}
&|\tau_3|=4.
\end{split}\end{align}

Choose
\[
B=\partial\tau\cup\partial\omega\cup\left( \bigcup_{i=1}^2(\partial\tau_i\cup\partial
\omega_i)\right)\cup\partial\tau_3.
\]
Using (\ref{tau-tau'-tau_1}), (\ref{omega_1}), (\ref{tau_2}),  (\ref{omega_2})  and (\ref{tau_2'-tau_3}), together with
\[
|\omega'|=|\tau_1'|=|\omega_1'|=4, |\tau_2'|=3,
\]
we obtain
\begin{align}\begin{split}\label{p-p_1-p_2-q_2}
&\Pttn(p)=\Pttn(p_1)=(3,3,4,4),\Pttn(q_1)=(3,4,4,4),\\
&\Pttn(p_2)=\Pttn(q_2)=(3,3,4,4).
\end{split}\end{align}
It is easy to check that $N=9$ and
\[
\sum\limits_{z'\in B\setminus(\partial\sigma_1\cup\partial\sigma_2)}\Phi(z')=\Phi(p)+\sum_{i=1}^2(\Phi(q_i)+\Phi(p_i))=\frac{3}{4}=\frac{N}{12}
\]
by (\ref{p-p_1-p_2-q_2}).


\begin{figure}[htbp]
 \begin{center}
   \begin{tikzpicture}
    \node at (0,0){\includegraphics[width=0.7\linewidth]{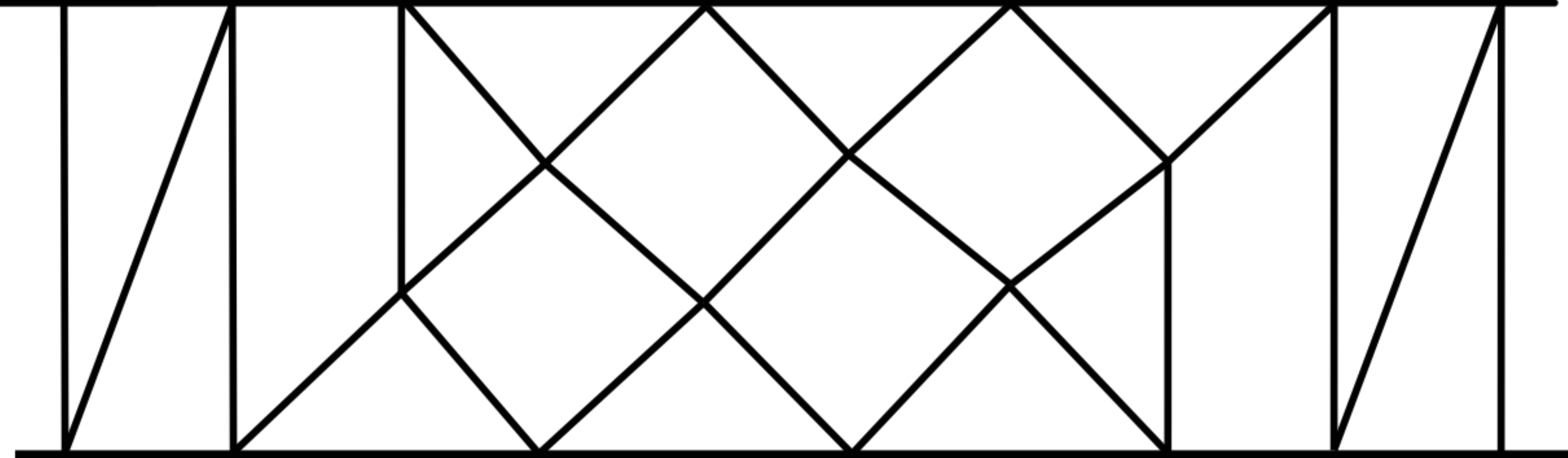}};
    \node at (-2.1,   1.5){\small $x$};
    \node at (-3.1,   1.6){\small $x'$};
    \node at (-.4, 1.5){\small $x_1$};
    \node at (1.2, 1.5){\small $x_2$};
    \node at (3, 1.5){\small $x_3$};
    \node at (-3.1, -1.6){\small $y$};
    \node at (-1.3, -1.6){\small $y_1$};
     \node at (.5, -1.6){\small $y_2$};
    \node at (2.3, -1.6){\small $y_3$};
    \node at (3.1, -1.6){\small $y_3'$};
    \node at (-1.3,1){\small $\tau_1$};
    \node at (.5,1 ){\small $\tau_2$};
    \node at (2.2,1 ){\small $\tau_3$};
    \node at (-2.7,   .4){\small $\tau$};
    \node at (-1.8,   .4){\small $\tau'$};
    \node at (-1.0, .4){\small $p_1$};
    \node at (-.4,.4){\small $\tau'_1$};
    \node at (1.3,.5){\small $\tau'_2$};
    \node at (.8,  .45){\small $p_2$};
    \node at (2.7,-.2){\small $\omega_3$};
    \node at (-1.9,  -.4){\small $p$};
    \node at (-1.3,  -.4){\small $\omega'$};
    \node at (-.1, -.4){\small $q_1$};
    \node at (.55,  -.4){\small $\omega_1'$};
    \node at (-2.2,  -.9){\small $\omega$};
    \node at (-.4,  -1){\small $\omega_1$};
    \node at (1.3,  -1){\small $\omega_2$};
    \node at (1.3,    -.05){\small $q_2$};
    \node at (2.4,.2){\small $p_3$};
     \node at (1.85,  -.4){\small $\omega_2'$};
    \node at (-1.0,  -2.2){\small $\sigma_2$};
    \node at (-1.0,   2.0){\small $\sigma_1$};

   \end{tikzpicture}
  \caption{}\label{triangle-square-oneedge-com6}
 \end{center}
\end{figure}

Let $\omega_2'$ be a face such that $\omega_2'$ is lower-adjacent to $\omega_2$ and $\{q_2,y_3\}=\omega_2\cap\omega_2'$.

\textbf{Case 5.} $|\omega'|=|\tau_1'|=|\omega_1'|=4$, $|\tau_2'|=4$, $|\omega_2'|=3$.

For $|\tau_2'|=4$, with the similar argument as in Claim 1 of Case 2, we obtain that there exists a triangle $\tau_3$ such that $\tau_3$ is $\sigma_1$-adjacent to $\tau_2$, $p_3=\partial\tau_3\setminus\partial\sigma_1$, $\{x_2,x_3\}=\tau_3\cap\sigma_1$, $x_2\prec\tau_2$, $p_3\nprec\sigma_2$ and  $p_3\sim q_2$, as shown in Figure \ref{triangle-square-oneedge-com6}. We have
\begin{align}\begin{split}\label{tau_3}
|\tau_3|=3.
\end{split}\end{align}

For $|\omega_2'|=3$, with the similar argument as in Claim 2 of Case 1, we obtain that there exists a square $\omega_3$ such that $\omega_3$ is $\sigma_2$-adjacent to $\omega_2$, $\{p_3,x_3\}=\omega_3\cap\tau_3$, $\{p_3,y_3\}=\omega_3\cap\omega_2'$ and $\{y_2,y_3'\}=\omega_3\cap\sigma_2$.
Hence,
\begin{align}\begin{split}\label{omega_1'-tau_3-34}
|\omega_3|=4.
\end{split}\end{align}

Choose
\[
B=\partial\tau\cup\partial\omega \bigcup_{i=1}^3(\partial\tau_i\cup\partial
\omega_i).
\]
Using (\ref{tau-tau'-tau_1}), (\ref{omega_1}), (\ref{tau_2}), (\ref{omega_2}), (\ref{tau_3}) and (\ref{omega_1'-tau_3-34}), together with
\[
|\omega'|=|\tau_1'|=|\omega_1'|=|\tau_2'|=4, |\omega_2'|=3,
\]
we obtain
\begin{align}\begin{split}\label{p-p_1-p_2-q_2-q_1}
&\Pttn(p)=\Pttn(p_1)=\Pttn(p_3)=\Pttn(q_2)=(3,3,4,4),\\
&\Pttn(q_1)=\Pttn(p_2)=(3,4,4,4).
\end{split}\end{align}
It is easy to check that $N=10$ and
\[
\sum\limits_{z'\in B\setminus(\partial\sigma_1\cup\partial\sigma_2)}\Phi(z')=\Phi(p)+\sum_{i=1}^2(\Phi(q_i)+\Phi(p_i))+\Phi(p_3)=\frac{N}{12} \]
by (\ref{p-p_1-p_2-q_2-q_1}).


\begin{figure}[htbp]
 \begin{center}
   \begin{tikzpicture}
    \node at (0,0){\includegraphics[width=0.8\linewidth]{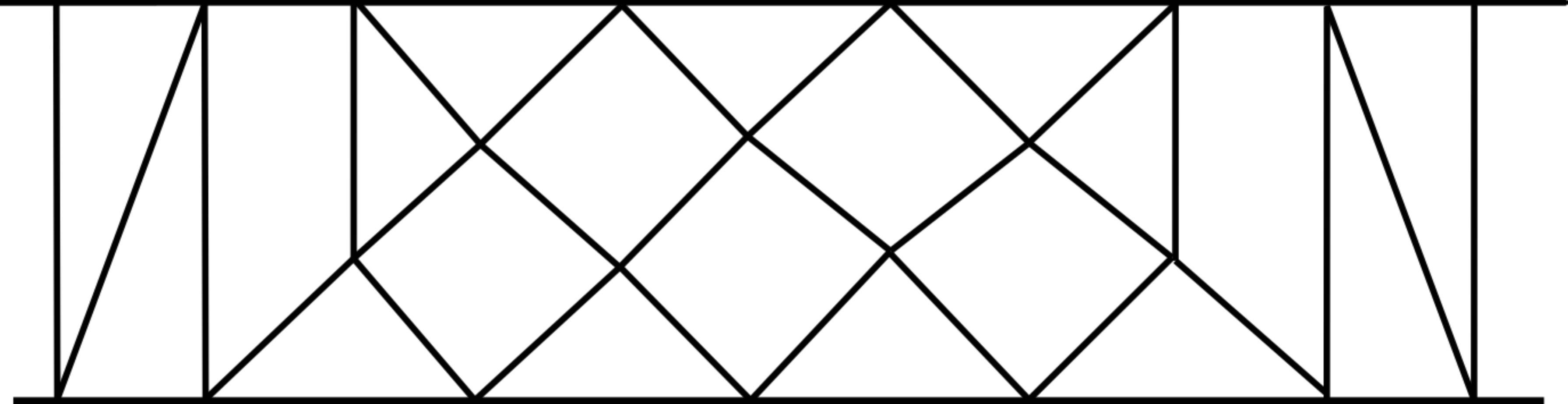}};
\node at (-2.7,   1.5){\small $x$};
    \node at (-3.7,   1.6){\small $x'$};
    \node at (-1.1, 1.5){\small $x_1$};
    \node at (.6, 1.5){\small $x_2$};
    \node at (2.5, 1.5){\small $x_3$};
    \node at (3.5, 1.6){\small $x_3'$};
    \node at (-3.7, -1.5){\small $y$};
    \node at (-2.0, -1.5){\small $y_1$};
     \node at (-.1, -1.5){\small $y_2$};
    \node at (1.7, -1.5){\small $y_3$};
    \node at (3.5, -1.5){\small $y_4$};
    \node at (-1.9,1){\small $\tau_1$};
    \node at (-.2,1 ){\small $\tau_2$};
    \node at (1.6,1 ){\small $\tau_3$};
    \node at (-3.2,   .4){\small $\tau$};
    \node at (-2.4,   .4){\small $\tau'$};
    \node at (-1.5, .4){\small $p_1$};
    \node at (-1,.4){\small $\tau'_1$};
    \node at (.8,.5){\small $\tau'_2$};
    \node at (.2,  .45){\small $p_2$};
    \node at (-2.4,  -.4){\small $p$};
    \node at (-1.8,  -.4){\small $\omega'$};
    \node at (-.7, -.4){\small $q_1$};
    \node at (-.1,  -.4){\small $\omega_1'$};
        \node at (-2.8,  -.9){\small $\omega$};
    \node at (-1.1,  -1){\small $\omega_1$};
    \node at (.7,  -1){\small $\omega_2$};
    \node at (2.5,  -1){\small $\omega_3$};
     \node at (2.75,-.3){\small $q_3$};
     \node at (2.1, .4){\small $\tau_3'$};
    \node at (3.1, .3){\small $\tau_4$};
    \node at (1,    -.4){\small $q_2$};
    \node at (1.65,.1){\small $p_3$};
    \node at (1.75,  -.4){\small $\omega_2'$};
    \node at (-1.0,  -2.0){\small $\sigma_2$};
    \node at (-1.0,   2.0){\small $\sigma_1$};
   \end{tikzpicture}
  \caption{}\label{triangle-square-oneedge-com7}
 \end{center}
\end{figure}

Let $\tau_3'$ be a face such that $\tau_3'$ is lower-adjacent to $\tau_3$ and $\tau_3\cap\tau_3'=\{p_3,x_3\}$.

\textbf{Case 6.}  $|\omega'|=|\tau_1'|=|\omega_1'|=|\tau_2'|=4$, $|\omega_2'|=4$ and $|\tau_3'|=3$.

As in Claim 1 of Case 2, for $|\omega_2'|=4$, we obtain that there exists a triangle $\omega_3$ such that $\omega_3$ is $\sigma_2$-adjacent to $\omega_2$, $q_3=\partial\omega_3\setminus\partial\sigma_2$, $\{y_3,y_4\}=\omega_3\cap\sigma_2$, $y_3\prec\omega_3$, $q_3\nprec\sigma_1$ and  $p_3\sim q_3$, as shown in Figure \ref{triangle-square-oneedge-com7}. Then
\begin{align}\begin{split}\label{omega_3}
|\omega_3|=3.
\end{split}\end{align}

For $|\tau_3'|=3$, $q_3\sim x_3$,  with the similar argument as in Claim 2 of Case 2, we obtain that there exists a square $\tau_4$ such that $\tau_4$ is $\sigma_1$-adjacent to $\tau_3$, $\{q_3,x_3\}=\tau_4\cap\tau_3'$, $\{q_3,y_4\}=\tau_4\cap\omega_3$ and $\{x_3,x_3'\}=\tau_4\cap\sigma_1$.
Hence,
\begin{align}\begin{split}\label{tau_3'-tau_4}
|\tau_4|=4.
\end{split}\end{align}

Choose
\[
B=\partial\tau\cup\partial\omega\cup\left( \bigcup_{i=1}^3(\partial\tau_i\cup\partial
\omega_i)\right)\cup\partial\tau_4.
\]
Using (\ref{tau-tau'-tau_1}), (\ref{omega_1}),  (\ref{tau_2}), (\ref{omega_2}), (\ref{tau_3}), (\ref{omega_3}) and (\ref{tau_3'-tau_4}), together with
\[
|\omega'|=|\tau_1'|=|\omega_1'|=|\tau_2'|=|\omega_2'|=4, |\tau_3'|=3,
\]
we obtain
\begin{align}\begin{split}\label{p-p_1-p_3-q_3-q_2}
&\Pttn(p)=\Pttn(p_1)=\Pttn(p_3)=\Pttn(q_3)=(3,3,4,4),\\
&\Pttn(p_2)=\Pttn(q_1)=\Pttn(q_2)=(3,4,4,4).
\end{split}\end{align}
It is easy to check that $N=11$ and
\[
\sum\limits_{z'\in B\setminus(\partial\sigma_1\cup\partial\sigma_2)}\Phi(z')=\Phi(p)+\sum_{i=1}^3(\Phi(q_i)+\Phi(p_i))=\frac{11}{12}=\frac{N}{12} \]
by (\ref{p-p_1-p_3-q_3-q_2}).

\begin{figure}[htbp]
 \begin{center}
   \begin{tikzpicture}
    \node at (0,0){\includegraphics[width=0.85\linewidth]{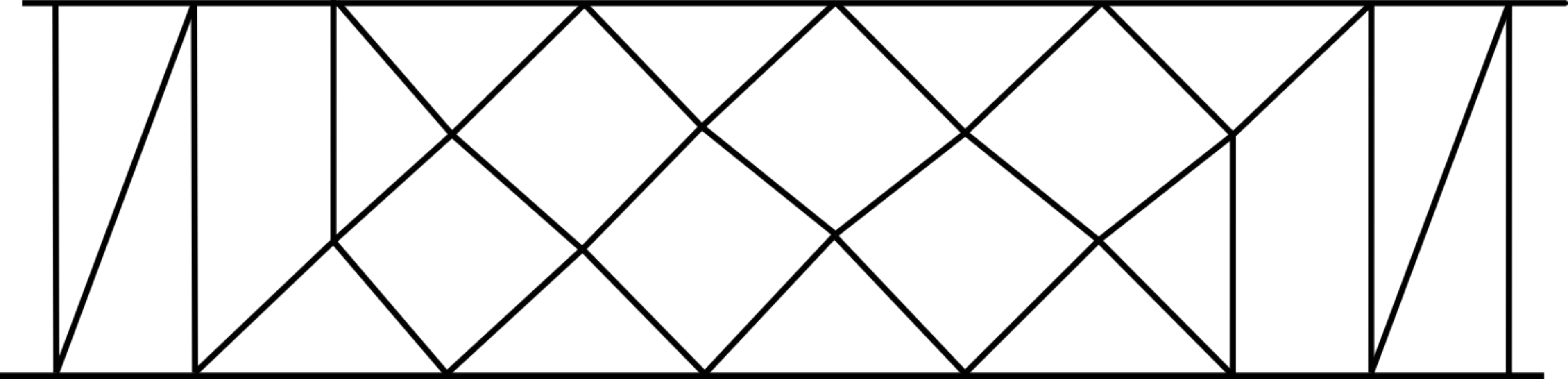}};
    \node at (-3,   1.5){\small $x$};
    \node at (-4,   1.6){\small $x'$};
    \node at (-1.3, 1.5){\small $x_1$};
    \node at (.5, 1.5){\small $x_2$};
    \node at (2.2, 1.5){\small $x_3$};
    \node at (4, 1.5){\small $x_4$};
    \node at (-4, -1.5){\small $y$};
    \node at (-2.3, -1.5){\small $y_1$};
     \node at (-.4, -1.5){\small $y_2$};
    \node at (1.3, -1.5){\small $y_3$};
    \node at (3.2, -1.5){\small $y_4$};
    \node at (4, -1.6){\small $y_4'$};
    \node at (-2.3, 1){\small $\tau_1$};
    \node at (-.5,1 ){\small $\tau_2$};
    \node at (1.3,1 ){\small $\tau_3$};
    \node at (3.1,1 ){\small $\tau_4$};
    \node at (-3.6,   .4){\small $\tau$};
    \node at (-2.8,   .4){\small $\tau'$};
    \node at (-1.9, .4){\small $p_1$};
    \node at (-1.4,.4){\small $\tau'_1$};
    \node at (.4,.5){\small $\tau'_2$};
    \node at (-.2,  .45){\small $p_2$};
    \node at (1.6, .45){\small $p_3$};
    \node at (-3.2,  -.9){\small $\omega$};
    \node at (-1.4,  -1){\small $\omega_1$};
    \node at (.4,  -1){\small $\omega_2$};
    \node at (2.2,  -1){\small $\omega_3$};
    \node at (-2.8,  -.4){\small $p$};
    \node at (-2.2,  -.4){\small $\omega'$};
    \node at (-1.1, -.4){\small $q_1$};
    \node at (-.5,  -.4){\small $\omega_1'$};
     \node at (.7,    -.4){\small $q_2$};
     \node at (1.45,  -.4){\small $\omega_2'$};
    \node at (2.1, .5){\small $\tau_3'$};
    \node at (2.2,-.1){\small $q_3$};
    \node at (2.65,  -.4){\small $\omega_3'$};
    \node at (3.6,  -.4){\small $\omega_4$};
    \node at (3.3, .2){\small $p_4$};

    \node at (-1.0,  -2.0){\small $\sigma_2$};
    \node at (-1.0,   2.0){\small $\sigma_1$};
   \end{tikzpicture}
  \caption{}\label{triangle-square-oneedge-com8}
 \end{center}
\end{figure}

Let $\omega_3'$ be a face such that $\omega_3'$ is lower-adjacent to $\omega_3$ and $\{q_3,y_4\}=\omega_3'\cap\omega_3$.

\textbf{Case 7.}  $|\omega'|=|\tau_1'|=|\omega_1'|=|\tau_2'|=|\omega_2'|=4$, $|\tau_3'|=4$, $|\omega_3'|=3$.

For $|\tau_3'|=4$, with the same argument as in Claim 1 of Case 2,  we obtain that there exists a triangle $\tau_4$ such that $\tau_4$ is $\sigma_1$-adjacent to $\tau_3$, $p_4=\partial\tau_4\setminus\partial\sigma_1$, $\{x_3,x_4\}=\tau_4\cap\sigma_1$, $x_3\prec\tau_4$, $p_4\nprec\sigma_2$ and $p_4\sim q_3$, as shown in Figure \ref{triangle-square-oneedge-com8}. Then
\begin{align}\begin{split}\label{tau_4}
|\tau_4|=3.
\end{split}\end{align}

For $|\omega_3'|=3$, with the similar argument as in Claim 2 of Case 2, we obtain that there exists a square $\omega_4$ such that $\omega_4$ is $\sigma_2$-adjacent to $\omega_3$, $\{p_4,y_4\}=\omega_4\cap\omega_3'$, $\{p_4,x_4\}=\tau_4\cap\omega_4$ and $\{y_4,y_4'\}=\omega_4\cap\sigma_2$, as shown in Figure \ref{triangle-square-oneedge-com8}.
Hence,
\begin{align}\begin{split}\label{omega_3'-omega_4}
|\omega_4|=4.
\end{split}\end{align}

Choose
\[
B=\partial\tau\cup\partial\omega \cup\left(\bigcup_{i=1}^4(\partial\tau_i\cup\partial
\omega_i)\right).
\]
Using (\ref{tau-tau'-tau_1}), (\ref{omega_1}),  (\ref{tau_2}), (\ref{omega_2}), (\ref{tau_3}), (\ref{omega_3}), (\ref{tau_4}) and (\ref{omega_3'-omega_4}), together with
\[
|\omega'|=|\tau_1'|=|\omega_1'|=|\tau_2'|=|\omega_2'|=|\tau_3'|=4, |\omega_3'|=3,
\]
we obtain
\begin{align}\begin{split}\label{p-q-p_4-q_3}
&\Pttn(p)=\Pttn(p_1)=\Pttn(q_3)=\Pttn(p_4)=(3,3,4,4),\\
&\Pttn(p_2)=\Pttn(p_3)=\Pttn(q_1)=\Pttn(q_2)=(3,4,4,4).
\end{split}\end{align}
It is easy to check that $N=12$ and
\[
\sum\limits_{z'\in B\setminus(\partial\sigma_1\cup\partial\sigma_2)}\Phi(z')=\Phi(p)+\sum_{i=1}^3(\Phi(q_i)+\Phi(p_i))+\Phi(p_4)=1=\frac{N}{12} \]
by (\ref{p-q-p_4-q_3}).

\begin{figure}[htbp]
 \begin{center}
   \begin{tikzpicture}
    \node at (0,0){\includegraphics[width=0.92\linewidth]{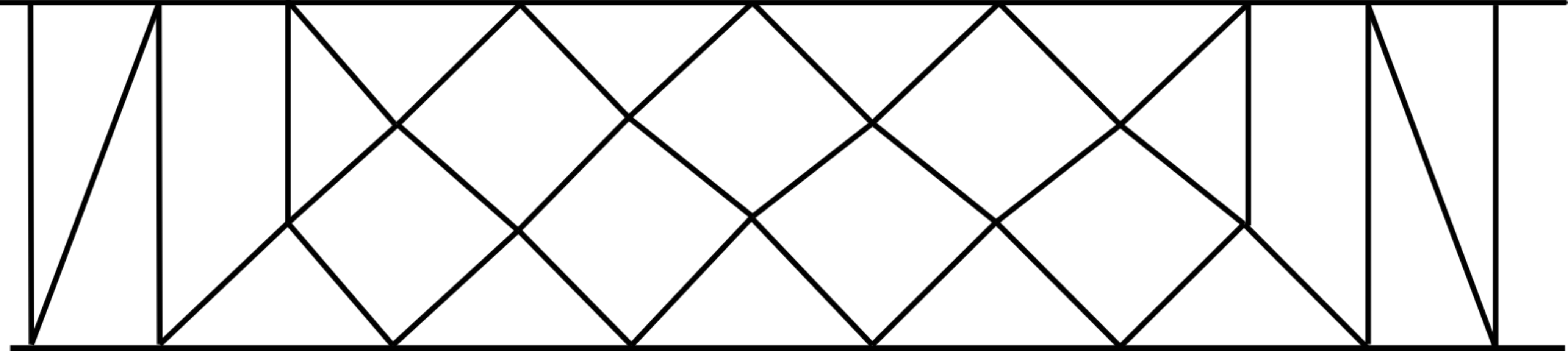}};
    \node at (-3.6,   1.5){\small $x$};
    \node at (-4.6,   1.5){\small $x'$};
    \node at (-2, 1.5){\small $x_1$};
    \node at (-.3, 1.5){\small $x_2$};
    \node at (1.7, 1.5){\small $x_3$};
    \node at (3.5, 1.5){\small $x_4$};
    \node at (4.4, 1.5){\small $x_4'$};

    \node at (-4.6, -1.5){\small $y$};
    \node at (-2.8, -1.5){\small $y_1$};
     \node at (-1.1, -1.5){\small $y_2$};
    \node at (.7, -1.5){\small $y_3$};
    \node at (2.5, -1.5){\small $y_4$};
    \node at (4.4, -1.5){\small $y_5$};
    \node at (-2.8, 1){\small $\tau_1$};
    \node at (-1,1 ){\small $\tau_2$};
    \node at ( .8,1 ){\small $\tau_3$};
    \node at (2.6,1 ){\small $\tau_4$};
    \node at (-4.1,   .4){\small $\tau$};
    \node at (-3.3,   .4){\small $\tau'$};
    \node at (-2.5, .4){\small $p_1$};
    \node at (-2,.4){\small $\tau'_1$};
    \node at (-.2,.5){\small $\tau'_2$};
    \node at (1.5, .5){\small $\tau_3'$};
    \node at (3.1, .4){\small $\tau_4'$};
    \node at (-.8,  .45){\small $p_2$};
    \node at (1.0, .45){\small $p_3$};
    \node at (2.5,  .1){\small $p_4$};
    \node at (-3.8,  -.9){\small $\omega$};
    \node at (-2.1,  -1){\small $\omega_1$};
    \node at (-.3,  -1){\small $\omega_2$};
    \node at (1.5,  -1){\small $\omega_3$};
    \node at (3.4,  -1){\small $\omega_4$};
    \node at (-3.4,  -.4){\small $p$};
    \node at (-2.8,  -.4){\small $\omega'$};
    \node at (-1.7, -.4){\small $q_1$};
    \node at (-1.1,  -.4){\small $\omega_1'$};
    \node at (.1,    -.4){\small $q_2$};
    \node at (.8,  -.4){\small $\omega_2'$};
    \node at (1.9,-.4){\small $q_3$};
    \node at (2.5,  -.4){\small $\omega_3'$};
    \node at (2.5,  -.4){\small $\omega_3'$};
    \node at (3.7,  -.4){\small $q_4$};
    \node at (3.9, .5){\small $\tau_5$};

    \node at (-1.0,  -2.0){\small $\sigma_2$};
    \node at (-1.0,   2.0){\small $\sigma_1$};
   \end{tikzpicture}
  \caption{}\label{triangle-square-oneedge-com9}
 \end{center}
\end{figure}

Let $\tau_4'$ be a face such that $\tau_4'$ is lower-adjacent to $\tau_4$ and $\{p_4,x_4\}=\tau_4'\cap\tau_4$.

\textbf{Case 8.}  $|\omega'|=|\tau_1'|=|\omega_1'|=|\tau_2'|=|\omega_2'|=|\tau_3'|=4$, $|\omega_3'|=4$.

For $|\omega_3'|=4$, with the same argument as in Claim 1 of Case 2,  we obtain that there exists a triangle $\omega_4$ such that $\omega_4$ is $\sigma_2$-adjacent to $\omega_3$, $q_4=\partial\omega_4\setminus\partial\sigma_2$, $\{y_4,y_5\}=\omega_4\cap\sigma_2$, $y_4\prec\omega_4$, $q_4\nprec\sigma_1$ and $p_4\sim q_4$, as shown in Figure \ref{triangle-square-oneedge-com9}. Then
\begin{align}\begin{split}\label{omega_4}
|\omega_4|=3.
\end{split}\end{align}
Using (\ref{tau-tau'-tau_1}), (\ref{omega_1}),  (\ref{tau_2}), (\ref{omega_2}), (\ref{tau_3}), (\ref{omega_3}), (\ref{tau_4}) and (\ref{omega_4}), we obtain
\begin{align*}
&\Pttn(p)=\Pttn(p_1)=(3,3,4,4),\\
&\Pttn(p_2)=\Pttn(p_3)=\Pttn(q_1)=\Pttn(q_2)=\Pttn(q_3)=(3,4,4,4).
\end{align*}
This implies that
\begin{align}\begin{split}\label{part-sum-curv}
\Phi(p)+\sum_{i=1}^3\left(\Phi(p_i)+\Phi(q_i)\right)=\frac{3}{4}.
\end{split}\end{align}

By $x_4\prec\sigma_1$ and (\ref{pattern}), we obtain $\Pttn(x_4)=(3,3,3,k_1)$ or $(3,3,4,k_1)$. Since $x_4\prec \tau_4'$, we obtain $|\tau_4'|=3$ or $4$. We divide it into subcases.

\textbf{Case 8.1.} $|\tau_4'|=3$. With the same argument as in Case 7, there exists a square $\tau_5$ lower-adjacent to $\tau_4'$ and $\omega_4$, as shown in Figure \ref{triangle-square-oneedge-com9}. Using (\ref{tau_4}), (\ref{omega_4}) and $|\tau_5|=4$, we obtain
\begin{align*}
\Pttn(p_4)=\Pttn(q_4)=(3,3,4,4), \Phi(p_4)=\Phi(q_4)=\frac{1}{6}.
\end{align*}
Combining this with (\ref{part-sum-curv}), we obtain
\[
\Phi(p)+\sum_{i=1}^4\left(\Phi(p_i)+\Phi(q_i)\right)=\frac{13}{12}>1,
\]
this contradicts Theorem \ref{thm:DeVosMohar}.

\begin{figure}[htbp]
 \begin{center}
   \begin{tikzpicture}
    \node at (-0.4,0){\includegraphics[width=0.95\linewidth]{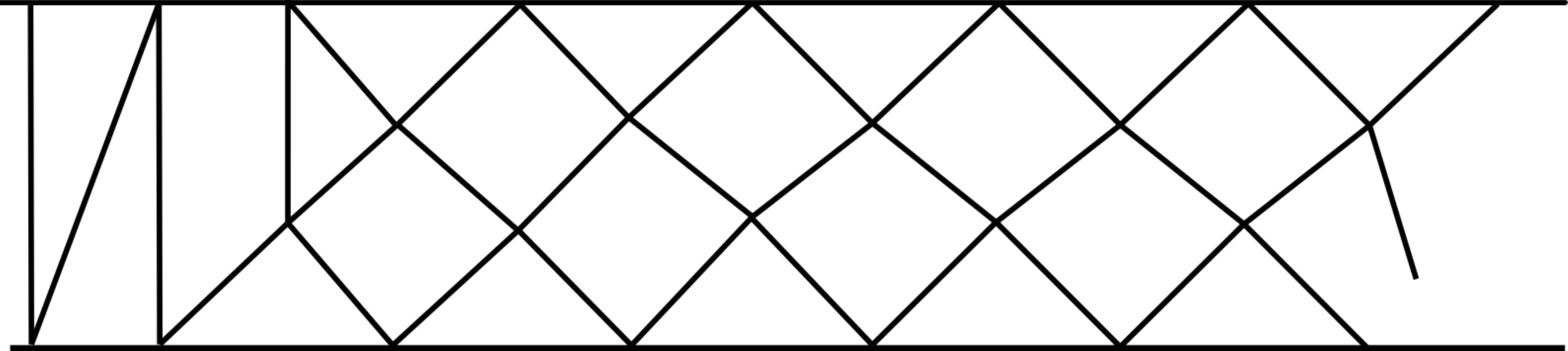}};
    \node at (-4.1,   1.5){\small $x$};
    \node at (-5.1,   1.5){\small $x'$};
    \node at (-2.4, 1.5){\small $x_1$};
    \node at (-.7, 1.5){\small $x_2$};
    \node at (1.3, 1.5){\small $x_3$};
    \node at (3.2, 1.5){\small $x_4$};
    \node at (5, 1.5){\small $x_5$};
    \node at (-5.2, -1.5){\small $y$};
    \node at (-3.4, -1.5){\small $y_1$};
     \node at (-1.7, -1.5){\small $y_2$};
    \node at (.2, -1.5){\small $y_3$};
    \node at (2.0, -1.5){\small $y_4$};
    \node at (3.9, -1.5){\small $y_5$};
    \node at (-3.4, 1){\small $\tau_1$};
    \node at (-1.6,1 ){\small $\tau_2$};
    \node at ( .3,1 ){\small $\tau_3$};
    \node at (2.2,1 ){\small $\tau_4$};
    \node at (4.1, 1){\small $\tau_5$};

    \node at (-4.6,   .4){\small $\tau$};
    \node at (-3.9,   .4){\small $\tau'$};
    \node at (-3.1, .4){\small $p_1$};
    \node at (-2.5,.4){\small $\tau'_1$};
    \node at (-.7,.5){\small $\tau'_2$};
    \node at (1.2, .5){\small $\tau_3'$};
    \node at (3.2, .5){\small $\tau_4'$};
    \node at (-1.2,  .45){\small $p_2$};
    \node at (.6, .45){\small $p_3$};
    \node at (2.6,  .4 ){\small $p_4$};
    \node at (4.4,  .3){\small $p_5$};
    \node at (3.1,  -0.1){\small $q_4$};

    \node at (-4.3,  -.9){\small $\omega$};
    \node at (-2.5,  -1){\small $\omega_1$};
    \node at (-.7,  -1){\small $\omega_2$};
    \node at (1.2,  -1){\small $\omega_3$};
    \node at (3,  -1){\small $\omega_4$};
    \node at (-3.9,  -.4){\small $p$};
    \node at (-3.3,  -.4){\small $\omega'$};
    \node at (-2.1, -.4){\small $q_1$};
    \node at (-1.6,  -.4){\small $\omega_1'$};
    \node at (-.3,    -.4){\small $q_2$};
    \node at (.3,  -.4){\small $\omega_2'$};
    \node at (1.5,-.4){\small $q_3$};
    \node at (2.2,  -.4){\small $\omega_3'$};
    \node at (3.7,  -.4){\small $\omega_4'$};
    \node at (5,  +.6){\small $\tau_5'$};

    \node at (-1.0,  -2.0){\small $\sigma_2$};
    \node at (-1.0,   2.0){\small $\sigma_1$};
   \end{tikzpicture}
  \caption{}\label{triangle-square-oneedge-com10}
 \end{center}
\end{figure}
\textbf{Case 8.2.} $|\tau_4'|=4$.  Then $\Pttn(p_4)=(3,4,4,4)$ and $\Phi(p_4)=\frac{1}{12}$. With the same argument as in Claim 1 of Case 2,  we obtain that there exists a triangle $\tau_5$ such that $\tau_5$ is $\sigma_1$-adjacent to $\tau_4$, $p_5=\partial\tau_5\setminus\partial\sigma_1$, $\{x_4,x_5\}=\tau_5\cap\sigma_1$, $x_4\prec\tau_4$, $p_5\nprec\sigma_2$ and $p_5\sim q_4$, as shown in Figure \ref{triangle-square-oneedge-com10}. Let $\omega_4'$ be a face such that $\omega_4'$ is lower-adjacent to $\omega_4$ and $\{q_4,y_5\}=\omega_4'\cap\omega_4$. Let $\tau_5'$ be a face such that $\tau_5'$ is lower-adjacent to $\tau_5$ and $\{p_5,x_5\}=\tau_5'\cap\tau_5$. By (\ref{pattern}) and $x_5\prec \omega_4'$ and $y_5\prec\tau_5'$, we have $|\omega_4'|$ and $|\tau_5'|$ are $3$ or $4$. This implies that $\Pttn(q_4)$ and $\Pttn(p_5)$ are $(3,3,4,4)$ or $(3,4,4,4)$, that is, $\Phi(q_4)\geq \frac{1}{12}$ and $\Phi(p_5)\geq \frac{1}{12}$. We divide it into subcases.

\textbf{Subcase 8.2.1.} $\Phi(q_4)>\frac{1}{12}$ or $\Phi(p_5)>\frac{1}{12}$. That implies that $\Phi(q_4)+\Phi(p_5)>\frac{1}{6}$. By (\ref{part-sum-curv}), we obtain
\[
\Phi(p)+\sum_{i=1}^3\left(\Phi(p_i)+\Phi(q_i)\right)+\Phi(p_4)+\left(\Phi(q_4)+\Phi(p_5)\right)>1.
\]
This contradicts Theorem \ref{thm:DeVosMohar}.

\begin{figure}[htbp]
 \begin{center}
   \begin{tikzpicture}
    \node at (0,0){\includegraphics[width=0.98\linewidth]{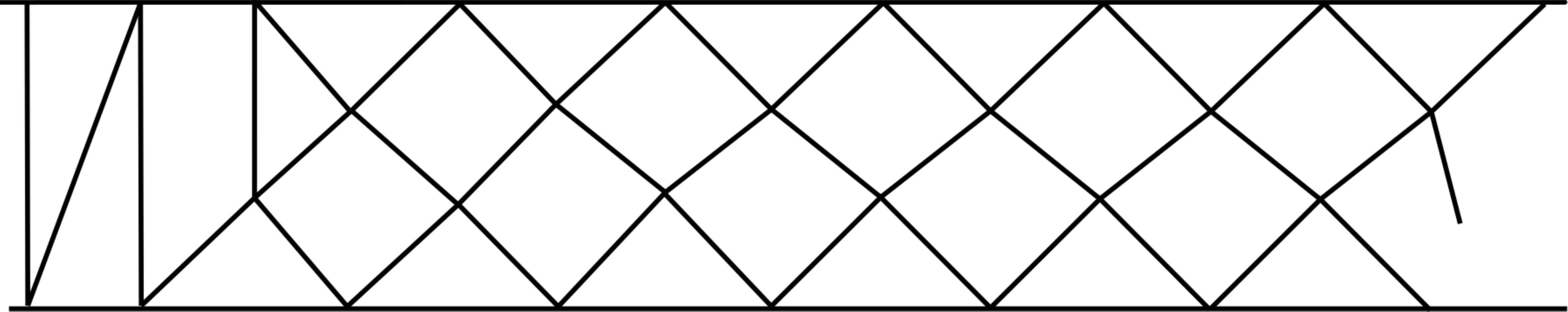}};
    \node at (-4.1,   1.4){\small $x$};
    \node at (-5.1,   1.4){\small $x'$};
    \node at (-2.6, 1.4){\small $x_1$};
    \node at (-.8, 1.4){\small $x_2$};
    \node at (.9, 1.4){\small $x_3$};
    \node at (2.6, 1.4){\small $x_4$};
    \node at (4.3, 1.4){\small $x_5$};
    \node at (5.9, 1.4){\small $x_6$};
    \node at (-5.1, -1.4){\small $y$};
    \node at (-3.5, -1.4){\small $y_1$};
     \node at (-1.8, -1.4){\small $y_2$};
    \node at (-.0, -1.4){\small $y_3$};
    \node at (1.7, -1.4){\small $y_4$};
    \node at (3.4, -1.4){\small $y_5$};
    \node at (5.1, -1.4){\small $y_6$};
    \node at (-3.5, .9){\small $\tau_1$};
    \node at (-1.8,.9 ){\small $\tau_2$};
    \node at ( -.1,.9){\small $\tau_3$};
    \node at (1.7,.9 ){\small $\tau_4$};
    \node at (3.5,.9 ){\small $\tau_5$};
    \node at (-4.7,   .4){\small $\tau$};
    \node at (-4,   .4){\small $\tau'$};
    \node at (-3.1, .4){\small $p_1$};
    \node at (-2.6,.4){\small $\tau'_1$};
    \node at (-1.5,  .45){\small $p_2$};
    \node at (4.2, .45){\small $\tau_5'$};
    \node at (-4.0,  -.35){\small $p$};
    \node at (-1,.4){\small $\tau'_2$};
    \node at (.7, .4){\small $\tau_3'$};
    \node at (2.5, .4){\small $\tau_4'$};
    \node at (-3.4,  -.4){\small $\omega'$};
    \node at (-2.3, -.4){\small $q_1$};
    \node at (-1.8,  -.4){\small $\omega_1'$};
    \node at (-.7,    -.3){\small $q_2$};
    \node at (.0,  -.4){\small $\omega_2'$};
     \node at (1.1,-.4){\small $q_3$};
     \node at (1.7,  -.4){\small $\omega_3'$};
     \node at (2.8,  -.4){\small $q_4$};
     \node at (3.4,  -.4){\small $\omega_4'$};
     \node at (3.7,  .4){\small $p_5$};
     \node at (5.4,  .3){\small $p_6$};
    \node at (.2, .4){\small $p_3$};
    \node at (2.0,  .38){\small $p_4$};
    \node at (-4.3,  -.9){\small $\omega$};
    \node at (-2.6,  -.9){\small $\omega_1$};
    \node at (-.9,  -.9){\small $\omega_2$};
    \node at (0.8,  -.9){\small $\omega_3$};
    \node at (2.4,  -.9){\small $\omega_4$};
    \node at (4.55,  -.4){\small $q_5$};
    \node at (4.3,  -.9){\small $\omega_5$};
    \node at (5,  -.7){\small $\omega_5'$};
    \node at (-1.0,  -2.0){\small $\sigma_2$};
    \node at (-1.0,   2.0){\small $\sigma_1$};
    \node at (5.1, .9){\small $\tau_6$};
   \end{tikzpicture}
  \caption{}\label{triangle-square-oneedge-com11}
 \end{center}
\end{figure}

\textbf{Subcase 8.2.2.} $\Phi(q_4)=\Phi(p_5)=\frac{1}{12}$. That is, $\Pttn(q_4)=\Pttn(p_5)=(3,4,4,4)$. This implies that $|\omega_4'|=|\tau_5'|=4$.  With the same argument as in Claim 1 of Case 2, we obtain there exist two triangles $\omega_5$ and $\tau_6$ such that $\omega_5$ is $\sigma_2$-adjacent to $\omega_4$, $q_5=\partial\omega_5\setminus\partial\sigma_2$, $\{y_5,y_6\}=\omega_5\cap\sigma_2$, $y_5\prec\omega_4$, $q_5\nprec\sigma_1$, $q_5\sim p_5$ and $\tau_6$ is $\sigma_1$-adjacent to $\tau_5$, $p_6=\partial\tau_6\setminus\partial\sigma_1$, $\{x_5,x_6\}=\tau_6\cap\sigma_1$, $x_5\prec\tau_5$, $p_6\nprec\sigma_2$, $p_6\sim q_5$, as shown in Figure \ref{triangle-square-oneedge-com11}. $|p_5|=4$ implies that $\{p_5,x_5\}$ and $\{p_5,q_5\}$ are contained in a face, so that $\{p_5,q_5\}\prec\tau_5'$. Let $\omega_5'$ be a face lower-adjacent to $\omega_5$ such that $\omega_5'\cap \omega_5=\{q_5,y_6\}$. Then $\Pttn(q_5)=(3,4,|\tau_5'|,|\omega_5'|)$. By (\ref{pattern}), $|\omega_5'|=3$ or $4$, together with $|\tau_5'|=4$, we obtain $\Phi(q_5)\geq\frac{1}{12}$.
Hence,
\[
\Phi(p)+\sum_{i=1}^3\left(\Phi(p_i)+\Phi(q_i)\right)+\Phi(p_4)+\left(\Phi(q_4)+\Phi(p_5)+\Phi(q_5)\right)\geq\frac{13}{12}>1.
\]
This contradicts Theorem \ref{thm:DeVosMohar}.

\begin{figure}[htbp]
 \begin{center}
   \begin{tikzpicture}
    \node at (0,0){\includegraphics[width=0.5\linewidth]{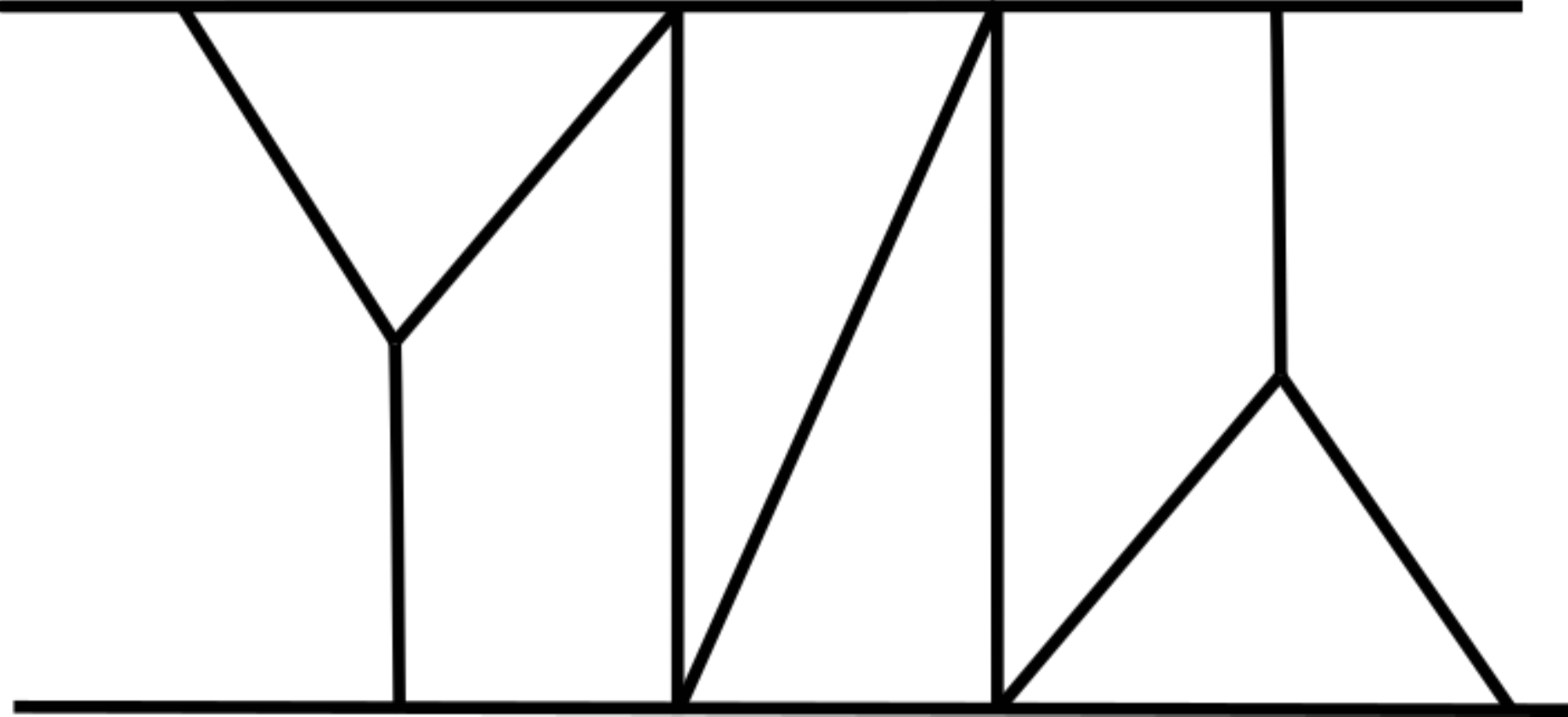}};
    \node at (.9,   1.7){\small $x'$};
    \node at (2.0, 1.6){\small $x$};
    \node at (2.0,  -1){\small $\omega$};
    \node at (1.4, .45){\small $\tau$};
    \node at (1.0, -1.7){\small $y$};
    \node at (3.0, -1.7){\small $y_1$};
    \node at (2.3,-.0){\small $p$};
    \node at (1.5,  -.5){\small $e$};
    \node at (-.0,  .45){\small $\eta_1$};
    \node at (.5,    -.4){\small $\eta_1'$};
    \node at (-1.1, -.4){\small $\gamma$};
    \node at (-1.5, .8){\small $\eta_2$};
    \node at (-1.8, -.1){\small $p'$};
    \node at (-1.6, -1.7){\small $z'$};
    \node at (-.4, -1.7){\small $z$};
    \node at (-.4,  1.7){\small $v_1$};
    \node at (-1.0,  -2.0){\small $\sigma_2$};
    \node at (-1.0,   2.0){\small $\sigma_1$};
   \end{tikzpicture}
  \caption{}\label{left-1}
 \end{center}
\end{figure}

\textbf{Step 2.} For the other side of $\tau$, we will prove that the modified curvature of these vertices bounded below by $\frac{1}{12}$.

Since $\tau$ is lower-adjacent to $\omega$, there exists an edge, denoted by $e$, satisfying $e=\tau\cap\omega$. Since $p=\partial\omega\setminus\partial\sigma_2$, $p\nprec\sigma_1$, and
$\{y,y_1\}=\omega\cap\sigma_2$,  $e=\{p,y\}$ or $\{p,y_1\}$. So $e=\{p,y\}$ if $x'\sim y$, as shown in Figure \ref{left-1}.

 By (\ref{pattern}) and $|\tau|=4$, $\Pttn(x')=(3,3,4,k_1)$. Then there exists a triangle $\eta_1$ such that $\eta_1$ is $\sigma_1$-adjacent to $\tau$, $\{v_1,x'\}=\eta_1\cap\sigma_1$ and $z=\partial\eta_1\setminus\partial\sigma_1$. $|x'|=4$ implies that $\{x',z\}$ and $\{x',y\}$ are contained in a face, denoted by $\eta_1'$. Since $\Pttn(x')=(3,3,4,k_1)$, $|\tau|=4$, and $|\eta_1'|=3$, $z\sim y$. Clearly, $z\prec\sigma_2$. Otherwise, $\{z,y\}\nprec\sigma_2$, which yields $|y|=5$. This contradicts $|y|=4$. Since $|z|=4$, there exists $z'\neq y$ such that $\{z',z\}\prec \sigma_2$ and both $\{z,z'\}$ and $\{z,v_1\}$ are contained in a face, denoted by $\gamma$. We divided it into cases.

\textbf{Case 1.} $|\gamma|=4$, as shown in Figure \ref{left-1}. Then $\Pttn(v_1)=(3,3,4,k_1)$ by (\ref{pattern}). Since $\{v_1,z\}=\gamma\cap\eta_1$, there exists a triangle, denoted by $\eta_2$, such that $\eta_2$ is $\sigma_1$-adjacent to $\eta_1$ and $\eta_2$ is lower-adjacent to $\gamma$. Let $p'=\partial\eta_2\setminus\partial\sigma_1$. Then $\{p',v_1\}\prec\gamma$.
Since $\{p',v_1\}\nprec\eta_1$, $\{p',v_1\}$, $\{v_1,z\}$ and $\{z,z'\}$ are contained in a face $\gamma$. Using $|\gamma|=4$, we obtain $p'\sim z'$ and $p'\nprec \sigma_2$, as shown in Figure \ref{left-1}. For $\gamma$ and $\eta_2$, by the same arguments about $\tau$ and $\omega$ in Case 1 to Case 7,  we obtain that there exists some $B$, same as the choice of $B$ in Case 1 to Case 7, such that $v_1,z\prec B\cap(\partial\sigma_1\cup\partial\sigma_2)$, and the modified curvature satisfies
$\Phi'(z)\geq\frac{1}{12}$ and $\Phi'(v_1)\geq\frac{1}{12}$.

\begin{figure}[htbp]
 \begin{center}
   \begin{tikzpicture}
    \node at (0,0){\includegraphics[width=0.46\linewidth]{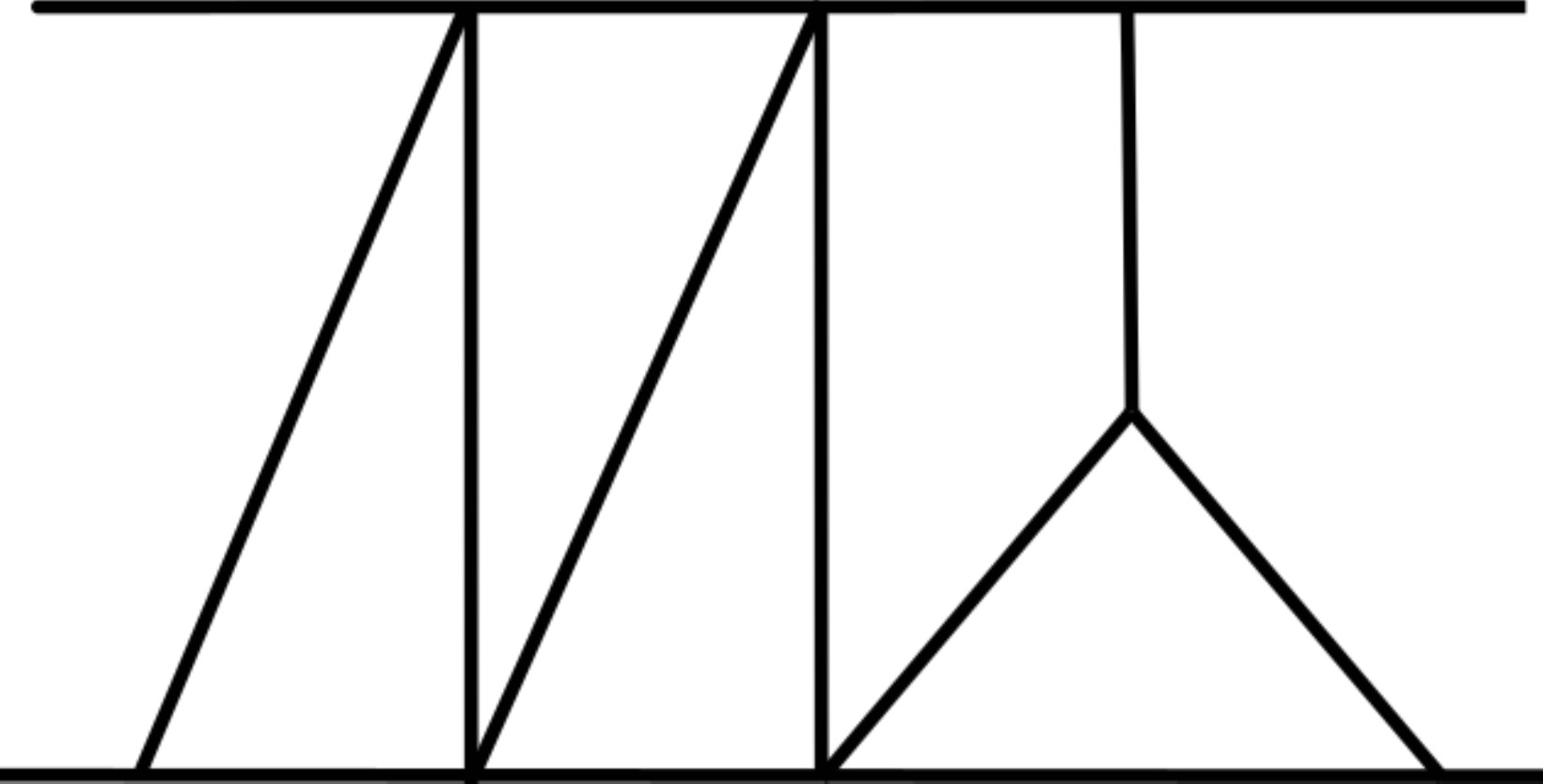}};
    \node at (.3,   1.8){\small $x'$};
    \node at (1.5, 1.7){\small $x$};
    \node at (-1.1, 1.7){\small $v_1$};
    \node at (1.3,  -1){\small $\omega$};

    \node at ( .6, .45){\small $\tau$};
    \node at (.2, -1.8){\small $y$};
    \node at (2.5, -1.8){\small $y_1$};
    \node at (1.6,-.0){\small $p$};
    \node at (-.8,  .45){\small $\eta_1$};
    \node at (-.3,    -.4){\small $\eta_1'$};
    \node at (-1.6, -.4){\small $\gamma$};
    \node at (-2.1, .8){\small $\eta_2$};
    \node at (-2.5, -1.7){\small $z'$};
     \node at (-1.1, -1.7){\small $z$};
    \node at (-1.0,  -2.2){\small $\sigma_2$};
    \node at (-1.0,   2.1){\small $\sigma_1$};
   \end{tikzpicture}
  \caption{}\label{left-2}
 \end{center}
\end{figure}
\textbf{Case 2.} $|\gamma|=3$, as shown in Figure \ref{left-2}. Then $v_1\sim z'$. This yields that $\Pttn(z)=(3,3,3,k_2)$ and $\Phi(z)=\frac{1}{k_2}\geq\frac{1}{12}$.  Let $\eta_2$ be a face lower-adjacent to $\{v_1,z'\}$ such that $\{v_1,z'\}=\eta_2\cap\gamma$.
Since $|v_1|=4$ and $\Pttn(v_1)=(3,3,4,k_1)$ or $(3,3,3,k_1)$, we obtain
$|\eta_2|=3$ or $4$.

If $|\eta_2|=3$, then $\Pttn(v_1)=(3,3,3,k_1)$ and $\Phi'(v_1)\geq\Phi(v_1)=\frac{1}{k_1}\geq \frac{1}{12}$.
If $|\eta_2|=4$, then by the same argument as in Case 1, we obtain that there exists some $B$ such that $v_1\prec B\cap(\partial\sigma_1\cap\partial\sigma_2)$, which yields that $\Phi'(v_1)\geq \frac{1}{12}$.
For $\eta_2$ and $\gamma$, repeating the process as those for $\eta_1$ and $\gamma$, we obtain that the vertices in $\partial\eta_2$ and $\partial\gamma$ has curvature bounded below by $\frac{1}{12}$.

Hence, there are at least $2(k_1-1)$ vertices with modified curvature bounded below by $\frac{1}{12}$.

\textbf{Step 3:} We will show that the sum of the modified curvature is greater than $1$. By $k_1\geq 8$,
 \[
2(k_1-1)\times \frac{1}{12}>1.
\]
This contradicts Theorem \ref{thm:DeVosMohar}.
This proves the theorem.
\end{proof}

Next we will prove the third case.
\begin{prop}\label{vert-in-sigma-conn-anosigma}
Let $\Gamma$ be a $4$-regular infinite planar graph satisfying $\Gamma\in \NNG$. For $i=1,2$, let $\sigma_i$ be a face with facial degree $k_i\geq 8$. Then
\[\d U_1(\sigma_i)\cap \d\sigma_j=\varnothing\] for $1\leq i\neq j\leq 2$.
\end{prop}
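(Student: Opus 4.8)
The plan is to argue by contradiction, reducing every sub-case to the three results already proved in this section together with Lemma~\ref{square-to-trianglespair} and Proposition~\ref{non-square-lower-triangle}. Suppose, without loss of generality, that there is a vertex $z\in\d U_1(\sigma_1)\cap\d\sigma_2$. Then $z\prec\sigma_2$, $z\notin\d\sigma_1$, and $z$ is adjacent to some $z_1\in\d\sigma_1$; by Proposition~\ref{prop:disj1} we have $z_1\notin\d\sigma_2$, so that in fact $z_1\in\d U_1(\sigma_2)$ as well, and the single edge $e=\{z,z_1\}$ is simultaneously a ``spoke'' of $\sigma_1$ at $z_1$ and a ``spoke'' of $\sigma_2$ at $z$. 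Around $z_1$ the four faces are $\sigma_1$, two faces lower-adjacent to $\sigma_1$, and one face opposite to $\sigma_1$, with $e$ separating one lower-adjacent face from the opposite one; the analogous statement holds around $z$ with $\sigma_2$ in place of $\sigma_1$. Since $e$ bounds exactly two faces, exactly one of the following occurs: \textbf{(I)} one of the two faces at $e$ is lower-adjacent to both $\sigma_1$ and $\sigma_2$; or \textbf{(II)} one face $\tau$ at $e$ is lower-adjacent to $\sigma_1$ only and the other face $\omega$ at $e$ is lower-adjacent to $\sigma_2$ only.

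In Case~(I), let $\eta$ be the face at $e$ lower-adjacent to both large faces. By Proposition~\ref{prop:nonadj}, $\eta$ is a triangle or a square; it cannot be a triangle, since a triangle lower-adjacent to both $\sigma_1$ and $\sigma_2$ would have two vertices on $\d\sigma_1$ and two on $\d\sigma_2$, which are four distinct vertices by Proposition~\ref{prop:disj1}. Hence $\eta$ is a square lower-adjacent to both $\sigma_1$ and $\sigma_2$, and Lemma~\ref{square-to-trianglespair} produces triangles $\tau,\omega$ with $\tau$ (resp.\ $\omega$) $\sigma_1$-adjacent (resp.\ $\sigma_2$-adjacent) to $\eta$ and with a common vertex $x$ equal to $\d\tau\setminus\d\sigma_1$ and to $\d\omega\setminus\d\sigma_2$. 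Then $x\in U_1(\sigma_1)$ through $\tau$ and $x\in U_1(\sigma_2)$ through $\omega$, while $x\notin\d\sigma_1\cup\d\sigma_2$; thus $x\in\d U_1(\sigma_1)\cap\d U_1(\sigma_2)$, contradicting Theorem~\ref{prop:disj2}.

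Case~(II) is the substance of the proof. Here $\tau$ is a triangle or square lower-adjacent to $\sigma_1$ with $z\in\d\tau\setminus\d\sigma_1$ and $z\prec\sigma_2$, while $\omega$ is a triangle or square lower-adjacent to $\sigma_2$ with $z_1\in\d\omega\setminus\d\sigma_2$ and $z_1\prec\sigma_1$, and $\tau\cap\omega=e$. I would first dispose of the case that $\tau$ or $\omega$ is a square: if $\tau$ is a square, a short check (using $|x|=4$ and that two adjacent edges of $\sigma_1$ cannot both lie on $\d\tau$) shows its vertex not on the $\sigma_1$-edge and not equal to $z$, say $w$, satisfies $w\notin\d\sigma_1$; if $w\in\d\sigma_2$ one falls back to Case~(I) or to Proposition~\ref{prop:disj1} according to whether $\{z,w\}$ is a boundary edge of $\sigma_2$, and if $w\notin\d\sigma_2$ then $w\in\d U_1(\sigma_1)\cap\d U_1(\sigma_2)$, again contradicting Theorem~\ref{prop:disj2}; symmetrically for $\omega$. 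There remains the case $|\tau|=|\omega|=3$, where $\d\tau=\{z_1,z_1',z\}$ with $z_1,z_1'\in\d\sigma_1$ and $\d\omega=\{z_1,z,u\}$ with $z,u\in\d\sigma_2$. I would then walk around $\sigma_1$ and $\sigma_2$ simultaneously: using the patterns $(3,3,3,k_i)$ or $(3,3,4,k_i)$ forced along $\d\sigma_i$ by $k_i\ge 8$ (Table~\ref{tabl1} and \eqref{vanishing-cur-pattern}), $|x|=4$ everywhere, Proposition~\ref{prop:nonadj} and the tessellation axioms, at each step the configuration either produces a square lower-adjacent to one $\sigma_i$ and lower-adjacent to a triangle lower-adjacent to the other (forbidden by Proposition~\ref{non-square-lower-triangle}), or produces a vertex of $\d U_1(\sigma_1)\cap\d U_1(\sigma_2)$ (forbidden by Theorem~\ref{prop:disj2}) or of $\d\sigma_1\cap\d\sigma_2$ (forbidden by Proposition~\ref{prop:disj1}), or else reproduces the same local picture by triangles only. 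In that last, purely triangular branch the walk cannot terminate; since $\d\sigma_1$ and $\d\sigma_2$ are finite simple closed curves it must close up into a triangulated annular region whose two boundary cycles are $\d\sigma_1$ and $\d\sigma_2$ and which contains no further vertices (every vertex of $\d\sigma_i$ being joined to exactly two consecutive vertices of $\d\sigma_{3-i}$, which also forces $k_1=k_2$). Capping this annulus with the disk faces $\sigma_1$ and $\sigma_2$ realizes $\Gamma$ as a finite graph on $\mathbb S^2$, contradicting that $\Gamma$ is infinite.

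The main obstacle is Case~(II), and within it the bookkeeping of the walk: one must check that the case tree is finite, that in every non-terminating branch the configuration is genuinely forced to consist of triangles (so that no new open configuration is created), and that the purely triangular branch is indeed forced to close into a sphere. This is where all the hypotheses are used --- $4$-regularity and $k_i\ge 8$ to pin down the vertex patterns and hence Proposition~\ref{prop:nonadj}, the earlier Propositions~\ref{prop:disj1} and \ref{non-square-lower-triangle}, Lemma~\ref{square-to-trianglespair} and Theorem~\ref{prop:disj2} to close the terminating branches, and the infiniteness of $\Gamma$ to rule out the annulus.
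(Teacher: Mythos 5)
Your overall skeleton matches the paper's: reduce to a ``crossing edge'' joining $\d\sigma_1$ to $\d\sigma_2$, split according to whether a face at that edge is a square or a triangle, kill the square cases with Lemma~\ref{square-to-trianglespair} and the earlier disjointness results, and propagate the all-triangle configuration around the two boundaries. The genuine difference is the endgame of the triangular branch and the treatment of a square lower-adjacent to only one big face. For the latter, the paper (its Case~1.2) builds an adjacent triangle with an apex off both boundaries and invokes Proposition~\ref{non-square-lower-triangle}, whereas you observe directly that the square's fourth vertex lies in $\d U_1(\sigma_1)\cap\d U_1(\sigma_2)$, contradicting Theorem~\ref{prop:disj2}; your argument is simpler and correct, and it is also the right tool in the inductive steps of the walk, where your stated appeal to Proposition~\ref{non-square-lower-triangle} does \emph{not} literally apply (the triangle across the crossing edge there has all its vertices on $\d\sigma_1\cup\d\sigma_2$, so the hypothesis ``the triangle has a vertex in neither face'' fails); use your fourth-vertex argument, or reconstruct a further triangle as in the paper's Case~1.2, at each such step. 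For the triangular branch, the paper avoids topology entirely: each boundary vertex met has pattern $(3,3,3,k_i)$, hence curvature at least $\tfrac1{12}$ (using $k_i\le 12$ from Proposition~\ref{max-face-degree}), and there are at least $2(\min\{k_1,k_2\}-1)\ge 14$ of them, so the total curvature exceeds $1$, contradicting Theorem~\ref{thm:DeVosMohar}. You instead close the forced strip of triangles into an annulus, cap it with $\sigma_1,\sigma_2$, and contradict infiniteness (the configuration is exactly the antiprism of Theorem~\ref{thm:13}); this is valid, but it is precisely the place you flag as needing bookkeeping: one must argue that the walk is deterministically forced, consumes each boundary edge exactly once before returning to the start, and leaves no room for extra faces at the boundary vertices, so that the resulting subcomplex is a closed surface and connectedness forces $\Gamma$ to be finite. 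The curvature count buys you out of all of that at the price of the modified-curvature bookkeeping.

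Two small citation slips, neither fatal: besides the misuse of Proposition~\ref{non-square-lower-triangle} noted above, in your square-disposal the sub-case ``$w\in\d\sigma_2$ but $\{z,w\}$ is not an edge of $\sigma_2$'' is excluded by tessellation axiom~(iii) (the closures of the square and of $\sigma_2$ would meet in two vertices spanning no common edge), not by Proposition~\ref{prop:disj1}, which only asserts $\d\sigma_1\cap\d\sigma_2=\varnothing$.
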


\begin{proof}
We will prove it by contradiction. Let $x,y$ be two vertices such that $x\prec\sigma_1$, $y\prec\sigma_2$ and $x\sim y$. Let $\tau$ be a face such that $\{x,y\}\prec \tau$. We divide it into cases.

\begin{figure}[htbp]
 \begin{center}
   \begin{tikzpicture}
    \node at (0,0){\includegraphics[width=0.25\linewidth]{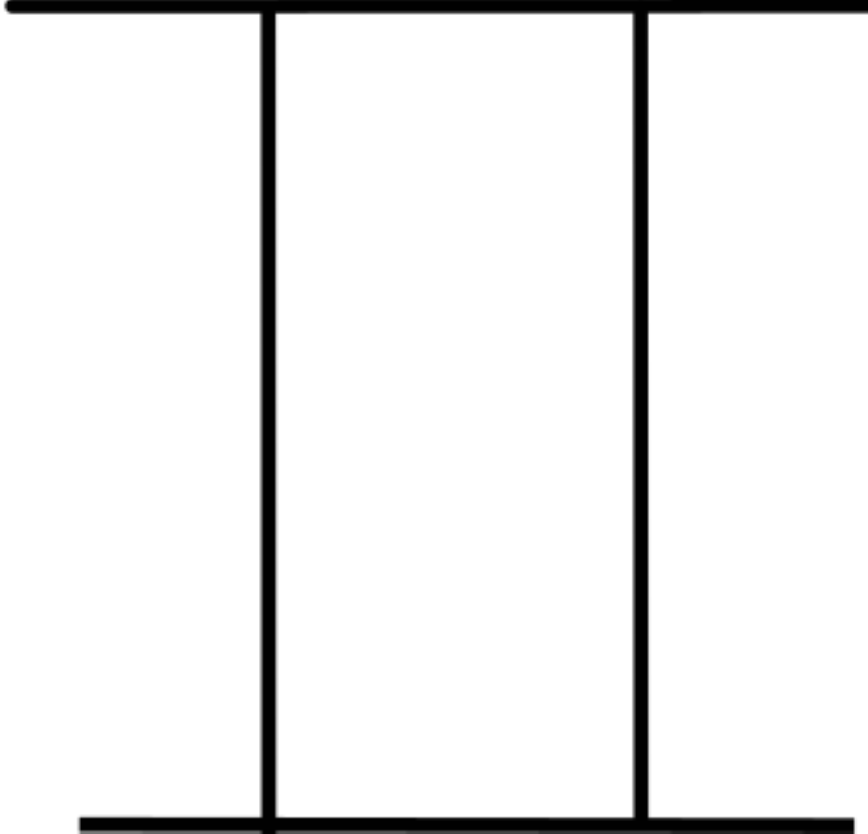}};
     \node at (-1.3,  -2.1){\Large $\sigma_2$};
    \node at (-1.3,   2.1){\Large $\sigma_1$};
     \node at ( .7,  -1.8){\Large $y$};
    \node at ( .8,   1.7){\Large $x$};
     \node at ( .1,   .2){\Large $\tau$};
   \end{tikzpicture}
  \caption{}\label{vertice-of-bothsigma-conn}
 \end{center}
\end{figure}

\textbf{Case 1.} $|\tau|=4$. We divide it into subcases.

\textbf{Case 1.1.} $\tau$ is a square lower adjacent to both $\sigma_1$ and $\sigma_2$, as shown in Figure \ref{vertice-of-bothsigma-conn}.
By Lemma \ref{square-to-trianglespair}, there exist two triangles, denoted by $\tau_1$ and $\tau_1'$, such that $\tau_1$ is $\sigma_1$-adjacent to $\tau$, $\tau_1'$ is $\sigma_2$-adjacent to $\tau$ and $\d\tau_1\setminus\d\sigma_1=\d\tau_1'\setminus\d\sigma_2$. Thus $\tau_1$ and $\tau_1'$ are same as $\tau$ and $\omega$ in Case 3.1 of Theorem \ref{nexist-both-adj}, which is impossible.

\begin{figure}[htbp]
 \begin{center}
   \begin{tikzpicture}
    \node at (0,0){\includegraphics[width=0.3\linewidth]{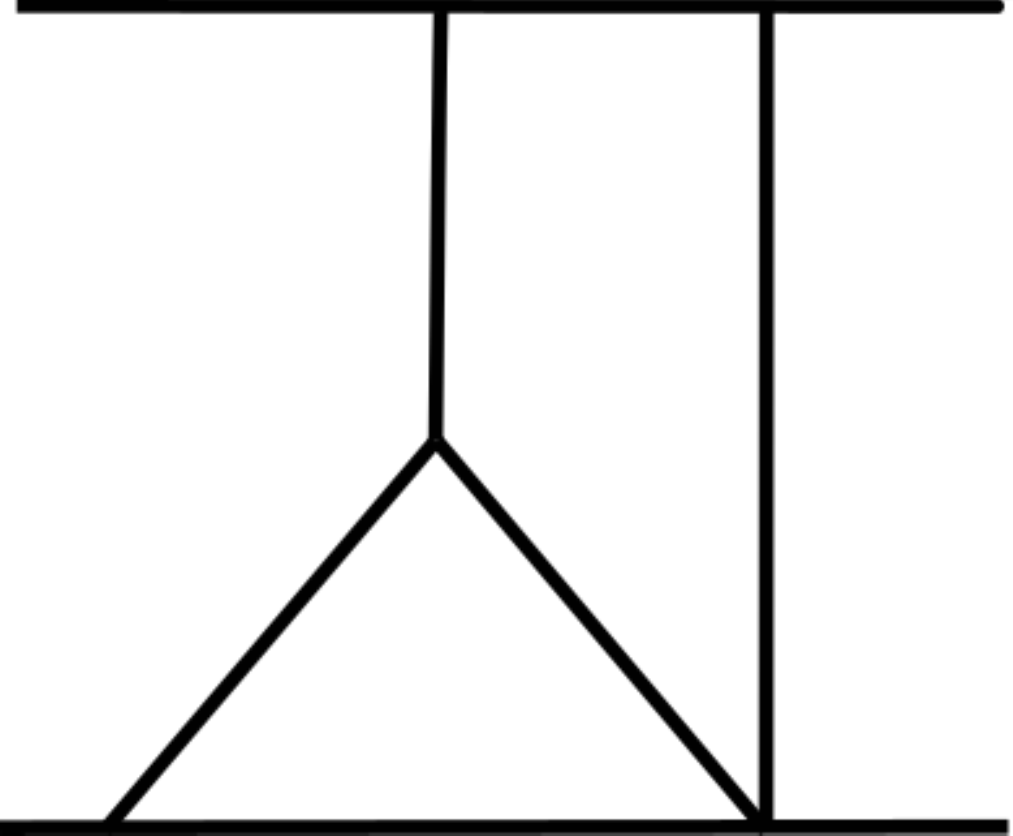}};
    \node at (-1.0,  -2.2){\Large $\sigma_2$};
    \node at (-1.0,   2.2){\Large $\sigma_1$};
    \node at (1.0, 1.8){\Large $x$};
    \node at ( -.3, 1.8){\Large $x'$};
    \node at (1.0,  -1.8){\Large $y$};
    \node at (-1.5,  -1.8){\Large $y_1$};
    \node at (-.1,     0){\Large $p$};
    \node at (-.3,    -1.0){\Large $\omega$};
    \node at (.4,    .2){\Large $\tau$};
   \end{tikzpicture}
  \caption{}\label{vertice-of-bothsigma-conn1}
 \end{center}
\end{figure}
\textbf{Case 1.2.} $\tau$ is a square lower-adjacent to one of $\sigma_1$ and $\sigma_2$, say $\sigma_1$, as shown in Figure \ref{vertice-of-bothsigma-conn1}.
 Then there is a vertex, denoted by $p$, such that $p\nprec(\sigma_1\cup\sigma_2)$ and $\{p,y\}\prec\tau$. Let $\{x,x'\}=\tau\cap\sigma_1$. Then it follows from $|\tau|=4$ that $x'\sim p$. By $|\tau|=4$ and $y\prec\tau$, $\Pttn(y)=(3,3,4,k_2)$. For $|y|=4$, there exists a vertex $y_1\prec \sigma_2$ such that $\{p,y\}$, $\{y,y_1\}$ are contained in a triangle, denoted by $\omega$. $|\omega|=3$ yields that $p\sim y_1$. Hence, $\tau\smile\sigma_1$, $\omega\smile\sigma_2$, $\tau\smile\omega$. Since $|\tau|=4$, $|\omega|=3$, $p\prec\omega$, and $p\nprec (\sigma_1\cup\sigma_2)$, this is impossible by Proposition \ref{non-square-lower-triangle}.

\begin{figure}[htbp]
 \begin{center}
   \begin{tikzpicture}
    \node at (0,0){\includegraphics[width=0.3\linewidth]{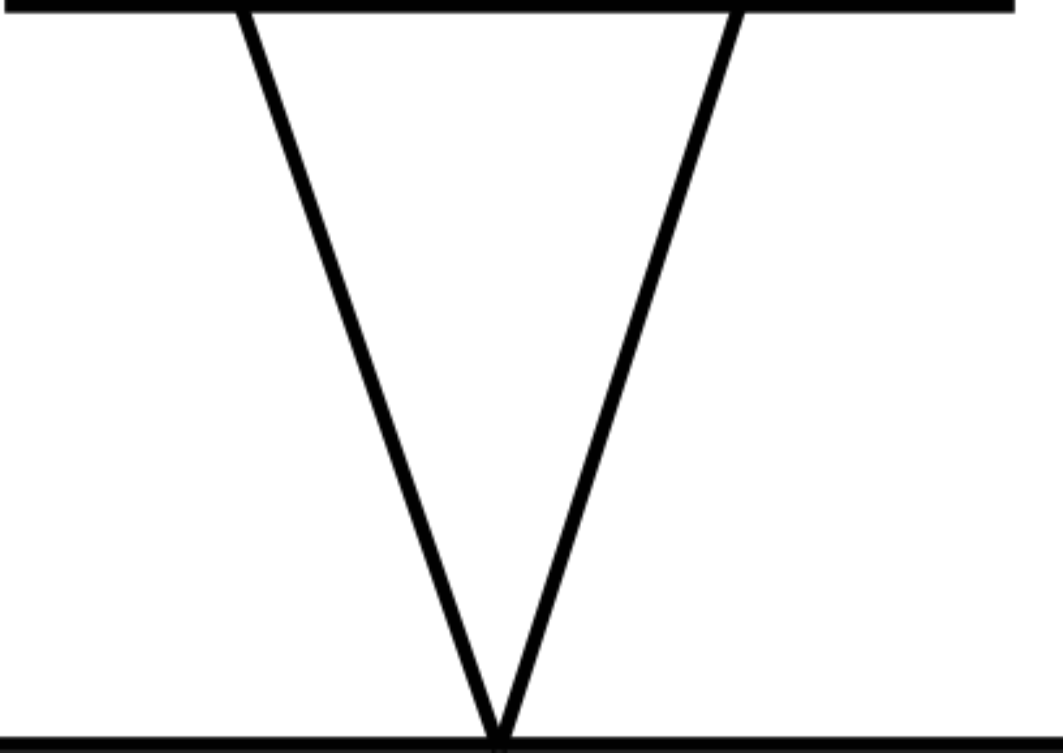}};
    \node at (.8, 1.6){\Large $x$};
    \node at (-.1, -1.6 ){\Large $y$};
    \node at (-1.0,  -2.0){\Large $\sigma_2$};
    \node at (-1.0,   2.1){\Large $\sigma_1$};
    \node at ( 1, -.2){\Large $\omega$};
    \node at (-.2,    .1){\Large $\tau$};
    \node at (-1.1, 1.6){\Large $z$};
    \end{tikzpicture}
  \caption{}\label{vertice-of-bothsigma-conn2}
 \end{center}
\end{figure}

\textbf{Case 2.}
$|\tau|=3$.
Without loss of generality, we assume that $\tau$ lower-adjacent to $\sigma_1$, i.e. there exists $z\prec\sigma_1$ such that $\{z,x\}=\tau\cap\sigma_1$ and $y\sim z$, as shown in Figure \ref{vertice-of-bothsigma-conn2}. Let $\omega$ be a face such that $\{x,y\}=\tau\cap\omega$. For $\Phi(x)\geq 0$, $\Pttn(x)=(3,3,3,k_1)$ or $(3,3,4,k_1)$, which implies that $|\omega|=3$ or $4$.

\begin{figure}[htbp]
 \begin{center}
   \begin{tikzpicture}
    \node at (0,0){\includegraphics[width=0.4\linewidth]{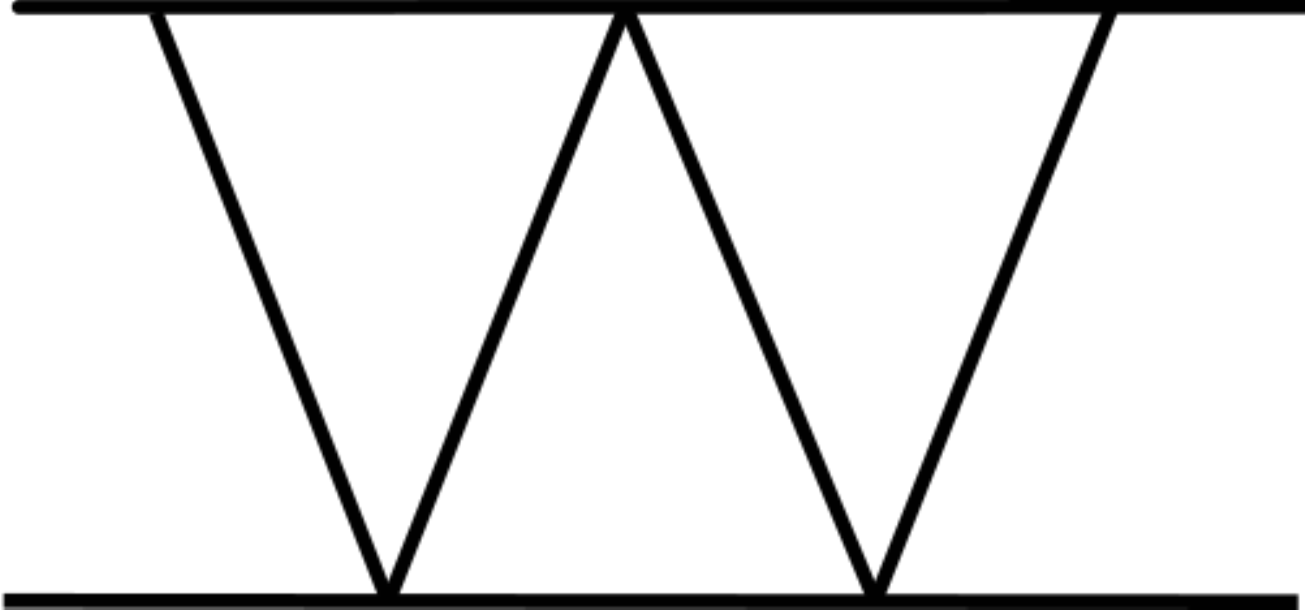}};
    \node at (-1.8, 1.5){\Large $x'$};
    \node at (0, 1.4){\Large $x$};
    \node at (1.7, 1.4){\Large $x_1$};
    \node at (-1.0, -2){\Large $\sigma_2$};
    \node at (-1.0,   1.8){\Large $\sigma_1$};
    \node at ( 1, .3){\Large $\omega'$};
    \node at ( 1.7, -.3){\Large $\omega_1$};
    \node at (0,   -.3){\Large $\omega$};
    \node at ( -1,   .3){\Large $\tau$};
    \node at (-1, -1.4){\Large $y$};
    \node at (.8, -1.5){\Large $y'$};
    \end{tikzpicture}
  \caption{}\label{two-triange-two-comvert-triangle}
 \end{center}
\end{figure}

Since $\{x,y\}\prec\omega$, $x\prec\sigma_1$ and $y\prec\sigma_2$, by Case 1, we obtain $|\omega|\neq 4$. So that $|\omega|=3$. $|y|=4$ implies that there exists a vertex $y'\prec\sigma_2$ such that $\{y,y'\}\prec\sigma_2$. $\{x,y\}$ and $\{y,y'\}$ are contained in $\tau'$ as shown in Figure \ref{two-triange-two-comvert-triangle}. Let $\omega'$ be a face such that $\{x,y'\}=\omega\cap\omega'$. By $|x|=4$, there exists a vertex $x_1\neq x'$ such that $x_1\prec \sigma_1$ and $\omega'$ contains $\{x,x_1\}$ and $\{x,y'\}$. By $\Phi(x)\geq 0$, $\Pttn(x)=(3,3,4,k_1)$ or $(3,3,3,k_1)$. Then $|\omega'|=3$ or $4$. From Case 1, we know that $|\omega'|\neq 4$. Hence, $|\omega'|=3$. This yields that $x_1\sim y'$  and $\Phi(x)=\frac{1}{k_1}\geq \frac{1}{12}$. For $|\omega'|=3$, let $\omega_1$ be a face such that $\{x_1,y'\}=\omega'\cap\omega_1$. By the same arguments as above for $|\omega|=3$, we obtain $|\omega_1|=3$ and $\Phi(y')=\frac{1}{k_2}\geq \frac{1}{12}$.
Repeating the process, we obtain that there are at least $2(\min\{k_1,k_2\}-1)$ vertices in $\d\sigma_1\cup \d\sigma_2$ such that each vertex has modified curvature bounded below by $\frac{1}{12}$. Hence, the sum of curvature of these vertices is bounded below by
\[\frac{2(\min\{k_1,k_2\}-1)}{12}>1
\]
since $k_1,k_2\geq 8$. This contradicts Theorem \ref{thm:DeVosMohar}.

\end{proof}

Now we can prove Theorem \ref{nexist-both-adj}.
\begin{proof}[Proof of Theorem \ref{nexist-both-adj}]
Theorem \ref{nexist-both-adj} follows from Remark \ref{three-cases}, Proposition \ref{prop:disj1}, Theorem \ref{prop:disj2} and Proposition \ref{vert-in-sigma-conn-anosigma}.
\end{proof}

\section{The discharging method and the proof of Theorem~\ref{thm:main2}}\label{section 5}
In this section, we use the discharging method to estimate the number of $k$-gons with $k\geq 8$ in an infinite $4$-regular planar graph with non-negative combinatorial curvature, and give the proof of main theorem, Theorem~\ref{thm:main2}.


First, we introduce the discharging process as follows. We denote by  \[W=\bigcup_{\sigma\in F,8\leq |\sigma|\leq 12}\partial\sigma \] the set of vertices on faces with facial degree $k$.
Recall that for any face $\sigma$ of $\Gamma$ with facial degree $k$, we denote by $\d U_1(\sigma)$ the boundary vertices of $1$-neighborhood of $\sigma$. We set
\[
W_1=\bigcup_{\sigma\in F, 8\leq |\sigma|\leq 12}\d U_1(\sigma).
\]
For the discharging rule, we defined the modified curvature by the discharging as
\begin{align}\begin{split}\label{mod-cur}\wt{\Phi}(x)=
\begin{cases}
\Phi(x)+\frac12 \sum\limits_{y\in W_1, y\sim x}\Phi(y),\ \  & x\in W\\
\Phi(x)-\frac12\#\{y\in W_1|y\sim x\}\Phi(x),\ \  & x\in W_1\\
\Phi(x), &x\in V\setminus(W\cup W_1).
\end{cases}
\end{split}\end{align}
That is, we redistribute the curvature of vertices in $W_1$ to that of vertices in $W$.

One readily checks that the sum of the curvature of all vertices, the total curvature, doesn't change after redistributing the curvature on vertices by Theorem \ref{nexist-both-adj}.
\begin{theorem}\label{sum-keep}
Let $\Gamma$ be a $4$-regular infinite planar graph with non-negative combinatorial curvature. Then for $8\leq k\leq 12$
\begin{equation}\label{eq4:eq10}\wt{\Phi}(\Gamma)= \Phi(\Gamma).\end{equation}
\end{theorem}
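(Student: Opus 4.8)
The plan is to read \eqref{mod-cur} as a pure conservation-of-charge rule: curvature is only ever shifted along edges joining a vertex of $W$ to a vertex of $W_1$, the amount leaving each $y\in W_1$ being $\tfrac12\Phi(y)$ along each such edge, and the amount entering each $x\in W$ being $\tfrac12\Phi(y)$ along the edge $\{x,y\}$; nothing else changes. Thus \eqref{eq4:eq10} will follow from a double counting over the set of edges joining $W$ to $W_1$, once the (a priori infinite) vertex sums are shown to be rearrangeable.

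First I would isolate the single structural fact that makes \eqref{mod-cur} well posed, namely $W\cap W_1=\varnothing$. If $v\in W\cap W_1$, then $v\in\partial\sigma$ for some face $\sigma$ with $8\le|\sigma|\le12$ and $v\in\partial U_1(\sigma')$ for some face $\sigma'$ with $8\le|\sigma'|\le12$; since $\partial\sigma\subset U_1(\sigma)$ and $\partial U_1(\sigma')\subset U_1(\sigma')$, Theorem~\ref{nexist-both-adj} forces $\sigma=\sigma'$, contradicting $\partial U_1(\sigma)\cap\partial\sigma=\varnothing$. Hence $V=W\sqcup W_1\sqcup\bigl(V\setminus(W\cup W_1)\bigr)$, the three cases of \eqref{mod-cur} genuinely partition $V$, and no vertex is simultaneously a ``receiver'' (in $W$) and a ``giver'' (in $W_1$). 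With this in hand, \eqref{mod-cur} says precisely that
\[
\wt\Phi(v)-\Phi(v)=
\begin{cases}
\displaystyle\sum_{y\in W_1,\ y\sim v}\tfrac12\Phi(y), & v\in W,\\[2ex]
-\tfrac12\,\#\{x\in W: x\sim v\}\,\Phi(v), & v\in W_1,\\[1ex]
0, & \text{otherwise.}
\end{cases}
\]

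Finally I would sum this identity over $v\in V$. Because $\Gamma$ is $4$-regular, each $y\in W_1$ is incident to at most four edges of $W$-to-$W_1$ type, so
\[
\sum_{v\in V}\bigl\lvert\wt\Phi(v)-\Phi(v)\bigr\rvert\ \le\ 4\sum_{y\in W_1}\Phi(y)\ \le\ 4\,\Phi(\Gamma),
\]
which is finite (indeed $\le4$) by Theorem~\ref{thm:DeVosMohar}; since moreover $\Phi\ge0$, the series $\sum_{v}\Phi(v)$ converges absolutely, hence so does $\sum_v\wt\Phi(v)$, and the termwise differences may be added and reordered freely. The two resulting iterated sums
\[
\sum_{x\in W}\ \sum_{y\in W_1,\ y\sim x}\tfrac12\Phi(y)
\qquad\text{and}\qquad
\sum_{y\in W_1}\ \sum_{x\in W,\ x\sim y}\tfrac12\Phi(y)
\]
both equal $\sum\tfrac12\Phi(y)$ taken over all edges $\{x,y\}$ with $x\in W$ and $y\in W_1$, so they cancel and $\wt\Phi(\Gamma)=\Phi(\Gamma)$, which is \eqref{eq4:eq10}. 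There is no genuine difficulty here; this is a bookkeeping lemma, and the only two points that need care are the appeal to Theorem~\ref{nexist-both-adj} to guarantee $W\cap W_1=\varnothing$ (without which a vertex could be double-counted in \eqref{mod-cur}) and the use of the global bound $\Phi(\Gamma)\le1$ to license the rearrangement of the infinite vertex sums.
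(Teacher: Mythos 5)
Your proof is correct and takes essentially the paper's (largely omitted) route: the paper merely asserts that conservation ``is readily checked'' using Theorem~\ref{nexist-both-adj}, and your argument --- disjointness $W\cap W_1=\varnothing$ via Theorem~\ref{nexist-both-adj}, then a double count over $W$--$W_1$ edges with rearrangement justified by $\Phi\geq 0$ and Theorem~\ref{thm:DeVosMohar} --- is exactly that check, made explicit. Note only that you have (rightly) read the second case of \eqref{mod-cur} as subtracting $\tfrac12\,\#\{x\in W: x\sim y\}\,\Phi(y)$ for $y\in W_1$; the paper's literal ``$\#\{y\in W_1\,|\,y\sim x\}$'' must be a typo, since with that count the charge sent and the charge received would not balance and the theorem would fail.
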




For some $k$ satisfying $8\leq k\leq 12$, to estimate the number of $k$-gons, we transfer the charge of vertices near $k$-gons to vertices on $k$-gons such that the total modified curvature on each $k$-gon is uniformly bounded below by $\frac{1}{2}.$
\begin{theorem}\label{12-gon-modcur-lower-bd}
Let $\Gamma$ be a $4$-regular infinite planar graph with non-negative combinatorial curvature and $\sigma$ be a $k$-gon with $8\leq k\leq 12$. Then
\[\sum_{z\in\partial\sigma}\tilde{\Phi}(z)> \frac{1}{2} \]
for $8\leq k\leq 11$ and
\[\sum_{z\in\partial\sigma}\tilde{\Phi}(z)\geq\frac{1}{2} \]
for $k=12$.
\end{theorem}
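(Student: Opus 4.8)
The plan is to localize $\tilde{\Phi}$ to the combinatorial annulus around $\sigma$ and then reduce the estimate to a finite bookkeeping of local curvatures. Write $\partial\sigma=\{z_1,\dots,z_k\}$ in cyclic order; for each edge $z_iz_{i+1}$ let $\tau_i\neq\sigma$ be the face containing it, which by Proposition~\ref{prop:nonadj} is a triangle or a square, no two consecutive $\tau_i$ both being squares. When $\tau_i$ is a triangle let $v_i$ be its apex; when it is a square let $a_i\sim z_{i+1}$ and $b_i\sim z_i$ be its two vertices off $\partial\sigma$. Each $z_i$ has degree $4$ with exactly two neighbours on $\partial\sigma$, and by Theorem~\ref{nexist-both-adj} every $W_1$-neighbour of $z_i$ lies in $\partial U_1(\sigma)$; hence the discharging rule \eqref{mod-cur} collapses to
\[
\sum_{z\in\partial\sigma}\tilde{\Phi}(z)=\sum_{i=1}^k\Phi(z_i)+\sum_{\tau_i\ \text{triangle}}\Phi(v_i)+\frac12\sum_{\tau_i\ \text{square}}(\Phi(a_i)+\Phi(b_i)),
\]
the apex of a triangular $\tau_i$ being seen from both $z_i,z_{i+1}$ and each vertex of a square $\tau_i$ from one of them. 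Letting $t$ be the number of $z_i$ of pattern $(3,3,3,k)$ (the remaining $k-t$ being $(3,3,4,k)$), one has $\sum_i\Phi(z_i)=1-\frac{k}{12}+\frac{t}{12}$, so the claim is exactly $\sum_{z\in\partial\sigma}\tilde{\Phi}(z)\ge\frac12$; since the ``missing'' amount $\frac{k}{12}-\frac12$ is $\le\frac12$ and is $<\frac12$ precisely when $k<12$, it suffices to establish the uniform bound $\sum_{z\in\partial\sigma}\tilde{\Phi}(z)\ge 1-\frac{k}{24}$, which is $\ge\frac12$ for all $k\in\{8,\dots,12\}$ and automatically strict for $k\le 11$.

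I would then run a case analysis on the patterns of the $z_i$ and, for a $(3,3,4,k)$ vertex, on which of $\tau_{i-1}$, the fourth face $\rho_i$ at $z_i$, or $\tau_i$ is the square. Configurations with large $t$, or with squares among the $\tau_i$, are the easy ones: a square $\tau_i$ is necessarily flanked by triangles (else some $z_i$ carries two squares, violating non-negativity of curvature), its vertices $a_i,b_i$ then have patterns $(3,4,4,\le 6)$ or $(3,3,4,\le 12)$, and a direct computation gives $\tfrac12(\Phi(a_i)+\Phi(b_i))$ a favourable lower bound. The delicate configuration is $t$ small with all $\tau_i$ triangles, hence all corner faces $\rho_i$ squares: then $\Phi(z_i)=\frac1k-\frac1{12}$, and $v_i$ has pattern $(3,4,4,|\mu_i|)$, where $\mu_i$ is the remaining face at $v_i$, so $\Phi(v_i)=\frac1{|\mu_i|}-\frac16$. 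Here the first ring alone carries too little charge, and the key move is to reach one ring further out: the vertex $c_i$ opposite $z_i$ in the square $\rho_i$ is incident to $\rho_i$, to $\mu_{i-1}$ and $\mu_i$, and to a fourth face $\nu_i$, so $\Phi(c_i)\ge 0$ forces $\frac1{|\mu_{i-1}|}+\frac1{|\mu_i|}\ge\frac34-\frac1{|\nu_i|}\ge\frac5{12}$. Summing over $i$ gives $\sum_i\frac1{|\mu_i|}\ge\frac{5k}{24}$, hence $\sum_i\Phi(v_i)\ge\frac{5k}{24}-\frac k6=\frac{k}{24}$, i.e. $\sum_{z\in\partial\sigma}\tilde{\Phi}(z)\ge 1-\frac{k}{24}$, tight only at $k=12$. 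Analogous ``one ring out'' estimates, together with $l\le\lfloor k/2\rfloor$ for the number of square $\tau_i$, handle the mixed sub-cases (some $\tau_i$ square, some $\rho_i$ triangle) and always leave room to conclude.

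Throughout, the bookkeeping must be done carefully: one needs the vertices $v_i,a_i,b_i,c_i,\dots$ and the faces $\mu_i,\nu_i,\dots$ to be genuinely distinct, and distinct from $\sigma$ itself, which is where Proposition~\ref{nonadj1}, Proposition~\ref{prop:nonadj} and the tessellation axioms enter (no identifications among the $\partial\tau_i\setminus\partial\sigma$; no two squares $\sigma$-adjacent), and each far-out vertex's curvature must be used at most once. The main obstacle is precisely this middle step, and within it the ``all triangles around $\sigma$'' configuration: $\partial U_1(\sigma)$ then carries too little curvature, so the argument is forced into the second ring of vertices, must invoke non-negativity of curvature there, and must balance the many resulting local inequalities so that they combine to exactly $1-k/24$. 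Once the uniform bound $\sum_{z\in\partial\sigma}\tilde{\Phi}(z)\ge 1-\frac{k}{24}\ge\frac12$ is in hand the theorem follows, and — combined with disjointness of the $U_1(\sigma)$'s (Theorem~\ref{nexist-both-adj}), the invariance $\tilde{\Phi}(\Gamma)=\Phi(\Gamma)$ (Theorem~\ref{sum-keep}) and $\Phi(\Gamma)\le 1$ (Theorem~\ref{thm:DeVosMohar}) — it yields Theorem~\ref{thm:main2}.
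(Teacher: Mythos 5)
Your localization of the discharging rule is correct (each $z_i$ receives exactly half the curvature of its two off-boundary neighbours, and Proposition~\ref{nonadj1} plus Theorem~\ref{nexist-both-adj} justify both the identity and the absence of identifications), the reduction to the uniform bound $\sum_{z\in\partial\sigma}\tilde{\Phi}(z)\ge 1-\tfrac{k}{24}$ is a legitimate strengthening (it is precisely the aggregate of the per-vertex bounds $\tilde{\Phi}(z)\ge\tfrac1k-\tfrac1{24}$ that the paper's Lemmas~\ref{Two-triang-adj} and~\ref{one-triangle-one-square} actually establish), and your treatment of the all-triangle/all-corner-square configuration via $\Phi(c_i)\ge 0$ is sound -- indeed your $c_i$ is exactly the vertex $y$ of Case~1.1 of Lemma~\ref{Two-triang-adj}, so there the key move coincides with the paper's, just summed over $i$ instead of applied per vertex.

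The genuine gap is in the configurations you dismiss as ``the easy ones''. Your stated justification for a square edge-face $\tau_i$ -- that its outer vertices $a_i,b_i$ have patterns $(3,4,4,\le 6)$ or $(3,3,4,\le 12)$ and hence give ``a favourable lower bound'' -- is wrong: $\Pttn(a_i)=\Pttn(b_i)=(3,4,4,6)$ is locally admissible and gives $\Phi(a_i)=\Phi(b_i)=0$, so the direct computation yields only $\tfrac12\bigl(\Phi(a_i)+\Phi(b_i)\bigr)\ge 0$, while the two boundary vertices of the square carry only $\tfrac1k-\tfrac1{12}$ each (zero when $k=12$). Thus the deficit of $\tfrac1{24}$ per such vertex must be recovered from the flanking triangle apexes, whose corner faces are forced to be triangles, so their patterns are $(3,3,3,m)$ or $(3,3,4,m)$; to make these contributions count one must cap $m\le 7$ (via Theorem~\ref{nexist-both-adj}/Proposition~\ref{vert-in-sigma-conn-anosigma}, since $m\in\{8,\dots,12\}$ would put the apex in two $1$-neighborhoods and $m\ge 13$ is impossible), and one must then share each apex between its two boundary neighbours, who may both be square-adjacent, which forces a further dig into the second ring (the paper's $y_1$, $\eta_1$, $\eta$ in Lemma~\ref{one-triangle-one-square}) and a nontrivial balancing of shared terms. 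This is exactly the content of Lemma~\ref{one-triangle-one-square} and of Cases~2--3 of Lemma~\ref{Two-triang-adj}, i.e.\ the bulk of the paper's work on this theorem, and your proposal replaces it with the assertion that analogous estimates ``always leave room to conclude'' backed by an incorrect computation; until that bookkeeping is actually carried out (with the $\le 7$ caps and without double-counting the shared apexes), the mixed cases, and hence the theorem, are not proved.
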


Since the total curvature of the graph is at most $1$ and $1$-neighborhoods of $k$-gons are disjoint for $8\leq k\leq 12$, see Theorem \ref{nexist-both-adj}. We may estimate the number of $k$-gons with $8\leq k\leq 12$.


\begin{lemma}\label{Two-triang-adj}
Let $\Gamma$ be a $4$-regular infinite planar graph with non-negative combinatorial curvature and $\sigma$ be a $k$-gon with $8\leq k\leq 12$. Assume that $\tau_i$, $i=1,2$ are triangles and $\tau_1$ is $\sigma$-adjacent to $\tau_2$. If $z=\partial\tau_1\cap\partial\tau_2$, then
\[
\tilde{\Phi}(z)\geq\frac{1}{2k}.
\]
The equality holds if and only if $k=12$, one of $\Pttn(\partial\tau_1\setminus\partial\sigma)$ and $\Pttn(\partial\tau_2\setminus\partial\sigma)$ is $(3,4,4,4)$, the other is $(3,4,4,6)$.
\end{lemma}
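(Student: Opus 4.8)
The plan is to evaluate $\widetilde\Phi(z)$ exactly and reduce the statement to a lower bound for $\Phi(p_1)+\Phi(p_2)$, where $p_i:=\partial\tau_i\setminus\partial\sigma$. First I would fix the local picture at $z$. Since $\tau_i$ is a triangle lower-adjacent to $\sigma$ and $z\in\partial\tau_i\cap\partial\sigma$, the edge $\tau_i\cap\sigma$ is one of the two edges of $\sigma$ at $z$; call these $e_1,e_2$, let $z_i$ be the second endpoint of $e_i$, so that $\tau_i=\{z,z_i,p_i\}$ with $p_i\notin\partial\sigma$ (and $p_1\neq p_2$ by Proposition~\ref{nonadj1}). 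Let $\rho$ be the fourth face at $z$ besides $\sigma,\tau_1,\tau_2$; its two edges at $z$ are $\{z,p_1\}$ and $\{z,p_2\}$. Because $|\sigma|=k\geq 8$, non-negativity of $\Phi(z)$ together with Table~\ref{tabl1} and \eqref{vanishing-cur-pattern} forces $\Pttn(z)=(3,3,3,k)$ or $(3,3,4,k)$, so $|\rho|\in\{3,4\}$. The neighbours of $z$ are exactly $z_1,z_2,p_1,p_2$; since $z_1,z_2\in\partial\sigma\subseteq W$, while $p_1,p_2\in U_1(\sigma)\setminus\partial\sigma=\partial U_1(\sigma)\subseteq W_1$, and $W\cap W_1=\varnothing$ by Theorem~\ref{nexist-both-adj}, only $p_1,p_2$ among the neighbours of $z$ lie in $W_1$. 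Hence
\[
\widetilde\Phi(z)=\Phi(z)+\tfrac12\bigl(\Phi(p_1)+\Phi(p_2)\bigr).
\]

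If $|\rho|=3$, then $\Phi(z)=1/k$, so $\widetilde\Phi(z)\geq 1/k>1/(2k)$ strictly; in particular equality can only occur when $|\rho|=4$. So assume $|\rho|=4$; then $\Phi(z)=1/k-1/12$, and since $k\leq 12$ by Proposition~\ref{max-face-degree}, it suffices to prove $\Phi(p_1)+\Phi(p_2)\geq 1/12$, for then $\widetilde\Phi(z)\geq 1/k-1/12+1/24=1/k-1/24\geq 1/(2k)$, with equality only if $k=12$ and $\Phi(p_1)+\Phi(p_2)=1/12$. To prove the inequality I would couple $p_1$ and $p_2$ through the square $\rho$. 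Writing the vertices of $\rho$ cyclically as $z,p_1,z',p_2$, one has $z'\notin\partial\sigma$ (otherwise $\overline\rho\cap\overline\sigma$ contains the two non-adjacent vertices $z,z'$ of $\rho$, contradicting condition~(iii) in the definition of a tessellation). The four faces at $p_i$ are $\tau_i$ (degree $3$), $\rho$ (degree $4$), the face $\beta_i$ sharing the edge $\{p_i,z_i\}$ with $\tau_i$, and the face $\alpha_i$ sharing the edge $\{p_i,z'\}$ with $\rho$. Since $\beta_i$ is incident to $z_i\in\partial\sigma$, with $\sigma$ and $\tau_i$ also among the faces at $z_i$, the constraint $\Pttn(z_i)\in\{(3,3,3,k),(3,3,4,k)\}$ gives $|\beta_i|\leq 4$. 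The four faces at $z'$ are $\rho$ (degree $4$), $\alpha_1$, $\alpha_2$ and one further face $\mu$ with $|\mu|\geq 3$, so $\Phi(z')\geq 0$ yields
\[
\frac1{|\alpha_1|}+\frac1{|\alpha_2|}+\frac1{|\mu|}\geq\frac34 .
\]

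If $|\alpha_i|\leq 4$, then $\Pttn(p_i)\in\{(3,3,3,4),(3,3,4,4),(3,4,4,4)\}$, hence $\Phi(p_i)\geq 1/12$; if $|\alpha_i|\geq 5$ for some index, say $i=1$, then the last inequality together with $|\mu|\geq 3$ forces $|\alpha_2|\leq 4$, hence $\Phi(p_2)\geq 1/12$. Either way $\Phi(p_1)+\Phi(p_2)\geq 1/12$, which finishes the inequality. For the equality statement, one already knows $|\rho|=4$, $k=12$ and $\Phi(p_1)+\Phi(p_2)=1/12$. Here I would use that every face at $p_i$ has degree at most $7$: a face of degree $\geq 8$ incident to $p_i$ would differ from $\sigma$ (as $p_i\notin\partial\sigma$) and would place $p_i$ in the $1$-neighbourhood of a second large face, contradicting Theorem~\ref{nexist-both-adj}. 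Together with $|\beta_i|\leq 4$ this restricts $\Phi(p_i)$ to the finite list $\{0,\,1/30,\,5/84,\,1/12,\,7/60,\,1/6,\,1/4\}$, and the only pair from this list summing to $1/12$ is $\{0,1/12\}$. Say $\Phi(p_1)=0$; then the only admissible pattern is $\Pttn(p_1)=(3,4,4,6)$, i.e.\ $|\alpha_1|=6$, and feeding $|\alpha_1|=6$ into the $z'$-inequality forces $|\alpha_2|\leq 4$, so $\Phi(p_2)=1/12$ leaves only $\Pttn(p_2)=(3,4,4,4)$. Conversely, those two patterns are incompatible with $|\rho|=3$ (which would insert a second $3$ into each $\Pttn(p_i)$), so $|\rho|=4$, $\Phi(z)=0$, and $\widetilde\Phi(z)=\tfrac12\cdot\tfrac1{12}=\tfrac1{24}=\tfrac1{2k}$.

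The main obstacle is the case $|\rho|=4$: there $\Phi(z)=1/k-1/12$ is small and each of $\Phi(p_1),\Phi(p_2)$ can even vanish in isolation (pattern $(3,4,4,6)$), so a naive vertex-by-vertex estimate fails. The resolution is the observation that $p_1$ and $p_2$ lie on one common square $\rho$ whose fourth vertex $z'$ supplies the single non-negativity constraint $1/|\alpha_1|+1/|\alpha_2|+1/|\mu|\geq 3/4$ with $|\mu|\geq 3$; this forbids $|\alpha_1|$ and $|\alpha_2|$ from both being large, and, combined with the degree restriction at $z_i$ and the disjointness of $1$-neighbourhoods of large faces (Theorem~\ref{nexist-both-adj}), it pins down every case, including the exact equality configuration.
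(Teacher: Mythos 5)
Your proof is correct, and it reaches the bound through the same underlying mechanism as the paper: the exact discharging identity $\tilde{\Phi}(z)=\Phi(z)+\frac12\bigl(\Phi(p_1)+\Phi(p_2)\bigr)$ together with non-negativity of $\Phi$ at the vertex opposite $z$ on the square (the paper's $y$, your $z'$), which prevents the two outer faces at $p_1,p_2$ (the paper's $\eta_1,\eta_2$, your $\alpha_1,\alpha_2$) from both being large. The difference is organizational, and it is a genuine simplification: the paper splits into Cases 1--3 according to whether the faces $\omega_1,\omega_2$ that are $\sigma$-adjacent to $\tau_1,\tau_2$ at $z_1,z_2$ are triangles or squares, with further subcases on $\Pttn(z_1),\Pttn(z_2)$, whereas you bypass $\omega_i$ entirely by observing that the face $\beta_i$ through the edge $\{p_i,z_i\}$ automatically has degree at most $4$, being one of the three small faces in $\Pttn(z_i)\in\{(3,3,3,k),(3,3,4,k)\}$; this collapses the whole case analysis into the single uniform estimate $\Phi(p_1)+\Phi(p_2)\geq\frac1{12}$. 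Your equality analysis is also cleaner than the paper's, which extracts the equality configuration only inside its Case 1.1 and disposes of the other cases by strict inequalities: you cap every face at $p_i$ at degree $7$ via Theorem~\ref{nexist-both-adj} (the paper invokes Proposition~\ref{vert-in-sigma-conn-anosigma} for the same purpose), enumerate the finitely many admissible values of $\Phi(p_i)$, note that only $0+\frac1{12}$ gives the required sum, and use the $z'$-inequality to exclude $(3,3,4,6)$ for the second vertex, which is exactly what the statement needs. Two points you make explicit are left implicit in the paper and are worth keeping: that $W\cap W_1=\varnothing$ (so the neighbours $z_1,z_2\in\partial\sigma$ contribute nothing to $\tilde{\Phi}(z)$, making the identity exact rather than an inequality), and that $z'\notin\partial\sigma$ by tessellation property (iii), which, with Proposition~\ref{nonadj1}, pins down the local picture around $\rho$.
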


\begin{proof}
By Proposition \ref{nonadj1} and $8\leq k\leq 12$, $z\prec \sigma$. Since $\Phi(z)\geq 0$, $\Pttn(z)$ is $(3,3,3,k)$ or $(3,3,4,k)$. If $\Pttn(z)=(3,3,3,k)$, then $\tilde{\Phi}(z)\geq \Phi(z)=\frac{1}{k}>\frac{1}{2k}$.
Next we consider the case $\Pttn(z)=(3,3,4,k)$, which yields
 \[
 \Phi(z)=\frac{1}{k}-\frac{1}{12}.
 \]
 Let $\omega_1$ and $\omega_2$ be two other faces, $\sigma$-adjacent to $\tau_1$ and $\tau_2$ respectively. Let $z_i=\partial \omega_i\cap \partial \tau_i,$ $i=1,2$. Proposition \ref{nonadj1} implies that $z_i\in \partial\sigma$.

\textbf{Case 1.} $|\omega_1|=|\omega_2|=3.$ Let $y$ be the vertex such that $y\sim x_1,$ $y\sim x_2$ and $y\not \in \partial \sigma.$
Let $\eta_1$ (resp. $\eta_2$) be the face containing $x_1$ and $y$ (resp. $x_2$ and $y$) and satisfying $\eta_1\cap\sigma=\varnothing$ (resp. $\eta_2\cap\sigma=\varnothing$).

By considering the vertex pattern at $z_i$, we have $\Pttn(z_i)=(3,3,3,k)$ or $(3,3,4,k)$. We divide it into subcases.

\textbf{Case 1.1.}  $\Pttn(z_1)=\Pttn(z_2)=(3,3,4,k)$. Then there are three squares containing $z,z_1,z_2$, as shown in Figure~\ref{Fig:m-1-2}. \begin{figure}[htbp]
 \begin{center}
   \begin{tikzpicture}
    \node at (0,0){\includegraphics[width=0.7\linewidth]{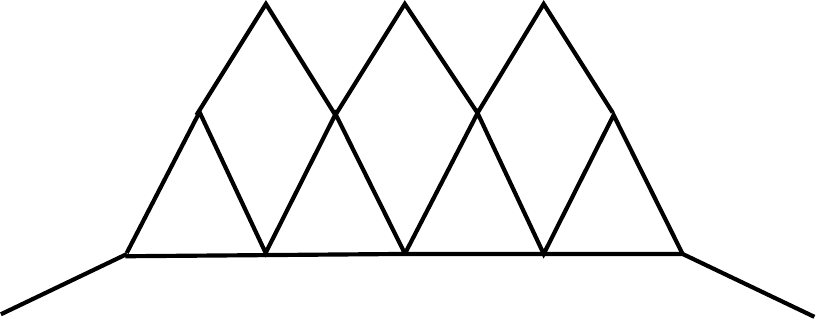}};
    \node at (-0.35, 0.4){\Large $x_1$};
        \node at (1.15, 0.4){\Large $x_2$};
       \node at (-.7,   1.5){\Large $\eta_1$};
        \node at (0,   1.2){\Large $y$};
    \node at (.8,   1.5){\Large $\eta_2$};
        \node at (-.8,  -.5){\Large $\tau_1$};
        \node at (.8,  -.5){\Large $\tau_2$};
      \node at (2.2,  -.5){\Large $\omega_2$};
    \node at (-2.2,  -.5){\Large $\omega_1$};
     \node at (0,  -1.35){\Large $z$};
     \node at (-1.5,  -1.35){\Large $z_1$};
     \node at (1.5,  -1.35){\Large $z_2$};
       \node at (-3.5,  -1.6){\Large $\sigma$};

   \end{tikzpicture}
  \caption{}\label{Fig:m-1-2}
 \end{center}
\end{figure}
 Then for any $i=1,2,$ $|\eta_i|=3,4,5,$ or $6.$ Since $\Phi(y)\geq 0,$ one of $|\eta_1|$ and $|\eta_2|$ is $3$ or $4.$ Without loss of generality, we assume that $|\eta_1|=3$ or $4.$ Then the vertex pattern of $x_1$ is either $(3,3,4,4)$ or $(3,4,4,4),$ which yields that $\Phi(x_1)= \frac{1}{6}$ or $\Phi(x_1)=\frac{1}{12}$, respectively. Hence,
\[
\Phi(x_1)+\Phi(x_2)\geq \frac{1}{12},
\]
the equality holds if and only if $\Phi(x_1)=\frac{1}{12}$ and $\Phi(x_2)=0$, that is, $\Pttn(x_1)=(3,4,4,4)$ and $\Pttn(x_2)=(3,4,4,6)$. By $\Phi(z)=\frac{1}{k}-\frac{1}{12},$ we obtain
\[
\tilde{\Phi}(z)=\Phi(z)+\frac{1}{2}\left(\Phi(x_1)+\Phi(x_2)\right)\geq\frac{1}{2k},
\]
where the equality holds if and only if $k=12$, $\Pttn(x_1)=(3,4,4,4)$ and $\Pttn(x_2)=(3,4,4,6)$ or $\Pttn(x_1)=(3,4,4,6)$ and $\Pttn(x_2)=(3,4,4,4)$.

\begin{figure}[htbp]
 \begin{center}
   \begin{tikzpicture}
    \node at (0,0){\includegraphics[width=0.7\linewidth]{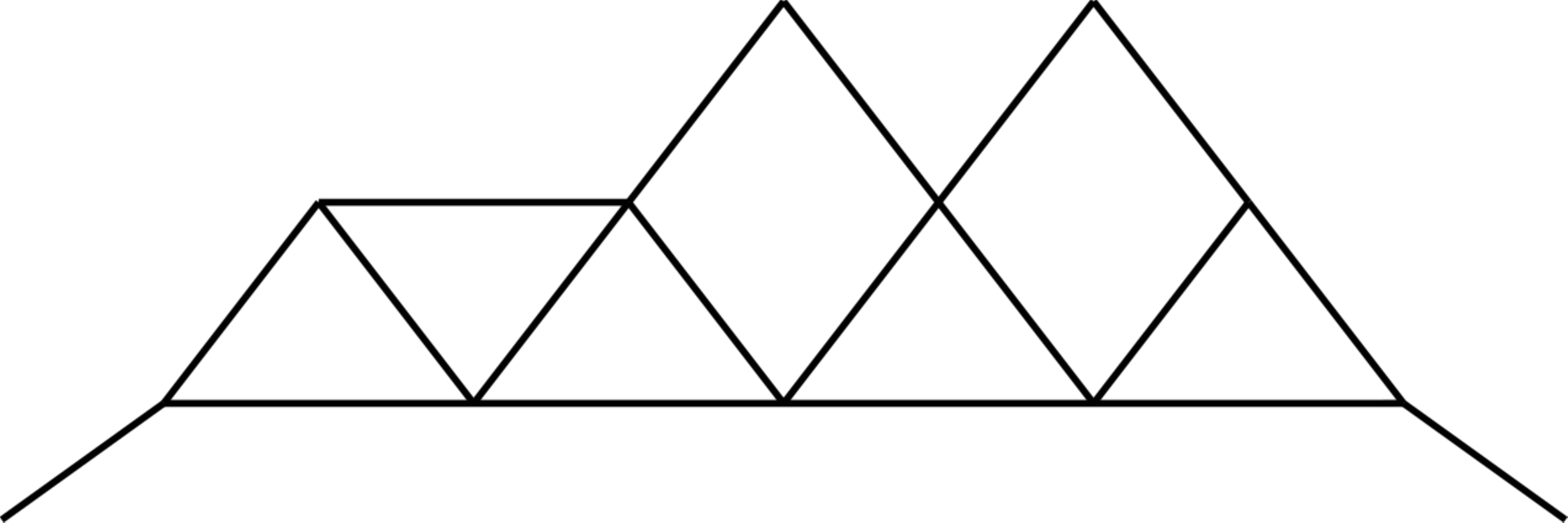}};
    \node at (-0.4, 0.3){\Large $x_1$};
        \node at (1.3, 0.3){\Large $x_2$};
       \node at (-1,   1.0){\Large $\eta_1$};
        \node at (0,   1.7){\Large $y$};
    \node at (0.8,   1.2){\Large $\eta_2$};
        \node at (-.8,  -.4){\Large $\tau_1$};
        \node at (.8,  -.4){\Large $\tau_2$};
      \node at (2.6,  -.4){\Large $\omega_2$};
    \node at (-2.6,  -.4){\Large $\omega_1$};
     \node at (0,  -1.2){\Large $z$};
     \node at (-1.7,  -1.2){\Large $z_1$};
     \node at (1.8,  -1.2){\Large $z_2$};
       \node at (-3.5,  -1.6){\Large $\sigma$};

   \end{tikzpicture}
  \caption{}\label{two-triangles-344}
 \end{center}
\end{figure}

\textbf{Case 1.2.} One of $\Pttn(z_1)$ and $\Pttn(z_2)$ is $(3,3,4,k)$, the other is $(3,3,3,k)$. Without loss of generality, we assume that $\Pttn(z_1)=(3,3,3,k)$ and $\Pttn(z_2)=(3,3,4,k)$.
As shown in Figure~\ref{two-triangles-344}, we claim that $|\eta_1|\leq 5$ or $|\eta_2|\leq 5$.
Otherwise, $|\eta_i|\geq 5$ for $i=1,2$, then $\Pttn(y)=(l,4,|\eta_1|,|\eta_2|)$ with $3\leq l\leq 12$, which yields $\Phi(y)<0$. This contradicts $\Phi(y)\geq 0$ and proves the claim.
Since $\Pttn(x_2)=(3,4,4,|\eta_2|)$ and $\Phi(x_2)\geq0$, $|\eta_2|\leq 6$.

If $5\leq|\eta_2|\leq 6$, then $|\eta_1|\leq 4$. We have $\Phi(x_1)\geq \frac{1}{6}$. Hence,
\[
\tilde{\Phi}(z)\geq\Phi(z)+\frac{1}{2}\Phi(x_1)=\frac{1}{k}>\frac{1}{2k}.
\]
If $|\eta_2|\leq 4$, then $\Pttn(x_2)=(3,4,4,|\eta_2|)$ and $\Phi(x_2)\geq\frac{1}{12}$. Since $x_1\prec \eta_1$, $z\prec\sigma$, $x_1\sim z$ and $8\leq |\sigma|\leq 12$, by Proposition \ref{vert-in-sigma-conn-anosigma}, we obtain that $|\eta_1|\leq 7$, so that $\Phi(x_1)>0$. Hence,
\[
\tilde{\Phi}(z)=\Phi(z)+\frac{1}{2}\left(\Phi(x_1)+\Phi(x_2)\right)>\frac{1}{k}-\frac{1}{24}\geq\frac{1}{2k}.
\]

\begin{figure}[htbp]
 \begin{center}
   \begin{tikzpicture}
    \node at (0,0){\includegraphics[width=0.7\linewidth]{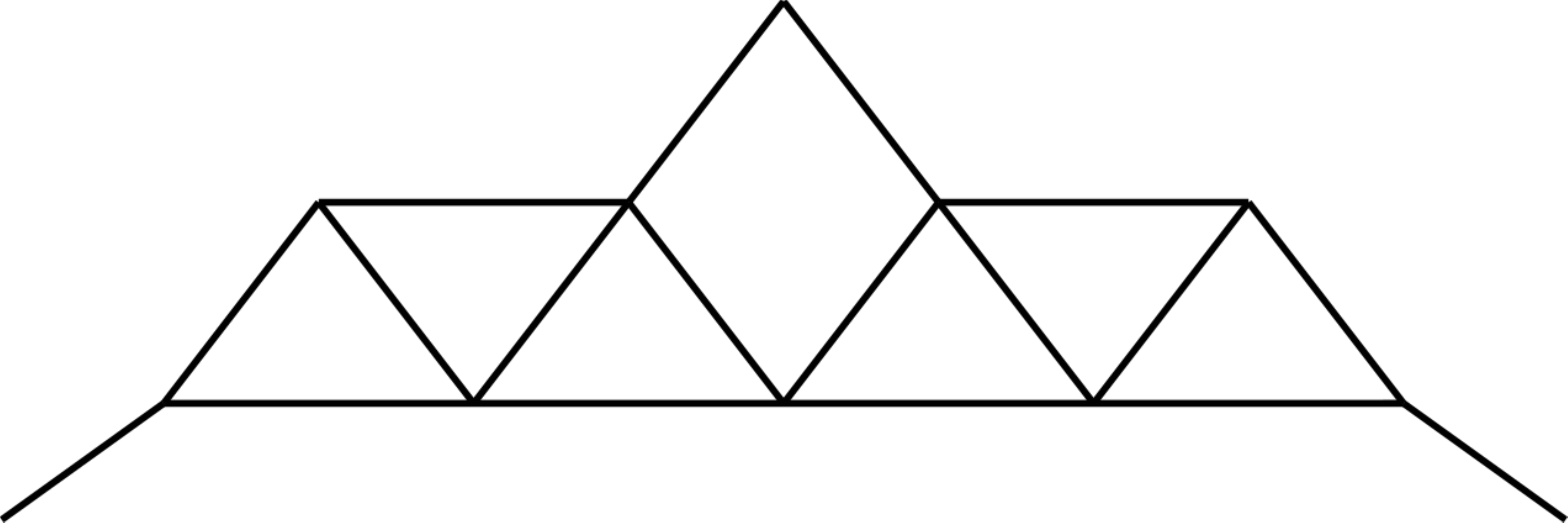}};
    \node at (-0.4, 0.3){\Large $x_1$};
        \node at (1.2, 0.6){\Large $x_2$};
       \node at (-1,   1.0){\Large $\eta_1$};
        \node at (0,   1.7){\Large $y$};
    \node at (0.8,   1.2){\Large $\eta_2$};
        \node at (-.8,  -.4){\Large $\tau_1$};
        \node at (.8,  -.4){\Large $\tau_2$};
      \node at (2.6,  -.4){\Large $\omega_2$};
    \node at (-2.6,  -.4){\Large $\omega_1$};
     \node at (0,  -1.2){\Large $z$};
     \node at (-1.7,  -1.2){\Large $z_1$};
     \node at (1.8,  -1.2){\Large $z_2$};
       \node at (-3.5,  -1.6){\Large $\sigma$};

   \end{tikzpicture}
  \caption{}\label{two-triangles-343}
 \end{center}
\end{figure}

\textbf{Case 1.3.} $\Pttn(z_1)=\Pttn(z_2)=(3,3,3,k)$. As shown in Figure \ref{two-triangles-343}, we claim that $|\eta_1|\leq 5$ or $|\eta_2|\leq 5$.
Otherwise, $|\eta_i|\geq 5$ for $i=1,2$, then $\Pttn(y)=(l,4,|\eta_1|,|\eta_2|)$ with $3\leq l\leq 12$, which yields $\Phi(y)<0$. This contradicts $\Phi(y)\geq 0$ and proves the claim.
Without loss of generality, we assume that $|\eta_1|\leq |\eta_2|$, then $|\eta_1|\leq 5$. Hence, $\Pttn(x_1)=(3,3,4,|\eta_1|)$ and $\Phi(x_1)>\frac{1}{12}$, which implies that
\[\tilde{\Phi}(z)\geq \Phi(z)+\frac{1}{2}\Phi(x_1)>\frac{1}{k}-\frac{1}{24}\geq \frac{1}{2k}.\]

\textbf{Case 2.} $|\omega_1|=4$ and $|\omega_2|=3$.
Let $y$ be the vertex satisfying $y\sim x_i(i=1,2)$ and $y\notin \d\sigma$. Let $\eta_i$ be the face containing the edge $\{y, x_i\}$ and $\eta_i\cap\sigma=\varnothing$, $i=1,2$. Clearly, $\Pttn(z_1)=(3,3,4,k)$, $\Pttn(z_2)=(3,3,3,k)$ or $(3,3,4,k)$. We divide it into subcases.

\begin{figure}[htbp]
 \begin{center}
   \begin{tikzpicture}
    \node at (0,0){\includegraphics[width=0.7\linewidth]{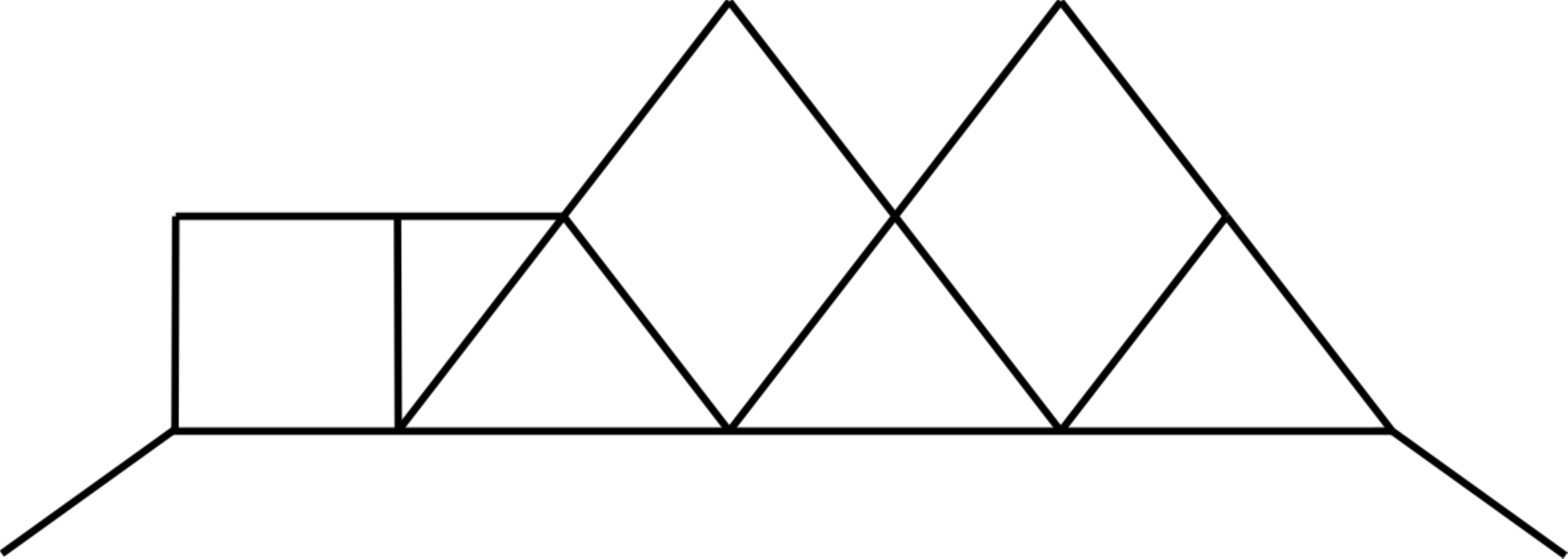}};
    \node at (-0.8, 0.25){\Large $x_1$};
        \node at (1, 0.25){\Large $x_2$};
    \node at (-1.4,   1){\Large $\eta_1$};
    \node at ( .7,   1){\Large $\eta_2$};
    \node at (-.3,   1.8){\Large $y$};
    \node at (-1.2,  -.4){\Large $\tau_1$};
        \node at (.7,  -.4){\Large $\tau_2$};
      \node at (2.5,  -.4){\Large $\omega_2$};
    \node at (-2.8,  -.4){\Large $\omega_1$};
           \node at (-3.5,  -1.4){\Large $\sigma$};
    \node at (-0.25,  -1.2){\Large $z$};
    \node at (-2,  -1.2){\Large $z_1$};
    \node at (1.6,  -1.2){\Large $z_2$};
   \end{tikzpicture}
  \caption{}\label{square-triangle-44}
 \end{center}
\end{figure}

\textbf{Case 2.1.} $\Pttn(z_2)=(3,3,4,k)$. As shown in Figure \ref{square-triangle-44}, with the same argument as in Case 1.2, we obtain
$|\eta_1|\leq 5$ or $|\eta_2|\leq 5$. Note that $|\eta_2|\leq 6$ by $\Pttn(x_2)=(3,4,4,|\eta_2|)$ and $\Phi(x_2)\geq 0$.
For any case that $|\eta_2|\leq 6$, it is easy to obtain
\[
\Phi(x_1)+\Phi(x_2)>\frac{1}{12}.
\]
Hence,
\[
\tilde{\Phi}(z)=\Phi(z)+\frac{1}{2}\left(\Phi(x_1)+\Phi(x_2)\right)
> \frac{1}{k}-\frac{1}{24}\geq \frac{1}{2k}.
\]

\textbf{Case 2.2.} $\Pttn(z_2)=(3,3,3,k)$. As shown in Figure \ref{square-triangle-43}, with the same argument as in Case 1.3, we obtain
\[
\tilde{\Phi}(z)=\Phi(z)+\frac{1}{2}\left(\Phi(x_1)+\Phi(x_2)\right)
>\frac{1}{2k}.
\]

\begin{figure}[htbp]
 \begin{center}
   \begin{tikzpicture}
    \node at (0,0){\includegraphics[width=0.7\linewidth]{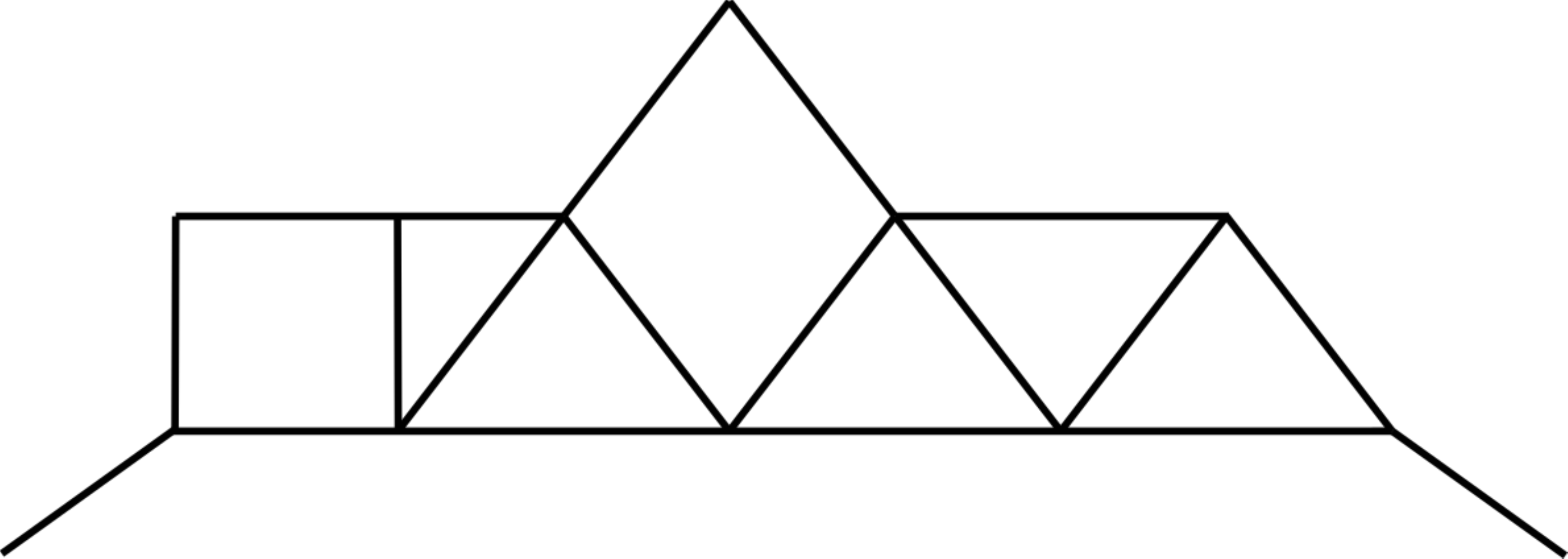}};
    \node at (-0.8, 0.35){\Large $x_1$};
        \node at (0.9, 0.55){\Large $x_2$};
    \node at (-1.4,   1){\Large $\eta_1$};
    \node at ( .9,   1.2){\Large $\eta_2$};
    \node at (-.3,   1.8){\Large $y$};
    \node at (-1.2,  -.4){\Large $\tau_1$};
        \node at (.7,  -.4){\Large $\tau_2$};
      \node at (2.5,  -.4){\Large $\omega_2$};
    \node at (-2.8,  -.4){\Large $\omega_1$};
           \node at (-3.5,  -1.4){\Large $\sigma$};
    \node at (-0.25,  -1.2){\Large $z$};
    \node at (-2,  -1.2){\Large $z_1$};
    \node at (1.6,  -1.2){\Large $z_2$};
   \end{tikzpicture}
  \caption{}\label{square-triangle-43}
 \end{center}
\end{figure}
\begin{figure}[htbp]
 \begin{center}
   \begin{tikzpicture}
    \node at (0,0){\includegraphics[width=0.7\linewidth]{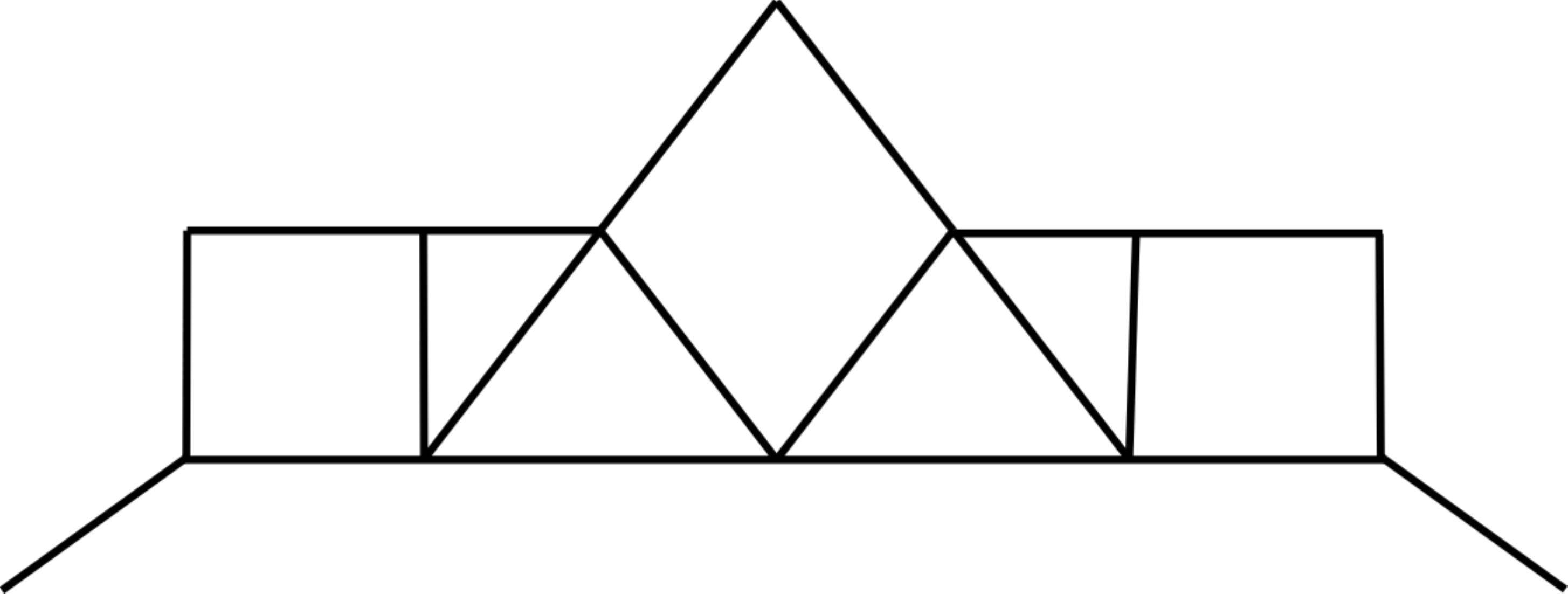}};
    \node at (-0.6, .3){\Large $x_1$};
        \node at (0.5, 0.3){\Large $x_2$};
    \node at (-1.2,   1){\Large $\eta_1$};
    \node at (0.2,   1.9){\Large $y$};
    \node at (-.9,  -.5){\Large $\tau_1$};
        \node at (.85,  -.5){\Large $\tau_2$};
        \node at (1.4,  0.9){\Large $\eta_2$};
      \node at (2.5,  -0.5){\Large $\omega_2$};
    \node at (-2.6,  -0.5){\Large $\omega_1$};
    \node at (-2.0,  -1.2){\Large $z_1$};
    \node at (-0,  -1.2){\Large $z$};
    \node at (2,  -1.2){\Large $z_2$};
           \node at (-3.6,  -1.5){\Large $\sigma$};
   \end{tikzpicture}
  \caption{}\label{two-squares}
 \end{center}
\end{figure}

\textbf{Case 3.} $|\omega_1|=|\omega_2|=4$. By $\Phi(z_i)\geq0$ for $i=1,2$, we obtain $\Phi(z_1)=\Phi(z_2)=(3,3,4,k)$, as in shown Figure~\ref{two-squares}. If $|\eta_1|\geq 7$, then by $\Phi(y)\geq 0$, we have $|\eta_2|=3$, which yields that $\Pttn(x_2)=(3,3,3,4)$ and $\Phi(x_2)=\frac{1}{4}$. Hence,
\[
\tilde{\Phi}(z)\geq \Phi(z)+\frac{1}{2}\Phi(x_2)>\frac{1}{2k}.\]
If $|\eta_1|=6$, then $\Pttn(x_1)=(3,3,4,6)$ and $\Phi(x_1)=\frac{1}{12}$. By $\Phi(y)\geq 0$, we obtain $|\eta_2|\leq 4$, which yields that $\Phi(x_2)>0$. Hence, \[\tilde{\Phi}(z)=\Phi(z)+\frac{1}{2}\left(\Phi(x_1)+\Phi(x_2)\right)>\frac{1}{k}-\frac{1}{24}\geq \frac{1}{2k}.\]
If $|\eta_1|\leq 5$, then $\Pttn(x_1)=(3,3,4,|\eta_1|)$ and $\Phi(x_1)=\frac{1}{|\eta_1|}-\frac{1}{12}>\frac{1}{12}$. Hence,
\[\tilde{\Phi}(z)\geq \Phi(z)+\frac{1}{2}\Phi(x_1)>\frac{1}{k}-\frac{1}{24}\geq \frac{1}{2k}.\]
\end{proof}

\begin{lemma}\label{one-triangle-one-square}
Let $\Gamma$ be a $4$-regular infinite planar graph with non-negative combinatorial curvature and $\sigma$ be a $k$-gon with $k\geq 8$. Assume $\omega$ is a square and $\tau_1$ is a triangle such that $\omega$ is $\sigma$-adjacent to $\tau_1$. If $z=\partial\omega\cap\partial\tau_1\cap\partial \sigma$, then
\[
\tilde{\Phi}(z)\geq\frac{1}{2k}.
\]
Moreover, if the equality holds, then there exists a vertex $z_1\in\partial\sigma$ such that $z\sim z_1$ and $\tilde{\Phi}(z_1)>\frac{1}{2k}$.
\end{lemma}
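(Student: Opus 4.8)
The plan is to pin down the local picture around $z$, reduce $\tilde{\Phi}(z)$ to $\Phi(z)+\tfrac12(\Phi(a)+\Phi(b))$ for two explicit neighbours $a,b$ of $z$, bound $\Phi(a)+\Phi(b)$ below by $\tfrac1{12}$, and then analyse the equality case by reducing it to the sharp form of Lemma~\ref{Two-triang-adj}.

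I would begin with the configuration. Since $z\prec\sigma$ and $z$ is incident to $\sigma$, to the triangle $\tau_1$ and to the square $\omega$, let $\eta$ be the fourth face at $z$. Writing $\Phi(z)\ge 0$ with $|z|=4$ gives $-\tfrac5{12}+\tfrac1k+\tfrac1{|\eta|}\ge 0$, which with $8\le k\le 12$ (Proposition~\ref{max-face-degree}) forces $|\eta|=3$; thus $\Pttn(z)=(3,3,4,k)$ and $\Phi(z)=\tfrac1k-\tfrac1{12}\ge0$. Because $\tau_1$ and $\omega$ meet only at $z$, the triangle $\eta=\{z,a,b\}$ is lower-adjacent to $\tau_1$ along $\{z,a\}$ and to $\omega$ along $\{z,b\}$, so $a\sim b$; the other two neighbours $c,d$ of $z$ satisfy $\{z,c\}=\sigma\cap\tau_1$ and $\{z,d\}=\sigma\cap\omega$, so $c,d\prec\sigma$, while $a,b\notin\partial\sigma$ (otherwise $\bar\eta$ and $\bar\sigma$ would share two vertices, contradicting the tessellation axioms). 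Then $a,b\in\partial U_1(\sigma)\subseteq W_1$, whereas $c,d\in\partial\sigma\subseteq W$ and, by Theorem~\ref{nexist-both-adj}, $c,d\notin W_1$; so by \eqref{mod-cur},
\[
\tilde{\Phi}(z)=\Phi(z)+\tfrac12\bigl(\Phi(a)+\Phi(b)\bigr).
\]

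The heart of the argument is the estimate $\Phi(a)+\Phi(b)\ge\tfrac1{12}$. I would name the three ``outer'' faces: $\phi$ across the edge $\{a,b\}$ from $\eta$ (incident to both $a$ and $b$), $\psi$ across $\{a,c\}$ from $\tau_1$, and $\chi$ across $\{b,b'\}$ from $\omega$, where $b'$ is the fourth vertex of $\omega$ (so $b'\sim d$, $b'\notin\partial\sigma$). After checking that $\phi,\psi,\chi$ are distinct from each other, from the faces already named and from $\sigma$, one has $\Pttn(a)=(3,3,|\psi|,|\phi|)$ and $\Pttn(b)=(3,4,|\phi|,|\chi|)$, so
\[
\Phi(a)=\tfrac1{|\psi|}+\tfrac1{|\phi|}-\tfrac13\ge0,\qquad \Phi(b)=\tfrac1{|\phi|}+\tfrac1{|\chi|}-\tfrac5{12}\ge0.
\]
The extra inputs are: $|\psi|\le 4$, because $c\prec\sigma$ with $|\sigma|\ge8$ forces $\Pttn(c)\in\{(3,3,3,k),(3,3,4,k)\}$ and $\psi$ is a non-$\sigma$ face at $c$; and $|\phi|,|\chi|\le 7$, because $a,b'\in\partial U_1(\sigma)$ and a vertex of $\partial U_1(\sigma)$ lies on no face of degree $\ge 8$ by Proposition~\ref{vert-in-sigma-conn-anosigma} (and $\phi,\chi\neq\sigma$ since $a,b'\notin\partial\sigma$). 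From $\tfrac1{|\psi|}\ge\tfrac14$ one gets $\Phi(a)\ge\tfrac1{|\phi|}-\tfrac1{12}$; if $|\phi|\le 6$ this already gives $\Phi(a)\ge\tfrac1{12}$, hence $\Phi(a)+\Phi(b)\ge\tfrac1{12}$, while if $|\phi|=7$ then $\Phi(b)\ge0$ pins $|\chi|=3$ and a one-line check gives $\Phi(a),\Phi(b)\ge\tfrac5{84}$, hence $\Phi(a)+\Phi(b)\ge\tfrac{10}{84}>\tfrac1{12}$. Tracking equality, $\Phi(a)+\Phi(b)=\tfrac1{12}$ forces $|\psi|=4$, $|\phi|=6$, $|\chi|=4$, i.e.\ $\Pttn(a)=(3,3,4,6)$ and $\Pttn(b)=(3,4,4,6)$. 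I expect this case bookkeeping, together with the non-degeneracy checks on $\phi,\psi,\chi$, to be the main obstacle.

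Combining, $\tilde{\Phi}(z)\ge(\tfrac1k-\tfrac1{12})+\tfrac1{24}=\tfrac1k-\tfrac1{24}\ge\tfrac1{2k}$, the last inequality being $\tfrac1{2k}\ge\tfrac1{24}$, valid since $k\le12$ and strict for $k<12$. Thus $\tilde{\Phi}(z)=\tfrac1{2k}$ forces $k=12$ and $\Phi(a)+\Phi(b)=\tfrac1{12}$, so $\Pttn(a)=(3,3,4,6)$ and $|\psi|=4$; then $\Pttn(c)=(3,3,4,12)$, the fourth face $\psi'$ at $c$ is a triangle, and $\tau_1,\psi'$ are triangles that are $\sigma$-adjacent at $c$. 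Lemma~\ref{Two-triang-adj} applied at $c$ gives $\tilde{\Phi}(c)\ge\tfrac1{2k}$, and since $\Pttn(\partial\tau_1\setminus\partial\sigma)=\Pttn(a)=(3,3,4,6)$ is neither $(3,4,4,4)$ nor $(3,4,4,6)$, the equality condition of Lemma~\ref{Two-triang-adj} cannot hold; hence $\tilde{\Phi}(c)>\tfrac1{2k}$ and we may take $z_1=c$, a neighbour of $z$ on $\partial\sigma$. The one genuinely non-routine idea is noticing that the exceptional vertex $c$ carries a pair of $\sigma$-adjacent triangles, so that the sharp case of Lemma~\ref{Two-triang-adj} is exactly what is needed.
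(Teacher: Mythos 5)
Your proposal is correct, and while it lives in the same discharging framework as the paper's proof (establish $\Pttn(z)=(3,3,4,k)$, lower-bound the curvature of the two off-$\sigma$ neighbours of $z$, use Proposition~\ref{vert-in-sigma-conn-anosigma} to forbid faces of degree $\geq 8$ touching $\partial U_1(\sigma)$, and in the equality case exhibit a neighbour $z_1\in\partial\sigma$ with $\tilde{\Phi}(z_1)>\frac{1}{2k}$), its execution is genuinely different and leaner. The paper case-splits on the face $\tau_2$ that is $\sigma$-adjacent to $\tau_1$ at $z_1$ ($|\tau_2|=3$ or $4$), then on $\Pttn(z_1)$ and on the degree of the outer face $\eta_1$ at $x_1$, bounding $\Phi(x_1)$ (and occasionally $\Phi(y_1)$) configuration by configuration, and in the unique near-equality configuration it verifies $\tilde{\Phi}(z_1)=\frac1k$ by a direct computation of $\Phi(x_2)$. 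You instead prove one unified estimate $\Phi(a)+\Phi(b)\geq\frac1{12}$ from the three outer faces $\psi,\phi,\chi$, using only $|\psi|\leq 4$ (forced by the pattern of $c\prec\sigma$) and $|\phi|,|\chi|\leq 7$ (forced by Proposition~\ref{vert-in-sigma-conn-anosigma}), which collapses the paper's casework; and you settle the ``moreover'' clause by a different mechanism, namely the equality characterization in Lemma~\ref{Two-triang-adj}: equality pins $\Pttn(a)=(3,3,4,6)$, hence $\Pttn(c)=(3,3,4,12)$ with two $\sigma$-adjacent triangles at $c$, and since $(3,3,4,6)$ is not one of the two patterns allowed in the sharp case of that lemma, $\tilde{\Phi}(c)>\frac1{2k}$. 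What your route buys is brevity and a transparent equality analysis; what the paper's route produces is the explicit local configuration around $z_1$, which it re-derives by hand rather than citing the earlier lemma. The small points you flag but do not spell out (the four faces around a degree-$4$ vertex are pairwise distinct, $a,b\notin\partial\sigma$, and $c,d\notin W_1$ while $a,b\in W_1$ via Theorem~\ref{nexist-both-adj}) are standard tessellation facts that the paper also uses implicitly, so they are not gaps.
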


\begin{proof}
Clearly, $\Pttn(z)=(3,3,4,k)$ and $\Phi(z)=\frac{1}{k}-\frac{1}{12}$. Let $\tau_2$ be a face such that $\tau_2$ $\sigma$-adjacent to $\tau_1$. Let $z_1=\partial\tau_1\cap\partial\tau_2$. By Proposition \ref{nonadj1}, $z_1\in\partial\sigma$ and $|\tau_2|=3$ or $4$. We divide it into cases.

\textbf{Case 1.} $|\tau_2|=3$. Let $x_i=\partial\tau_i\setminus \partial\sigma$, $i=1,2$. Let $z_1=\partial\tau_1\cap\partial\tau_2$. By Proposition \ref{nonadj1}, $z_1\in\partial\sigma$ and $\Pttn(z_1)=(3,3,3,k)$ or $(3,3,4,k)$. We divide it into subcases.


\begin{figure}[htbp]
 \begin{center}
   \begin{tikzpicture}
    \node at (0,0){\includegraphics[width=0.75\linewidth]{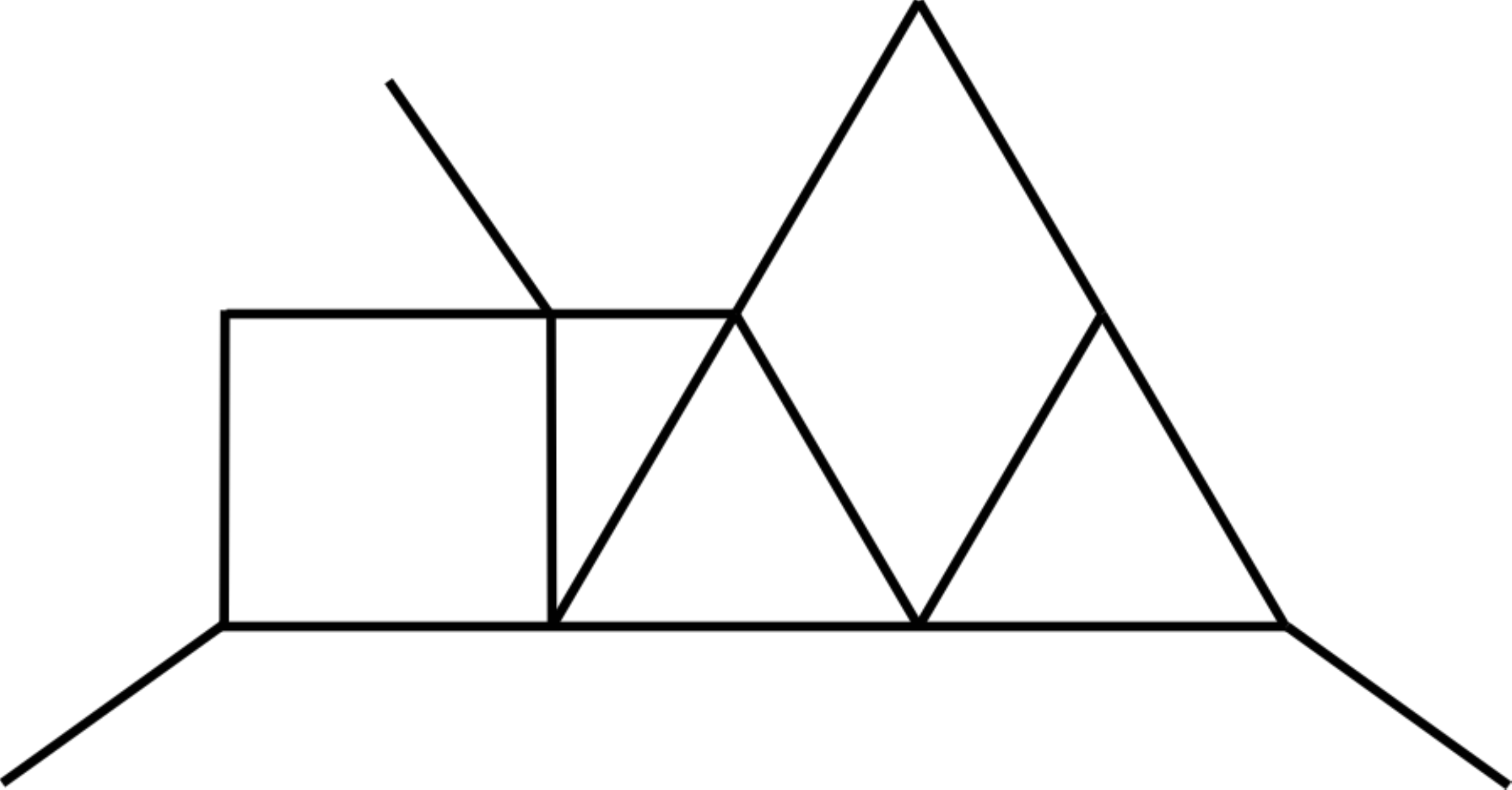}};
    \node at (0.25, 0.5){\Large $x_1$};
    \node at (2.5, 0.5){\Large $x_2$};
    \node at (-1.2,   1.6){\Large $\eta_1$};
    \node at (2, 1.8){\Large $\eta_2$};
    \node at (0.6,   2.4){\Large $y$};
    \node at (-.1,  -.5){\Large $\tau_1$};
    \node at (2.2,  -.5){\Large $\tau_2$};
    \node at (-2.4,  -0.5){\Large $\omega$};
    \node at (-1.3,  -1.8){\Large $z$};
    \node at (-1.1,  0.8){\Large $y_1$};
    \node at (1,  -1.8){\Large $z_1$};
   \node at (-3.6,  -2.0){\Large $\sigma$};
   \end{tikzpicture}
  \caption{}\label{square-triangles-4}
 \end{center}
\end{figure}
\textbf{Case 1.1.} $\Pttn(z_1)=(3,3,4,k)$.
Let $y$ be the vertex such that $y\sim x_1,$ $y\sim x_2$ and $y\not\prec \sigma.$ Let $\eta_1$ be the face containing $x_1$ and $y$ and satisfying $\eta_1\cap\sigma=\varnothing$, as shown in Figure \ref{square-triangles-4}. Let $y_1$ be a vertex such that $y_1\sim x_1$, $y_1\sim z$, $y_1\not\prec \sigma$ and $y_1\prec\omega$.

If $|\eta_1|\leq 5$, then $\Pttn(x_1)=(3,3,4,|\eta_1|)$ and $\Phi(x_1)=\frac{1}{|\eta_1|}-\frac{1}{12}>\frac{1}{12}$.
Hence,
\[\tilde{\Phi}(z)\geq \Phi(z)+\frac{1}{2}\Phi(x_1)>\frac{1}{k}-\frac{1}{24}\geq \frac{1}{2k}.
\]
If $|\eta_1|=6$, then $\Pttn(x_1)=(3,3,4,6)$ and $\Phi(x_1)=\frac{1}{12}$. Note that $\Pttn(y_1)=(3,3,4,6)$ or $(3,4,4,6)$. For $\Pttn(y_1)=(3,3,4,6)$, $\Phi(y_1)=\frac{1}{12}>0$, which implies that
\[
\tilde{\Phi}(z)=\Phi(z)+\frac{1}{2}\left(\Phi(y_1)+\Phi(x_1)\right)\geq \frac{1}{k}>\frac{1}{2k}.
\]
For $\Pttn(y_1)=(3,4,4,6)$, $\Phi(y_1)=0$, which implies that
\[
\tilde{\Phi}(z)=\Phi(z)+\frac{1}{2}\left(\Phi(y_1)+\Phi(x_1)\right)\geq \frac{1}{k}-\frac{1}{24}\geq \frac{1}{2k}.
\]
Let $\eta_2$ be the face containing $y$ and $x_2$ and satisfying $\eta_2\cap\sigma=\varnothing$. By $|\eta_1|=6$ and $\Phi(y)\geq 0$, $|\eta_2|\leq 4$. Using Proposition \ref{nonadj1}, we obtain $\Pttn(x_2)=(3,3,4,4)$ or $(3,4,4,4)$. Then $\Phi(x_2)=\frac{1}{12}$.
Hence,
\[
\tilde{\Phi}(z_1)=\Phi(z_1)+\frac{1}{2}\left(\Phi(x_1)+\Phi(x_2)\right)=\frac{1}{k}>\frac{1}{2k}.
\]

If $|\eta_1|=7$, then $\Pttn(y_1)=(3,3,4,|\eta_1|)$ and $\Pttn(x_1)=(3,3,4,7)$. That is $\Phi(y_1)=\Phi(x_1)=\frac{5}{84}$.
Hence,
\[
\tilde{\Phi}(z)=\Phi(z)+\frac{1}{2}\left(\Phi(y_1)+\Phi(x_1)\right)=\frac{1}{k}-\frac{1}{12}+\frac{5}{84}>\frac{1}{2k}.
\]
Since $x_1\prec\eta_1$, $z\prec\sigma$, $x_1\sim z$ and $|\sigma|=k$, by Proposition \ref{vert-in-sigma-conn-anosigma}, we obtain that $|\eta_1|\geq 8$ is impossible. Hence, $|\eta_1|\leq 7$.

\begin{figure}[htbp]
 \begin{center}
   \begin{tikzpicture}
    \node at (0,0){\includegraphics[width=0.75\linewidth]{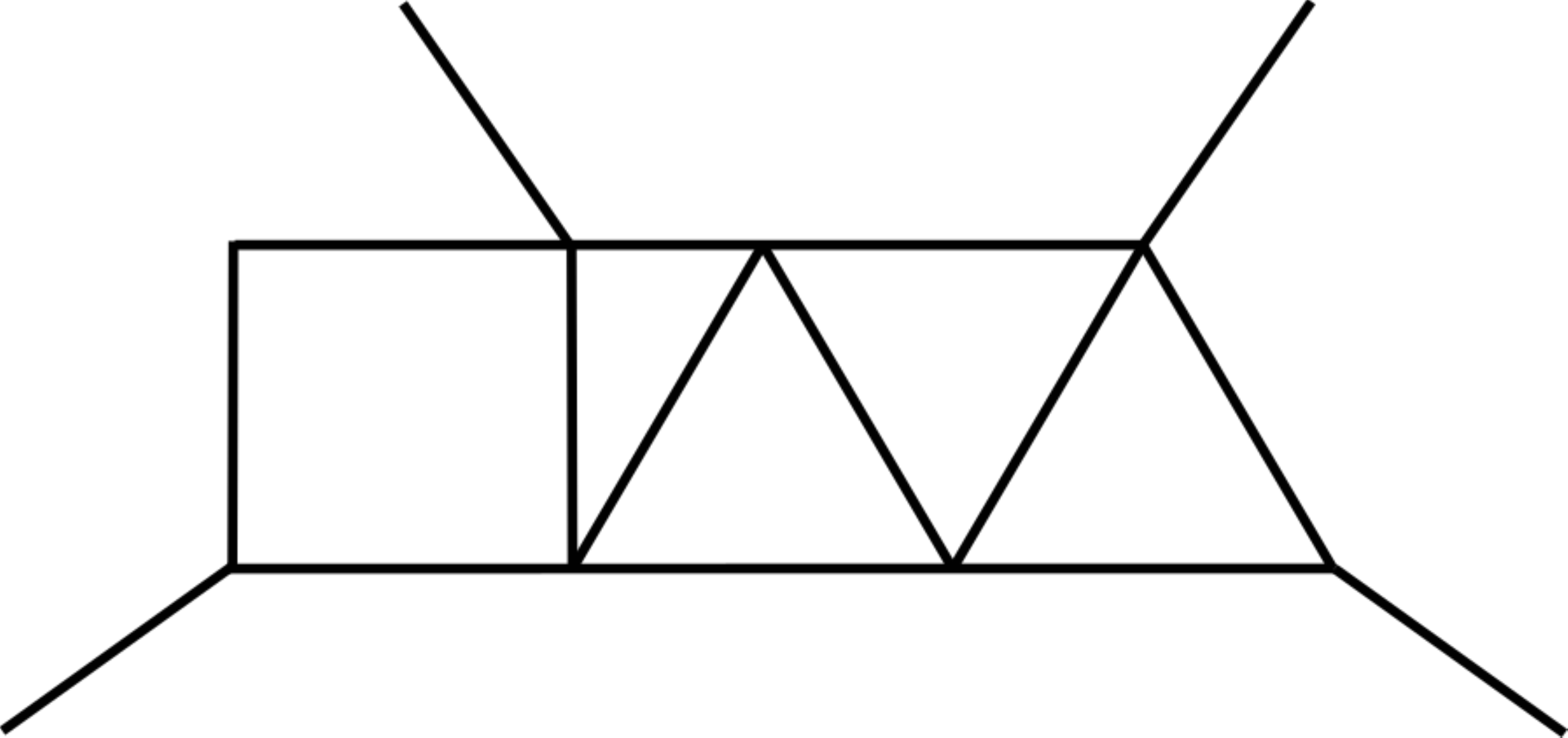}};
    \node at (-.1, 1){\Large $x_1$};
    \node at (2.0, 1){\Large $x_2$};
    \node at (-1.2,   2){\Large $\eta$};
    \node at (-.1,  -.5){\Large $\tau_1$};
    \node at (2.2,  -.5){\Large $\tau_2$};
    \node at (-2.4,  -0.5){\Large $\omega$};
    \node at (-1.3,  -1.6){\Large $z$};
    \node at (-1.1,  1){\Large $y_1$};
    \node at (1,  -1.6){\Large $z_1$};
   \node at (-3.6,  -2.0){\Large $\sigma$};
   \end{tikzpicture}
  \caption{}\label{square-triangles-3}
 \end{center}
\end{figure}

\textbf{Case 1.2.} $\Pttn(z_1)=(3,3,3,k)$. Let $y_1$ be a vertex such that $y_1\sim x_1$, $y_1\sim z$, $y_1\not\prec \sigma$ and $y_1\prec\omega$.
By $|x_1|=4$, there exists a face containing $y_1, x_1, x_2$, denoted by $\eta$, as shown in Figure \ref{square-triangles-3}. Since $x_1\prec\eta$, $z\prec \sigma$, $x\sim z$ and $|\sigma|=12$, by Proposition \ref{vert-in-sigma-conn-anosigma}, we obtain that $|\eta|\geq 8$ is impossible. Hence, $|\eta|\leq 7$, $\Pttn(x_1)=(3,3,3,|\eta|)$, which implies that $\Phi(x_1)>\frac{1}{12}$ and
\[
\tilde{\Phi}(z)\geq \Phi(z)+\frac{1}{2}\Phi(x_1)>\frac{1}{k}-\frac{1}{24}\geq\frac{1}{2k}.
\]

\textbf{Case 2.} $|\tau_2|=4$. Obviously, $\Pttn(z_1)=(3,3,4,k)$. Let $y_1$ (resp. $y_2$) be the vertex satisfying $y_1\sim z$ (resp. $y_2\sim z_1$) and $y_1\notin\partial\sigma$ (resp. $y_2\notin\partial\sigma$), as shown in Figure \ref{square-triangles-square}. By $|z|=4$ and $\Pttn(z)=(3,3,4,12)$, we obtain $\{y_1,z\}$ and $\{z,x_1\}$ are contained in a triangle. Then $y_1\sim x_1$. Similarly, $y_2\sim x_1$.
By $|x_1|=4$, $\{x_1,y_1\}$ and $\{x_1,y_2\}$ are contained in a face, denoted by $\eta$.

\begin{figure}[htbp]
 \begin{center}
   \begin{tikzpicture}
    \node at (0,0){\includegraphics[width=0.75\linewidth]{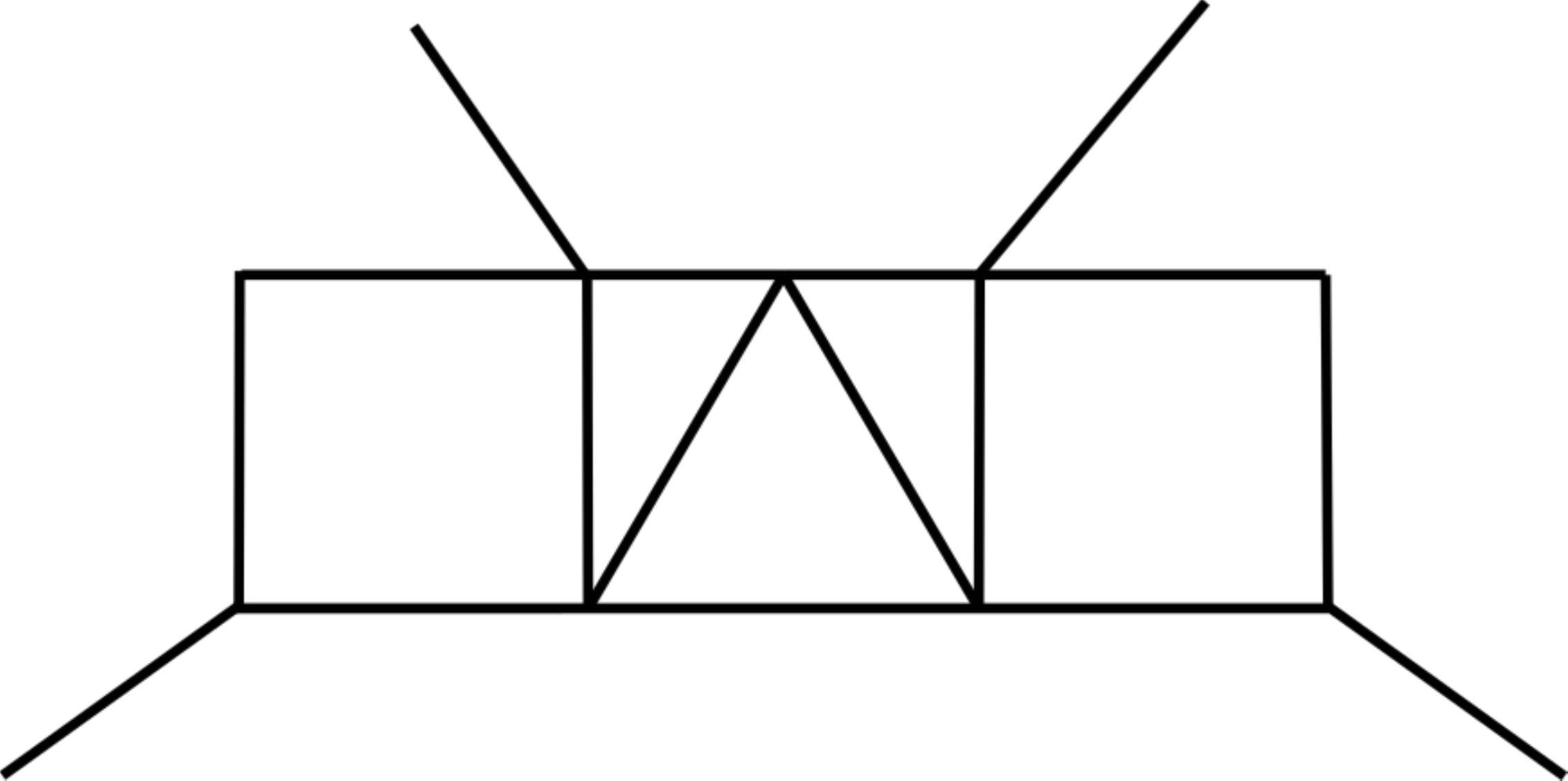}};
    \node at (0.15, 0.9){\Large $x_1$};
    \node at (-1.0, 0.95){\Large $y_1$};
    \node at (-1.2,  -1.6){\Large $z$};
    \node at (1.2,  -1.6){\Large $z_1$};
    \node at (1, 0.95){\Large $y_2$};
    \node at (0.6,   2.0){\Large $\eta$};
    \node at (-.1,  -.5){\Large $\tau_1$};
    \node at (2.2,  -.5){\Large $\tau_2$};
    \node at (-2.4,  -0.5){\Large $\omega$};
    \node at (-3.6,  -2.0){\Large $\sigma$};
   \end{tikzpicture}
  \caption{}\label{square-triangles-square}
 \end{center}
\end{figure}

Since $x_1\prec\eta$, $x_1\sim z$ and $z\prec \sigma$, $x_1\in U_1(\sigma)$ and $x_1\in\partial\eta$.  Proposition \ref{vert-in-sigma-conn-anosigma} implies that $|\eta|\leq 7$. Then $\Pttn(x_1)=(3,3,3,|\eta|)$ and $\Phi(x_1)=\frac{1}{|\eta|}\geq \frac{1}{7}$. Hence,
\[
\tilde{\Phi}(z)\geq \Phi(z)+\frac{1}{2}\Phi(x_1)\geq\frac{1}{k}-\frac{1}{12}+\frac{1}{14}>\frac{1}{2k}.
\]

Combining Case 1 and Case 2, we prove the lemma.

\end{proof}

\begin{proof}[Proof of Theorem \ref{12-gon-modcur-lower-bd}] For any $z\in \partial\sigma$, $\Pttn(z)=(3,3,3,k)$ or $(3,3,4,k)$. Applying Lemma \ref{Two-triang-adj} and Lemma \ref{one-triangle-one-square}, we obtain 
\[
\sum_{z\in\partial\sigma}\tilde{\Phi}(z)> k\times\frac{1}{2k}=\frac{1}{2}
\]
for $8\leq k\leq 11$ and
\[
\sum_{z\in\partial\sigma}\tilde{\Phi}(z)\geq k\times\frac{1}{2k}=\frac{1}{2}
\]
for $k=12$, the equality holds if and only if $\tilde{\Phi}(z)=\frac{1}{24}$ for each $z\in\partial\sigma$.
\end{proof}

Now we are ready to prove Theorem \ref{thm:main2}.
\begin{proof}[Proof of Theorem \ref{thm:main2}]
We argue by contradiction. Assume that $\sum_{k=8}^{12}F_k(\Gamma)>1$, that is there exist at least two faces with facial degrees between $8$ and $12$.
We denote by $\sigma_1$, $\sigma_2$ the two faces with $\sigma_i$ is $k_i$-gon and $8\leq k_i\leq 12$, $i=1,2$. We divide it into cases.

{\bf Case 1.} $8\leq k_1<12$ or $8\leq k_2<12$.
Applying Theorem \ref{thm:DeVosMohar}, Theorem \ref{sum-keep} and Theorem \ref{12-gon-modcur-lower-bd}, we obtain
\[
1\geq \Phi(\Gamma)=\tilde{\Phi}(\Gamma)=\sum_{z\in V}\tilde{\Phi}(z)\geq \sum_{z\in \partial\sigma_1}\tilde{\Phi}(z)+\sum_{z\in \partial\sigma_2}\tilde{\Phi}(z)>\frac{1}{2}+\frac{1}{2}=1.
\]
This yields a contradiction.

{\bf Case 2.} $k_1=k_2=12$.
Applying Theorem \ref{thm:DeVosMohar}, Theorem \ref{sum-keep} and Theorem \ref{12-gon-modcur-lower-bd}, we have
\[
1\geq \Phi(\Gamma)= \tilde{\Phi}(\Gamma)=\sum_{z\in V}\tilde{\Phi}(z)\geq \sum_{z\in \partial\sigma_1}\tilde{\Phi}(z)+\sum_{z\in \partial\sigma_2}\tilde{\Phi}(z)\geq\frac{1}{2}+\frac{1}{2}=1,
\]
then the equality holds, if and only if $\sum_{z\in\partial\sigma_i}\tilde{\Phi}(z)=\frac{1}{2}$ for $i=1, 2$.
According to Theorem \ref{12-gon-modcur-lower-bd}, that is
\begin{align}\begin{split}\label{12-gon-each-vertex-1/24}
\tilde{\Phi}(z)=\frac{1}{24},\ \forall z\in \partial\sigma_i, i=1,2.
\end{split}\end{align}
\begin{figure}[htbp]
 \begin{center}
   \begin{tikzpicture}
    \node at (0,0){\includegraphics[width=1\linewidth]{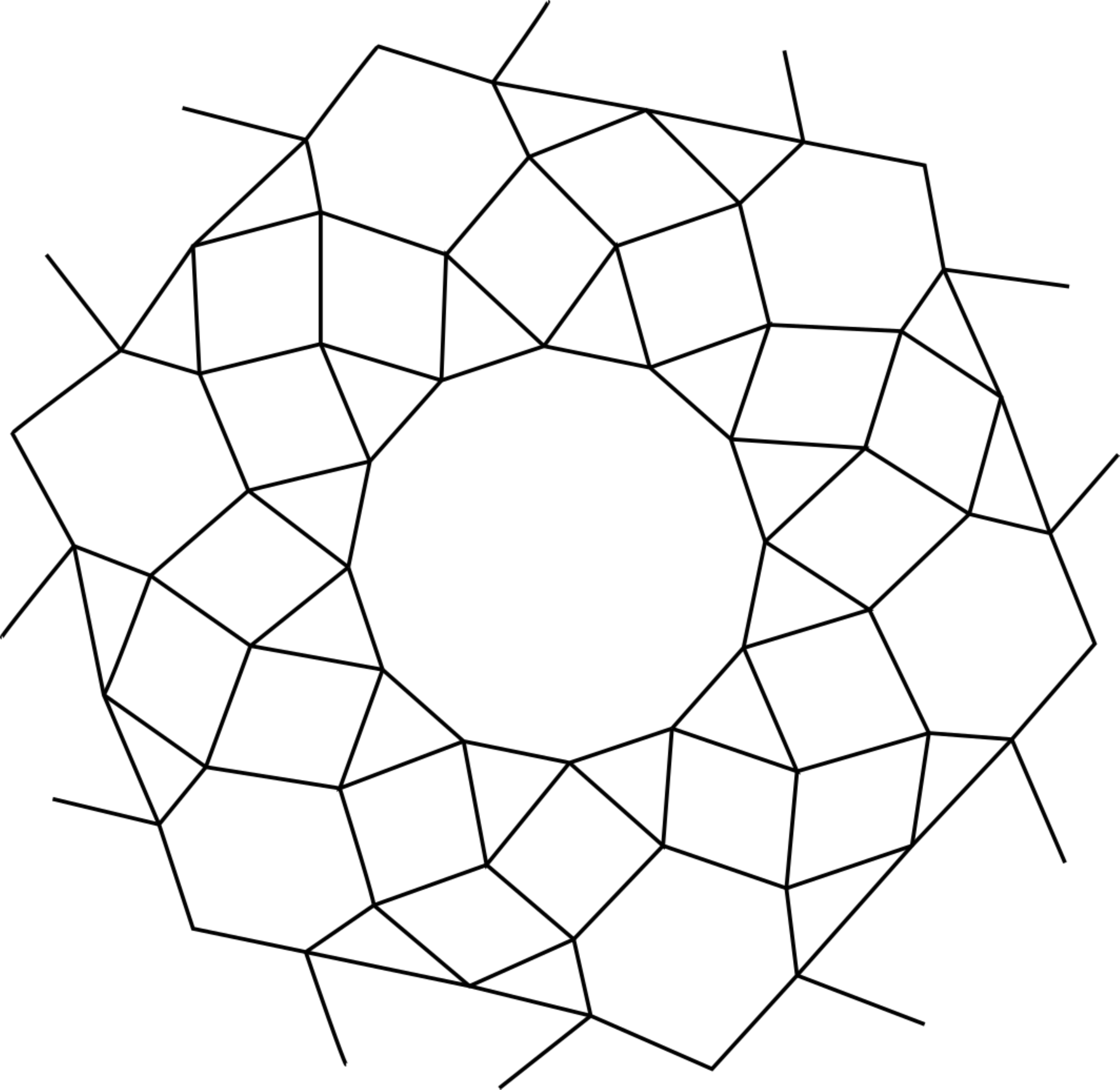}};
   \node at (-.1,   2.0){\small $z_1$};
   \node at (-.2,   3.2){\small $\omega_{12}$};
   \node at (1,   1.8){\small $z_2$};
   \node at (0.5,   2.5){\small $\tau_1$};
   \node at (0.7,   3.6){\small $x_1$};
   \node at (1.5,   2.9){\small $\omega_1$};
   \node at (1.8,   1.9){\small $\tau_2$};
   \node at (1.75,   1){\small $z_3$};
   \node at (2.1, -0.1){\small $z_4$};
   \node at (1.8, -1.1){\small $z_5$};
   \node at (1.0, -1.9){\small $z_6$};
   \node at (-0, -2.3){\small $z_7$};
   \node at (-1.0, -2){\small $z_8$};
   \node at (-1.8, -1.3){\small $z_9$};
   \node at (-2.1, -.2){\small $z_{10}$};
   \node at (-1.8,  .9){\small $z_{11}$};
   \node at (-1.1, 1.7){\small $z_{12}$};
   \node at (2.6,   0.8){\small $\tau_3$};
   \node at (2.6, -.6){\small $\tau_4$};
   \node at (2.,   -1.9){\small $\tau_5$};
   \node at (.8, -2.6){\small $\tau_6$};
   \node at (-0.6, -2.8){\small $\tau_7$};
   \node at (3.5,   0.8){\small $x_3$};
   \node at (-1.8, -2.1){\small $\tau_8$};
   \node at (-2.65, -.95){\small $\tau_9$};
   \node at (-2.65,  .4){\small $\tau_{10}$};
   \node at (-2, +1.7){\small $\tau_{11}$};
   \node at (-.9,  2.5){\small $\tau_{12}$};
    \node at (2.6 ,2.65){\small $x_2$};
    \node at (2.9,1.8){\small $\omega_2$};
    \node at (3.5, 0.1){\small $\omega_3$};
    \node at (3.4, -1){\small $x_4$};
    \node at (3.1, -1.7){\small $\omega_4$};
    \node at (2.9,   -2.3){\small $x_5$};
    \node at (2.,   -2.9){\small $\omega_5$};
    \node at (.1, -3.5){\small $\omega_6$};
    \node at ( 1.2, -3.7){\small $x_6$};
    \node at (-0.9, -3.85){\small $x_7$};
    \node at (-1.6, -3.2){\small $\omega_7$};
    \node at (-2.65, -2.9){\small $x_8$};
    \node at (-3.5, - .9){\small $x_9$};
    \node at (-3.5, -.3){\small $\omega_9$};
    \node at (-3, -1.9){\small $\omega_8$};
    \node at (-3.8,  .7){\small $x_{10}$};
    \node at (-3, 1.35){\small $\omega_{10}$};
    \node at (-2, 2.7){\small $\omega_{11}$};
     \node at (-2.9, +2.0){\small $x_{11}$};
    \node at (-1.4, 3.6){\small $x_{12}$};
    \node at (-.65,   4.5){\small $y_{12}$};
    \node at (0.7,   4.2){\small $\alpha_1$};
    \node at (3.2,3.5){\small $\alpha_2$};
    \node at (0.7,   5.6){\small $\eta_1$};
    \node at (1,   5.1){\small $p_1$};
    \node at (-.9,   5.5){\small $q_{12}$};
    \node at (2.3,   3.8){\small $y_1$};
    \node at (2.9 ,4.7){\small $q_1$};
    \node at (3.7 ,2.6){\small $y_2$};
    \node at (4.55,3.3){\small $q_2$};
    \node at (4.3,1.3){\small $\alpha_3$};
    \node at (5.3,1.6){\small $p_3$};
    \node at (5.7,1.8){\small $\eta_3$};
    \node at (4.7, .1){\small $y_3$};
    \node at (5.8, .1){\small $q_3$};
    \node at (4.8, -1){\small $\alpha_4$};
    \node at (4.4, -2){\small $y_4$};
    \node at (5.4, -2.2){\small $q_4$};
    \node at (4.2, -3.5){\small $p_5$};
    \node at (4.4, -4){\small $\eta_5$};
    \node at (2.8,   -3.6){\small $y_5$};
    \node at (3.3,   -3.1){\small $\alpha_5$};
    \node at (1.6,   -4.6){\small $\alpha_6$};
    \node at (2.8,   -5.1){\small $q_5$};
    \node at (.4, -4.5){\small $y_6$};
    \node at (.4, -5.5){\small $q_6$};
    \node at (-0.9, -5.2){\small $p_7$};
    \node at (-1.6, -5.4){\small $\eta_7$};
    \node at (-2.4, -4.1){\small $y_7$};
    \node at (-3, -4.8){\small $q_7$};
    \node at (-3.3, -3.6){\small $\alpha_8$};
    \node at (-0.9, -4.3){\small $\alpha_7$};
    \node at (-3.9,  -2.7){\small $y_8$};
    \node at (-4.7,  -3.3){\small $q_8$};
    \node at (-4.3, -1.5){\small $\alpha_9$};
    \node at (-4.6,  1 ){\small $\alpha_{10}$};
    \node at (-5.4, -1.7){\small $p_9$};
    \node at (-4.6, -.1){\small $y_9$};
    \node at (-5.3,  .1){\small $q_9$};
    \node at (-5.8, -2.0){\small $\eta_9$};
    \node at (-3.9,  -2.7){\small $y_8$};
    \node at (-1.65,   4.5){\small $\alpha_{12}$};
    \node at (-2.5,   4.5){\small $q_{11}$};
    \node at (-2.4,   3.5){\small $y_{11}$};
    \node at (-4.3,   3.5){\small $p_{11}$};
    \node at (-4.8,   4.0){\small $\eta_{11}$};
    \node at (-3.5, 2.8){\small $\alpha_{11}$};
     \node at (-4.3, +1.8){\small $y_{10}$};
     \node at (-4.9, +2){\small $q_{10}$};
    \node at (-0.0,   0.0){\small $\sigma_1$};
    \end{tikzpicture}
  \caption{}\label{figure14}
 \end{center}
\end{figure}

Now we consider $\sigma_1$.  Combining (\ref{12-gon-each-vertex-1/24}) with Lemma \ref{Two-triang-adj}, we obtain that there exist $12$ triangles, $\{\tau_i\}_{i=1}^{12}$, ordered clockwise along $\sigma_1$, such that $\tau_{12}$ is $\sigma$-adjacent to $\tau_1$ and $\tau_i$ is $\sigma_1$-adjacent to $\tau_{i+1}$ for $1\leq i\leq 11$, and
\begin{align}\begin{split}\label{pattern-x_i}
\Pttn(x_i)=
\begin{cases}
(3,4,4,4), & i \ \textrm{is odd}\\
(3,4,4,6), & i \ \textrm{is even},
\end{cases}
\end{split}\end{align}
where $x_i=\partial\tau_i\setminus \partial\sigma_1$, as shown in Figure \ref{figure14}. Let $z_i=\d\tau_i\cap\d\tau_{i+1}$ for $1\leq i\leq 11$ and $z_{12}=\tau_{12}\cap\tau_1$. Clearly, $z_i\prec\sigma$. For $1\leq i\leq 11$, by $|z_i|=4$, $\{z_i, x_i\}$ and $\{z_i,x_{i+1}\}$ are contained in a face, denoted by $\omega_i$.
Since $z_i\prec\omega_i$ and $z_i\prec\sigma_1$, $|\omega_i|=3$ or $4$. Using (\ref{pattern-x_i}), $x_i\prec\omega_i$ and $|\tau_i|=3$ for $1\leq i\leq 12$, we have $|\omega_i|=4$. By $|z_{12}|=4$, $\{z_{12}, x_{12}\}$ and $\{z_{12}, x_1\}$ are contained in a face, denoted by $\omega_{12}.$ Similarly, $|\omega_{12}|=4$.

Let $y_i$ be a vertex such that $y_i\sim x_i$, $y_i\sim x_{i+1}$ and $y_i\nprec\sigma_1$ for $i=1,\cdots, 11$ and $y_{12}$ be a vertex such that $y_{12}\sim x_{12}$ and $y_{12}\sim x_1$. Let $\alpha_i$ be a face containing $\{x_i, y_{i-1}\}$ and $\{x_i, y_i\}$ for $2\leq i\leq 12$ and $\alpha_1$ be a face containing $\{x_1,y_1\}$ and $\{x_1,y_{12}\}$. Hence, by $|x_i|=4$, we have
\[
x_1=\tau_1\cap\omega_{12}\cap\omega_1\cap\alpha_1,\ \ x_i=\tau_i\cap\omega_{i-1}\cap\omega_i\cap\alpha_i, \ \ i=2,\cdots, 12.
\]
By $|\tau_i|=3$ and $|\omega_i|=4$, together with (\ref{pattern-x_i}), we obtain
\begin{align}\begin{split}\label{alpha_i-degree}
|\alpha_i|=
\begin{cases}
4, & i \ \textrm{is odd},\\
6, & i \ \textrm{is even}.
\end{cases}
\end{split}\end{align}
For $|x_1|=4$, $\{x_1,y_1\}$ and $\{x_1,z_2\}$ are contained in a face. Combining it with $\{x_1,z_1\}\prec \omega_1$,  we have $\{x_1,y_1\}\prec\omega_1$ and $y_1\prec\omega_1$. With the same arguments as above, we obtain $y_i\prec \omega_i$ for $i=1,\cdots, 12$, as shown in Figure \ref{figure14}. Noting that $y_i\prec(\alpha_i\cap\alpha_{i+1})$ for $i=1,\cdots,11$ and $y_{12}\prec\alpha_{12}\cap\alpha_1$, we have
\[
y_{12}\prec\omega_{12}\cap \alpha_{12}\cap \alpha_1, \ \ y_i\prec\omega_i\cap \alpha_i\cap \alpha_{i+1},\ \ i=1,\cdots,11.
\]
By $|\omega_i|=4$, (\ref{alpha_i-degree}) and $\Phi(y_i)\geq 0$,
\begin{align}\begin{split}\label{pt-y_i}
\Pttn(y_i)=(3,4,4,6), \ i=1,\cdots,12.
\end{split}\end{align}
Let $p_i$ be a vertex such that $p_i\sim y_{i-1}$ and $p_i\sim y_i$ for $i=3,5,7,9,11$, and $p_1$ be a vertex such that $p_1\sim y_{12}$ and $p_1\sim y_1$. By $|y_{12}|=4$ and (\ref{pt-y_i}), there exists a
vertex $q_{12}$ satisfying $q_{12}\prec \alpha_{12}$ and $q_{12}\neq x_{12}$ such that $q_{12}\sim y_{12}$ and $q_{12}\sim p_1$. That is, there exists a triangle with boundary vertices $y_{12}, p_1, q_{12}$. With the same arguments as above, we obtain that there exists a vertex $q_i$ such that $q_i\prec \alpha_{i+1}$, $q_i\sim y_i$ and $q_i\sim p_i$ for $i$ odd and there exists a vertex $q_i$ such that $q_i\prec \alpha_i$, $q_i\sim y_i$ and $q_i\sim p_{i+1}$ for $i$ even, $1\leq i\leq 11$. Let $\eta_i$ be a face containing $\{p_i,q_{i-1}\}$ and $\{p_i, q_i\}$ for $i=3,5,7,9,11$ and $\eta_1$ be a face containing $\{p_1,q_{12}\}$ and $\{p_1,q_1\}$.




For $\eta_1$, since $q_1\prec\eta_1$, $q_1\prec\alpha_2$, $|\alpha_2|=6$ and $\Phi(q_1)\geq 0$, $|\eta_1|\leq 6$. For $p_1$, since $p_1\sim y_1\sim q_1\sim p_1$, $p_1\sim y_{12}\sim q_{12}\sim p_1$, and $p_1\prec \alpha_1$, we obtain that $p_1$ is contained in two triangles and a square $\alpha_2$, which yields that $\Pttn(p_1)=(3,3,4,|\eta_1|)$ and $\Phi(p_1)\geq \frac{1}{12}$.
Using the same arguments for $\eta_1$ and $p_1$ for $i=3,5,7,9,11$, we obtain that $|\eta_i|\leq 6$, $\Pttn(p_i)=(3,3,4,|\eta_i|)$ and $\Phi(p_i)\geq \frac{1}{12}$.

We claim that $|\eta_{2i-1}|=6$ for $1\leq i\leq 6$. Otherwise, there exists some $\eta_i$, say $|\eta_1|,$ such that $|\eta_1|<6$. So that $\Phi(p_1)>\frac{1}{12}$, which yields that
\[
\sum_{j=1}^6\Phi(p_{2j-1})>\frac{1}{2}.
\]
By (\ref{pattern-x_i}), we obtain $\sum^{12}_{i=1}\Phi(x_i)=\frac{1}{2}$, and
\[
\sum^{12}_{i=1}\Phi(x_i)+\sum_{i=1}^6\Phi(p_{2j-1})>1.
\]
This contradicts Theorem \ref{thm:DeVosMohar}. This proves the claim.

Hence, $|\eta_i|=6$ for $1\leq i\leq 6$ and
\begin{align}\begin{split}\label{sum-curvature}
\sum^{12}_{i=1}\Phi(x_i)+\sum_{i=1}^6\Phi(p_i)=1.
\end{split}\end{align}
Applying Theorem \ref{thm:DeVosMohar}, we obtain the remaining vertices, except $\{x_{2i-1}\}_{i=1}^6$ and $\{p_{2j-1}\}_{j=1}^6$, has vanishing curvature.

With the same argument as $\sigma_1$, we obtain that $\sigma_2$ has the same structure as $\sigma_1$. Theorem \ref{nexist-both-adj} implies that $x_i\nsim x'$ for any $x'\prec \sigma_2$ with $i=1,\cdots, 12$. Hence, there are vertices, $z'$ and $z$, such that $z'\sim z$, $z\prec \sigma_2$ and $\Phi(z')>0$. Combining it with (\ref{sum-curvature}), we obtain $\Phi(\Gamma)>1$, this contradicts  Theorem \ref{thm:DeVosMohar}.
This proves the theorem.
\end{proof}

\section*{Acknowledgements}
Y. A. is supported by JSPS KAKENHI Grant Number JP16K05247. B.H. is supported by NSFC, no.11831004 and no. 11826031. Y.S. is supported by NSFC, grant no. 11771083, NSF of Fujian Province through grants no. 2017J01556 and NSF of Fuzhou University through grant no. GXRC-18035. L. W. is supported by NSFC, no. 11671141 and China Postdoctoral Science Foundation, no. 2019M651332.

\bibliography{Mcurvature1}
\bibliographystyle{alpha}

\end{document}